\newcolumntype{P}[1]{>{\centering\arraybackslash}p{#1}}
\definecolor{red}{HTML}{f54b1a}
\definecolor{pink}{HTML}{d19eb1}
\definecolor{orange}{HTML}{d3772e}
\definecolor{yellow}{HTML}{ebe85d}
\definecolor{green}{HTML}{0f6852}
\definecolor{lightblue}{HTML}{01abe9}
\definecolor{darkblue}{HTML}{1b346c}
\definecolor{tan}{HTML}{e5c39e}
\definecolor{darktan}{HTML}{af9e73}
\definecolor{grey}{HTML}{c3ced0}
\definecolor{darkgrey}{HTML}{9dadc4}
\definecolor{black}{HTML}{110d1b}
\definecolor{white}{HTML}{f1f8f1}
\theoremstyle{plain}
\newtheorem{thm}{Theorem}[section]
\newtheorem{cor}[thm]{Corollary}
\newtheorem{lem}[thm]{Lemma}
\newtheorem{prop}[thm]{Proposition}
\newtheorem{obs}[thm]{Observation}
\newtheorem{claim}[thm]{Claim}
\newtheorem*{claim*}{Claim}
\def\@rst #1 #2other{#1}
\newcommand\MR[1]{\relax\ifhmode\unskip\spacefactor3000 \space\fi
  \MRhref{\expandafter\@rst #1 other}{#1}}
\newcommand{\MRhref}[2]{\href{http://www.ams.org/mathscinet-getitem?mr=#1}{MR#2}}
\theoremstyle{definition}
\newtheorem{defn}[thm]{Definition}
\newtheorem{remark}[thm]{Remark}
\numberwithin{equation}{section}
\newcommand{\dsb}{\begin{adjustwidth}{2.5em}{0pt}
\begin{footnotesize}}
\newcommand{\dse}{\end{footnotesize}
\end{adjustwidth}}
\newcommand{\ssb}{\begin{adjustwidth}{2.5em}{0pt}}
\newcommand{\sse}{\end{adjustwidth}}
\newcommand{\aryb}{\begin{eqnarray*}}
\newcommand{\arye}{\end{eqnarray*}}
\def\alb#1\ale{\begin{align*}#1\end{align*}}
\def\allb#1\alle{\begin{align}#1\end{align}}
\newcommand{\eqb}{\begin{equation}}
\newcommand{\eqe}{\end{equation}}
\newcommand{\eqbn}{\begin{equation*}}
\newcommand{\eqen}{\end{equation*}}
\newcommand{\mcl}{\mathcal}
\newcommand{\multK}{\mathcal{K}}
\newcommand{\tree}{T^*}
\newcommand{\spt}{\mathbb{T}^*}
\newcommand{\spthat}{\hat{\mathbb{T}}^*}
\newcommand{\sptd}{T^{\smallsetminus}(\spt)}
\newcommand{\sptdhat}{T^{\smallsetminus}(\spthat)}
\let\originalleft\left
\let\originalright\right
\renewcommand{\left}{\mathopen{}\mathclose\bgroup\originalleft}
\renewcommand{\right}{\aftergroup\egroup\originalright}
\def\DD{\mathbb{D}}
\def\EE{\mathbb{E}}
\def\NN{\mathbb{N}}
\def\RR{\mathbb{R}}
\def\SS{\mathbb{S}}
\def\TT{\mathbb{T}}
\def\ZZ{\mathbb{Z}}
\def\cC{\mathcal{C}}
\def\cE{\mathcal{E}}
\def\cF{\mathcal{F}}
\def\cG{\mathcal{G}}
\def\cI{\mathcal{I}}
\def\cK{\mathcal{K}}
\def\cL{\mathcal{L}}
\def\cM{\mathcal{M}}
\def\cN{\mathcal{N}}
\def\cP{\mathcal{P}}
\def\cQ{\mathcal{Q}}
\def\cS{\mathcal{S}}
\def\cT{\mathcal{T}}
\def\cV{\mathcal{V}}
\newcommand{\E}[1]{\mathbbm{E}\left[#1\right]}
\DeclareMathOperator{\Var}{Var}
\newcommand{\sm}{\setminus}
\def\npe{\cN\cP\cM}
\DeclareMathOperator{\area}{area}
\DeclareMathOperator{\supp}{supp}
\DeclareMathOperator{\Tr}{Tr}
\DeclareMathOperator{\tr}{tr}
\DeclareMathOperator{\mle}{\mathfrak{M}}
\def\<{\langle} \def\>{\rangle}
\newcommand{\optionaldesc}[2]{%
  \phantomsection
  #1\protected@edef\@currentlabel{#1}\label{#2}%
}
\newcommand{\unitary}{\mathrm{U}}
\newcommand{\SO}{\mathrm{S}\mathrm{O}}
\newcommand{\SU}{\mathrm{S}\mathrm{U}}
\title{Surface sums in two-dimensional large-$N$ lattice Yang--Mills: Cancellations and explicit computations for general loops}
\date{    }
\author{Jacopo Borga\thanks{Department of Mathematics, Massachusetts Institute of Technology, \texttt{\fontfamily{cmr}\selectfont \href{mailto:jborga@mit.edu}{jborga@mit.edu}}} \quad Sky Cao\thanks{Department of Mathematics, Massachusetts Institute of Technology, \texttt{\fontfamily{cmr}\selectfont \href{mailto:skycao@mit.edu}{skycao@mit.edu}}} \quad Jasper Shogren-Knaak\thanks{Courant Institute of Mathematical Sciences, New York University, \texttt{\fontfamily{cmr}\selectfont \href{mailto:jasper.shogren-knaak@cims.nyu.edu}{jasper.shogren-knaak@cims.nyu.edu}}}}
\begin{document}

\maketitle

\begin{abstract}
In the context of two-dimensional large-$N$ lattice Yang--Mills theory, we perform a refined study of the surface sums defined in the companion work~\cite{borga2024surfacesumslatticeyangmills}. In this setting, the surface sums are \textit{a priori} expected to exhibit significant simplifications because two-dimensional Yang--Mills theory is a special model that admits many known exact formulas. Thus, a natural problem is to understand these simplifications directly from the perspective of the surface sums. Towards this goal, we develop a key new tool in the form of a surface exploration algorithm (or ``peeling process''), which, at each step, carefully selects the next edge to explore. Using this algorithm, we manage to find many cancellations in the surface sums, thereby obtaining a detailed understanding of precisely which surfaces remain after cancellation. As a consequence, we obtain many new explicit formulas for Wilson loop expectations of general loops in the large-$N$ limit of lattice Yang--Mills in two dimensions and prove a convergence result for the empirical spectral measure of any simple loop. \end{abstract}

\tableofcontents

\section{Introduction}\label{sec: intro}

\subsection{Overview and informal statement of the main results}\label{subsec:overview}

The use of geometric representations in the analysis of statistical mechanical models has a long history. Classical examples include the high temperature expansion, low temperature expansion, random current representation, Edwards-Sokal coupling of the Ising model (see the surveys \cite{DC2018, DC2023} for more discussion), and the Brydges-Fr\"{o}lich-Spencer random walk representation of lattice spin systems \cite{BFS1982}. More recent examples include random loop models related to the spin $O(N)$ model \cite{BU2020, LT2021a, LT2021}. In all these examples, the geometric representation involves a system of edges (in terms of a bond percolation configuration or a collection of loops and paths), which reflects the fact that the spin systems in consideration are defined by specifying pairwise interactions of neighboring vertices.

For lattice Yang--Mills, instead of vertices interacting along edges, we now have edges interacting along plaquettes. Thus, the associated geometric representations will naturally consist of a system of plaquettes, for instance, a plaquette percolation configuration or a collection of surfaces. Examples of such representations include Wilson's high-temperature expansion \cite{Wilson1974}, various surface representations \cite{tH1974, KOSTOV1984445, Kostov1994}, and plaquette percolation models \cite{DS2025a, DS2025b}.

Recently, there has been a resurgence of interest in surface representations of lattice and continuum Yang--Mills, starting with the ``vanishing string trajectory'' point of view of Chatterjee \cite{chatterjee_rigorous_2019} and further studied by Chatterjee and Jafarov \cite{chatterjee20161n, jafarov2016wilson}. Other examples include a surface representation for two-dimensional continuum Yang--Mills by Park et al.~\cite{park2023wilson}, and what is most relevant for us, the surface model for lattice Yang--Mills of Cao-Park-Sheffield \cite{cao2023random}. 

The main point in introducing these surface representations is to use them to prove new results about both lattice and continuum $d$-dimensional Yang--Mills. A recent example of this is the work \cite{CNS2025}, whose main contribution, in a sense, can be thought of as developing techniques to analyze the surface sums appearing in \cite{cao2023random} in the case of finite $N$ (the parameter that governs the size of the matrices in the gauge group) and $d\geq 2$.

Towards this aim, in our previous paper \cite{borga2024surfacesumslatticeyangmills}, we introduced a surface representation for $d$-dimensional Yang--Mills in the large-$N$ limit. This regime is notable, as it exhibits a relatively simple and tractable surface model. In fact, the main results of the present paper will be obtained by analyzing this surface model.

To go into slightly more detail about our previous paper \cite{borga2024surfacesumslatticeyangmills}, roughly speaking (precise explanations are given in \cref{sec: Wilson loop expectations as surface sum large N}), we showed that in the large-$N$ limit for any dimension $d\geq 2$, when the inverse temperature $\upbeta$ is small, the Wilson loop expectation $\phi_{N,\upbeta}(\ell)$ of any loop $\ell$ -- which are the main quantities of interest in Yang--Mills theory -- converges  to a limiting ``surface sum'', that is, a sum of the form (see \eqref{eq:def-phi_K} below for a precise definition):
\begin{equation}\label{eq:surf-sum}
    \phi(\ell)=\sum_{M\in\cM(\ell)}w_{\upbeta}(M),
\end{equation} 
where $\mathcal{M}(\ell)$ denotes a \emph{infinite} collection of planar maps, sometimes referred to as \textit{surfaces}, and $w_{\upbeta}(M) \in \mathbb{R}$ represents a signed weight associated with $M$. We stress that the limiting result holds only when $\upbeta$ is small, but the surface sum in \eqref{eq:surf-sum} is formally defined for all $\upbeta\in\RR$ and is shown to be convergent for $\upbeta$ sufficiently small.

The above surface sum representation has many attractive features (simple weights, planar maps, cf.\ the discussion after \cite[Theorem 1.6]{borga2024surfacesumslatticeyangmills} of how the large-$N$ surface sum compares to previous surface representations). However, as the weights are still signed and the sum is infinite, to illuminate new facts about Wilson loop expectations, a refined study of the surface sum needs to be performed.

Focusing now on two dimensions, it is well known that two-dimensional Yang--Mills is special, in that many exact formulas are known, and in a sense, the model is completely integrable, see e.g.\  \cite{Driver1989, GKS1989, Sengupta1997,  dahlqvist2016free, lévy2012masterfieldplane,  DHK2017, DHK2017b, Basu:2016dnp, Driver2019, Lemoine2022, DL2023, SZZ2024, DL2025}. Due to this complete integrability, we expect the planar map model from \cite{borga2024surfacesumslatticeyangmills} to simplify greatly in two dimensions, in the sense that there should be a vast number of cancellations between surfaces. That is, one should be able to identify many subsets of surfaces $\cM'$ such that \[\sum_{M\in \cM'}w_{\upbeta}(M)=0,\] 
where we emphasize that the weights $w_{\upbeta}(\cdot)$ are \textit{signed}. Such cancellations are also expected to occur in higher dimensions, albeit in an even more intricate manner, making the two-dimensional setting the natural starting point.
However, even in dimension two, the nature and origin of these cancellations remain quite opaque when viewed directly from the definition of the surface model.

In the present paper, we develop techniques to find these cancellations between surfaces in the case of dimension $d = 2$. As a first example of this, we show that all but finitely many of the surfaces from the surface sums in \eqref{eq:surf-sum} (those from \cite{borga2024surfacesumslatticeyangmills}) cancel each other out. This is our first main result, which we state informally as follows. For the precise statement, see Theorem \ref{thm: erasable loops have one plaquette assignment}.

\begin{thm}[\textsc{informal}]\label{thm:main-1-informal}
Let $d = 2$ and $\upbeta\in \RR$. For any loop $\ell$ the surface sum $\phi(\ell)$  in \eqref{eq:surf-sum} from our previous paper \cite{borga2024surfacesumslatticeyangmills} is actually a finite sum. Moreover, we can explicitly determine which of the summands may be nonzero. 
\end{thm}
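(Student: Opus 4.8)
The plan is to analyze the surface sum \eqref{eq:surf-sum} by equipping each surface $M\in\cM(\ell)$ with a deterministic \emph{peeling exploration} and showing that the ``unforced'' steps of this exploration generate cancellations that collapse the sum to a finite one. First I would fix a canonical order in which to reveal the edges of $M$: starting from the boundary loop $\ell$, maintain a frontier (the boundary of the already-revealed sub-surface) together with a priority queue of unexplored edges, and at each step select the next edge by a rule that favors edges whose continuation in $M$ is \emph{forced} by $\ell$ and by the part already revealed --- this is the ``careful selection of the next edge'' alluded to in the abstract. The point of the order is that as long as there is an edge on the frontier whose incident plaquette, orientation, and gluing are determined by the data revealed so far, we reveal it; only when no such edge exists do we reach a genuine branch point, where several continuations (which plaquette of $\ZZ^2$ is attached, with which orientation, and how its new edges are identified with frontier edges) are a priori possible.

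Second, and this is the crux, I would prove a \textbf{cancellation lemma}: at any such branch point, the contributions of all surfaces agreeing with $M$ up to that step, summed over the admissible continuations of the branching step, vanish. This should follow from the large-$N$ structure of the weights $w_{\upbeta}$: the local factor attached to merging or splitting the frontier strings at a branch point is governed by the limiting normalized Weingarten data $\nWgI$, and the relevant sums of $\nWgI$ over the free choices are zero --- these are the same M\"obius-type identities in the lattice of pairings that force the collapse of the genus expansion at leading order in the companion work \cite{borga2024surfacesumslatticeyangmills}. Since the peeling order guarantees that no branch point is ever taken unless forced, every surface using at least one unforced step lies in a group of total weight $0$ and may be discarded. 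One must be slightly careful that these groupings are compatible with the (conditionally convergent, for $\upbeta$ small) rearrangement of \eqref{eq:surf-sum}, and that the resulting statement is then an identity of formal weights valid for all $\upbeta\in\RR$; I would handle this by performing the cancellation at the level of finite partial explorations and only afterwards passing to the sum.

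Third, I would characterize the survivors. A surface contributes (potentially nonzero weight) only if \emph{every} peeling step is forced, and I would show this pins down the plaquette assignment of $M$ uniquely in terms of $\ell$ --- so that $\ell$ must be what the precise statement calls an ``erasable'' loop and its surface carries ``one plaquette assignment'' (cf.\ \cref{thm: erasable loops have one plaquette assignment}) --- and in particular confines $M$ to a bounded region of $\ZZ^2$ with a bounded number of faces. Only finitely many such $M$ remain, and they can be listed explicitly from the combinatorics of $\ell$ (the hull and winding data of $\ell$, together with the reduction that erases it).

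The main obstacle I anticipate is \emph{designing the exploration order} so that the cancellation lemma is genuinely clean: the order must be canonical (independent of the unrevealed part of $M$), the ``forced versus free'' dichotomy at each step must be decidable from the revealed data alone, and --- most delicately --- a single unforced step must cancel \emph{completely} against its siblings without the cancellation being spoiled by whatever the exploration does afterwards (which requires that the set of admissible continuations, and their weights, factor through the revealed data in the right way). Getting this interface right, rather than the algebraic identities for $\nWgI$ themselves, is where the real work lies.
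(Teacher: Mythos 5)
There is a genuine gap, on two counts. First, your characterization of the survivors is factually wrong: you claim that a contributing surface has its plaquette assignment ``pinned down uniquely in terms of $\ell$'' and that only surfaces all of whose peeling steps are forced survive. In fact the set of contributing plaquette assignments is the canonical collection $\cK_{\ell}$ of \cref{defn:collection-plaquette-ass}, which by \cref{rem:card-set-cK} has cardinality $\prod_R\bigl(\tfrac{d_{\ell}(R)-|h_{\ell}(R)|}{2}+1\bigr)$ and so is a singleton only when $d_{\ell}=|h_{\ell}|$ (e.g.\ for simple loops); \cref{table} lists loops with $\#\cK_{\ell}=2$ and $4$. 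Moreover, even for a fixed $K\in\cK_{\ell}$ the coefficient $c(\ell,K)$ is generically a sum over many surfaces that do \emph{not} cancel --- e.g.\ $c(\ell^n,K_{\ell^n})=\frac{(-1)^{n+1}}{n}\binom{na_{\ell}-2}{n-1}$ in \cref{thm: wound simple loops WLE} --- so your proposed dichotomy ``every unforced branch cancels completely against its siblings, every survivor is fully forced'' cannot hold: it would leave at most one surface per loop. The cancellation that actually occurs is not a local sibling cancellation at each branch point; it is the vanishing of entire values $\phi^{K'}(\ell')$ for the (smaller) loop plaquette assignments reached at the leaves of the exploration.

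Second, the mechanism you propose (re-deriving M\"obius/Weingarten identities for the local gluing factors at each branch) is not what is needed and is not available in the required form. The paper takes the fixed-$K$, fixed-location master loop equation (\cref{thm: fixed K 't Hooft master loop equation for surface sum}) as the \emph{only} input from the surface weights, and never touches $w_{\infty}(M)$ again. The real work, which your proposal does not supply, is: (i) a quantitative control of how the height $h$, the distance $d$, the plaquette assignment, and the edge multiplicities evolve under a ``macro step'' that removes one plaquette by pushing an edge to infinity along a path of plaquettes (\cref{prop: remove plaquette from loop}); (ii) a global ordering of these removals along a spanning pair of trees so that the loop provably shrinks to one supported on one or two plaquettes; and (iii) explicit base cases ($\phi^K(p^n)$ and loops around two adjacent plaquettes) where the vanishing for $K\notin\cK_{\ell}$ is computed directly from the recursion. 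Your ``forced versus free'' interface, even if it could be made canonical, does not track the invariants $(h_{\ell},d_{\ell},K)$ whose preservation is precisely what identifies $\cK_{\ell}$ as the surviving set, so the proposal would not recover the statement ``we can explicitly determine which summands may be nonzero.''
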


We emphasize here that Theorem \ref{thm:main-1-informal} applies to any loop. Beyond just providing an understanding of which terms may possibly contribute to the surface sum, Theorem \ref{thm:main-1-informal} also allows us to explicitly compute the surface sums in many new cases. As an example, we informally state our next main result. For the precise statement, see Theorem \ref{thm: wound simple loops WLE}.

\begin{thm}[\textsc{informal}]\label{thm:main-2-informal}
Let $d = 2$ and $\upbeta\in \RR$. Let $\ell$ be a simple loop and let $a_\ell = \area(\ell)$ (which may be defined as the number of plaquettes enclosed by $\ell$). For $n \in \NN$, let $\ell^n$ be the loop $\ell$ wound around itself $n$ times. Then the surface sum $\phi(\ell^n)$ is explicitly given by
\[ \frac{(-1)^{n+1}}{n}\binom{na_{\ell}-2}{n-1} \upbeta^{n a_\ell} .\]
\end{thm}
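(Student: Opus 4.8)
The plan is to combine the structural result of Theorem~\ref{thm: erasable loops have one plaquette assignment} (the precise form of Theorem~\ref{thm:main-1-informal}) with an explicit combinatorial count organized through the peeling process, and then to package the answer via a one-variable generating function and Lagrange inversion. As a first step I would invoke Theorem~\ref{thm: erasable loops have one plaquette assignment} for the loop $\ell^n$: this turns $\phi(\ell^n)$ into a finite sum over an explicitly described subfamily $\cM^*(\ell^n)\subseteq\cM(\ell^n)$. Since $\ell$ is simple, $\ell^n$ winds $n$ times around each of the $a_\ell$ enclosed plaquettes, so morally every filling surface must cover each of them at least $n$ times; using the explicit description of the surviving surfaces (which in particular do not wander outside the enclosed region), one checks that every $M\in\cM^*(\ell^n)$ has exactly $n a_\ell$ plaquettes, whence $w_{\upbeta}(M)=\varepsilon(M)\,\upbeta^{\,n a_\ell}$ with $\varepsilon(M)\in\{\pm1\}$. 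Therefore $\phi(\ell^n)=C_n\,\upbeta^{\,n a_\ell}$ with $C_n=\sum_{M\in\cM^*(\ell^n)}\varepsilon(M)$, and the whole problem reduces to evaluating the signed count $C_n$.

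To compute $C_n$ I would run the peeling exploration on a generic surviving surface $M\in\cM^*(\ell^n)$, starting from a fixed boundary edge and selecting the next edge as prescribed by the algorithm. Because $\ell$ is simple, the boundary word of $\ell^n$ is the $n$-fold repetition of that of $\ell$, and each time the exploration returns to a vertex of $\ell$ it meets one of the $n$ strands. At each junction the exploration resolves in one of two ways: either it uncovers a fresh plaquette of the enclosed region, reducing to the same problem with one fewer enclosed plaquette (and, after $a_\ell$ such steps, to a basic one-plaquette move); or it performs an identification that pinches the boundary loop $\ell^n$ into $\ell^k$ and $\ell^{\,n-k}$ for some $1\le k\le n-1$, which, by the large-$N$ factorization of the surface sum, contributes a product term $C_k\,C_{n-k}$. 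Translating these two alternatives into a relation for
\[ \Psi(z)\;:=\;\sum_{n\ge1}\phi(\ell^n)\,z^n \]
one is led to the closed functional equation
\[ \Psi(z)\,\bigl(1+\Psi(z)\bigr)^{a_\ell-1}\;=\;\upbeta^{\,a_\ell}\,z, \]
where the factor $(1+\Psi)^{a_\ell-1}$ records the $a_\ell$ enclosed plaquettes, one of which is distinguished by the peeling edge. The base case $n=1$, namely $\phi(\ell)=\upbeta^{\,a_\ell}$ (the only surviving surface for a simple loop being the flat filling of its interior), is consistent with this equation.

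Solving the functional equation is then routine. Writing $w=w(z)$ for the formal power series determined by $w=-\upbeta^{\,a_\ell}z(1+w)^{a_\ell}$, a short computation shows that the equation above is equivalent to $\Psi=-w/(1+w)$. Applying Lagrange inversion with $\varphi(w)=-\upbeta^{\,a_\ell}(1+w)^{a_\ell}$ and $H(w)=-w/(1+w)$, so that $H'(w)=-(1+w)^{-2}$, gives
\[ [z^n]\,\Psi\;=\;\frac1n\,[w^{n-1}]\Bigl(-(1+w)^{-2}\,(-\upbeta^{\,a_\ell})^{n}(1+w)^{n a_\ell}\Bigr)\;=\;\frac{(-1)^{n+1}}{n}\,\upbeta^{\,n a_\ell}\binom{n a_\ell-2}{n-1}, \]
which is precisely the claimed formula; equivalently $C_n=\tfrac{(-1)^{n+1}}{n}\binom{na_\ell-2}{n-1}$. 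Since the numbers $\phi(\ell^n)$ are the moments of the limiting spectral measure of the holonomy around $\ell$, this computation simultaneously yields the spectral measure of a simple loop.

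The analytic step (Lagrange inversion) and the reduction in the first paragraph are mechanical; the crux of the argument is the derivation of the functional equation from the peeling process. The delicate points there are: showing that the carefully chosen exploration genuinely decomposes \emph{every} surviving surface of $\ell^n$ into the two model moves, with no surface over- or under-counted; and, most subtly, verifying that the signs $\varepsilon(M)$ recombine coherently along the recursion so that it is sign-consistent and reproduces the alternating factor $(-1)^{n+1}$. It is exactly here that Theorem~\ref{thm: erasable loops have one plaquette assignment} is indispensable, since it guarantees that one works with a finite, explicitly understood family rather than the full infinite signed sum. As consistency checks, the functional equation degenerates correctly for $a_\ell=1$, recovering the large-$N$ Gross--Witten--Wadia values $\phi(\ell)=\upbeta$ and $\phi(\ell^n)=0$ for $n\ge2$, and in a suitable $a_\ell\to\infty$, $\upbeta\to1$ scaling it reproduces Biane's formula for the moments of a free unitary Brownian motion.
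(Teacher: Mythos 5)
Your overall architecture matches the paper's: reduce via \cref{thm: erasable loops have one plaquette assignment} to a single coefficient (since $\cK_{\ell^n}=\{K_{\ell^n}\}$ and $\area(K_{\ell^n})=na_\ell$), encode the answer in a generating function satisfying a functional equation, and extract the coefficients by Lagrange inversion. Your functional equation $\Psi(z)(1+\Psi(z))^{a_\ell-1}=\upbeta^{a_\ell}z$ is exactly the paper's equation \eqref{eq: moment eq}, $F^a-F^{a-1}=t$, under $F=1+\Psi$ and $t=\upbeta^{a_\ell}z$, and your Lagrange-inversion computation is correct (it is essentially \cref{prop: TilC solves moment equation} combined with the uniqueness statement \cref{prop: unique sol to moment equation}).

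The genuine gap is in the step you yourself flag as the crux: the derivation of the functional equation from the exploration. The dichotomy you describe --- ``either uncover a fresh plaquette, reducing to area $a_\ell-1$, or pinch $\ell^n$ into $\ell^k$ and $\ell^{n-k}$'' --- does not close into a recursion in the quantities $C_n$ alone. A single application of the master loop equation at an edge $e$ of $\ell^n$ removes only one of the $n$ layers over the adjacent plaquette $p$: it produces the loop $\ell^n\ominus_e p$ together with a modified plaquette assignment, and this intermediate object is neither $\ell^n$ over a smaller region nor a product of smaller windings. To fully peel one plaquette you must iterate $n$ times, and the positive splittings that occur along the way produce cross terms $\tilde c_{i,i}(a)\,\tilde c_{n-i,m-i}(a)$ involving the intermediate states (this is the two-index recursion \eqref{eq: coeffienct MLE} in the paper), not products $C_kC_{n-k}$. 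Closing this into your one-variable equation requires introducing the auxiliary generating functions $\tilde C_m(a,t)$ and the bivariate series $\tilde C(a,t,z)$, and then making the kernel-method substitution $z=t/\tilde C(a,t)$, which yields $\tilde C(a,t)=\tilde C\bigl(a-1,t/\tilde C(a,t)\bigr)$ and hence, by induction on the area, the functional equation. Two further ingredients you use implicitly also need proof: that a simple loop of area at least two always admits an edge whose removal-deformation keeps the loop simple (\cref{lemma: simple loops are closed under shrinking}), so that the area induction stays within the class of wound simple loops, and that the formal power series solution of \eqref{eq: moment eq} with constant term $1$ is unique, so that matching the closed-form series against $\tilde C(a,t)$ is legitimate. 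With those pieces supplied, your argument becomes the paper's proof; without them, the passage from the peeling picture to $\Psi(1+\Psi)^{a_\ell-1}=\upbeta^{a_\ell}z$ is unsupported.
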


Similarly, our final main result provides explicit formulas for the surface sums corresponding to loops with at most three self-crossings. See Table~\ref{table} for the explicit formulas.

\begin{thm}[\textsc{informal}]\label{thm:main-3-informal}
Let $d = 2$ and $\upbeta\in \RR$. For any loop $\ell$ with at most three self-crossings, we can explicitly compute the surface sum $\phi(\ell)$.
\end{thm}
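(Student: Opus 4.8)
The plan is to bootstrap from Theorem~\ref{thm:main-1-informal} (precisely, Theorem~\ref{thm: erasable loops have one plaquette assignment}): for \emph{any} loop it already reduces the a priori infinite sum $\phi(\ell)$ to a finite sum and pins down explicitly the planar maps $M \in \cM(\ell)$ whose weight $w_\upbeta(M)$ may be nonzero after cancellation. The remaining task is therefore (i) to enumerate the finitely many combinatorial types of loops $\ell$ with at most three self-crossings, and (ii) for each type, to list the surviving surfaces and add up their weights, which are monomials $\pm \upbeta^{\mathrm{area}(M)}$ times an explicit combinatorial coefficient coming from the large-$N$ planar-map structure.

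For step (i) I would classify loops with at most three self-crossings up to the symmetries that preserve $\phi$: lattice isometries, orientation reversal, and the reduction moves (backtrack erasure and the like) under which the surface sum is invariant. Each surviving type is then recorded by its combinatorial ``shape'' — essentially the Gauss code of $\ell$ together with the planar arrangement of the bounded faces of $\RR^2 \smallsetminus \ell$ — and parametrized by the tuple of face areas $a_1, a_2, \dots$ measured in plaquettes. Care is needed to include degenerate configurations in which portions of the loop coincide or wind around a region several times: the self-crossing count does not bound the winding, and these wound pieces are exactly where Theorem~\ref{thm:main-2-informal} and its binomial coefficients re-enter as building blocks.

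For step (ii), for each shape I run the characterization of Theorem~\ref{thm:main-1-informal} via the peeling process to obtain the explicit set of possibly-nonzero surfaces; reading off $\mathrm{area}(M)$ in terms of the $a_i$ and computing the planar-map coefficient then gives a closed-form contribution, and summing over the (finitely many) surfaces of each area produces the entries of Table~\ref{table}. As an independent check on the casework, I expect the surface sums to satisfy combinatorial analogues of the Makeenko--Migdal relations: resolving a self-crossing in its two ways relates $\phi(\ell)$ to surface sums of loops with strictly fewer crossings, so that, together with the simple-loop base case $\phi(\ell) = \upbeta^{a_\ell}$ and Theorem~\ref{thm:main-2-informal}, one can rederive and sign-check all the one-, two-, and three-crossing formulas recursively.

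The main obstacle will be the combinatorial bookkeeping in step (ii). For two and especially three crossings the set of surviving surfaces is large, several surfaces can carry the same area (so their coefficients must be combined and the residual cancellations tracked exactly), and the degenerate wound configurations must be handled without double-counting. Ensuring that the enumeration of loop shapes in step (i) is genuinely exhaustive — in particular not omitting configurations with coincident edges or higher winding — and then verifying shape-by-shape that the peeling-process characterization has been applied correctly, is where the real effort lies; the Makeenko--Migdal-type recursion is the most reliable safeguard against errors here.
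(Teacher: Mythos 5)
Your high-level skeleton (classify loops with at most three self-crossings up to isotopy, parametrize by region areas, use \cref{thm: erasable loops have one plaquette assignment} to cut the sum down to finitely many terms, and use \cref{thm: wound simple loops WLE} for the wound pieces) matches the paper. The genuine gap is in your primary computational step (ii). \cref{thm: erasable loops have one plaquette assignment} does \emph{not} ``pin down explicitly the planar maps $M$ whose weight may be nonzero after cancellation'': it identifies the finitely many plaquette assignments $K\in\cK_{\ell}$ for which $\phi^K(\ell)$ may be nonzero, i.e.\ which monomials $\upbeta^{\area(K)}$ can appear. For each surviving $K$ the coefficient $c(\ell,K)=\sum_{M\in\npe(\ell,K)}w_{\infty}(M)$ from \eqref{eq:coefficents} is still a signed sum over a large collection of maps that is never described explicitly; the paper deliberately avoids enumerating $\npe(\ell,K)$ or the weights $w_{\infty}(M)$ (see the footnote to \cref{thm: sum over surfaces representation in 't hooft limit} and the discussion after \eqref{eq:mle-2}). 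So ``listing the surviving surfaces and adding up their weights'' is not a step you can actually carry out with the tools at hand, and coefficients such as $\frac{1}{6}(3u-3)(3u-2)-t(1-u)$ in \cref{table} are not obtained by any direct surface count.

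What you relegate to an ``independent check'' is in fact the proof. In \cref{sec: Wilson loop expectation constant for erasable loops with three or fewer self-crossings} every coefficient $c(\ell,K)$ is computed by iterating the fixed-$K$ master loop equation \eqref{eq:mle-2}: one peels the outermost plaquette-region away one plaquette at a time by negative deformations, and whenever an edge is traversed more than once the splitting terms produce pairs of loops with fewer regions or crossings whose coefficients are known from earlier cases, bottoming out at \cref{thm: wound simple loops WLE} for the innermost wound region. Note, however, that the recursion is not simply on the number of self-crossings as you suggest: the deformation terms preserve the crossing structure and only decrease areas, so the induction must be run jointly on the crossing pattern and on the region areas (equivalently, one removes entire regions before moving inward), and one must also track which of the several plaquette assignments in $\cK_{\ell}$ (there can be $2$ or $4$ of them) each recursive branch contributes to. With those corrections, your ``Makeenko--Migdal-type'' recursion, promoted from a safeguard to the main argument, is essentially the paper's method.
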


Recalling from the discussion above \eqref{eq:surf-sum} that, for small $\upbeta$, the surface sums $\phi(\ell)$ coincide with the large-$N$ limit of Wilson loop expectations, a direct consequence of our main results in Theorems \ref{thm:main-1-informal}–\ref{thm:main-3-informal} is that, in this regime, we can compute Wilson loop expectations for the large-$N$ lattice Yang--Mills theory in many new cases.

We briefly discuss the proof ideas for our main results, leaving a more detailed discussion to Section \ref{subsec:proof-techniques and ML}. The main new contribution is to design an exploration algorithm (or ``peeling process'') that explores the surface one face at a time. At each step of the algorithm, we have to carefully choose the next edge of the boundary to explore at, in order to preserve various key properties at every step of the process. Eventually, after enough exploration steps, we will have arrived at a simple enough boundary that can be computed explicitly. 

We view our work as a first step in understanding how to find cancellations in the surface models of \cite{cao2023random, borga2024surfacesumslatticeyangmills}. While the cancellations we introduce in this paper provide a detailed description of the surface sum when $d=2$ and $N=\infty$, in order to continue the program initiated in \cite{cao2023random} and apply the surface approach towards Yang--Mills in more general settings, e.g.\  for dimension $d > 2$ and finite $N$, a more refined understanding of cancellations seems to be needed. See e.g.\  \cite[Remark 1.7]{CNS2025} which discusses why a better understanding of cancellations would lead to an improvement on the main result of \cite{CNS2025}. We hope that the techniques developed in the present paper may inform future approaches.

Next, we briefly comment on how our results relate to the existing literature. Previously (cf.\ \cite[Theorem 2.11]{Basu:2016dnp}) it was known that the Wilson loop expectation for any loop is a finite polynomial in $\upbeta$. However, due to the nature of the methods used, the degree and coefficients of the polynomial (which depend on the loop) were not explicitly known. Thus, \cref{thm:main-1-informal} provides a refined understanding of Wilson loop expectations; in particular, it details exactly which summands in the surface sum may be non-zero and their corresponding weights. We will apply this refined understanding to derive our second and third main results, Theorems \ref{thm:main-2-informal} and \ref{thm:main-3-informal}.

Also, \cref{thm:main-2-informal} was previously only known in the cases of a simple loop wrapped once (i.e.\ $\{n=1, a_{\ell}\in \NN\}$) and of a single plaquette wrapped $n\in \NN$ times (i.e.\ $\{n\in \NN, a_{\ell}=1\}$) (cf.\ \cite[Theorem 2.8, Theorem 4.1]{Basu:2016dnp}). We also note that \cref{thm:main-2-informal} will be used to obtain the limiting (as $N \rightarrow \infty$) empirical spectral measure of the random matrix $Q_\ell$, where $Q_{\ell}$ is distributed according to the $\unitary(N)$ lattice Yang--Mills theory (all notation will be defined in Section \ref{subsec: lattice Yang—Mills}). We give a formula for the limiting spectral density of any simple loop in Section \ref{sect:main-res-2}; see, in particular, \eqref{eq: mua}. Previously, only the single plaquette (i.e.\ $a_{\ell}=1$) case was known (cf.\ \cite[Proposition 4.2]{Basu:2016dnp}).

Next, an additional quick comment on proof techniques. \cite{Basu:2016dnp} proves many of their results using free probability in an essential way. In particular, in principle, our second main result, Theorem \ref{thm:main-2-informal}, could have also been obtained using Basu-Ganguly's result on the one-plaquette case \cite[Proposition 4.1]{Basu:2016dnp} and free probability (in particular, the notions of free independence and free convolution). While we are not opposed to using free probability as a key input, we were more interested in trying to prove everything purely in terms of the analysis of surface sums. 

This approach aligns with the overarching philosophy of the paper: to develop a deep understanding of surface sum cancellations, with an eye toward future applications to Yang--Mills theory. Moreover, this method appears to be more broadly applicable; even with free probability, we do not see a clear path to generalize \cite[Proposition 4.1]{Basu:2016dnp} to obtain~\cref{thm:main-3-informal}.

To close this section, we outline the remainder of the paper. We begin by introducing lattice Yang--Mills theory in Section~\ref{subsec: lattice Yang—Mills} and then present the relevant results from \cite{borga2024surfacesumslatticeyangmills} in Section~\ref{sec: Wilson loop expectations as surface sum large N}. Next, we formally state our main results in \cref{sec: Main results} and then introduce the master loop equation and our main proof techniques in \cref{subsec:proof-techniques and ML}. Finally, \cref{sec: intro} comes to a close in \cref{sec: trivial} with some discussion about taking a continuum limit of our model and future directions. \cref{sec: expectations in 2D} is devoted to proving that the large-$N$ surface sum has a finite number of non-zero terms (i.e.\ proving \cref{thm:main-1-informal}). Then, in \cref{sec: height plaquette assignment coefficent} we present how to explicitly compute the surface sum for simple loops wound around $n$ times and loops with three or fewer self-crossings (i.e.\ prove \cref{thm:main-2-informal} and \cref{thm:main-3-informal}). \cref{sec: height plaquette assignment coefficent} also details the convergence of the empirical spectral distribution for simple loops.

\paragraph{Acknowledgments.}~We thank Scott Sheffield for many helpful discussions. J.B.\ was partially supported by the NSF under Grant No.\ DMS-2441646. S.C.\ was partially supported by the NSF under Grant No.\ DMS-2303165. 

\subsection{Two-dimensional Lattice Yang--Mills theory}\label{subsec: lattice Yang—Mills}

Let $\Lambda$ be some finite subgraph of $\ZZ^2$. We consider the set of oriented nearest neighbor edges in $\Lambda$ which we denote by $E_{\Lambda}$. We say that an edge $e\in E_{\Lambda}$ is positively oriented if the endpoint of the edge is greater than the initial point of the edge in lexicographical ordering. Let $E^+_{\Lambda}$ denote the set of positively oriented edges in $\Lambda$. For an edge $(u,v)=e\in E_{\Lambda}$, we let $e^{-1}=(v,u)$ denote the reverse direction. Whenever we consider a lattice edge, we always assume that such an edge is oriented, unless otherwise specified.

We will denote the group of $N\times N$ unitary matrices by $\unitary(N)$. The $\unitary(N)$-lattice Yang--Mills theory assigns a random matrix from $\unitary(N)$ to each oriented edge in $E_{\Lambda}$. We stipulate that this assignment must have \textbf{edge-reversal symmetry}. That is, if $Q_e$ is the matrix assigned to the oriented edge $e$ then $Q_{e^{-1}}=Q^{-1}_{e}$. 

\begin{remark}
    We defined the $\unitary(N)$-lattice Yang--Mills model; however, since the large-$N$ limit of lattice Yang--Mills is the same for $\unitary(N), \SO(N)$, and $\SU(N)$ (see, for instance, the discussion at the end of \cite[Section 1.2]{borga2024surfacesumslatticeyangmills}) the results in this paper hold for any of the three groups.
\end{remark}

We call an oriented cycle of edges $\ell$ a \textbf{loop}.\footnote{We stress that our definition of loop is different from the one in \cite{chatterjee_rigorous_2019}. Indeed, in the latter, work loops are defined so that they do not contain backtracks (see \eqref{eq:backt} for a definition).}
We call the \textbf{null-loop} the loop with no edges, and denote it by $\emptyset$. For a loop $\ell=e_1e_2\hdots e_n$, we let $Q_{\ell} = Q_{e_1}Q_{e_2}\hdots Q_{e_n}$. We say that $\ell$ is a \textbf{simple loop} if the endpoints of all the $e_i$'s are distinct. We call a \textbf{string} (of cardinality $n$) any multiset of loops $\{\ell_1,\dots,\ell_n\}$ and denote it by $s$. 

Define $\cP_{\Lambda}$ to be the collection of simple loops consisting of four edges (i.e.\ oriented squares). We call such loops \textbf{plaquettes}. We say that a plaquette $p\in \cP_{\Lambda}$ is positively oriented if its leftmost edge is oriented upwards. Let $\cP^+_{\Lambda}$ denote the set of positively oriented plaquettes in $\Lambda$. For $p\in \cP_{\Lambda}$, we let $p^{-1}$ denote the plaquette containing the same edges as $p$ but with opposite orientations. Whenever we consider a plaquette, we always assume that it is oriented, apart from when we explicitly say that we are considering its unoriented version.

We say that a loop $\ell$ has a \textbf{backtrack} if two consecutive edges of $\ell$ correspond to the same edge in opposite orientations. That is, $\ell$ has a backtrack if it is of the form 
\begin{equation}\label{eq:backt}
\ell=\pi_1 \, e  \, e^{-1}  \, \pi_2,
\end{equation} 
where $\pi_1$ and $\pi_2$ are two (possibly empty) paths of edges and $e\in E_{\Lambda}$. Notice for such a loop, we can remove the $e\,e^{-1}$ backtrack and obtain a new loop $\pi_1 \, \pi_2$. We say that a loop $\ell$ is a \textbf{non-backtrack loop} if it has no backtracks. Throughout this work, unless explicitly stated, a loop might have backtracks.
We say that a loop $\ell$ is a \textbf{trivial} loop if, after removing all backtracks from $\ell$, the resulting loop is the null-loop $\emptyset$. In particular, the null-loop is trivial, but the converse is not necessarily true.

For an $N \times N$ matrix $Q$, define the \textbf{normalized trace} to be
\begin{equation*}
\tr(Q) := \frac{1}{N}\Tr(Q),
\end{equation*}
where $\Tr(Q)$ is the sum of the diagonal elements in $Q$, i.e.\ $\Tr(Q) = \sum_{i=1}^N Q_{i,i}$. Let $\cQ = (Q_e)_{e\in E^{+}_{\Lambda}}$ denote a \textbf{matrix configuration}, that is, an assignment of matrices to each positively oriented edge of $\Lambda$. The lattice Yang--Mills measure (with \emph{Wilson} action) is a probability measure for such matrix configurations. In particular, 
\begin{equation}\label{eq: lattice YM measure-2}
\hat{\mu}_{\Lambda,N,\beta} (\cQ):= \hat{Z}_{\Lambda,N, \beta}^{-1}\cdot\left(\prod_{p\in \cP_{\Lambda}}\exp\Big(\beta \cdot \,\Tr(Q_p)\Big)\right)\prod_{e\in E^+_{\Lambda}}dQ_{e},
\end{equation}
where $\hat{Z}_{\Lambda, N, \beta}$ is a normalizing constant\footnote{The normalizing constant $\hat{Z}_{\Lambda, N, \beta}$ is finite because $\unitary(N)$ is a compact Lie group.} (to make $\hat{\mu}_{\Lambda,N,\beta}$ a probability measure), $\beta\in \RR$ is a parameter (often called the inverse temperature), and each $dQ_e$ denotes the Haar measure on $\unitary(N)$. We highlight that in \eqref{eq: lattice YM measure-2} we are considering both positively and negatively oriented plaquettes in $\cP_{\Lambda}$.

Since the goal of our paper is to consider the large-$N$ limit, we prefer to use the following more convenient rescaling, replacing $\beta$ by $\upbeta N$,
\begin{equation}\label{eq: lattice YM measure}
\mu_{\Lambda,N,\upbeta} (\cQ):= Z_{\Lambda,N, \upbeta}^{-1}\cdot\left(\prod_{p\in \cP_{\Lambda}}\exp\Big(\upbeta N\cdot\Tr(Q_p)\Big)\right)\prod_{e\in E^+_{\Lambda}}dQ_{e}.
\end{equation}

\begin{remark}\label{remark: beta factor of two}
In previous work, for instance \cite{chatterjee_rigorous_2019, chatterjee20161n,Basu:2016dnp}, $\beta$ in \eqref{eq: lattice YM measure-2} is simply replaced by $\beta N$ in \eqref{eq: lattice YM measure}, without any distinction between $\beta$ and $\upbeta$.
Moreover, the $\beta$ and $\upbeta$ terms appearing throughout this paper differ from those in previous work by a factor of $2$. That is, where we have $\beta$ or $\upbeta$, previous works would have $\beta/2$. This is because we are considering both positively and negatively oriented plaquettes. 
\end{remark}

The primary quantities of interest in lattice Yang--Mills are the Wilson loop observables. These observables are defined in terms of a matrix configuration $\cQ$ and a string $s$.  In particular, we define \textbf{Wilson loop observables} as (note the normalized trace)
\begin{equation}\label{eq: Wilson loop observables}
W_s(\cQ) :=\prod_{\ell\in s}\tr(Q_{\ell}).
\end{equation}
Wilson loop observables are invariant under backtrack erasure, that is, $W_{\pi_1 \, e  \, e^{-1}  \, \pi_2}(\cQ)=W_{\pi_1 \pi_2}(\cQ)$. Moreover, the Wilson loop observable for the null-loop is defined to be $1$ for any matrix configuration $\cQ$, that is, $W_{\emptyset}(\cQ)=1$. 

One of the fundamental questions of Yang--Mills theory is to understand the expectation of Wilson loop observables with respect to the lattice Yang--Mills measure; see the survey~\cite{chatterjee2018yangmillsprobabilists} for further discussion. We denote this expectation by
\begin{align*}
\phi_{\Lambda, N, \upbeta}(s) := \EE_{\mu_{\Lambda,N,\upbeta}}[W_s(\cQ)].
\end{align*}
The rest of this paper is devoted to understanding the expectation of Wilson loop observables in the specific case when $N$ tends to infinity.

\begin{remark}
As in \cite{borga2024surfacesumslatticeyangmills} and \cite{cao2023random}, we define the Wilson loop observable with respect to the normalized trace. While this choice contrasts with some previous works, for instance \cite{chatterjee_rigorous_2019, chatterjee20161n,Basu:2016dnp}, this scaling will be natural in the large-$N$ limit.
\end{remark}

\subsection{Large-$N$ Wilson loop expectations as surface sums}\label{sec: Wilson loop expectations as surface sum large N}

In this section, we introduce the relevant results from our companion paper \cite{borga2024surfacesumslatticeyangmills}, restricted to the two-dimensional case, and present some additional related notation needed for this work. We call a function $K:\cP_{\ZZ^2}\to\ZZ_{\geq 0}$ a \textbf{plaquette assignment}.

\begin{thm}[{\cite[Theorem 3.4]{borga2024surfacesumslatticeyangmills}}]\label{thm: sum over surfaces representation in 't hooft limit}
There exists a number $\upbeta_0>0$ such that the following is true. Let $\Lambda_1\subseteq\Lambda_2\subseteq\hdots$ be any sequence of finite subsets of the lattice $\ZZ^2$ such that $\ZZ^2 = \cup_{N=1}^{\infty}\Lambda_N$. If $|\upbeta|\leq \upbeta_0$, then for any string $s=\{\ell_1,\dots,\ell_n\}$,
	\begin{align}\label{eq:factor}
		\lim_{N\to\infty}\phi_{\Lambda_N,N,\upbeta}(s) =
		\prod_{i=1}^n\phi(\ell_i),
	\end{align}
	where
	\begin{equation}\label{eq:def-phi_K}
		\phi(\ell)=\sum_{K:\cP_{\ZZ^2}\to\ZZ_{\geq 0}} \phi^K(\ell), \quad\text{with}\quad \phi^K(\ell)=\sum_{M\in\npe(\ell,K)}\upbeta^{\area(K)}w_{\infty}(M).
	\end{equation}
	Here, $\area(K)= \sum_{p\in \cP}K(p)$, the set $\npe(\ell,K)$ is a certain collection of connected planar maps with the disk topology and $w_{\infty}(M)$ is a specific (positive or negative) weight associated with each map $M$.\footnote{We do not need the precise (and somewhat lengthy) definitions of the set $\npe(\ell, K)$ and the weights $w_{\infty}(M)$ in this paper; interested readers can refer to the original paper for details. See also the discussion around \eqref{eq:mle-2} for further explanation of why the precise definitions of the set $\npe(\ell, K)$ and the weights $w_{\infty}(M)$ are not needed here.}

The infinite sum $\phi(\ell)$ is only over finite plaquette assignments $K:\cP_{\ZZ^2}\to\ZZ_{\geq 0}$, i.e.\ plaquette assignments such that $\sum_{p\in\cP_{\ZZ^2}}K(p)<\infty$. Moreover, the infinite sum $\phi(\ell)$ is absolutely convergent.\footnote{The sum $\phi^K(\ell)$ is a finite sum for all $K$ and $\ell$.}
\end{thm}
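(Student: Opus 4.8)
This is the main structural result of the companion paper \cite{borga2024surfacesumslatticeyangmills}; the plan is to derive it from the finite-$N$ master loop (Makeenko--Migdal / Schwinger--Dyson) equations for lattice Yang--Mills, in the spirit of Chatterjee \cite{chatterjee_rigorous_2019} and Jafarov \cite{jafarov2016wilson}, carried out in the 't Hooft scaling of \eqref{eq: lattice YM measure}. Integrating by parts on $\unitary(N)$ against the Yang--Mills density gives an exact identity expressing $\phi_{\Lambda,N,\upbeta}(s)$, for a string $s$ containing a loop $\ell$, as a finite linear combination of: \emph{deformation} terms, where an edge of $\ell$ is slid across an adjacent plaquette, changing the enclosed area by one and producing a factor of $\upbeta$; \emph{splitting} terms, where a self-intersection of $\ell$ is resolved, replacing $\tr(Q_\ell)$ by a product $\tr(Q_{\ell_1})\tr(Q_{\ell_2})$; \emph{merging} terms, where $\ell$ is fused with another loop of $s$; and a backtrack-erasure term. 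The first step is to record this recursion with completely explicit $\upbeta$-weighted coefficients.

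Second, I would pass to $N \to \infty$. The products of traces arising from splitting and the merging terms factorize up to $O(1/N^2)$ — this is the large-$N$ factorization mechanism — and the merging terms moreover carry an explicit $1/N^2$; hence in the limit $\lim_N \phi_{\Lambda_N,N,\upbeta}(s)$ factorizes as $\prod_i \phi(\ell_i)$, and each single-loop quantity $\phi(\ell)$ satisfies a closed recursion built from deformations and splittings alone. Unrolling this limiting recursion from $\ell$, every maximal sequence of elementary moves that terminates at the null loop is read as a recipe for gluing faces onto a growing connected planar map with boundary $\ell$; the terminal maps are precisely the elements of $\npe(\ell,K)$, with $\area(K)$ counting the plaquette-addition moves and $w_\infty(M)$ accumulating the product of signs and combinatorial multiplicities along the branch. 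Matching the formal solution of the recursion with the intrinsic surface sum in \eqref{eq:def-phi_K} is the bookkeeping core of the argument.

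Third, for finiteness and convergence: every $M \in \npe(\ell,K)$ is a connected planar map with boundary of length $|\ell|$ and exactly $\area(K)$ internal faces, so $K$ has finite support (the outer sum is genuinely over finite plaquette assignments), and $\phi^K(\ell)$ is a finite sum because for fixed $K$ only finitely many such maps exist. For absolute convergence of $\sum_K \phi^K(\ell)$ one needs an exponential bound $|\npe(\ell,K)|\cdot\max_{M}|w_\infty(M)| \le C(\ell)^{\area(K)}$; this follows from Tutte/Catalan-type enumeration (the number of planar maps with a fixed boundary and $n$ internal faces grows at most exponentially in $n$, with a constant governed by the bounded local combinatorics of the plaquette complex of $\ZZ^2$) together with a crude bound on the weights. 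Summing over plaquette assignments with $\area(K)=n$ — of which there are at most polynomially many once the support is forced to lie near $\ell$ — gives $\sum_K |\phi^K(\ell)| \le \sum_{n\ge 0}(\text{poly in }n)\,(C(\ell)|\upbeta|)^n$, finite for $|\upbeta|\le\upbeta_0$; applying the same estimate to the $O(1/N^2)$ remainders, summed over all branches, justifies exchanging $\lim_N$ with the sum.

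The main obstacle is making the unrolling of the master loop equation rigorous. Since the number of moves needed to contract $\ell$ to the null loop is unbounded, one cannot iterate finitely; one must instead show the expansion converges and that the accumulated $1/N^2$ remainders, summed over the exponentially many branches present at each depth, still vanish as $N\to\infty$. This is exactly where the a priori exponential surface-counting bound is indispensable: it is simultaneously the engine of absolute convergence and the justification for passing the limit inside the sum. A secondary, more combinatorial difficulty is verifying that the object produced by the iteration agrees map by map and weight by weight with the intrinsic definitions of $\npe(\ell,K)$ and $w_\infty$ in \cite{borga2024surfacesumslatticeyangmills}.
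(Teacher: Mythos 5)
This statement is \cite[Theorem 3.4]{borga2024surfacesumslatticeyangmills}: it is imported from the companion paper and is not proved in the present paper at all, so there is no in-paper proof to compare against. Judged against what the companion paper actually does (as far as one can tell from how it is used here), your route is genuinely different. You propose the Chatterjee--Jafarov ``string trajectory'' route: derive the finite-$N$ Schwinger--Dyson/master loop equations by integration by parts, pass to $N\to\infty$, and then \emph{unroll} the limiting recursion to manufacture the surfaces and their weights. The companion paper goes the other way: the surface sum \eqref{eq:def-phi_K} is \emph{defined} directly (via a Weingarten-calculus expansion of the Haar integrals, with $\npe(\ell,K)$ and $w_\infty$ given intrinsically), convergence and factorization are proved by counting connected planar maps spanning $\ell$, and the master loop equation (Theorem \ref{thm: fixed K 't Hooft master loop equation for surface sum}) is then a \emph{theorem about} the already-defined surface sums, obtained ``largely using a surface sum point of view (and an associated peeling exploration),'' as stated below \eqref{eq:mle-2}. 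The direct route buys you a well-defined limit object before any recursion is invoked, which is exactly what lets the present paper fix a plaquette assignment $K$ in the master loop equation; your route would have to construct $\phi^K(\ell)$ as the output of an infinite unrolling, which is harder to make canonical.

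Two concrete gaps in your sketch. First, in Step 2 you invoke ``the large-$N$ factorization mechanism'' to discard the merging terms and factorize products of traces, but factorization is part of what \eqref{eq:factor} asserts; in the Chatterjee--Jafarov framework it is \emph{proved} by a delicate induction on the master loop hierarchy with uniform-in-$N$ norm bounds at small $\upbeta$, not assumed. As written, this step is circular. Second, your convergence estimate asserts that plaquette assignments with $\area(K)=n$ are ``at most polynomially many once the support is forced to lie near $\ell$,'' but nothing forces this a priori: for any faraway plaquette $p$, setting $K(p)=K(p^{-1})=1$ gives a balanced assignment of area $2$. What kills such terms is that $\npe(\ell,K)$ consists of \emph{connected} maps with the disk topology, so $\phi^K(\ell)=0$ unless the support of $K$ is connected to $\ell$, and only then does the support lie within distance $O(n)$ of the loop. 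You need to state and use this connectivity explicitly; without it the outer sum is not even formally controlled.
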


For the rest of this paper, we will be working only on the entire lattice $\ZZ^2$ instead of a finite subgraph of $\ZZ^2$. Thus, to simplify notation, we set 
\[\cP=\cP_{\ZZ^2}, \quad \cP^+=\cP_{\ZZ^2}^+, \quad E= E_{\ZZ^2} \quad \text{and} \quad E^+ = E^+_{\ZZ^2}.\]

It will be important for the result in this paper to recall how $\phi^K(\ell)$ is defined for some ``base'' cases. To do this, we recall some terminology from \cite{borga2024surfacesumslatticeyangmills}. For a loop $\ell$ and plaquette assignment $K$, let $n_e(\ell)$ denote the number of times the oriented edge $e\in E$ appears in $\ell$ and $n_e(\ell,K)$ denote the number of times $e$ appears in $(\ell,K)$, in the sense that 
\begin{align}\label{defn:ne}
n_e(\ell) := \text{\# copies of $e$ in $\ell$}\quad \text{and}\quad n_e(\ell,K) := \text{\# copies of $e$ in $\ell$} + \sum_{p\in \cP(e)}K(p),
\end{align}
where $\cP(e)$ is the collection of plaquettes in $\cP$ containing $e$ as one of the four boundary edges (with the correct orientation). We say that the pair $(\ell,K)$, sometimes called a \textbf{loop plaquette assignment}, is \textbf{balanced} if 
\begin{equation}\label{eq:jobfowerbf}
n_e(\ell,K) = n_{e^{-1}}(\ell,K),\quad\text{for all $e\in E^+$}.
\end{equation}
With this, we recall how $\phi^K(\ell)$ is defined in the following ``base'' cases:
\begin{enumerate}
    \item If $(\ell,K)$ is not balanced, then $\phi^K(\ell)=0$;
    \item If $\ell$ is a non-trivial loop and $K=0$ (i.e.\ $K(p)=0$ for all $p\in \cP$), then $\phi^K(\ell) = 0$; 
    \item\label{def: phi k empty = 0} If $K\neq 0$, then $\phi^K(\emptyset)=0$; 
    \item\label{def: phi k empty = 1} If $K=0$, then $\phi^K(\emptyset)=1$.
\end{enumerate}
We refer the reader to the discussion below~\cite[Equation (3.2)]{borga2024surfacesumslatticeyangmills} for some more explanations on why these turn out to be the correct definitions.

Lastly, we often write $\phi^K(\ell)$ as (recall \eqref{eq:def-phi_K}),
\begin{align*}
    \phi^K(\ell)=c(\ell,K)\upbeta^{\area(K)},
\end{align*}
where 
\begin{equation}\label{eq:coefficents}
    c(\ell,K)\coloneqq\sum_{M\in\npe(\ell,K)}w_{\infty}(M).
\end{equation}This is an important formulation as many of our results below will concern the value of the coefficients $c(\ell,K)$.

\subsection{Main results}\label{sec: Main results}

\subsubsection{Preliminary definitions: distance, height and canonical plaquette assignments for loops}\label{sec:prelim-def}

To state our main results, we need to introduce some terminology. First, we define two important notions, the height (\cref{def: height}) and distance (\cref{def: lattice distance}) of a loop. We say that two plaquettes are \textbf{adjacent} if they share exactly one unoriented edge.

\begin{defn}\label{def: height}
	Given a loop $\ell$, we define its \textbf{height} $h_{\ell}:\cP \to \ZZ$ as follows (notice $\ell$ must be contained in $B= [-N,N]^2$ for some $N\in \NN$):
    \begin{enumerate}
		\item $h_{\ell}(p)=0$ for $p\in \cP^+\cap B^c$.
		\item If for $p\in \cP^+$, we have that $h_{\ell}(p) = x$, then we define $h_{\ell}(q)$ for $q\in \cP^+$ adjacent to $p$ as follows: If $\ell$ crosses the oriented line from the center of $p$ to the center of $q$ from right to left $n\geq 0$ times and crosses the line from the left to the right $m\geq 0$ times, set $h_{\ell}(q) = x+n-m$.
	\end{enumerate}
    For $p\in\cP\sm \cP^+$, we define $h_{\ell}(p) = h_{\ell}(p^{-1})$.
    For a string $s=\{\ell_1,\dots,\ell_n\}$, we define $h_s(p) = \sum_{i\in [n]}h_{\ell_i}(p)$. 
\end{defn}

See the left-hand side of~\cref{fig: loop regions} for an example.

\begin{remark}
The height is well defined because for every vertex $v$ on the lattice, the loop $\ell$ must enter and exit $v$ the same number of times (since it is a loop). Thus, the sum of the height over the four plaquettes containing $v$ is $0$ and so the order in which we construct the height $h_{\ell}$ using item 2 above is not relevant. We also remark that one can view the height as a discrete notion of the winding number of a loop around a point.
\end{remark}

Next, we introduce the notion of distance of a loop, after defining some preliminary terminology. Given two adjacent plaquettes $p,q$, let $\mathfrak{e}(p,q)$ denote the unoriented edge shared by $p$ and $q$. For a loop $\ell$ and an unoriented lattice edge $\mathfrak{e}$, let $n_{\mathfrak{e}}(\ell)$ denote the number of copies of $\mathfrak{e}$ in $\ell$ in either orientation, that is, 
\[n_{\mathfrak{e}}(\ell) \coloneqq n_{e}(\ell) + n_{e^{-1}}(\ell),\] 
for $e$ one of the orientations of $\mathfrak{e}$. 

A \textbf{path of plaquettes} is a (possibly infinite) collection of positively oriented plaquettes $\{p_i\}_i$ such that no plaquette appears in $\{p_i\}_i$ more than once and $p_{i}$ is adjacent to $p_{i+1}$ for all $i$. For any plaquette $p\in \cP^+$, let $\mathfrak{P}(p)$ denote the set of all infinite paths of plaquettes $\{p_i\}_{i}$ starting at $p$, i.e.\ with $p_1=p$. 

\begin{defn}\label{def: lattice distance}
        For a loop $\ell$, define its \textbf{distance} (to infinity) $d_{\ell}: \cP \to \ZZ_{\geq 0}$ as follows:
        \begin{equation}\label{eq: dist def}
            d_{\ell}(p) = \min_{\{p_i\}_{i}\in \mathfrak{P}(p)}\,\,\,\sum_{i=1}^{\infty}n_{\mathfrak{e}(p_i,p_{i+1})}(\ell), \quad\text{for all }p\in\cP^{+}.
        \end{equation}
        For $p\in\cP\sm \cP^+$, we define $d_{\ell}(p) = d_{\ell}(p^{-1})$. For a string $s=\{\ell_1,\dots,\ell_n\}$, we define $d_s(p) = \sum_{i\in [n]}d_{\ell_i}(p)$ for all $p\in \cP$. 
        
        We will often refer to an infinite path of plaquettes $\{p_i\}_{i}\in \mathfrak{P}(p)$ that achieves the minimum in \eqref{eq: dist def} as a \textbf{distance-achieving path}.
\end{defn}

See again the left-hand side of~\cref{fig: loop regions} for an example.

\medskip

Another important notion will be that of regions. Note that each plaquette $p$ (which is an oriented loop) naturally identifies a closed $(1 \times 1)$-subsquare of the plane. A \textbf{plaquette-region} is a collection of  plaquettes such that the corresponding subset of the plane is connected. We take the convention that each plaquette-region $R$ contains both orientations of each plaquette contained in $R$. The \textbf{area} of a plaquette-region $R$, denoted by $\area(R)$, is equal to half of the number of plaquettes it contains.

In a few cases, when we refer to a plaquette-region, we actually mean the corresponding connected subset of the plane (but this will always be clear from the context). 

Given a loop~$\ell$, the \textbf{plaquette-regions of $\ell$}  are the maximal plaquette-regions separated by $\ell$, i.e.\ the maximal collection of plaquettes corresponding to a maximal connected subset in the complement of $\ell$ in the plane; see the right-hand side of~\cref{fig: loop regions}. Note that if $p$ and $q$ are in the same plaquette-region of $\ell$, then $h_{\ell}(p)= h_{\ell}(q)$ and $d_{\ell}(p)= d_{\ell}(q)$. We refer to the unique plaquette-region of $\ell$ with distance zero as the \textbf{exterior plaquette-region} and to all other plaquette-regions as \textbf{interior plaquette-regions}. We also refer to the complement of the exterior plaquette-region (i.e.\ the union of the interior regions) as the \textbf{support} of $\ell$, which we denote by $\supp(\ell)$. Finally, we set
\begin{equation}\label{eq:area-supp}
    \area(\ell) \coloneqq \area(\supp(\ell)).
\end{equation}

\begin{figure}[ht!]
\begin{center}
	\includegraphics[width=\textwidth]{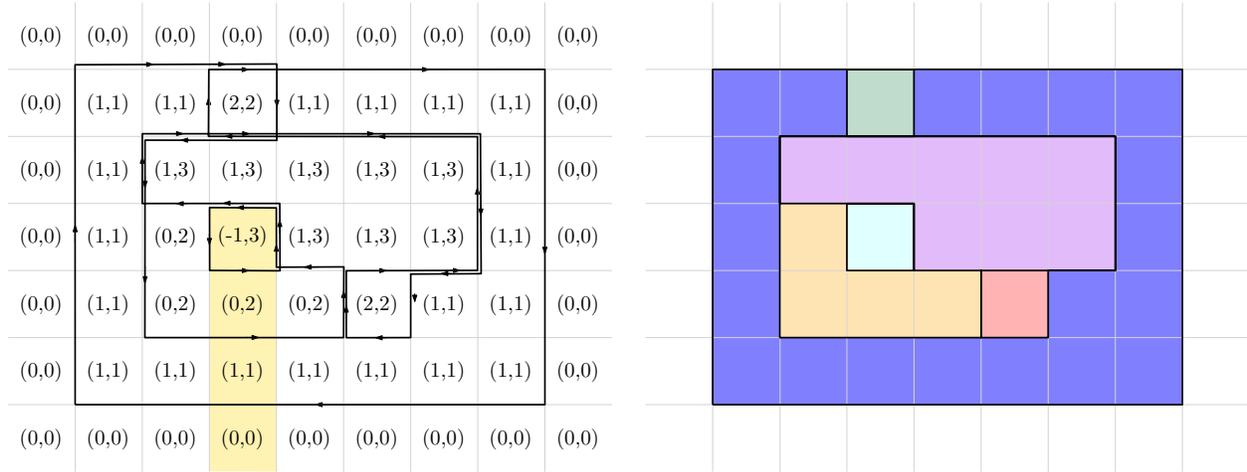}  
	\caption{\label{fig: loop regions}
    \textbf{Left:} A loop $\ell$ of $\ZZ^2$ drawn with non-overlapping edges for graphical convenience. Each plaquette $p$ is labeled with a pair of integers $(h_\ell(p),d_\ell(p))$, where $h_\ell$ is the height and $d_\ell$ is the distance of $\ell$ introduced in Definitions~\ref{def: height}~and~\ref{def: lattice distance}, respectively. The golden path of plaquettes starting at the plaquette labeled by $(-1,3)$ is a distance achieving path for that plaquette. Conversely, the path of plaquettes starting at the same plaquette and going to the right is not a distance achieving path as it crosses five edges.
    \textbf{Right:} The same loop $\ell$ of $\ZZ^2$ is now drawn with overlapping edges. Each interior plaquette is marked with a color denoting the plaquette-region it belongs to. All the remaining plaquettes, uncolored in the figure, form the exterior plaquette-region. Following \cref{defn:collection-plaquette-ass}, each $K \in \cK_{\ell}$ equals $K_{\ell}$ in the blue, red, and green plaquette-regions, as the height coincides with the distance there. In contrast, in the orange, light cyan, and purple plaquette-regions, the height disagrees with the distance, in particular $\frac{d_{\ell}(p)-|h_{\ell}(p)|}{2}=1$ in all these plaquette-regions. Thus, in these plaquette-regions, $K$ equals $K_{\ell}$ plus a possible “layer of positively and negatively oriented plaquettes'' for each plaquette-region.
    }
\end{center}
\vspace{-3ex}
\end{figure}

The final ingredient we need to state our main result is a correspondence between loops and some special collections of plaquette assignments. We associate with a loop $\ell$ its \textbf{height plaquette assignment} $K_{\ell}$, defined as follows:
\begin{align}\label{defn:master-plaq-ass}
	\forall p\in \cP^+,\qquad\left(K_{\ell}(p), K_{\ell}(p^{-1})\right) := \begin{cases}
		(0,|h_{\ell}(p)|) & \text{if $h_{\ell}(p)\geq 0$},\\ (|h_{\ell}(p)|,0)& \text{if $h_{\ell}(p)< 0$}.
	\end{cases}
\end{align}
Note that $(\ell, K_{\ell})$ is balanced. From this special plaquette assignment, we associate a special collection of plaquette assignments to $\ell$.

\begin{defn}\label{defn:collection-plaquette-ass}
Given a non-backtrack loop $\ell$, the \textbf{canonical collection} $\cK_{\ell}$ of plaquette assignments associated with $\ell$ is the collection of finite plaquette assignments of the form $K = K_{\ell} + K'$ where:
\begin{enumerate}
    \item\label{Kl p1} $K'$ is balanced, that is,  $K'(p) = K'(p^{-1})$ for all $p\in \cP$;
    \item\label{Kl p2} $K'$ is constant in each region of $\ell$, that is,  for each plaquette-region $R$ of $\ell$, \[K'(p)= K'(q),\quad \text{ for all } p,q\in R\cap \cP^+;\] 
    \item\label{Kl p3} $K'$ is bounded by $\tfrac{1}{2}(d_{\ell}-|h_{\ell}|)$, that is,
    $K'(p)\leq \frac{d_{\ell}(p)-|h_{\ell}(p)|}{2}$, for all $p\in\cP^+$.
\end{enumerate}
If $\ell$ has backtracks and $\ell'$ is the non-backtrack version of $\ell$, then $\cK_{\ell}\coloneqq\cK_{\ell'}$.
\end{defn}

Read the caption of the right-hand side of~\cref{fig: loop regions} for an example.

\begin{remark}\label{rem:card-set-cK}
Recall that $d_{\ell}$ and $h_{\ell}$ are constant in each region $R$ of $\ell$ and so, taking one plaquette $p$ of $R$, we can set $d_{\ell}(R) := d_{\ell}(p)$ and $h_{\ell}(R) := h_{\ell}(p)$.
With this notation, we observe that $\cK_{\ell}$ consists of all plaquette assignments obtained from $K_{\ell}$ by adding, one by one,  at most 
\[\text{$\tfrac{d_{\ell}(R)-|h_{\ell}(R)|}{2}$ ``layers'' of positively and negatively oriented plaquettes on each plaquette-region $R$ of $\ell$.}\] 
In particular, if $R$ is the exterior plaquette-region, $d_{\ell}(R)-|h_{\ell}(R)|=0$, and so every $K\in K_{\ell}$ is finite and supported inside the loop $\ell$. Moreover, $\cK_{\ell}$ has cardinality 
\[\prod_{R}\left(\tfrac{d_{\ell}(R)-|h_{\ell}(R)|}{2} + 1\right),\] 
where the product ranges over all the regions $R$ of $\ell$. In particular, the cardinality of $\cK_{\ell}$ only depends on the isotopy class of the loop $\ell$ (see the discussion at the end of this section and \cref{table} for more details).

As a consequence, for any loop $\ell$ such that $d_{\ell}=|h_{\ell}|$, we have that $\cK_{\ell}=\{K_{\ell}\}$. Note that this is the case for simple loops. One can actually show that the loops $\ell$ such that $d_{\ell}=|h_{\ell}|$ are exactly the loops that winds around each point always in the same direction (but we omit the details here since this is not necessary for our arguments). 
\end{remark}

\subsubsection{Main result 1: Wilson loop expectations as a finite explicit sum over canonical plaquette assignments}\label{sect:main-res-1}

We now state our first main result (recall the statement of \cref{thm: sum over surfaces representation in 't hooft limit} for the notation).

\begin{thm}\label{thm: erasable loops have one plaquette assignment}
    Let $\upbeta\in \RR$. For any non-trivial loop $\ell$, \begin{equation}\label{eq: WLE sum}
        \phi(\ell) = \sum_{K\in \cK_{\ell}}c(\ell, K)\cdot\upbeta^{\area(K)},
    \end{equation}
    where we recall that:
    \begin{itemize}
        \item $\cK_{\ell}$ is the canonical collection $\cK_{\ell}$ of plaquette assignments associated with $\ell$ introduced in \cref{defn:collection-plaquette-ass};
        \item $c(\ell, K) = \sum_{M\in \npe(\ell,K)}w_{\infty}(M)$ are the coefficients (independent of $\upbeta$) introduced in \eqref{eq:coefficents};
        \item $\area(K)= \sum_{p\in \cP}K(p)$.
    \end{itemize}
    \noindent More specifically, we have that 
    \[\phi^K(\ell)=0, \quad\text{for all } K\notin \cK_{\ell}.\]
\end{thm}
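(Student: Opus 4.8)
The plan is to prove the sharper assertion that $\phi^K(\ell)=0$ for every $K\notin\cK_\ell$; the displayed identity then follows at once from $\phi(\ell)=\sum_K\phi^K(\ell)$ (see \eqref{eq:def-phi_K}) and the finiteness of $\cK_\ell$ recorded in \cref{rem:card-set-cK}. After the standard reduction to the case of a non-backtrack loop, the argument is a single induction on $\area(K)=\sum_{p\in\cP}K(p)$, whose engine is the master loop equation of \cref{subsec:proof-techniques and ML} applied at one carefully selected boundary edge at a time. This is exactly the ``peeling'' step of the introduction, and it is the only place where anything nontrivial happens; in particular the explicit forms of $\npe(\ell,K)$ and $w_\infty$ are never used.

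The first reduction is to the balanced case: if $(\ell,K)$ is not balanced then $\phi^K(\ell)=0$ by base case (1), and no element of $\cK_\ell$ is unbalanced, so the target statement is consistent. Assume henceforth $(\ell,K)$ is balanced. Here a short ``discrete potential'' computation carries most of the weight: the edge-flow $e\mapsto n_e(\ell)-n_{e^{-1}}(\ell)$ is divergence-free and finitely supported, hence has a unique potential on plaquettes vanishing at infinity, which is essentially $h_\ell$ (this is \cref{def: height}); since $(\ell,K_\ell)$ is balanced and $K_\ell(p)-K_\ell(p^{-1})=-h_\ell(p)$ for all $p$ directly from \eqref{defn:master-plaq-ass}, balance of $(\ell,K)$ is equivalent to $K':=K-K_\ell$ satisfying $K'(p)=K'(p^{-1})$ for all $p$, and then $K\geq 0$ forces $K'\geq 0$. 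Thus for balanced $(\ell,K)$ we automatically have $K=K_\ell+K'$ with $K'\geq0$ obeying condition (1) of \cref{defn:collection-plaquette-ass}, and the remaining content of the theorem is precisely that $\phi^K(\ell)=0$ whenever $K'$ violates condition (2) (constancy on each plaquette-region of $\ell$) or condition (3) (the bound $K'\leq\tfrac12(d_\ell-|h_\ell|)$).

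The base case $\area(K)=0$ is immediate: then $K=0$, and either $K_\ell\neq 0$, so $(\ell,0)$ is unbalanced and $\phi^0(\ell)=0$ by base case (1), or $K_\ell=0$, so $0=K_\ell\in\cK_\ell$ and there is nothing to prove. For the inductive step, let $(\ell,K)$ be balanced with $K'$ violating (2) or (3), so $K\neq0$ and $\area(K)\geq1$. One selects a boundary edge $e$ of $\ell$ and performs one peeling step: the master loop equation rewrites $\phi^K(\ell)$ as a signed sum of terms $\phi^{K^{(j)}}(\ell^{(j)})$, together with product terms $\phi^{K^{(j)}_1}(\ell^{(j)}_1)\,\phi^{K^{(j)}_2}(\ell^{(j)}_2)$ that appear when the peel splits $\ell$ at a self-intersection, and in every term the total plaquette assignment has area $\area(K)-1$. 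The crux is to choose $e$ — bordering a ``witness'' of the defect of $K$ (a plaquette-region on which $K'$ is non-constant, or a plaquette at which $K'$ overshoots its bound), and as close to the exterior region as possible — so that every resulting pair $(\ell^{(j)},K^{(j)})$ is again balanced but outside $\cK_{\ell^{(j)}}$. Granting this, the inductive hypothesis makes every term vanish, so $\phi^K(\ell)=0$; in a product term one factor inherits the defect and hence vanishes, and if some $\ell^{(j)}$ degenerates to a trivial loop then balance forces $K^{(j)}\neq0$, so $\phi^{K^{(j)}}(\ell^{(j)})=\phi^{K^{(j)}}(\emptyset)=0$ by base case (3).

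The main obstacle is exactly this edge selection and the geometric bookkeeping it demands. After a single peel, $\ell^{(j)}$ is a genuinely different loop, with its own height $h_{\ell^{(j)}}$, distance $d_{\ell^{(j)}}$, and plaquette-region decomposition, and one must check that a defect of $K$ relative to $\cK_\ell$ is transported to a defect of $K^{(j)}$ relative to $\cK_{\ell^{(j)}}$. Understanding how $h$, $d$, and the region structure evolve under a single face-peel — especially near self-intersections of $\ell$, where regions merge or split and the product terms arise — is the technical heart, and is precisely what forces the careful, step-by-step choice of ``the next edge to explore'' advertised in the introduction. Everything else (the reduction to the balanced case, the potential computation, the base case, the finiteness of $\cK_\ell$) is routine.
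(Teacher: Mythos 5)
Your reduction of condition~\ref{Kl p1} of \cref{defn:collection-plaquette-ass} to balancedness is correct: in two dimensions the balance constraints \eqref{eq:jobfowerbf} determine the antisymmetric part $K(p)-K(p^{-1})$ uniquely (the homogeneous solutions are exactly the symmetric $K'$, by the potential argument you sketch plus finite support), so a balanced $K$ automatically has the form $K_\ell+K'$ with $K'(p)=K'(p^{-1})\ge 0$. This is a genuine shortcut past \cref{prop: non-zero plaquette assignments one}. The gap lies in the inductive step for conditions~\ref{Kl p2} and~\ref{Kl p3}. First, a concrete error: in the master loop equation \eqref{eq:mle-main} the splitting terms carry $K_1+K_2=K$, so their total area equals $\area(K)$, not $\area(K)-1$; only the deformation terms lose a unit of area. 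Your induction on $\area(K)$ therefore does not close as stated. One could try a lexicographic induction on $(\area(K),\,\text{length of }\ell)$ (this is essentially how \cref{lem:finite-trees} proves termination), but even then the case of a splitting with $K_1=K$, $K_2=0$ and $\ell_2$ trivial must be treated separately, and you do not set this up.

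More seriously, the assertion that a single well-chosen peel transports the defect to every term is the entire content of the theorem, and it is false at the single-step level. A positive deformation at $\mathbf{e}$ inserts the three remaining edges of a plaquette into the loop while removing one unit of $K$: this can increase $d_{\ell^{(j)}}(\hat p)$ along the witnessing distance-achieving path (destroying a violation of condition~\ref{Kl p3}), and it can place loop edges between two witness plaquettes of the same region (destroying a violation of condition~\ref{Kl p2} by splitting the region). This is precisely why the paper works with ``macro steps'': entire trajectory trees $\cT_{e,\mathfrak p}(\ell,K)$ (\cref{defn:tr-tree-gen}) that remove a whole plaquette from the support, whose leaves obey the quantitative controls of \cref{prop: remove plaquette from loop} --- in particular Properties~\ref{p5 remove plaquette} and~\ref{p6 remove plaquette}, which guarantee that the intermediate growth of the loop produced by deformations is cancelled by the time the exploration stops. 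Moreover the choice of where to peel is not local (``as close to the exterior as possible'') but globally organized by a spanning pair of trees avoiding the witness plaquettes, always peeling at the outermost edge of the associated tree-paths (\cref{lem: exists an outer edge}); this is what makes the iteration terminate on loops supported on one or two plaquettes, which themselves require the separate base-case analysis of \cref{sec: Loops supported on a plaquette} (e.g.\ the double induction in \cref{prop: phi^K(p) = 0 for K only supported on p}). None of this is supplied, and the single-step framework you propose cannot accommodate it.
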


Note that \eqref{eq: WLE sum} is a finite sum (in contrast to the infinite sum in \eqref{eq:def-phi_K}). We also emphasize that our result makes no assumptions on the loop $\ell$. 
In the special case when $\ell$ satisfies $d_{\ell}(p) = |h_{\ell}(p)|$ for all $p \in \cP$ (for instance, when $\ell$ is simple, or in some other cases), the sum in \eqref{eq: WLE sum} simplifies, thanks to \cref{rem:card-set-cK}, to
\begin{equation*}
        \phi(\ell) = c(\ell, K_{\ell})\cdot\upbeta^{\area(K_{\ell})}.
\end{equation*}

We conclude this section by showing that the situation in higher dimensions is more involved.

\begin{remark}\label{rem: Infinite contributing plaquette assignments in dimension three}

    The following discussion assumes some familiarity with the surface sum perspective developed in \cite{borga2024surfacesumslatticeyangmills}, which we have not included in this paper, as this remark and \cref{rmk: Surface cancellations} are the only places where it is required.

    Let $p$ be a plaquette of $\ZZ^3$. We claim that there are an infinite number of plaquette assignments $K$ such that $\phi^K(p)\neq 0$. To see this, consider any cuboid of $\ZZ^3$ containing the plaquette $p$ on its boundary; some examples are shown in \cref{fig: cuboid}. Let $\cC$ denote the infinite set of such cuboids. 

    For each cuboid $C \in \cC$, let $K_C$ denote the unique plaquette assignment such that:
    \begin{itemize}
        \item $(p, K_C)$ is balanced;
        \item for each plaquette $q$ on the boundary of $C$, exactly one of $K(q)$ or $K(q^{-1})$ equals 1, and the other equals 0;
        \item $K(q) = 0$ for all other plaquettes $q$ of $\ZZ^3$.
    \end{itemize} 
     
    Lastly, for each $C\in \cC$, we claim that $c(p,K_C) = 1$. Indeed, note that each lattice edge in $(p,K_C)$ appears twice, once in each orientation. Thus, there is only one map in $\npe(p,K_C)$, that is, the map having a blue 2-gon between each two copies of the same edge. In particular, this map has $w_{\infty}$-weight $1$. 
\end{remark}

\begin{figure}[ht!]
    \begin{center}
            \includegraphics[width=.4\textwidth]{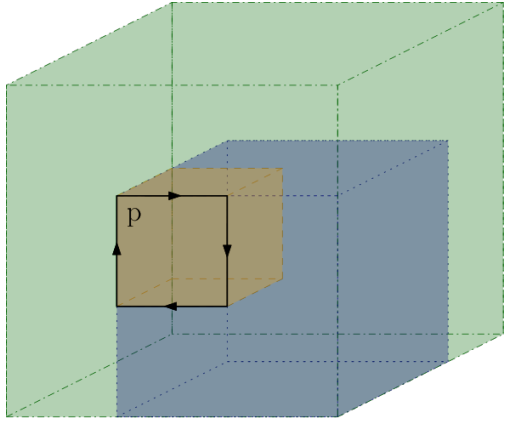} 
    	\caption{\label{fig: cuboid}An oriented plaquette $p$ in dimension three (shown in black) along with some cuboids (orange, blue, and green) that contain $p$.}
    \end{center}
    \vspace{-3ex}
\end{figure}

\subsubsection{Main result 2: Convergence for the empirical spectral measure of simple loops to an explicit limiting density}\label{sect:main-res-2}

Thanks to \cref{thm: erasable loops have one plaquette assignment}, to explicitly compute $\phi(\ell)$, it is enough to compute the coefficients $c(\ell,K)$  for all $K\in \cK_{\ell}$. The next result (and the one in \cref{sect:main-res-3}) illustrates that such computations are often tractable.

\begin{thm}\label{thm: wound simple loops WLE}
    Let $\upbeta\in \RR$. Suppose $\ell$ is a simple loop and set $a_{\ell}=\area(\ell)$. For $n\in \NN$, let $\ell^n$ denote the loop $\ell$ wound around itself $n$ times. Then,
    \begin{equation}\label{eq: cn(a)}
        c(\ell^n, K_{\ell^n}) = \frac{(-1)^{n+1}}{n}\binom{na_{\ell}-2}{n-1},
    \end{equation}
    where, when $a_{\ell}=1$, the above expression should be interpreted as $c(\ell^n, K_{\ell^n}) =\mathds{1}_{n=1}$.
    In particular, since $\cK_{\ell^n} = \{K_{\ell^n}\}$ thanks to \cref{rem:card-set-cK}, we deduce that
    \begin{align}\label{eq: cn(a)_2}
        \phi(\ell^n) = \frac{(-1)^{n+1}}{n}\binom{na_{\ell}-2}{n-1}\upbeta^{na_{\ell}}.
    \end{align}
\end{thm}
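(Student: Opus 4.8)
The plan is to use \cref{thm: erasable loops have one plaquette assignment} to reduce the statement to computing the single coefficient $c(\ell^n, K_{\ell^n})$, and then compute this coefficient by deriving and solving a recursion in the winding number $n$ and the area $a_\ell$. For the reduction: since $\ell$ is simple, $\ell^n$ has a single interior plaquette-region, on which it winds $n$ times in a fixed direction, so $d_{\ell^n} = |h_{\ell^n}| = n$ there and $=0$ on the exterior. By \cref{rem:card-set-cK} this forces $\cK_{\ell^n} = \{K_{\ell^n}\}$, and from \eqref{defn:master-plaq-ass} one reads off $K_{\ell^n}(p) + K_{\ell^n}(p^{-1}) = n$ for each of the $a_\ell$ positively oriented plaquettes of $\supp(\ell)$ (and $0$ otherwise), whence $\area(K_{\ell^n}) = n a_\ell$. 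Plugging into \cref{thm: erasable loops have one plaquette assignment} gives $\phi(\ell^n) = c(\ell^n, K_{\ell^n})\,\upbeta^{n a_\ell}$, so \eqref{eq: cn(a)_2} follows from \eqref{eq: cn(a)}; it remains to prove the latter.

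\textbf{Setting up the recursion.} I would first observe that $c(\ell^n, K_{\ell^n})$ depends only on the pair $(n, a_\ell)$ — write it $c_n(a)$ — because the peeling process of \cref{subsec:proof-techniques and ML}, run on $(\ell^n, K_{\ell^n})$, only ever produces loop--plaquette-assignments that (after erasing backtracks and collapsed regions) are again wound simple loops carrying their height assignment, with weights depending only on winding numbers and enclosed areas. The same process supplies a recursion: applying the master loop equation at an edge inside the thin corridor where the $n$ strands of $\ell^n$ run in parallel — equivalently, at a self-crossing of $\ell^n$ separating two of the strands — the splitting moves contribute products $c_k(a)\,c_{n-k}(a)$ (reflecting the factorization in \eqref{eq:factor}), while the deformation and plaquette-expansion moves contribute terms of lower complexity, with the area parameter entering through the $a_\ell$ plaquettes enclosed by $\ell$. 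After matching powers of $\upbeta$ (only contributions at total area $na$ survive, so every move's effect on $\area(K)$ must be tracked precisely), these collapse into a closed recursion for $c_n(a)$, with base data $c_1(\cdot) = 1$ and $c_\bullet(1) = \mathds{1}_{n=1}$ grounded in the single-plaquette computation (the loop $p$ with $K_p$ admits exactly one surface, of weight $1$, so $\phi(p) = \upbeta$).

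\textbf{Solving the recursion.} Finally I would pass to the generating function $C_a(x) := \sum_{n \geq 1} c_n(a)\,x^n$, under which the recursion packages into the algebraic equation
\[ C_a(x)\,\bigl(1 + C_a(x)\bigr)^{a-1} = x , \qquad \text{i.e.} \qquad C_a(x) = x\,\bigl(1 + C_a(x)\bigr)^{1-a} . \]
Lagrange inversion then yields $[x^n]\,C_a = \tfrac1n[y^{n-1}]\,(1+y)^{n(1-a)} = \tfrac1n\binom{n(1-a)}{n-1} = \tfrac{(-1)^{n+1}}{n}\binom{n a - 2}{n-1}$, which is exactly \eqref{eq: cn(a)}; note $C_1(x) = x$ recovers $c_n(1) = \mathds{1}_{n=1}$. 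As an alternative to generating functions, one can verify directly by induction — on $n$, or on $a$ — that the claimed binomial expression satisfies the recursion, the inductive step being a Vandermonde-type identity.

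\textbf{Main obstacle.} The crux is the second step: identifying exactly which loop--plaquette-assignments the master loop equation produces from $(\ell^n, K_{\ell^n})$ and carrying out the area bookkeeping, so that no contributions escape the relevant canonical families (this is where \cref{thm: erasable loops have one plaquette assignment} and the careful edge-selection in the peeling process do the work) and the split, deformation, and expansion terms assemble into precisely a recursion whose generating function satisfies $C_a(1+C_a)^{a-1} = x$. Once that recursion is in hand, the final step is routine.
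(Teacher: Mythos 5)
Your overall architecture coincides with the paper's: reduce to the single coefficient $c(\ell^n,K_{\ell^n})$ via \cref{thm: erasable loops have one plaquette assignment} and \cref{rem:card-set-cK}, derive a recursion from the master loop equation, show the generating function satisfies an algebraic equation, and finish by Lagrange inversion. Your target equation $C_a(x)\bigl(1+C_a(x)\bigr)^{a-1}=x$ is, after setting $F=1+C_a$, exactly the paper's \eqref{eq: moment eq}, namely $F^a-F^{a-1}=t$, your Lagrange-inversion computation reproduces \cref{prop: TilC solves moment equation}, and the base cases you cite are the ones the paper uses.

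The genuine gap is in the middle step, and it is not merely bookkeeping. First, a single application of the master loop equation at a copy of an edge $e$ of $\ell^n$ does not yield a closed recursion among the quantities $c_n(a)$ alone: a negative deformation at $e$ produces the loop $\ell^n\ominus_e p(e)^{-1}$, which is neither $(\ell')^{n}$ for a smaller simple loop $\ell'$ nor $\ell^{n-1}$; it is a partially deformed object that must be tracked separately. The paper handles this by introducing the two-index family $\tilde{c}_{n,m}(a)=c(\ell_a^n\ominus_{e_a}^{n-m}p_a,\,K^{e_a,\ell_a}_{n,m})$ of \eqref{eq: two height WLE coeffiencte}; the resulting recursion \eqref{eq: coeffienct MLE} relates different values of $m$, its splitting terms are products $\tilde{c}_{i,i}(a)\,\tilde{c}_{n-i,m-i}(a)$ rather than $c_i(a)\,c_{n-i}(a)$ (and this product structure comes from \eqref{eq:mle-2}, not from the large-$N$ factorization \eqref{eq:factor} you invoke), the area drops by one only at $m=0$ via $\tilde{c}_{n,0}(a)=\tilde{c}_n(a-1)$, and the functional equation emerges only after a further generating-function manipulation in the index $m$ (\cref{claim: C proof} and the substitution $z=t/\tilde{C}(a,t)$) combined with induction on $a$. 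Second, iterating the area reduction requires that $\ell\ominus_e p(e)^{-1}$ be again simple, i.e.\ the existence of simple-preserving edges (\cref{lemma: simple loops are closed under shrinking}), a combinatorial lemma absent from your outline. Third, your a priori assertion that $c(\ell^n,K_{\ell^n})$ depends only on $(n,a_\ell)$ is not justified by the peeling heuristic; in the paper this independence is a consequence of the uniqueness of the solution to \eqref{eq: moment eq} (\cref{prop: unique sol to moment equation}), not an input. You correctly flag the recursion as the crux, but as written the proposal supplies the easy endgame while leaving the actual content---the precise form of the recursion and why it closes---unestablished.
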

When $n=1$, we recover the result $\phi(\ell) =\upbeta^{a_{\ell}}$ for simple loops previously established in \cite[Theorem 2.8]{Basu:2016dnp}. We stress the interesting fact that $\phi(\ell^n)$ only depends on the area of $\ell$ and not on the exact shape of the loop.

\begin{remark}\label{rmk:cont-parallel}
It is interesting to note the similarity between our formula in \eqref{eq: cn(a)_2} and the corresponding expressions for the two-dimensional large-$N$ continuum Yang--Mills theory introduced in \cite{lévy2012masterfieldplane}.
In particular, for a simple continuum loop $\cL$ of (Euclidean) area $\alpha_{\cL}$, let $\cL^n$ be the loop winding $n$-times around $\cL$.  The Wilson loop expectation of $\cL^n$ in L\'evy's two-dimensional large-$N$ continuum Yang--Mills theory is given by (see for instance \cite[Theorem 2.3]{dahlqvist2016free})
    \begin{equation}\label{eq:levy}
        \varphi(\cL^n) =\left(\sum_{k=0}^{n-1}\frac{(-\alpha_{\cL})^k}{k!}n^{k-1}\binom{n}{k+1}\right)\mathrm{e}^{-(n\alpha_{\cL})/2}.
    \end{equation}
Note that replacing $\upbeta$ by $\mathrm{e}^{-1/2}$ in \eqref{eq: cn(a)_2} and identifying $a_\ell$ with $\alpha_\cL$, we get that \eqref{eq: cn(a)_2} and \eqref{eq:levy} are identical for $n=1$ and $n=2$, and equal at the first order as $a_\ell\to\infty$ for all $n\geq 3$. Further discussion of this analogy will be provided in \cref{remark: similar to levy} and \cref{sec: trivial}.
\end{remark}

\cref{thm: wound simple loops WLE} allows us to determine the limiting spectral density for simple loops. 
For $n,a\in \NN$, set 
\begin{equation}\label{eq:defn-cna}
    c_n(a) :=  \frac{(-1)^{n+1}}{n}\binom{na-2}{n-1},
\end{equation}
i.e.\ the coefficients appearing in \eqref{eq: cn(a)}.
We also introduce for all $a\in\NN$, $|\upbeta|\leq 1/2$,  the function
\begin{equation}\label{eq: mua}
        f_{a,\upbeta}(x) = \frac{1}{2\pi}\left(1 +2 \sum_{n=1}^{\infty}c_n(a) \cos(nx)\upbeta^{na}\right), \qquad x\in[0,2\pi).
\end{equation}
The sum above can be explicitly computed for small values of $a$, for instance see \eqref{eq:ieuvbfdowehbd}, \eqref{eq:ieuvbfdowehbd2} and \eqref{eq:ieuvbfdowehbd3} below. In particular, in these cases it is pointwise convergent for all $\upbeta\in\RR$.
The next lemma shows that $f_{a, \upbeta}$ is well-defined under the condition $|\upbeta| \leq 1/2$. 
    
\begin{lem}\label{lem:cov-series}
For all $a\in\NN$ and\footnote{The threshold $|\upbeta|\leq 1/2$ captures the largest possible regime of validity for \cref{cor: limiting spectral density}; see \cref{remark:critical}.} $|\upbeta|\leq 1/2$, the sum in \eqref{eq: mua} is absolutely convergent for all $x\in[0,2\pi)$.
\end{lem}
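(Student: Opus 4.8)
The plan is to reduce the statement to an estimate on the coefficients $c_n(a)$ alone. Since $|\cos(nx)|\le 1$ uniformly in $x$, it suffices to prove $\sum_{n\ge 1}|c_n(a)|\,|\upbeta|^{na}<\infty$; equivalently, that the power series $\sum_{n\ge1}|c_n(a)|\,z^n$ converges absolutely at $z=|\upbeta|^a$ whenever $|\upbeta|\le 1/2$, i.e.\ for all $0\le z\le 2^{-a}$. So everything comes down to controlling the growth of $|c_n(a)|=\tfrac1n\binom{na-2}{n-1}$ as $n\to\infty$.

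First I would bound the binomial coefficient. A short Pascal-rule argument gives $\binom{na-2}{n-1}\le\binom{na}{n}$ for all $a\ge 2$, $n\ge1$. Applying the two-sided Stirling bound $\sqrt{2\pi m}\,(m/e)^m\le m!\le\sqrt{2\pi m}\,(m/e)^m e^{1/(12m)}$ to $\binom{na}{n}=(na)!/(n!\,((a-1)n)!)$, the powers of $e$ and of $n$ cancel, leaving
\[
\binom{na}{n}\ \le\ C_a\,n^{-1/2}\rho_0^{\,n},\qquad \rho_0:=\frac{a^a}{(a-1)^{a-1}},\qquad C_a:=e^{1/12}\sqrt{\tfrac{a}{2\pi(a-1)}}.
\]
Hence $|c_n(a)|\,|\upbeta|^{na}\le C_a\,n^{-3/2}\big(\rho_0\,|\upbeta|^a\big)^n$. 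The last ingredient is the elementary inequality $\rho_0\le 2^a$, i.e.\ $a^a\le 2^a(a-1)^{a-1}$ for every integer $a\ge2$; this holds with equality exactly at $a=2$, and can be checked by treating $a\in\{2,3\}$ directly and using $(1+\tfrac1{a-1})^{a-1}<e\le 2^a/a$ for $a\ge4$. Granting it, $\rho_0|\upbeta|^a\le\rho_0\,2^{-a}\le1$ when $|\upbeta|\le1/2$, so $|c_n(a)|\,|\upbeta|^{na}\le C_a n^{-3/2}$, and $\sum_n n^{-3/2}<\infty$ finishes the proof. The case $a=1$ is trivial: $\binom{n-2}{n-1}=0$ for $n\ge2$, so the sum reduces to its $n=1$ term.

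The only genuinely delicate point, and the one I expect to be the main obstacle, is the borderline case $a=2$ with $|\upbeta|=1/2$: there $\rho_0|\upbeta|^a=1$ exactly, so the geometric factor $(\rho_0|\upbeta|^a)^n$ is useless and a crude root-test bound $\binom{na}{n}\le\rho_0^n$ would only yield $\sum_n\tfrac1n$, which diverges. Convergence at this endpoint is rescued entirely by the sharp $n^{-1/2}$ polynomial gain in the Stirling estimate for $\binom{na}{n}$, which combines with the prefactor $\tfrac1n$ in $c_n(a)$ to produce the summable exponent $-\tfrac32$. This is also consistent with the claim that $|\upbeta|\le1/2$ is the largest possible range: since Stirling also yields a matching lower bound $\binom{2n-2}{n-1}\ge c\,n^{-1/2}\rho_0^n$, for $a=2$ and $|\upbeta|>1/2$ the terms grow geometrically and the series diverges.
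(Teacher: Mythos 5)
Your proof is correct and follows essentially the same route as the paper's: reduce to $\sum_n |c_n(a)|\,|\upbeta|^{na}$, use Stirling to extract the bound $n^{-3/2}\bigl(\rho\,|\upbeta|^a\bigr)^n$ with $\rho=a^a/(a-1)^{a-1}$, and observe that $\rho\le 2^a$ for $a\ge 2$ so the bracket is at most $1$. Your version is marginally more careful than the paper's (non-asymptotic two-sided Stirling bounds, an explicit verification of $a^a\le 2^a(a-1)^{a-1}$, and an explicit discussion of the endpoint $a=2$, $|\upbeta|=1/2$ where only the $n^{-3/2}$ saves the day), but the underlying argument is identical.
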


Recall that for a loop $\ell=e_1e_2\hdots e_n$, we denoted by $Q_{\ell}$ the product $ Q_{e_1}Q_{e_2}\hdots Q_{e_n}$ of matrices along $\ell$ in the $\unitary(N)$-lattice Yang--Mills model.

\begin{cor}\label{cor: limiting spectral density}
    Let $\upbeta_0>0$ be as in \cref{thm: sum over surfaces representation in 't hooft limit} and assume that $|\upbeta|\leq \upbeta_0\wedge 1/2$. Suppose $\ell$ is a simple loop and set $a_{\ell}=\area(\ell)$. Then $f_{a_\ell, \upbeta}$ is a probability density (in particular, it is non-negative\footnote{While the non-negativity of $f_{a_{\ell},\upbeta}$ is only shown for $|\upbeta|\leq \upbeta_0 \wedge 1/2$, we believe it should be true up to $|\upbeta|\leq 1/2$, but we will not pursue this direction in the present paper.}), and moreover, the following holds. Let $\mu_{\ell,N}$ denote the empirical spectral distribution of $Q_{\ell}$, that is,
    \[\mu_{\ell,N}(x)=\frac{1}{N}\sum_{i=1}^N \delta_{\lambda_{\ell,N}^i}(x),\] 
    where $(\lambda_{\ell,N}^i)_i$ are the complex-valued unitary eigenvalues of the unitary matrix $Q_{\ell}$ and $ \delta_y(\cdot)$ is the Dirac delta function at $y$.
    Then, 
    \begin{align*}
        \mu_{\ell,N} \xrightarrow[N\to\infty]{} \mu_{a_\ell},\qquad\text{weakly in probability,}
    \end{align*}
     where $\mu_{a_\ell}$ is a probability measure on the unit circle with density $f_{a_{\ell},\upbeta}(x)$  for all $x\in[0,2\pi)$ (under the standard parameterization of the unit circle).
\end{cor}

For instance, if $a=1$, i.e.\ $\ell$ is just a single plaquette, then we recover \cite[Proposition 4.2]{Basu:2016dnp}, which gives the limiting spectral density

\begin{equation}\label{eq:ieuvbfdowehbd}
        f_{1,\upbeta}(x) = \frac{1}{2\pi}\Big(1 + 2\upbeta\cos(x)\Big),
\end{equation}
while for $a=2$, we obtain
\begin{equation}\label{eq:ieuvbfdowehbd2}
        f_{2,\upbeta}(x) = \frac{1}{2\pi}\sqrt{\frac{1+4\upbeta^2\cos(x)+\sqrt{1+8\upbeta^2\cos(x)+16\upbeta^4}}{2}},
\end{equation}
and for $a=3$, we obtain
\begin{equation}\label{eq:ieuvbfdowehbd3}
        f_{3,\upbeta}(x) = 
        \frac{-1 + 2 \cosh\left(\tfrac{2}{3} \mathrm{arcsinh}\left(\tfrac{3 \sqrt{3}}{2} \upbeta^{3/2} \mathrm{e}^{-\frac{ix}{2}}\right)\right) + 
 2 \cosh\left(\tfrac{2}{3} \mathrm{arcsinh}\left(\tfrac{3 \sqrt{3}}{2} \upbeta^{3/2} \mathrm{e}^{\frac{ix}{2}}\right)\right)}{6 \pi}.
\end{equation}
These densities, and some others, are shown in \cref{fig-limiting-spectral-density}.

\begin{figure}[ht!]
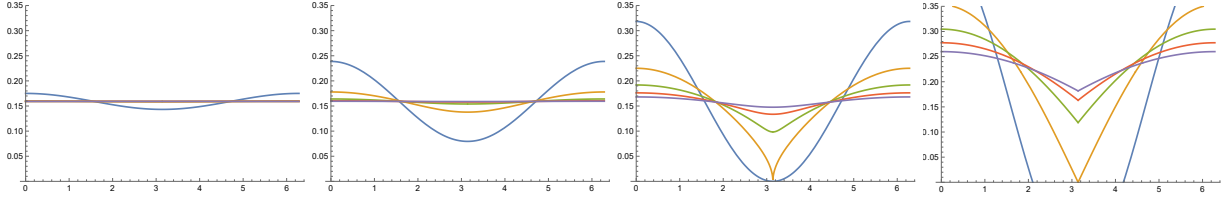

\begin{center}
        \includegraphics[width=.24\textwidth]{figs/beta-0.05.pdf} 
        \includegraphics[width=.24\textwidth]{figs/beta-0.25.pdf} 
        \includegraphics[width=.24\textwidth]{figs/beta-0.5.pdf} 
	\includegraphics[width=.24\textwidth]{figs/beta-1.pdf}  
	\caption{\label{fig-limiting-spectral-density} 
        In each diagram, the densities $f_{1,\upbeta}(x), f_{2,\upbeta}(x), f_{3,\upbeta}(x), f_{4,\upbeta}(x), f_{5,\upbeta}(x)$ for $x \in [0, 2\pi)$ are shown in blue, orange, green, red, and purple, respectively. From left to right, the values of $\upbeta$ are $0.05$, $0.25$, $0.5$ and $1$, respectively. Note that $\upbeta=1$ is outside the regime of validity of \cref{lem:cov-series} and \cref{cor: limiting spectral density}, but will be useful for the discussion in \cref{sec: trivial}.}
\end{center}
\vspace{-3ex}
\end{figure}

\begin{remark}\label{remark:critical}
    \cref{thm: sum over surfaces representation in 't hooft limit} was proved in \cite{borga2024surfacesumslatticeyangmills} only for $\upbeta$ small. Here, we proved in \cref{thm: wound simple loops WLE} that, in dimension two, the Wilson loop expectation $\phi(p)$ of a plaquette $p$ is exactly $\upbeta$. Since we have that  $|\phi_{\Lambda_N,N,\upbeta}(p)|  \leq 1$ for all $N\geq 1$ and $\upbeta\in \mathbb R$, Theorem~\ref{thm: sum over surfaces representation in 't hooft limit} can hold only if $\upbeta\leq 1$. Thus, it would be interesting to understand the value $\upbeta^*$ at which the conclusion of Theorem \ref{thm: sum over surfaces representation in 't hooft limit} first fails to hold.

    In~\cref{cor: limiting spectral density}, we also proved that for $\upbeta$ small and a plaquette $p$, the empirical spectral distribution of $Q_{p}$  converges in probability as $N\to\infty$ to a deterministic measure on the circle with density (under the standard parameterization of the unit circle) 
    \begin{equation}\label{eq: muaone}
        f_{1,\upbeta}(x) = \frac{1}{2\pi}\Big(1 + 2\upbeta\cos(x)\Big).
    \end{equation}
    Note that the above function can be a density only if $\upbeta\leq 1/2$ (otherwise, the function $f_{1,\upbeta}(x)$ is negative around $x=\pi$).
    For this reason (the paper \cite{HIAI200071} seems to suggest the same conclusion), we suspect, at least in dimension two, that  $\upbeta^* = \frac{1}{2}$, and it would be nice to confirm whether this is true.

    Nevertheless, if one is only interested in loops of large size (for instance, this is the case for the purpose of understanding scaling limits (see \cref{sec: trivial})) we do not exclude that the critical value $\upbeta^*$ for the validity of \cref{thm: sum over surfaces representation in 't hooft limit} might be 1.
\end{remark}

\subsubsection{Main result 3: Explicit Wilson loop expectations for all loops with three or fewer self-crossings}\label{sect:main-res-3}

In fact, Wilson loop expectations for more general loops can be explicitly computed. For instance, we are able to compute the Wilson loop expectations for all loops with three or fewer self-crossings.

Let $\cL : [0,1] \to \mathbb{R}^2$ be a time-parametrized continuum loop of the plane $ \mathbb{R}^2$.
A point $x \in \mathbb{R}^2$ is called a \textbf{self-crossing point} of the loop $\cL$ if there exist distinct times $t_1, t_2 \in [0,1]$ such that $\cL(t_1) = \cL(t_2) = x$, and the corresponding tangent vectors are not parallel: $\cL'(t_1) \notin \text{span}\{\cL'(t_2)\}.$ Moreover, we assume that $\cL(t)\neq x$ for all $t\in[0,1]\setminus\{t_1, t_2\}$, i.e.\ a self-crossing point is a double point.
In words, the loop $\cL$ passes through the point $x$ at exactly two different times and crosses itself. We say that a loop has $n$ self-crossing points if it has $n$ \emph{distinct} self-crossing points and all the other points of the loop are simple, i.e.\ only visited once.

Given a lattice loop $\ell$ (possibly with self-crossings) and a continuum loop $\cL$ (possibly with self-crossings) we say that $\ell$ is (ambient) \textbf{isotopic} to $\cL$ if  there exists a continuous family of homeomorphisms
\[
\Phi_t : \mathbb{R}^2 \to \mathbb{R}^2, \quad t \in [0,1],
\]
such that:
\begin{enumerate}
    \item Each $ \Phi_t $ is a homeomorphism.
    \item $ \Phi_0 = \mathrm{id}_{\mathbb{R}^2}$ (the identity map).
    \item $ \ell = \Phi_1 \circ \cL $.
\end{enumerate}
Note that this kind of isotopy preserves the embedded structure of the loop, including the number of self-crossings and the number of regions.

\cref{table} shows, in the left column, all continuum loops with three or fewer self-crossings (up to isotopy), in the middle column, the cardinality of the canonical collection $\cK_{\ell}$ of plaquette assignments for $\ell$ any lattice loop isotopic to the continuum loop in the same row, and in the right column, the Wilson loop expectation for such a lattice loop. More precisely, the right part of each row of the table states that any lattice loop isotopic to the continuum loop shown on the left, and whose plaquette-regions have areas matching the labels of the corresponding continuum regions, has Wilson loop expectation as indicated in the right column.

\afterpage{%
\begin{center}
    \begin{longtable}{| >{\centering\arraybackslash} m{45pt} | >{\centering\arraybackslash}m{30pt} | > {\centering\arraybackslash} m{0.8\textwidth} |}
     \hline
     $\cL$ & $\#\cK_{\ell}$ & $\phi(\ell)$\\ 
     \specialrule{.1em}{.05em}{.05em} 
     \includegraphics[height=37pt]{figs/0-1.pdf} & 1 &  $\upbeta^s$ \\
     \hline
     \includegraphics[height=37pt]{figs/1-1.pdf} & 1 & $\upbeta^{s_1+s_2}$ \\
     \hline
     \includegraphics[height=37pt]{figs/1-2.pdf} & 1 & $(1-t)\upbeta^{s+2t}$ \\
     \hline
     \includegraphics[height=37pt]{figs/2-1.pdf} & 1 & $\upbeta^{s_1+s_2+s_3}$ \\
     \hline
     \includegraphics[height=37pt]{figs/2-2.pdf} & 1 & $(1-t_1)(1-t_2)\upbeta^{s+2t_1+2t_2}$ \\
     \hline
     \includegraphics[height=37pt]{figs/2-3.pdf} & 2 & $\left(1-t_1\upbeta^{2t_2}\right) \upbeta^{s+2t_1}$ \\
     \hline
     \includegraphics[height=37pt]{figs/2-4.pdf} & 1 & $(1-t_1)\upbeta^{s_1+s_2+2t}$ \\
     \hline
     \includegraphics[height=37pt]{figs/2-5.pdf} & 1 & $\left[\frac{1}{6}(3u-3)(3u-2)-t(1-u)\right] \upbeta^{s+2t+3u}$ \\
     \hline
     \includegraphics[height=37pt]{figs/3-1.pdf} & 1 & $\upbeta^{s_1+s_2+s_3+s_4}$ \\
     \hline
     \includegraphics[height=37pt]{figs/3-2.pdf} & 1 & $(1-t_2)\left[\frac{1}{6}(3u-3)(3u-2)-t_1(1-u)\right]\upbeta^{s+2t_1+2t_2+3u}$ \\
     \hline
     \includegraphics[height=37pt]{figs/3-3.pdf} & 4 & $\left(1-t_1(1-u)\upbeta^{2t_2+2u}\right) \upbeta^{s+2t_1+u}$ \\
     \hline
     \includegraphics[height=37pt]{figs/3-4.pdf} & 1 & $\left[\frac{1}{6}(3u-3)(3u-2)-t(1-u)\right]\upbeta^{s_1+s_2+2t+3u}$ \\
     \hline
     \includegraphics[height=37pt]{figs/3-5.pdf} & 1 & \raisebox{0.8cm}{$\left(1-\frac{5}{2}u+\frac{3}{2}u^2 -\frac{13}{3}v+\frac{13}{2}uv-\frac{3}{2}u^2v\right.$}
    \raisebox{0.1cm}{\hspace{-5cm}$\left.+6v^2-4uv^2-\frac{8}{3}v^3 - t\left[\frac{1}{6}(3v-3)(3v-2) - u(1-v)\right]\right) \upbeta^{s+2t+3u+4v}$} \\
     \hline
     \includegraphics[height=37pt]{figs/3-6.pdf} & 1 & $(1-t_1)(1-t_2)\upbeta^{s_1+s_2+2t_1+2t_2}$ \\
     \hline
     \includegraphics[height=37pt]{figs/3-7.pdf} & 4 & $\left(\upbeta^{2t_2}+\upbeta^{2t_3}-(1+t_1)\upbeta^{2t_2+2t_3}\right) \upbeta^{s+2t_1}$ \\
     \hline
     \includegraphics[height=37pt]{figs/3-8.pdf} & 1 & $\upbeta^{s_1+s_2+s_3+s_4}$ \\
     \hline
     \includegraphics[height=37pt]{figs/3-9.pdf} & 1 & $(1-t)\upbeta^{s_1+s_2+s_3}$ \\
     \hline
     \includegraphics[height=37pt]{figs/3-10.pdf} & 1 & $(1-t_1)(1-t_2)\upbeta^{s_1+s_2+2t_1+2t_2}$ \\
     \hline
     \includegraphics[height=37pt]{figs/3-11.pdf} & 1 & $(1-t_1)(1-t_2)(1-t_3)\upbeta^{s+2t_1+2t_2+2t_3}$ \\
     \hline
     \includegraphics[height=37pt]{figs/3-12.pdf} & 2 & $\left(1-t_1\upbeta^{2t_2}\right)\upbeta^{s_1+s_2+2t_1}$ \\
     \hline
     \includegraphics[height=37pt]{figs/3-13.pdf} & 1 & $(1-t)\upbeta^{s_1+s_2+s_3+2t}$ \\
     \hline
     \includegraphics[height=37pt]{figs/3-14.pdf} & 2 & $(1-t_3)\left(1-t_1\upbeta^{2t_2}\right) \upbeta^{s+2t_1+2t_3}$ \\
     \hline
     \includegraphics[height=37pt]{figs/3-15.pdf} & 2 & $\left((1-t_3)-t_1\upbeta^{2t_2}\right)\upbeta^{s+2t_1+2t_3}$ \\
     \hline
     \includegraphics[height=37pt]{figs/3-16.pdf} & 2 & $\left((1-u)+\upbeta^{2t_2}\left[\frac{1}{6}(3u-3)(3u-2)-(1+t_1)(1-u)\right]\right) \upbeta^{s+2t_1+3u}$ \\
    \hline
     \includegraphics[height=37pt]{figs/3-17.pdf} & 1 &
         \raisebox{0.8cm}{$\left[\frac{1}{6}(3(u_1+u_2)-3)(3(u_1+u_2)-2)\right.$}
         \raisebox{0.1cm}{\hspace{-5cm}$\left.-t(1-u_1)(1-u_2) +u_1u_2\left(1-\frac{3}{2}(u_1+u_2)\right)\right] \upbeta^{s+2t+3u_1+3u_2}$} \\
     \hline
     \includegraphics[height=37pt]{figs/3-18.pdf} & 2 & $\left(1-2u_1-t+\upbeta^{2u_2}\left[u_1\left(t+u_2-\frac{1}{2}\right)+\frac{3}{2}u_1^2\right]\right) \upbeta^{s+2t+3u_1+3u_2}$ \\
     \hline
     \includegraphics[height=37pt]{figs/3-19.pdf} & 1 & $(1-t)\upbeta^{s_1+s_2+s_3+2t}$ \\
     \hline
     \includegraphics[height=37pt]{figs/3-20.pdf} & 4 & $\left(\upbeta^{2t_1}+\upbeta^{2t_2}-\upbeta^{2t_1+2t_2}\right) \upbeta^{s_1+s_2}$ \\ 
     \hline
     \caption{\label{table} Table of Wilson loop expectations for loops with three or fewer self-crossings. The left column displays a continuum loop $\cL$. The middle column shows the cardinality of $\cK_{\ell}$ for any $\ell$ isotopic to $\cL$ on the same row. The right column displays the Wilson loop expectation for such an $\ell$ whose plaquette-regions have areas matching the labels of the corresponding continuum regions. (The continuum loop figures are reproduced from \cite{lévy2012masterfieldplane}.)}
    \end{longtable}
\end{center}
}

\begin{remark}\label{remark: similar to levy}
    The formulas presented in \cref{table} are very similar to those appearing in \cite[Appendix B]{lévy2012masterfieldplane} and \cite[Table in Section 6.2]{park2023wilson}\footnote{Note that this table only presents formulas for the leading term (in the limit where all the areas of the regions tend to infinity). However, if we instead consider the formulas given in the table in \cite[Section 6.1]{park2023wilson} for finite $N$, and then take the limit $N\to\infty$, we recover exactly the expressions found in \cite[Appendix B]{lévy2012masterfieldplane}.} for the large-$N$ continuum Yang--Mills theory. Indeed, the polynomial coefficients of many expressions are exactly the same as in \cite[Appendix B]{lévy2012masterfieldplane}. However, for loops having a region around which the loop is wound three or more times (see, for instance, the $8$-th loop from the top of the table), we get slightly different polynomial coefficients (but with the same leading order in the limit where all the areas of the regions tend to infinity). This discrepancy is a consequence of what we already observed in \cref{rmk:cont-parallel}.
\end{remark}

\begin{remark}
    Many additional loops can be computed using the same procedure detailed in \cref{sec: Wilson loop expectation constant for erasable loops with three or fewer self-crossings}, as was done for those in \cref{table}. Notably, \cref{table} does not include any lattice loop that has an edge appearing twice or more (as this loop would correspond to a continuum loop with an infinite number of non-simple points), but the Wilson loop expectation for these loops can be similarly explicitly computed. We have decided to restrict ourselves to three or fewer self-crossings purely to simplify the presentation and enhance the clarity of the main ideas.
\end{remark}

\subsection{Proof techniques and the master loop equation}\label{subsec:proof-techniques and ML}

In this section, we first introduce the master loop equation, our main technical tool (see \cref{sec: the master loop equation}). Then, in \cref{sec: proof tech}, we give an overview of our key proof techniques, in particular, the surface exploration algorithm.

\subsubsection{The master loop equation}\label{sec: the master loop equation}

The main tool we will use throughout this paper is the master loop equation in~\cref{thm: fixed K 't Hooft master loop equation for surface sum}, a recursive relation satisfied by $\phi^K(\ell)$. One may think of this equation as giving the first step in a surface exploration process. Thus, we may interpret the repeated application of this equation as a surface exploration procedure. As briefly mentioned in Section \ref{subsec:overview}, the key step towards our main results is to design a suitable exploration procedure. Said in terms of the master loop equation, we will need to carefully choose how to apply the master loop equation at each step of the process. These points are elaborated on in \cref{sec: proof tech}.

To introduce the master loop equation, we first need to define two loop operations. Let $s=\{\ell_1,\dots,\ell_n\}$ be a string and fix $\mathbf{e}$ to be one specific copy of the (oriented) edge $e$ appearing in $s$.\footnote{Throughout this paper, we will always use bolded edges to denote a specific copy of the lattice edge (i.e.\ $\mathbf{e}$ is a specific copy of the lattice edge $e$).} Assume that $\mathbf{e}$ is contained in the loop $\ell_i$.

\medskip

\noindent\underline{\textbf{Splittings}}. First, we define splittings at $\mathbf{e}$, an operation that splits a loop into two loops. Let $\mathbf{e}'$ denote another copy of $e$ in $\ell_i$. If $\ell_i$ has the form $\pi_1\, \mathbf{e} \,\pi_2 \,\mathbf{e}' \,\pi_3$, where $\pi_i$ is a path of edges, then we say that \[\mathsf{S}_{\mathbf{e},\mathbf{e}'}(\ell_i)=\{\pi_1 \, \mathbf{e} \, \pi_3 \, , \, \pi_2 \, \mathbf{e}'\}\] 
is a \textbf{positive splitting} of $\ell_i$ at $\mathbf{e}$. 
We let $\SS_+(\mathbf{e},s)$ denote the multiset of strings that can be obtained from $s$ by a positive splitting of $s$ at $\mathbf{e}$. See the left-hand side of Figure~\ref{fig-operations} for an example.

Similarly, let $\mathbf{e}^{-1}$ be any specific copy of the edge $e^{-1}$ in $s$. If $\ell_i$ has the form $\pi_1 \, \mathbf{e} \, \pi_2 \, \mathbf{e}^{-1} \, \pi_3$, then we say that 
\[
\mathsf{S}_{\mathbf{e},\mathbf{e}^{-1}}(\ell_i)=\{\pi_1 \, \pi_3  \, ,  \, \pi_2\}
\] 
is a \textbf{negative splitting} of $\ell_i$ at $\mathbf{e}$. 
We let $\SS_-(\mathbf{e},s)$ denote the multiset of strings that can be obtained from $s$ by a negative splitting of $s$ at $\mathbf{e}$. See the left-hand side of Figure~\ref{fig-operations} for an example.

\medskip

\noindent\underline{\textbf{Deformations}}. Next, we define deformations at $\mathbf{e}$, an operation that combines a loop and a plaquette. Suppose that $\ell_i=\mathbf{e}\pi_1$, where $\pi_1$ is a path of edges. Let  $p=\mathbf{e'}\pi_2$ be a plaquette, where $\mathbf{e}'$ is another copy of the same lattice edge $e$ and $\pi_2=e_1e_2e_3$ is a path of edges.  We define the \textbf{positive deformation} of $\ell_i$ with $p$ at $\mathbf{e}$ to be\begin{equation*}
\ell_i\oplus_{\mathbf{e}}p = \mathbf{e} \pi_2 \mathbf{e}' \pi_1.
\end{equation*}
More generally, we define the sets of positive deformations at $\mathbf{e}$ by \begin{align*}
\DD_+(\mathbf{e},s) = \{\{\ell_1,\dots,\ell_{i-1},\ell_i\oplus_{\mathbf{e}}p,\ell_{i+1},\dots,\ell_n\}:p\in \cP(e)\},
\end{align*}where we recall that $\cP(e)$ is the collection of plaquettes in $\cP$ containing $e$ as one of the four boundary edges (with the correct orientation). See the right-hand side of Figure~\ref{fig-operations} for an example.

Similarly, let  $p=\mathbf{e^{-1}}\pi_2$ be a plaquette, where $\mathbf{e^{-1}}$ is a copy of the edge $e^{-1}$ and  $\pi_2=e_4e_5e_6$ is a path of edges. We define the \textbf{negative deformation} of $\ell_i$ with $p$ at $\mathbf{e}$ to be
\begin{equation*}
\ell_i\ominus_{\mathbf{e}}p = \pi_2 \pi_1.
\end{equation*}
More generally, we define the sets of negative deformations at $\mathbf{e}$ by \begin{align*}
\DD_-(\mathbf{e},s) = \{\{\ell_1,\dots,\ell_{i-1},\ell_i\ominus_{\mathbf{e}}q, \ell_{i+1},\dots,\ell_n\}:q\in \cP(e^{-1})\}.
\end{align*}
See the right-hand side of Figure~\ref{fig-operations} for an example.

\medskip

Note that (cf.\  the bottom part of Figure~\ref{fig-operations}) the negative splitting and negative deformation operations might introduce backtracks to the new loops (even if the original loop was a non-backtrack loop). We stress that if this is the case, we do \emph{not} remove the backtracks when performing these operations (unless explicitly stated otherwise); this convention will play an important role later in the paper.

\medskip

\begin{figure}[ht!]
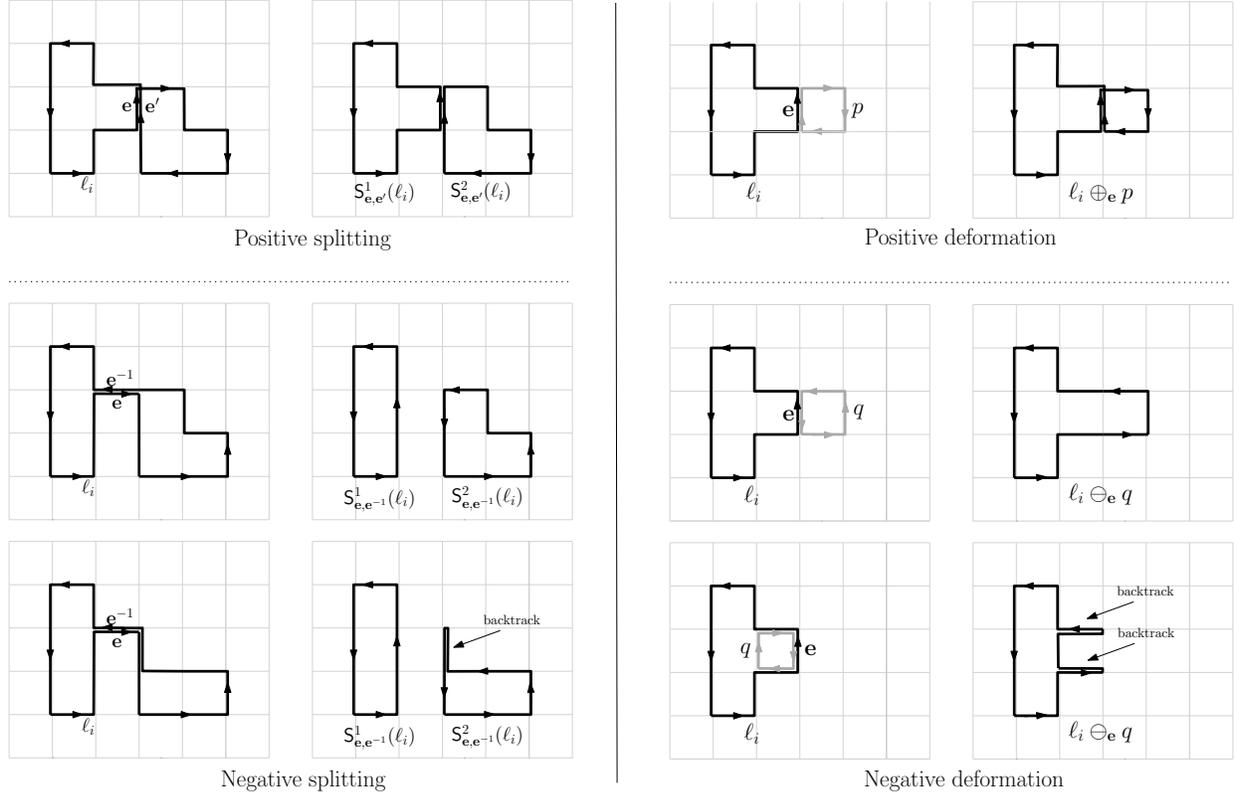

\begin{center}
	\includegraphics[width=.49\textwidth]{figs/fig-splittings.pdf}
        \includegraphics[width=.49\textwidth]{figs/fig-deformations.pdf} 
	\caption{\label{fig-operations} \textbf{Left:} An example of a positive splitting and two examples of negative splittings (the bottom one introduces a backtrack). \textbf{Right:} An example of a positive deformation and two examples of a negative deformations (the bottom one introduces two backtracks).}
\end{center}
\vspace{-3ex}
\end{figure}

With these operations defined, we can now introduce the master loop equation, which is the most important tool for this paper.

\begin{thm}[{\cite[Theorem 3.6]{borga2024surfacesumslatticeyangmills}}]\label{thm: fixed K 't Hooft master loop equation for surface sum} 
	Fix a non-null-loop $\ell$ and a finite plaquette assignment  $K:\cP\to\ZZ_{\geq 0}$. Let $\mathbf{e}$ be a specific edge of the loop $\ell$. Suppose that the  edge $\mathbf{e}$ is a copy of the lattice edge $e\in E_{\ZZ^2}$. Then
	\begin{align}\label{eq:mle-main}
		\phi^K(\ell) =&\sum_{\{\ell_1,\ell_2\}\in \SS_-(\mathbf{e},\ell)}
		\sum_{K_1+K_2=K}
		\phi^{K_1}(\ell_1)\phi^{K_2}(\ell_2) - \sum_{\{\ell_1,\ell_2\}\in \SS_+(\mathbf{e},\ell)}
		\sum_{K_1+K_2=K}\phi^{K_1}(\ell_1)\phi^{K_2}(\ell_2) \notag\\
		&+ \upbeta
		\sum_{p\in \cP(e^{-1},K)}
		\phi^{K\sm p}(\ell \ominus_{\mathbf{e}}p)
		-\upbeta\sum_{q\in \cP(e,K)}
		\phi^{K\sm q}(\ell \oplus_{\mathbf{e}}q),
	\end{align}
	where $\cP(e,K)$ denotes the collection of plaquettes in $\cP$ containing $e$ as one of the four boundary edges (with the correct orientation) such that $K(p)\geq 1$, and $K\sm p$ denotes the plaquette assignment $K'$ such that $K'(p)=K(p)-1$ and $K'(q)=K(q)$ for all $q\neq p$.
	
	We also used the (more compact) notation $\sum_{K_1+K_2=K}$ to indicate the sum $\sum_{\substack{K_1,K_2:\\K_1+K_2=K}}$, where $K_1+K_2=K$ means that $K_1(p) + K_2(p)=K(p)$ for all $p\in \cP$.
\end{thm}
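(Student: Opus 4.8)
The statement is Theorem~3.6 of \cite{borga2024surfacesumslatticeyangmills}, and the plan is to establish it by a direct combinatorial analysis of the surface sums defining $\phi^K$, viewing \eqref{eq:mle-main} as the outcome of the \emph{first peeling step} of a surface at the distinguished edge $\mathbf{e}$. A preliminary remark organizes the bookkeeping: recalling $\phi^K(\ell)=c(\ell,K)\upbeta^{\area(K)}$, every term of \eqref{eq:mle-main} is a monomial in $\upbeta$ of degree exactly $\area(K)$ — indeed $\area(K_1)+\area(K_2)=\area(K)$ for any split $K=K_1+K_2$, and $1+\area(K\sm p)=\area(K)$. Hence \eqref{eq:mle-main} is homogeneous in $\upbeta$ and is equivalent to an identity among the $\upbeta$-free coefficients $c(\cdot,\cdot)$ of \eqref{eq:coefficents}; this is what one must prove directly at the level of maps (the finite-$N$ route sketched at the end would only give the weaker version summed over all $K$ of a given area).

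First I would dispose of the unbalanced case. If $(\ell,K)$ is not balanced then $\phi^K(\ell)=0$ by the base cases, and one checks the right-hand side also vanishes: every operation appearing there preserves all edge multiplicities $n_{e'}(\cdot,\cdot)$ from \eqref{defn:ne} — a splitting only redistributes the edges of $\ell$ among the two outputs (matched with $K=K_1+K_2$), up to the fact that a negative splitting also drops the balanced pair $\{e,e^{-1}\}$, while a deformation $\ell\oplus_{\mathbf{e}}q$ paired with $K\sm q$ adds the edges of the plaquette $q$ to the loop and deletes one copy of $q$ from the assignment (and likewise for $\ominus$), which leaves all $n_{e'}$ unchanged because $q$ is a simple plaquette. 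Consequently, if $(\ell,K)$ is unbalanced then so is every resulting loop--plaquette-assignment pair (for a split, balancedness of both halves would force balancedness of $(\ell,K)$), and each summand on the right is $0$ by the base case.

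Now assume $(\ell,K)$ balanced and unpack the definition $\phi^K(\ell)=\upbeta^{\area(K)}\sum_{M\in\npe(\ell,K)}w_\infty(M)$, recalling from \cite{borga2024surfacesumslatticeyangmills} that each $M\in\npe(\ell,K)$ is a connected planar map of disk type whose boundary reads off $\ell$, whose internal faces are the $K(p)$ plaquette-copies together with auxiliary $2$-gons recording edge identifications, and whose weight $w_\infty(M)\in\{+1,-1\}$ is the leading large-$N$ Weingarten sign. The heart of the proof is a case analysis of the distinguished boundary edge $\mathbf{e}$ according to the internal face of $M$ incident to it, which partitions $\npe(\ell,K)$ into four families: (i) the incident face is a $2$-gon whose other side lies on the boundary as a copy of $e^{-1}$ — cutting $M$ along it splits the disk into two disks, realizing a negative splitting $\{\ell_1,\ell_2\}\in\SS_-(\mathbf{e},\ell)$ together with a distribution $K=K_1+K_2$, and giving a weight-multiplicative bijection onto $\npe(\ell_1,K_1)\times\npe(\ell_2,K_2)$; (ii) the analogous situation realizing a positive splitting $\SS_+(\mathbf{e},\ell)$, where the Weingarten sign acquires an extra $-1$; (iii) the incident face is a plaquette-copy $q\in\cP(e,K)$ — peeling $q$ off produces a map in $\npe(\ell\oplus_{\mathbf{e}}q,K\sm q)$, the $\upbeta$-degree dropping by one (the explicit factor $\upbeta$) and the sign picking up an extra $-1$; (iv) the incident face is a plaquette-copy $p\in\cP(e^{-1},K)$, giving $\npe(\ell\ominus_{\mathbf{e}}p,K\sm p)$ with a factor $+\upbeta$ and no extra sign. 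Summing $w_\infty$ over these four families and multiplying by $\upbeta^{\area(K)}$ reproduces, term by term, the right-hand side of \eqref{eq:mle-main}.

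The main obstacle is the sign accounting in cases (ii) and (iii): one must show that peeling a ``positive'' configuration — a same-orientation identification, respectively a plaquette glued along $e$ rather than $e^{-1}$ — flips the parity of the permutation datum underlying $w_\infty$, whereas peeling a ``negative'' configuration preserves it; this is exactly the source of the relative minus signs between the $\SS_-$ and $\SS_+$ sums and between the $\ominus$ and $\oplus$ sums. Two further points need verification: that disk topology and connectedness survive both cutting along a $2$-gon (a chord separates a disk into two disks) and peeling a boundary plaquette (leaving a disk); and that the convention of \emph{not} erasing backtracks created by negative splittings and deformations is precisely what makes the maps produced in cases (i) and (iv) lie in the correct sets $\npe(\cdot,\cdot)$, so that the bijections are clean. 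Since $\ell$ is a non-null loop and $\mathbf{e}$ is a genuine boundary edge, the four families are exhaustive and disjoint, which finishes the argument. (For $|\upbeta|$ small, the $K$-summed form of \eqref{eq:mle-main} can alternatively be derived from the finite-$N$ Schwinger--Dyson equation for $\phi_{\Lambda,N,\upbeta}(\ell)$ — integration by parts in $Q_e$ against Haar measure on $\unitary(N)$ using the $\mathfrak{u}(N)$ Fierz identity — followed by $N\to\infty$ via \cref{thm: sum over surfaces representation in 't hooft limit}, with the subleading terms dropping out; but this does not yield the per-$K$ refinement, so the combinatorial argument above is needed.)
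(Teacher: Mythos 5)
This theorem is quoted verbatim from the companion paper \cite{borga2024surfacesumslatticeyangmills} and is not proved in the present paper, so your sketch can only be measured against what this paper reports about that proof. Your overall strategy --- reducing \eqref{eq:mle-main} to an identity among the $\upbeta$-free coefficients $c(\cdot,\cdot)$ and interpreting it as the first step of a peeling exploration that classifies maps in $\npe(\ell,K)$ according to the internal face incident to $\mathbf{e}$ --- is indeed the strategy of the companion paper (cf.\ the discussion in \cref{sec: proof tech} and \cref{rmk: Surface cancellations}), and your disposal of the unbalanced case via preservation of the multiplicities $n_{e'}(\cdot,\cdot)$ under all four operations is correct.

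The gap is the central claim that your four families are ``exhaustive and disjoint'' and that each is in weight-multiplicative bijection with the corresponding set on the right-hand side. As \cref{rmk: Surface cancellations} and its footnote make explicit, the surfaces matched by the loop operations at $\mathbf{e}$ do \emph{not} account for all of $\npe(\ell,K)$: after grouping the surfaces corresponding to splittings and deformations, a leftover family remains, and \eqref{eq:mle-main} holds only because the signed weights of that leftover family sum to zero --- this is the content of the separate master cancellation lemma \cite[Lemma 5.25]{borga2024surfacesumslatticeyangmills}, for which your argument has no analogue. Relatedly, the footnote in \cref{sec: proof tech} records that an additional ``pinching'' step is needed before cutting produces two disk-type components whose boundaries read off the strings in $\SS_{\pm}(\mathbf{e},\ell)$; the remark that ``a chord separates a disk into two disks'' elides this. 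Finally, the sign comparison you defer (that peeling a positive configuration flips the parity of the permutation datum while a negative one preserves it) is essentially the entire content of the weight bookkeeping rather than a verifiable afterthought, and the premise $w_{\infty}(M)\in\{+1,-1\}$ is stronger than anything stated here: the paper only says the weights are signed (they come from leading-order Weingarten asymptotics, hence are signed products of Catalan-type factors, not generally $\pm 1$). As written, the argument does not close without the cancellation lemma and the pinching step.
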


\begin{remark}[\textsc{single location fixed plaquette assignment master loop equation}]\label{remark:single-location-mle}
We point out that the master loop equation in Theorem \ref{thm: fixed K 't Hooft master loop equation for surface sum} is more general than the ones previously presented in the literature as it is both for a fixed location in the loop and for a fixed plaquette assignment $K$. This seemingly subtle difference will be crucial for our proofs. See Section \ref{sec: proof tech} for more discussion.
\end{remark}

The master loop equation also gives us a useful recursion satisfied by the coefficients $c(\ell,K)$ defined in \eqref{eq:coefficents}. That is, the master loop equation in \eqref{eq:mle-main} gives the following relation:
\begin{align}\label{eq:mle-2}
		c(\ell,K) =&\sum_{\{\ell_1,\ell_2\}\in \SS_-(\mathbf{e},\ell)}
		\sum_{K_1+K_2=K}
		c(\ell_1,K_1)\cdot c(\ell_2,K_2) - \sum_{\{\ell_1,\ell_2\}\in \SS_+(\mathbf{e},\ell)}
		\sum_{K_1+K_2=K}
        c(\ell_1,K_1)\cdot c(\ell_2,K_2) \notag\\
		&+
		\sum_{p\in \cP(e^{-1},K)}
            c(\ell \ominus_{\mathbf{e}}p,K\sm p) 
		-\sum_{q\in \cP_{\ZZ^2}(e,K)}
        c(\ell \oplus_{\mathbf{e}}q,K\sm q).
\end{align}
In this paper, all the coefficients $c(\ell, K)$ will be computed using the master loop equation in \eqref{eq:mle-2} -- together with the  ``base'' cases coming from the discussion below \eqref{eq:jobfowerbf} -- and not using the specific surface sum expression in \eqref{eq:coefficents}. Nevertheless, we stress that the master loop equation \eqref{eq:mle-main} was obtained in \cite{borga2024surfacesumslatticeyangmills} largely using a surface sum point of view (and an associated peeling exploration of planar surfaces).

\subsubsection{Proof techniques}\label{sec: proof tech}

In this section, we discuss the central proof techniques used to prove our main results. As already mentioned in previous sections, the key point is to design a new surface exploration algorithm, i.e.\ ``peeling process'', which gradually explores the surfaces in a controlled manner. As is well known in the literature on random planar maps \cite{Curien2023}, such a peeling process can be encoded by a sequence of loops, which tracks the boundary of the surface as it is gradually deformed. As described in \cite[Chapter 4]{Curien2023}, for random planar maps, conditioning on the first step of the peeling process gives Tutte's equation, which is a fundamental recurrence relation that leads to many enumeration results about planar maps. 

For the surface sums in the present paper, the analog of Tutte's equation is precisely the master loop equation  of~\cite[Theorem 3.6]{borga2024surfacesumslatticeyangmills}, stated in this paper as~\cref{thm: fixed K 't Hooft master loop equation for surface sum}. In particular, this master loop equation roughly corresponds to conditioning on the first step of a peeling process which selects an edge on the boundary and reveals the face on the other side of the boundary.\footnote{There is an additional ``pinching'' step that is needed in \cite{borga2024surfacesumslatticeyangmills}, but we refrain from going more into the details here.} This ``single step'' recursion is one of the main contributions of \cite{borga2024surfacesumslatticeyangmills}.

It will also be important for us to not only track how the boundary of the surface evolves during the exploration process but also how the remaining unknown region evolves. In particular, this corresponds to tracking how the loop and plaquette assignment jointly evolve. Since the master loop equation of \cref{thm: fixed K 't Hooft master loop equation for surface sum} is stated for a fixed plaquette assignment, it naturally tracks both quantities. Note that the previous master loop equations in the literature \cite{chatterjee_rigorous_2019, jafarov2016wilson} are not stated for a fixed plaquette assignment, and thus would not allow us to define the peeling process that we describe shortly.

In the present paper, we develop a ``multi-step'' version of \cref{thm: fixed K 't Hooft master loop equation for surface sum}, which will be the key tool used to obtain our main results. Roughly, we seek to shrink the loop ``plaquette-by-plaquette'', until the loop disappears (and ensure that the resulting plaquette assignment is well behaved, which we do not wish to elaborate on here). But a single step of the master loop equation may not achieve this, and in fact, may grow the loop by a plaquette. Thus, naturally, we can try to take as many steps in the master loop equation as needed until the loop has indeed been shrunk by a plaquette. This can be thought of as a ``macro step'' (note for general loops some work has to be done to precisely define what ``shrunk by a plaquette'' means and thus how to define a macro step; see \cref{sec: Removing a plaquette}). The main point is to ensure that after such a macro step, the loop has indeed been shrunk, and moreover has not become too wild (we also need to have some control on the resulting plaquette assignment which again we do not wish to elaborate on). In other words, we have to ensure that the macro steps shrink the loop in a ``controlled manner''. The key technical contributions of the current paper are in identifying the precise notion of ``macro step'' and ``controlled'' and the verification that such ``controlled macro steps'' are even possible. Assuming this can be done, then by repeatedly applying these macro steps, we will be able to shrink the loop until it disappears, or more generally, until it is simple enough such that we may explicitly calculate the corresponding surface sum.

We highlight that in order to verify that such macro steps are possible, it is crucial that we fix a plaquette assignment (and thus constrain the unknown region of the surface) and track its evolution (i.e.\ have a fixed plaquette assignment master loop equation). This is because if the plaquette assignment is allowed to be picked freely at each step of the evolution, then the loop can grow forever (as there is always some plaquette assignment that can be used to make a surface with the larger loop). We view this ability to fix a plaquette assignment as one of the benefits of the surface perspective. Indeed, the master loop equation derived from the string-trajectory perspective (recall Remark \ref{remark:single-location-mle}) corresponds to an exploration where the plaquette assignment can be picked freely at each step. As an aside, we also note this previous less-general version of the master loop equation also corresponds to conditioning on a ``more averaged'' process which at each step, randomly chooses an edge among a (possibly large) collection of edges, and then explores at the chosen edge. 

We note that planarity is heavily used to define ``macro steps'' and ``control'' the loop under such evolution (i.e.\ in higher dimensions ``shrinking'' the loop is much more complicated and harder to track). We also mention that planarity is used in our algorithm of repeatedly applying macro steps to make the loop vanish.

We remark that the most technical parts of our arguments arise because our exploration process applies to arbitrary loops. If we restrict to the special case where the loop is supported on a single plaquette, which is the case treated in \cite[Section 4]{Basu:2016dnp}, then our exploration process specializes to something which is reminiscent of the algorithm developed in the cited reference. Thus, in some ways our surface exploration may be thought of as a generalization of the algorithm in \cite{Basu:2016dnp} to arbitrary loops. 

We conclude this section by discussing where the surface cancellations appear in this whole process.

\begin{remark}[\textsc{surface cancellations}]\label{rmk: Surface cancellations}
    The following discussion assumes some familiarity with the surface sum perspective developed in \cite{borga2024surfacesumslatticeyangmills}. 

    The master loop equation roughly says that instead of considering all the surfaces that can be constructed from some loop $\ell$ and plaquette assignment $K$ we can equivalently consider all the surfaces that can be constructed from loops obtained from $\ell$ by a loop operation and their corresponding plaquette assignment. In particular, as explained in \cite[Section 5.2]{borga2024surfacesumslatticeyangmills} the surfaces constructed from loops obtained from $\ell$ by a loop operation correspond to grouping the original surfaces constructed from $\ell$.\footnote{Note that the surfaces corresponding to loop operations do not correspond to all the surfaces of the original loop but this does not matter as the remaining surfaces cancel out (cf.\ the master cancellation lemma \cite[Lemma 5.25]{borga2024surfacesumslatticeyangmills}).} That is, a step of the master loop equation roughly corresponds to grouping surfaces.
    
   If we iterate this grouping in such a way that in the end we obtain groups of surfaces for which we can explicitly calculate the corresponding surface sums, then we can use this to compute the original surface sum. For example, if we end up in a setting where we know each resulting group of surfaces sums to zero, then we know that the original surface sum is zero. 

    A priori, the only collections of surfaces for which we know the explicit surface sum value are the ``base'' cases of $\phi^K(\ell)$ introduced in \cref{sec: Wilson loop expectations as surface sum large N}. Nonetheless, building from these cases we will be able to find more collections of surfaces that we know must cancel. Ultimately, once enough cases are covered, we may reduce computing general loops to these known cases.

    Thus, much of the rest of the paper is about grouping surfaces in such a way so that we know everything cancels out. We will achieve this through studying the master loop equation.
\end{remark}

\subsection{Triviality of the scaling limit and future research directions}\label{sec: trivial}

    One reason to consider the large-$N$ limit of lattice Yang--Mills theory (in any dimension $d\geq 2$) is the belief that, when the lattice mesh $\delta$ is sent to zero and $\upbeta$ is properly rescaled, it should lead to the construction of a $d$-dimensional large-$N$ \emph{continuum} Yang--Mills theory. Such a theory has only been constructed in dimension $d=2$ by L\'evy~\cite{lévy2012masterfieldplane}, but without considering the scaling limit of the lattice Yang--Mills model analyzed in this paper (see below for more details).

    Before investigating the approach outlined above in any dimension $d>2$, one would like to first double check (we will refer to this as the “two-dimensional test”) that the two-dimensional large-$N$ limit of lattice Yang--Mills theory (when the lattice mesh $\delta$ is sent to zero and $\upbeta$ is properly rescaled)  behaves  as L\'evy's two-dimensional large-$N$ continuum Yang--Mills theory~\cite{lévy2012masterfieldplane}.  In particular, this theory associates each (reasonably regular) continuum loop $\cL$ of the plane with a (non-trivial) real number, called the Wilson loop expectation of $\cL$; see \cite[Appendix B]{lévy2012masterfieldplane}.

    Before carrying out this “two-dimensional test”, we make an important observation.
    
    \begin{obs}\label{obs-formulas-make-sense}
       Recall that \cref{thm: sum over surfaces representation in 't hooft limit} guarantees that for any loop $\ell$ of $\ZZ^2$,
        \begin{equation}\label{eq:wedrfiuvbfwerui}
            \lim_{N\to\infty}\phi_{\Lambda_N,N,\upbeta}(\ell) =
		\phi(\ell),
        \end{equation}
        under the important assumption that $\upbeta$ is small enough. Moreover, as discussed in \cref{remark:critical}, we know that the above limit cannot hold for $\upbeta > 1$.
        
        Nevertheless, all the formulas that we computed in this paper for the quantity $\phi(\ell)$ make perfect sense for all values of $\upbeta\in\RR$.
    \end{obs}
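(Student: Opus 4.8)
The plan is to read this observation straight off the results already proved, since it needs no genuinely new argument. I would treat the two assertions in \cref{obs-formulas-make-sense} separately.

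The statements surrounding \eqref{eq:wedrfiuvbfwerui} require no work: the convergence for small $\upbeta$ is exactly \cref{thm: sum over surfaces representation in 't hooft limit}, and the failure for $\upbeta>1$ is the argument recorded in \cref{remark:critical}, which I would simply recall. Namely, specialize $\ell$ to a single plaquette $p$; then $\phi(p)=\upbeta$ by \cref{thm: wound simple loops WLE} (the case $n=1$, $a_{\ell}=1$), whereas $|\phi_{\Lambda_N,N,\upbeta}(p)|\leq 1$ for every $N$ and every $\upbeta\in\RR$, because $\phi_{\Lambda_N,N,\upbeta}(p)$ is the expectation of the normalized trace of a unitary matrix; hence \eqref{eq:wedrfiuvbfwerui} applied to $p$ would force $|\upbeta|\leq 1$.

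For the substantive claim --- that all of the formulas computed in the paper for $\phi(\ell)$ make sense for every $\upbeta\in\RR$ --- the point I would stress is that each such formula is a \emph{polynomial} in $\upbeta$. By \cref{thm: erasable loops have one plaquette assignment}, for any non-trivial loop $\ell$,
\[\phi(\ell)=\sum_{K\in\cK_{\ell}}c(\ell,K)\,\upbeta^{\area(K)},\]
where $\cK_{\ell}$ is finite by \cref{rem:card-set-cK}, each exponent $\area(K)=\sum_{p\in\cP}K(p)$ is a non-negative integer, and --- the one thing one must explicitly note --- each coefficient $c(\ell,K)$ is independent of $\upbeta$, being either the weight sum $\sum_{M\in\npe(\ell,K)}w_{\infty}(M)$ or, equivalently, the output of the $\upbeta$-free recursion \eqref{eq:mle-2} started from the base cases below \eqref{eq:jobfowerbf}. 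Thus $\phi(\ell)$ is a real polynomial in $\upbeta$ (and for a trivial loop it is the constant $1$ coming from those same base cases), hence defined for all $\upbeta\in\RR$; the explicit closed forms of \cref{thm: wound simple loops WLE} and of \cref{table} are obtained by unwinding precisely this finite recursion, so they are manifestly polynomial in $\upbeta$ and inherit the property.

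The hard part is essentially nonexistent: once one records that the $c(\ell,K)$ carry no hidden $\upbeta$-dependence, the observation is pure bookkeeping --- a finite $\RR$-linear combination of non-negative integer powers of $\upbeta$ is defined for every $\upbeta\in\RR$, in contrast with the infinite surface sum in \eqref{eq:def-phi_K}, whose absolute convergence \cref{thm: sum over surfaces representation in 't hooft limit} only guarantees for $|\upbeta|\leq\upbeta_0$.
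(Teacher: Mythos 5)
Your proposal is correct and matches the paper's intent exactly: the paper treats this Observation as a direct consequence of \cref{thm: sum over surfaces representation in 't hooft limit}, \cref{remark:critical}, and the finiteness of the sum in \cref{thm: erasable loops have one plaquette assignment} with $\upbeta$-independent coefficients $c(\ell,K)$, and offers no separate proof beyond the surrounding discussion. Your explicit remark that the $c(\ell,K)$ carry no hidden $\upbeta$-dependence is precisely the one point worth recording, and the paper itself flags it in the statement of \cref{thm: erasable loops have one plaquette assignment}.
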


    Given \cref{obs-formulas-make-sense}, a natural question --related to the ``two-dimensional test'' -- is whether one can send the lattice mesh $\delta$ to zero, while rescaling $\upbeta$ accordingly (not necessarily keeping $\upbeta$ small), in such a way that our formulas for $\phi(\ell)$ converge to those in~\cite[Appendix B]{lévy2012masterfieldplane}.

    We will argue, at least heuristically, that the answer is negative because any rescaling of $\upbeta$ leads to \emph{trivial} limiting formulas, and then discuss the potential sources of this discrepancy.

    \medskip

    Recall from \cref{rmk:cont-parallel} that for a simple continuum loop $\cL_\alpha$ of (Euclidean) area $\alpha$, the Wilson loop expectation of $\cL_\alpha^n$ in L\'evy's two-dimensional large-$N$ continuum Yang--Mills theory is given by 
    \begin{equation}\label{eq:levy2}
        \varphi(\cL_\alpha^n) =\left(\sum_{k=0}^{n-1}\frac{(-\alpha)^k}{k!}n^{k-1}\binom{n}{k+1}\right)\mathrm{e}^{-(n\alpha)/2}.
    \end{equation}
    On the other hand, if $\ell_a$ denotes a simple loop on $\ZZ^2$ of area $a$ (for instance, $\ell_a$ could simply be a rectangle of sides $s_1 \times s_2$), then, thanks to \cref{thm: wound simple loops WLE}, we know that for all $n\geq 1$,
    \begin{align}\label{eq:ref-exprss}
        \phi(\ell_a^n) = \frac{(-1)^{n+1}}{n}\binom{na-2}{n-1}\upbeta^{na}.
    \end{align}
    Now, if we want to approximate a continuum loop $\cL_\alpha$ with a lattice loop $\ell_a$, we should take the lattice mesh $\delta$ to zero (but keeping the loop $\ell_a$ of macroscopic size, for instance, in the rectangle case, by replacing $s_1 \times s_2$ by $\lfloor s_1/\delta \rfloor \times \lfloor s_2/\delta \rfloor$). Notice, when doing this, the area (i.e.\ number of plaquettes contained in the lattice loop) of the lattice loop  must be approximately $a(\delta)\sim \alpha/ \delta^2$. In particular, it must go to infinity as $\delta$ goes to zero. Thus, when taking the lattice mesh $\delta$ to zero, only the leading order term in $a=a(\delta)$ of the polynomial factor in \eqref{eq:ref-exprss} should contribute. That is, for all $n\geq 1$,
    \begin{align}\label{eq:ref-exprss2}
        \phi\left(\ell_{a(\delta)}^n\right) \sim C_n {(\alpha/ \delta^2)}^{n-1}\upbeta^{n{\alpha/ \delta^2}} \qquad\text{as }\delta\to 0,
    \end{align}
    where $C_n$ is some constant depending on $n$. Next, in order to recover the behavior of the continuum limit in \cite{lévy2012masterfieldplane} (recall \eqref{eq:levy2}),  we should scale $\upbeta$ in terms of $\delta$ so that, as $\delta\to 0$, we recover the exponential factor $\mathrm{e}^{-(n\alpha)/2}$. For instance, one could rescale $\upbeta$ as $\mathrm{e}^{-\delta^2/2}$. But, if $\upbeta$ has this rescaling, then the polynomial coefficient in $a/ \delta^2$ in \eqref{eq:ref-exprss2} dominates and  $\phi(\ell_{a(\delta)}^n)$ tends to infinity. Clearly, this is not what we need to get formulas similar to the ones in \eqref{eq:levy2}. Therefore, $\upbeta$ must also be scaled to compensate for the polynomial coefficient in $a/ \delta^2$ in \eqref{eq:ref-exprss2}, getting that $\upbeta$ should be rescaled, for instance, as follows 
    \begin{align*}
        \upbeta = \mathrm{e}^{-\delta^2/2}\delta^{c\delta^2},
    \end{align*}
    for some constant $c\in\RR$ (in particular, $\upbeta\sim 1$ as $\delta\to 0$). Substituting this in \eqref{eq:ref-exprss2},
    we get that, for all $n\geq 1$,
    \begin{align}\label{eq:ref-exprss3}
         \phi\left(\ell_{a(\delta)}^n\right) \sim C_n (\alpha^{n-1}\mathrm{e}^{-n\alpha/2})\delta^{cn\alpha-2(n-1)} \qquad\text{as }\delta\to 0.
    \end{align}
    To get a non-trivial continuum limit as $\delta\to 0$, we should have that the limit of the right-hand side of \eqref{eq:ref-exprss3} is finite for all $n\geq 1$. This is only possible if $cn\alpha-2(n-1)\geq 0$. As this should hold for all $n\geq 1$, we get that $c\geq 2/\alpha$. But if we plug such a $c$ in \eqref{eq:ref-exprss3}, we will get that for all $n\geq 1$,
    \begin{align*}
        \phi\left(\ell_{a(\delta)}^n\right) \sim  0 \qquad\text{as }\delta\to 0.
    \end{align*} 
    One can turn the above heuristic argument into a proof (but we skip these details here) of the following fact: Any rescaling of $\upbeta$ as a function of $\delta$ that yields a finite limit of $\phi(\ell_{a(\delta)}^n)$ as $\delta \to 0$ for all $n \geq 1$ must result in the trivial limit, equal to zero for all $n \geq 1$. Note that the latter fact also suggests that any rescaling of $\upbeta$ as a function of $\delta$ which gives a non-trivial limit for the empirical spectral distribution of $Q_{\ell}$ must result in the uniform measure on the unit circle (which has all the non-trivial moments equal to zero); note that this is consistent with the right-hand side of \cref{fig-limiting-spectral-density}.

    \medskip

    To continue our discussion, we need to first clarify one additional point: the finite-$N$ lattice Yang--Mills model introduced at the beginning of this paper in  \eqref{eq: lattice YM measure-2} has been defined in terms of the so-called \emph{Wilson action}. In the literature, there are  other finite-$N$ lattice Yang--Mills models, for instance, defined in terms of the so-called \emph{Villain or Manton action}; see \cite[Section 2.3.1]{chevyrev2023invariant}.

    With this we now continue the previous discussion. In this section, we argued (at least heuristically) that, for the finite-$N$ lattice Yang--Mills model with the Wilson action, sending the lattice mesh $\delta$ to zero \emph{after} taking the large-$N$ limit, and using our formulas for all values of $\upbeta\in\RR$, can only result in a \emph{trivial} scaling limit. In \cite[ Corollary 2.26]{chevyrev2023invariant} it has been shown that the scaling limit of the finite-$N$ lattice Yang--Mills model (with Wilson, Villain, or Manton action) converges  after sending the lattice mesh $\delta$ to zero to the two-dimensional finite-$N$ continuum Yang--Mills theory of~\cite{levy2006discrete, chevyrev2019yang}. Moreover, \cite{lévy2012masterfieldplane} also shows that such a two-dimensional finite-$N$ continuum Yang--Mills theory converges, when $N$ tends to infinity, to the two-dimensional large-$N$ continuum Yang--Mills theory. Hence, this section can be interpreted as saying that the $\delta$ to zero and the $N$ to infinity limits for the lattice Yang--Mills model with Wilson action do not commute (at least when $\beta=\upbeta N$ and $\upbeta$ is small).

    A few natural questions or possible future directions of research are the following ones:
    \begin{enumerate}
        \item Is it possible that one needs to take the two limits in $N$ and $\delta$ simultaneously in order to recover a non-trivial scaling limit for the two-dimensional lattice Yang--Mills theory with the Wilson action?
        
        \item Recall from \cref{remark:critical} that \cref{thm: sum over surfaces representation in 't hooft limit} cannot hold for $\upbeta>1$. Hence, when $\upbeta>1$, one should find some new limiting  Wilson loop expectations $\phi'(\ell)$.
        
        Is it possible that such new  limiting Wilson loop expectations  $\phi'(\ell)$ yields new formulas that admit a rescaling of $\upbeta$ in $\delta$ leading to non-trivial limits?
        
        \item It would be interesting to construct a surface-sum representation for the large-$N$ limit of Wilson loop expectations in any dimension using the Villain action, instead of the Wilson action of \eqref{eq: lattice YM measure-2} used in this paper. Indeed, \cite{lévy2012masterfieldplane} shows that the two-dimensional large-$N$ lattice Yang--Mills model with Villain action converges, as $\delta$ tends to zero, to the two-dimensional large-$N$ continuum Yang--Mills theory. Hence, the two-dimensional large-$N$ lattice Yang--Mills model with Villain action passes the “two-dimensional test” discussed at the beginning of this section, and it might be a more promising model for investigating the construction of a $d$-dimensional large-$N$ continuum Yang--Mills theory for $d > 2$. Our hope is that such a surface-sum representation for the large-$N$ limit of Wilson loop expectations in any dimension, built using the Villain action, may provide new tools for studying this model—just as it has been the case for this and our previous paper \cite{borga2024surfacesumslatticeyangmills} in the case of the Wilson action.

        \item The explicit computations performed throughout this paper heavily rely on the fact that we are in two dimensions. That being said, can any of the ideas or techniques developed in this paper be applied to higher dimensions? One thing to note is that in dimension three, as shown in \cref{rem: Infinite contributing plaquette assignments in dimension three}, the loop $\ell$ consisting of a single plaquette $p$ already admits an \emph{infinite} number of plaquette assignments that yield a non-zero surface sum (though these sums may be positive or negative,\footnote{Note that all the surface sums considered in \cref{rem: Infinite contributing plaquette assignments in dimension three} have positive sign, but that surface sums are not all the non-zero surface sums.} leading to more intricate cancellations). Thus, in order to understand the higher-dimensional case, we believe a finer understanding of such cancellations would be required.

        \item We showed that, in the large-$N$ limit, all but very few plaquette assignments contribute to the surface sum. Can this phenomenon already be observed in the finite-$N$ surface sum for large values of $N$ in the regime $\beta=\upbeta N$? More generally, could the complete description of the large-$N$ limit detailed in this paper inform our understanding of the finite-$N$ case in any meaningful way? If so, then this would help in extending the main result of \cite{CNS2025} to a larger regime of parameters.
    \end{enumerate}

\section{Contributions from plaquette assignments outiside the canonical collection are trivial}\label{sec: expectations in 2D}

In this section, we prove \cref{thm: erasable loops have one plaquette assignment}. The proof will follow by repeatedly applying, in a suitable way, the master loop equation (\cref{thm: fixed K 't Hooft master loop equation for surface sum}) until we can write $\phi^K(\ell)$ as a sum of terms that are all zero. 

More precisely, the proof of \cref{thm: erasable loops have one plaquette assignment} will proceed as follows: First, in Section~\ref{sec: Removing a plaquette}, we introduce a process that allows us to ``remove'' a plaquette from a loop. Then, in \cref{sec: Loops supported on a plaquette}, we show that each of the three conditions appearing in the definition of the canonical collection of plaquette assignments (\cref{defn:collection-plaquette-ass}) is necessary to have a non-zero surface sum of certain loops of small size (having support on one or two plaquettes).  Finally, in \cref{sec: general loops}, to finish the proof of \cref{thm: erasable loops have one plaquette assignment}, we repeatedly apply the ``removing'' procedure to shrink general loops to those that we know how to handle from the previous section.

\begin{remark}\label{remark: loops}
    We remark that if $\ell$ is such that $d_{\ell}(p) = |h_{\ell}(p)|$ for all $p \in \cP$, then the proof can be significantly simplified (although we will only present the general version).
    Indeed, as explained in \cref{rem:card-set-cK}, for such loops we have $\cK_{\ell} = \{K_{\ell}\}$, so it suffices to show that $\phi^K(\ell) = 0$ for all $K \neq K_{\ell}$; a much simpler task.
\end{remark}

\subsection{Removing a plaquette from a loop by pushing its edges}\label{sec: Removing a plaquette}

In this section, we describe a procedure for applying the master loop equation to ``remove'' a plaquette from a loop~$\ell$ (see Sections~\ref{sec: Master loop equation operator}~and~\ref{sec: removing a plaquette}). We also describe how certain key quantities—such as the height, the plaquette assignment, and the loop’s edges—evolve during this process (see~\cref{sec: removing a plaquette-pushing}) by observing that ``removing'' a plaquette amounts to ``pushing'' certain edges. The terminology of ``removal'' and ``pushing'' will be clarified in Observations~\ref{obs:remove_plaq}~and~\ref{obs: move edges}, respectively.

Before that, we introduce the following notation and recall a fact from \cite{borga2024surfacesumslatticeyangmills}. Both will be useful in the subsequent sections. Given a string $s=\{\ell_1,\dots,\ell_n\}$ and a multiset of plaquette assignments $\multK=\{K_1,\dots, K_n\}$ , we denote a pairing\footnote{Note that both $s$ and $\multK$ are unordered, so such pairings need to be specified. Nevertheless, they will often be clear from the context and not explicitly stated.} of $s$ and $\multK$  by
\begin{align*}
    (s,\multK) = \{(\ell_1,K_1),\dots,(\ell_n,K_n)\},
\end{align*} 
and call it a \textbf{string plaquette assignment}. Note that $(s,\multK)$ is a multiset of pairs $(\ell_i,K_i)$.
We say that a string plaquette assignment is balanced, if $(\ell_i,K_i)$ is balanced for all $i\in[n]$. We also introduce the compact notation
\begin{align}\label{eq: fixed K factor}
    \phi^{\multK}(s) := \prod_{i=1}^n\phi^{K_i}(\ell_i).
\end{align}

Finally, we recall from \cite{borga2024surfacesumslatticeyangmills} that the Wilson loop expectation $\phi^K(\ell)$ for a fixed plaquette assignment $K$ introduced in \cref{eq:def-phi_K} is invariant under removing backtracks of $\ell$.

\begin{lem}[{\cite[Lemma 4.2]{borga2024surfacesumslatticeyangmills}}]\label{lemma: backtrack cancellations}
Suppose that $\ell=\pi_1 \, e \,e^{-1} \, \pi_2$ is a loop with backtrack $e \, e^{-1}$ and $(\ell,K)$ is a balanced loop plaquette assignment. Then, 
\begin{equation*}
	\phi^K(\pi_1 \, e \, e^{-1} \, \pi_2) = \phi^K(\pi_1 \, \pi_2).
\end{equation*}
\end{lem}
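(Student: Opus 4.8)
The plan is to reduce to the surface-sum coefficients and then exhibit a weight-preserving bijection between the relevant collections of surfaces. First note that the prefactor $\upbeta^{\area(K)}$ in $\phi^K(\ell)=\upbeta^{\area(K)}c(\ell,K)$ depends only on $K$, and that the balance hypothesis is itself backtrack-symmetric: since $n_f(\pi_1\,e\,e^{-1}\,\pi_2,K)=n_f(\pi_1\pi_2,K)+\mathbbm{1}[f=e]+\mathbbm{1}[f=e^{-1}]$ for every oriented edge $f$, and $\mathbbm{1}[f=e]+\mathbbm{1}[f=e^{-1}]=\mathbbm{1}[f^{-1}=e]+\mathbbm{1}[f^{-1}=e^{-1}]$, the pair $(\pi_1\,e\,e^{-1}\,\pi_2,K)$ is balanced if and only if $(\pi_1\pi_2,K)$ is. Hence, recalling $c(\ell,K)=\sum_{M\in\npe(\ell,K)}w_{\infty}(M)$, it suffices to prove
\[
\sum_{M\in\npe(\pi_1\,e\,e^{-1}\,\pi_2,\,K)}w_{\infty}(M)=\sum_{M\in\npe(\pi_1\pi_2,\,K)}w_{\infty}(M).
\]

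The map I would use is the following. A surface $M\in\npe(\pi_1\,e\,e^{-1}\,\pi_2,K)$ is a connected planar map with the disk topology whose boundary reads $\pi_1\,\mathbf e\,\mathbf e^{-1}\,\pi_2$, with $\mathbf e$ and $\mathbf e^{-1}$ consecutive boundary edges carrying opposite orientations of the lattice edge $e$; together they form a ``spike'' of the boundary attached at a vertex $w$. Define $\Phi(M)$ by collapsing this spike --- equivalently, folding $\mathbf e$ onto $\mathbf e^{-1}$ --- while leaving every face, every interior edge-identification, and in particular every plaquette contributing to $K$ untouched. The resulting map is connected, still has the disk topology, has boundary $\pi_1\pi_2$, the same plaquette assignment $K$ (hence the same $\area(K)$), and the same weight $w_{\infty}$, since $w_{\infty}$ is built from local data attached to pieces of the surface that the surgery does not modify. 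The inverse operation re-inserts a spike labelled $e$ at the unique boundary vertex between $\pi_1$ and $\pi_2$, and likewise preserves $K$. Thus $\Phi$ is a weight-preserving bijection $\npe(\pi_1\,e\,e^{-1}\,\pi_2,K)\to\npe(\pi_1\pi_2,K)$, and summing $w_{\infty}$ over both sides gives the displayed equality, hence the lemma.

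The main obstacle is making the ``collapse/fold'' surgery rigorous against the precise (and lengthy) definitions of $\npe(\ell,K)$ and $w_{\infty}$ from \cite{borga2024surfacesumslatticeyangmills}: one must check that the spike can always be detached --- if interior edges at $w$ lie between $\mathbf e$ and $\mathbf e^{-1}$, so that plaquette faces are pinned at its tip, one first normalizes by sliding them out, or argues directly in the folded picture --- and that $\Phi$ and $\Phi^{-1}$ respect all constraints defining $\npe$ (connectivity, disk topology, the edge-multiplicity bookkeeping $n_f(\cdot,K)$, the admissibility conditions on edge identifications) as well as the exact form of $w_{\infty}$. As a sanity check that the statement should hold at the refined fixed-$K$ level and not merely for $\phi(\ell)=\sum_K\phi^K(\ell)$: at the level of observables $W_{\pi_1\,e\,e^{-1}\,\pi_2}(\cQ)=W_{\pi_1\pi_2}(\cQ)$ trivially since $Q_eQ_{e^{-1}}=I$, and in the strong-coupling expansion of $\phi_{\Lambda,N,\upbeta}$ the coefficient of each monomial in the variables $(\upbeta N)^{K(p)}$ is already --- before the limit $N\to\infty$ --- invariant under backtrack erasure of the loop, and $\phi^K$ is the large-$N$ image of such a coefficient. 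A direct attack via \cref{thm: fixed K 't Hooft master loop equation for surface sum} applied at $\mathbf e$ does peel off $\phi^K(\pi_1\pi_2)$ as one term, but leaves a combination of splitting and deformation terms whose vanishing is not transparent and for which no obvious complexity measure decreases, so I would not pursue that route.
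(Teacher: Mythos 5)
You should first note that this paper does not actually prove \cref{lemma: backtrack cancellations}: it is imported verbatim from the companion paper as \cite[Lemma 4.2]{borga2024surfacesumslatticeyangmills}, so there is no in-paper proof to match. Your reduction to the coefficients $c(\ell,K)$ and your observation that balance is preserved under backtrack insertion are both correct. The gap is in the central claim that ``collapsing the spike'' is a weight-preserving \emph{bijection} $\npe(\pi_1\,e\,e^{-1}\,\pi_2,K)\to\npe(\pi_1\pi_2,K)$. A surface in $\npe(\ell,K)$ is determined in part by a pairing of the copies of each lattice edge $f$ with the copies of $f^{-1}$ (coming from the loop and from the plaquettes of $K$); cf.\ \cref{rem: Infinite contributing plaquette assignments in dimension three}, where uniqueness of the map holds precisely because each edge appears exactly twice. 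Inserting the backtrack adds one copy of $e$ and one of $e^{-1}$, so the number of admissible pairings at that edge strictly increases (roughly from $n!$ to $(n+1)!$), and there are surfaces in which $\mathbf e$ is glued not to $\mathbf e^{-1}$ but to a copy of $e^{-1}$ contributed by $K$ or by another occurrence of $e^{-1}$ in $\pi_1\pi_2$. Such surfaces have no preimage under your collapse map, and they are not all of zero weight; the identity holds only because their signed weights cancel among themselves. This is exactly the mechanism the paper alludes to in \cref{rmk: Surface cancellations} (``the remaining surfaces cancel out, cf.\ the master cancellation lemma [Lemma 5.25]''). The obstacle you flag --- interior edges pinned at the tip vertex $w$ --- is a secondary topological issue; the primary one is this combinatorial mismatch of cardinalities, which no normalization of the embedding can repair.

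Two smaller points. Your finite-$N$ ``sanity check'' is heuristic at best: the coefficient of a fixed monomial $\prod_p\upbeta^{K(p)}$ in $\phi_{\Lambda,N,\upbeta}(\ell)$ involves the $\upbeta$-expansion of the normalizing constant as well, so identifying it with $\phi^K(\ell)$ and concluding fixed-$K$ backtrack invariance requires an argument, not just $Q_eQ_{e^{-1}}=I$. And the route you dismiss --- applying \cref{thm: fixed K 't Hooft master loop equation for surface sum} at $\mathbf e$, where the negative splitting with the adjacent $\mathbf e^{-1}$ produces $\phi^K(\pi_1\pi_2)\,\phi^{0}(\emptyset)=\phi^K(\pi_1\pi_2)$ --- is in fact the natural one here; the burden of showing the remaining splitting and deformation terms cancel is the same cancellation your bijection silently assumes away, so avoiding that route does not avoid the difficulty.
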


\subsubsection{The master loop operator and trajectory trees}\label{sec: Master loop equation operator}

We start by introducing certain trees that provide a convenient way to keep track of all the string plaquette assignments generated by repeatedly applying the master loop equation (\cref{thm: fixed K 't Hooft master loop equation for surface sum}). Recall from \cref{sec: Wilson loop expectations as surface sum large N} that we do \emph{not} remove backtracks when performing loop operations.

When we apply the master loop equation  at an edge~$\mathbf{e}$ of a loop~$\ell$ with a finite plaquette assignment~$K$, we obtain a new collection of string plaquette assignments. We define the \textbf{master loop operator} (ML-operator) to be the operator that takes as input a balanced loop plaquette assignment~$(\ell, K)$ and a specific edge~$\mathbf{e}$ of~$\ell$, and outputs all balanced string plaquette assignments that appear on the right-hand side of the master loop equation. That is (cf.\ \cref{fig:master-loop-operator}),
\begin{align*}
    \mle_{\mathbf{e}}(\ell,K) &\coloneqq \Big\{\{(\ell_1,K_1),(\ell_2K_2)\}: \{\ell_1,\ell_2\}\in \SS_+(\mathbf{e},\ell), K_1+K_2=K, (\ell_i,K_i)\text{ is balanced for $i=1,2$}\Big\}\\
    &\,\,\,
    \cup\Big\{\{(\ell_1,K_1),(\ell_2K_2)\}: \{\ell_1,\ell_2\}\in \SS_-(\mathbf{e},\ell), K_1+K_2=K, (\ell_i,K_i)\text{ is balanced for $i=1,2$}\Big\}\\
    &\,\,\,
    \cup \Big\{\{(\ell\oplus_{\mathbf{e}} p, K\sm\{p\})\}: p\in \cP, e\in p, K(p)\geq 1\Big\} \\
    &\,\,\,
    \cup \Big\{\{(\ell\ominus_{\mathbf{e}} p,K\sm\{p\})\}: p\in \cP, e^{-1}\in p, K(p)\geq 1\Big\},
\end{align*}
where the sets on the right-hand side are (possibly empty) multisets of string plaquette assignments. In particular, note that in the last two cases (corresponding to deformations), $\mle_{\mathbf{e}}$ outputs string plaquette assignments of cardinality one. In particular, there can be multiple \emph{identical} string plaquette assignments or multiple string plaquette assignments having the same string, but with different plaquette assignments corresponding to each loop (cf.\ \cref{fig:master-loop-operator}).

\begin{figure}[ht!]
\begin{center} 
    \includegraphics[width=\textwidth]{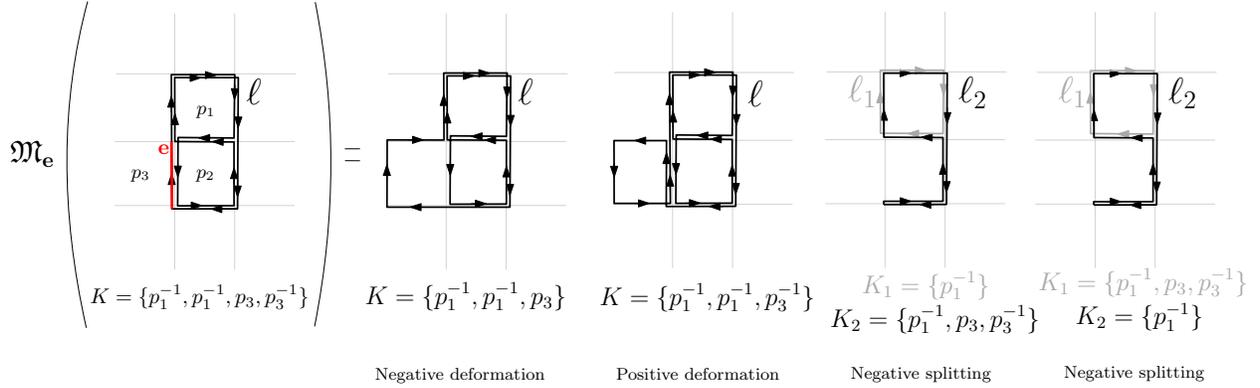} 
	\caption{\label{fig:master-loop-operator}A balanced loop plaquette assignment $(\ell, K)$ is shown in parentheses, with a specific edge~$\mathbf{e}$ of~$\ell$ highlighted in red. On the right, the four string plaquette assignments produced by $\mle_{\mathbf{e}}(\ell, K)$ are displayed. The first results from a negative deformation with $p_3^{-1}$, and the second from a positive deformation with $p_3$. The third and fourth arise from a negative splitting (note that there are two possible choices for $K_1$ and $K_2$ such that the resulting string plaquette assignment remains balanced). Note that we do not remove the backtracks formed in the loop $\ell_2$.}
\end{center}
\vspace{-3ex}
\end{figure}

We similarly define the ML-operator for a balanced string plaquette assignment 
\[
(s,\multK)=\{(\ell_1,K_1),\dots,(\ell_n,K_n)\}
\] 
and a specific edge $\mathbf{e}$ of a loop $\ell_i$ in the string $s$, by applying the ML-operator $\mle_{\mathbf{e}}$ to the loop plaquette assignment $(\ell_i,K_i)$, while leaving the rest of the string identical. That is, 
\begin{multline*}
  \mle_{\mathbf{e}}(s,\multK)=\mle_{\mathbf{e}}\left(\{(\ell_1,K_1),\dots,(\ell_n,K_n)\}\right) \\
  \coloneqq \bigcup \;\;\bigg\{\left\{(\ell_1,K_1),\dots(\ell_{i-1},K_{i-1}), (\ell_{i,1},K_{i,1}),\dots,(\ell_{i,m},K_{i,m}), (\ell_{i+1},K_{i+1}),\dots,(\ell_n,K_n)\right\}\bigg\},
\end{multline*}
where the union ranges over $\{(\ell_{i,1},K_{i,1}),\dots,(\ell_{i,m},K_{i,m})\}\in  \mle_{\mathbf{e}}(\ell_i,K_i)$ (note that $m$ can only take the value 1 or 2) and the sets on the right-hand side are (possibly empty) multisets of string plaquette assignments.

\begin{remark}\label{rem:balanced}
    We emphasize that $\mle_{\mathbf{e}}(\ell,K)$ (thus also $\mle_{\mathbf{e}}(s,\multK)$) outputs balanced string plaquette assignments $(s',\multK')=\{(\ell'_1,K'_1),\dots,(\ell'_n,K'_n)\}$, i.e.\ each component $(\ell'_i,K'_i)$ is balanced.
\end{remark}

Recall that $\phi^K(\ell) = 0$ if $(\ell, K)$ is not balanced. 
Therefore, the ML-operator indeed outputs all string plaquette assignments that potentially contribute to the right-hand side of the master loop equation.

If we pick an edge $\mathbf{e}'$ on one of the string plaquette assignments $(s',\multK')$ outputted by the ML-operator $\mle_{\mathbf{e}}(s,\multK)$, we can apply the ML-operator $\mle_{\mathbf{e}'}$ again (note now at $\mathbf{e}'$), obtaining a new collection of string plaquette assignments $\mle_{\mathbf{e}'}(s',\multK')$.

Iterating these operations, we can naturally form certain unordered\footnote{Unordered means that the children of a node are not ordered from left to right in a specific way.} rooted trees where each vertex in the tree is labeled by one of the outputs of a ML-operator applied to the label of its parent vertex. 
More precisely, starting from a loop plaquette assignment $(\ell,K)$ we build a \textbf{trajectory tree $\cT(\ell,K)$ generated from $(\ell,K)$} as follows (cf.\ \cref{fig:master-loop-operator-tree}): 
\begin{itemize}
    \item We start by adding the root of $\cT(\ell,K)$ and label it with the string plaquette assignment $(\ell,K)$. Then, we have two options:
    \begin{itemize}
        \item Either, we declare the root to be a final leaf;
        \item Or we choose an edge $\mathbf{e}$ of $\ell$, and we add $\#\mle_{\mathbf{e}}(\ell,K)$ new children leaves to the root and label each new leaf with one of the string plaquette assignments in $\mle_{\mathbf{e}}(\ell,K)$.
    \end{itemize}
    \item Then, iteratively, for each leaf $l$ of $\cT(\ell,K)$, that has not yet been declared a final leaf, labeled with a string plaquette assignment $(s,\multK)$, if $s$ is empty then we declare $l$ to be a final leaf; otherwise, if $s$ is non-empty, we have two options:
    \begin{itemize}
        \item Either, we declare $l$ to be a final leaf;
        \item Or, we choose an edge $\mathbf{e}$ of $s$, and we add $\#\mle_{\mathbf{e}}(s,\multK)$ new leaves to $l$ and label each new leaf with one of the string plaquette assignments in $\mle_{\mathbf{e}}(s,\multK)$. 
    \end{itemize}
    \item Finally, for each final leaf $l$ labeled by the string plaquette assignment $\{(\ell_1,K_1),\dots, (\ell_m,K_m)\}$ relabel the leaf $l$ with the string plaquette assignment obtained from $\{(\ell_1,K_1),\dots, (\ell_m,K_m)\}$ by removing all the backtracks from $\ell_i$ for all $i\in[m]$.
\end{itemize}
We stress that $\cT(\ell,K)$ depends both on when we declare a leaf to be a final leaf and, for each internal vertex, on the choice we made when selecting the edge $\mathbf{e}$ for the ML-operator $\mle_{\mathbf{e}}$ to use. We also note that the somewhat odd convention of not removing backtracks until the end of a trajectory tree will be quite useful for some later proofs (see for instance \cref{lemma: MLE preserves K and height} and \cref{lem: MLE for edge loops}).

Often, when the starting loop plaquette assignment is unambiguous, we will simply refer to such a tree as a \textbf{trajectory tree}. We will also often refer to a vertex of a tree $\cT(\ell,K)$ by directly referring to its label.

\begin{figure}[ht!]
\begin{center} 
    \includegraphics[width=\textwidth]{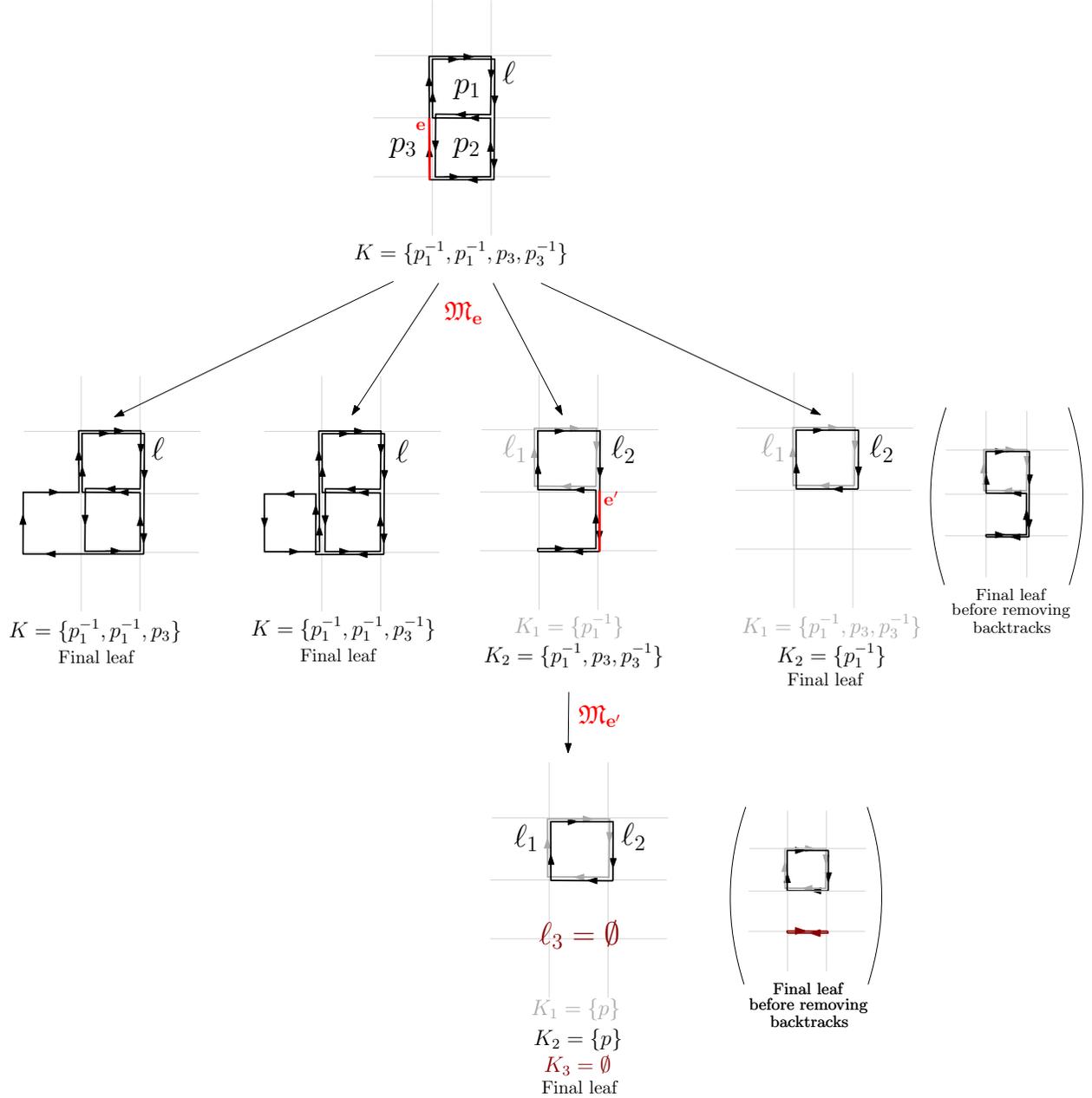} 
	\caption{\label{fig:master-loop-operator-tree} The trajectory tree $\cT(\ell, K)$ generated from the loop plaquette assignment $(\ell, K)$ shown at the top. The first generation of the tree is obtained by applying the ML-operator $\mle_{\mathbf{e}}$, as in \cref{fig:master-loop-operator}. Three out of four leaves are declared final leaves. For the fourth leaf, the ML-operator $\mle_{\mathbf{e}'}$ is applied, producing a new leaf, which is then declared a final leaf. Backtracks are removed from all final leaves; the loops shown in parentheses represent the loops before backtrack removal.}
\end{center}
\vspace{-3ex}
\end{figure}

Let $\cT=\cT(\ell,K)$ be a trajectory tree generated from $(\ell,K)$ and let $\cV_1$ denote the vertices at distance one from the root, i.e.\ the children of the root. Since applying the ML-operator $\mle_{\mathbf{e}}$ for some edge $\mathbf{e}$ in $\ell$ outputs the contributing terms on the right-hand side of the master loop equation (\cref{thm: fixed K 't Hooft master loop equation for surface sum}) and $\phi^{K_i}(\ell_i)$ is invariant under removing backtracks (\cref{lemma: backtrack cancellations}; this is only necessary if one of the children of the root is a final leaf), we have that
\begin{align*}
    \phi^K(\ell) = \sum_{(s,\cK)=\{(\ell_1,K_1),\dots(\ell_m,K_m)\}\in \cV_1} \upbeta_{s,\cK} \cdot \prod_{i=1}^m\phi^{K_i}(\ell_i)\stackrel{\eqref{eq: fixed K factor}}{=}\sum_{(s,\multK)\in \cV_1}\upbeta_{s,\cK}\cdot\phi^{\multK}(s),
\end{align*}
where we note that here $m$ is either 1 or 2, and $\upbeta_{s,\cK}$ is a prefactor (irrelevant for our purposes in this section) of the form $(-1)^x \upbeta^{y}$ for some $x, y \in \{0,1\}$, arising from the right-hand side of \eqref{eq:mle-main}. 

More generally, using the fact that the ML-operator acts on a single loop of each string, we get, again from the master loop equation (\cref{thm: fixed K 't Hooft master loop equation for surface sum}) and the invariance under removing backtracks (\cref{lemma: backtrack cancellations}), the following useful observation. 

\begin{obs}\label{obs: tree WLE relation} 
    For a trajectory tree $\cT$, let $\cL(\cT)$ denote the leaves of $\cT$. If $\cT=\cT(\ell,K)$ is a trajectory tree generated from $(\ell,K)$ then
    \begin{align*}
    \phi^K(\ell) = \sum_{(s, \cK)=\{(\ell_1,K_1),\dots(\ell_m,K_m)\}\in \cL(\cT)} \upbeta_{s,\cK} \cdot \prod_{i=1}^m\phi^{K_i}(\ell_i) \stackrel{\eqref{eq: fixed K factor}}{=} \sum_{(s,\multK)\in \cL(\cT)} \upbeta_{s,\cK} \cdot \phi^{\multK}(s),
\end{align*}
where $\upbeta_{s,\cK}$ is a prefactor (irrelevant for our purposes in this section) of the form $(-1)^x \upbeta^{y}$ for some $x \in \{0,1\}$ and $y\geq 0$, which depends on all the labels appearing along the path from the root of $\cT$ to the leaf $(s,\cK)$.
\end{obs}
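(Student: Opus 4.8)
The plan is to prove this by induction on the number of vertices of the trajectory tree $\cT$, establishing simultaneously the (formally more general) statement for trajectory trees $\cT(s,\multK)$ generated from an arbitrary balanced string plaquette assignment $(s,\multK)$: namely that
\[
\phi^{\multK}(s)=\sum_{(s',\multK')\in\cL(\cT(s,\multK))}\upbeta_{s',\multK'}\cdot\phi^{\multK'}(s'),
\]
where $\upbeta_{s',\multK'}$ is of the form $(-1)^{x}\upbeta^{y}$ with $x\in\{0,1\}$, $y\ge 0$, and is determined by the labels along the path from the root to the leaf $(s',\multK')$. This generalization is needed because the children of the root of $\cT(\ell,K)$ are in general strings with two loops, and the subtree hanging off each of them is itself a trajectory tree generated from that string.

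For the base case, $\cT$ consists of a single vertex: the root is declared a final leaf and then relabeled by erasing all backtracks from each loop of $s$, producing some $(s'',\multK)$. Applying \cref{lemma: backtrack cancellations} once for each erased backtrack (using that $(s,\multK)$ is balanced and that balancedness is preserved by backtrack erasure) gives $\phi^{\multK}(s)=\phi^{\multK}(s'')$, which is the claim with empty-path prefactor $\upbeta_{s'',\multK}=1$. For the inductive step, suppose $\cT$ has at least two vertices, so the root $(s,\multK)$ is not a final leaf. Let $\mathbf e$ be the edge chosen at the root — necessarily an edge of some non-null loop $\ell_i$ in $s$ — and let $\cV_1$ be the set of children of the root. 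Writing $\phi^{\multK}(s)=\phi^{K_i}(\ell_i)\prod_{j\ne i}\phi^{K_j}(\ell_j)$ and expanding $\phi^{K_i}(\ell_i)$ by the master loop equation (\cref{thm: fixed K 't Hooft master loop equation for surface sum}) at $\mathbf e$, we obtain a sum over splittings and deformations of $(\ell_i,K_i)$, each term carrying a prefactor in $\{1,-1,\upbeta,-\upbeta\}$ according to which of the four groups of terms in \eqref{eq:mle-main} it belongs to. Every term whose resulting loop plaquette assignment fails to be balanced contributes $0$, since $\phi^{K'}(\ell')=0$ for $(\ell',K')$ unbalanced; hence the surviving terms are precisely those retained by the ML-operator, and recalling the definition of $\mle_{\mathbf e}$ on strings together with \cref{rem:balanced}, we get
\[
\phi^{\multK}(s)=\sum_{(s',\multK')\in\cV_1}\upbeta^{(0)}_{s',\multK'}\cdot\phi^{\multK'}(s'),\qquad \upbeta^{(0)}_{s',\multK'}=(-1)^{x_0}\upbeta^{y_0},\ \ x_0,y_0\in\{0,1\}.
\]

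For each child $(s',\multK')$, the subtree $\cT_{s'}$ of $\cT$ rooted at it is a trajectory tree generated from $(s',\multK')$ with strictly fewer vertices, so the inductive hypothesis applies. Since the leaves of $\cT$ are partitioned as $\cL(\cT)=\bigsqcup_{(s',\multK')\in\cV_1}\cL(\cT_{s'})$, substituting the inductive hypothesis for each $\phi^{\multK'}(s')$ yields $\phi^{\multK}(s)=\sum_{(s',\multK')\in\cL(\cT)}\upbeta_{s',\multK'}\,\phi^{\multK'}(s')$, where $\upbeta_{s',\multK'}$ is the product of $\upbeta^{(0)}$ for the first edge of the root-to-leaf path with the prefactor supplied by the hypothesis for the remainder of the path. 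A product of two factors of the form $(-1)^x\upbeta^y$ is again of this form (add the $\upbeta$-exponents, add the sign-exponents mod $2$), and it is a function of the full root-to-leaf path; this closes the induction, and the case $(s,\multK)=(\ell,K)$ is the statement of the observation.

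The bookkeeping just described is really the only delicate point. One has to pass correctly from the single-loop master loop equation to the string-valued recursion — multiplying through by the untouched factors $\phi^{K_j}(\ell_j)$ — and check that the nonzero surviving terms coincide exactly with the output of $\mle_{\mathbf e}$ on strings (the unbalanced terms vanishing), and one has to track that the prefactor class $(-1)^x\upbeta^y$ is stable under composition along the tree and depends only on the root-to-leaf path. \cref{lemma: backtrack cancellations} is used only at the moment a leaf is finalized and its backtracks are erased; at every intermediate step the master loop equation is applied directly to the (possibly backtracking) loops, so no backtrack removal is needed there.
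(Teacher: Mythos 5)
Your proposal is correct and follows essentially the same route as the paper: the paper derives the one-step identity over the children $\cV_1$ from the master loop equation (with unbalanced terms vanishing and \cref{lemma: backtrack cancellations} invoked only when a leaf is finalized) and then asserts the general statement by iteration, which is exactly what your induction on the number of tree vertices formalizes. The only addition on your part — strengthening the inductive statement to trajectory trees rooted at arbitrary balanced string plaquette assignments — is the natural bookkeeping needed to make that iteration precise and is already implicit in the paper's definition of the ML-operator acting on strings.
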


\subsubsection{Removing a plaquette from a loop using trajectory trees}\label{sec: removing a plaquette}

Now, we detail a procedure that, given a string plaquette assignment $(\ell,K)$, constructs a trajectory tree $\cT(\ell,K)$ with all leaves labeled by strings which no longer contain a marked plaquette in the support of any of the loops forming the string. To make this precise, we first introduce some terminology and notation. 

Given a (possibly infinite) collection of plaquettes $\{p_i\}_i$, let $\cE(\{p_i\}_i)$ denote the collection of edges in $\{p_i\}_i$ in both orientations. For a (possibly infinite) path of plaquettes $\mathfrak{p}= \{p_i\}_i$, defined above \cref{def: lattice distance}, let 
\begin{equation}\label{eq:bewivfov}
    \cI(\mathfrak{p}) = \{e\in E: \mathfrak{e}(p_i,p_{i+1})\text{ corresponds to $e$}\}_{i}
\end{equation}
be the sequence of edges (in both orientations) in the interior of $\mathfrak{p}$.

We say that an edge $e$ of $\ell$ is \textbf{connected to infinity} if there is an infinite path of plaquettes $\mathfrak{p}= \{p_i\}_{i=1}^{\infty}$ such that $\mathfrak{e}(p_1,p_2)$ is the unoriented edge corresponding to $e$ and $\cI(\mathfrak{p})$ does not contain any edge of $\ell$ other than $e^{\pm 1}$ (cf.\ top-left of \cref{fig: traj tree}). 

Given any infinite sequence of edges $\{e_i\}_{i=1}^{\infty}$ such that no edge appears more than once (in either orientation), we define the \textbf{outermost edge of $\ell$ in $\{e_i\}_{i=1}^{\infty}$} to be the edge $e_j$ with the highest index such that $e_j$ is in $\ell$, and $e_k \notin \ell$ for all $k > j$. Such an index $j$ is well defined because $\ell$ contains only finitely many edges, and each edge appears at most once (in either orientation) in the sequence $\{e_i\}_{i=1}^{\infty}$. 

In what follows, we write ``$e$ is an edge contained in $\ell$'' to refer to a lattice edge $e$ appearing in $\ell$, potentially multiple times, but without referring to any specific copy of $e$ in $\ell$.

\begin{defn}\label{defn:tr-tree-gen}
Let $(\ell, K)$ be a balanced loop plaquette assignment and let $e$ be an edge contained in $\ell$ that is connected to infinity by a path of plaquettes $\mathfrak{p}$. We define the \textbf{trajectory tree $\cT_{e,\mathfrak{p}}(\ell, K)$ generated from $(\ell, K, e, \mathfrak{p})$} to be the trajectory tree constructed from $(\ell, K)$ by (cf.\ \cref{fig: traj tree}) :
\begin{itemize}
    \item Declaring a leaf $l$ to be a final leaf only if the string plaquette assignment $\{(\ell_1, K_1), \dots, (\ell_n, K_n)\}$ corresponding to $l$ satisfies the following property:  
    \begin{equation}\label{eq:noplins}
        \text{$\ell_i\cap \cI(\mathfrak{p})=\emptyset,$ for all $i \in [n]$.}
    \end{equation}
    Note the root cannot be a final leaf as $e$ is contained in $\ell$.

    \item For any non-final leaf, $l= \{(\ell_1,K_1),\dots,(\ell_n,K_n)\}$, arbitrarily select one of the loop plaquette assignments in $l$ such that the corresponding loop contains an edge in $\cI(\mathfrak{p})$ (note such a loop plaquette assingment must exist for any non-final leaf by the construction of final leaves above). Then apply the the ML-operator to any copy of the outermost edge of $\ell_i$ in $\cI(\mathfrak{p})$.

    \item Finally, for all final leaves, remove all backtracks as required by the definition of trajectory trees generated from a loop plaquette assignment (\cref{defn:tr-tree-gen}).
\end{itemize}
\end{defn}

We will prove in \cref{lem:finite-trees} that $\cT_{e,\mathfrak{p}}(\ell,K)$ is a finite tree. Note that the trajectory tree $\cT_{e,\mathfrak{p}}(\ell,K)$ will depend on the choices of the arbitrary copy of the outermost edge selected for the location where to perform the MLE operator (i.e.\ selecting different copies would produce different trees). However, this arbitrary choice will be irrelevant for our results. Thus, we choose to slightly abuse terminology by referring to one possible such trajectory tree as the trajectory tree $\cT_{e,\mathfrak{p}}(\ell, K)$ generated from $(\ell, K, e, \mathfrak{p})$.

\begin{figure}[ht!]
\begin{center}
	\includegraphics[width=0.65\textwidth]{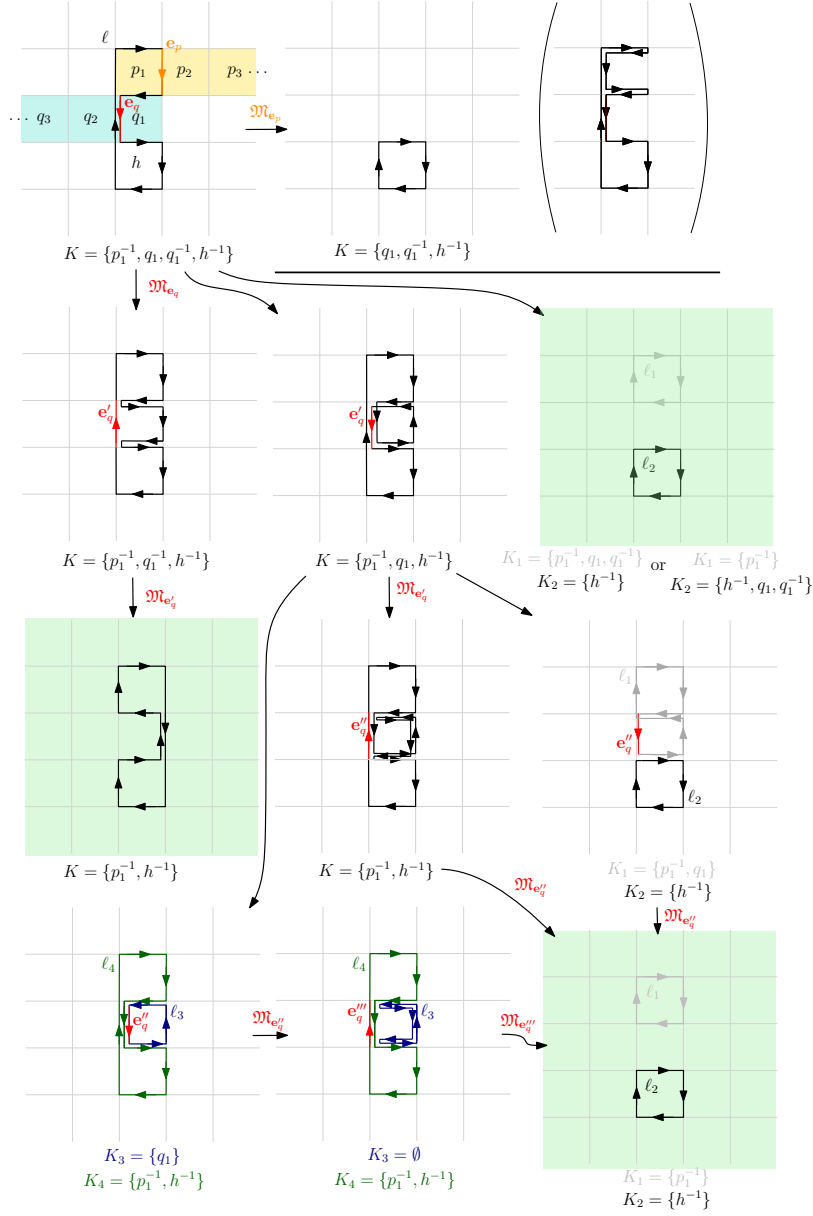}  
	\caption{\label{fig: traj tree} \textbf{Top-left:} A balanced loop plaquette assignment $(\ell,K)$ with two edges, $e_p$ and $e_q$, connected to infinity by $\mathfrak{p}=\{p_i\}_{i=1}^{\infty}$ (yellow) and $\mathfrak{q}=\{q_i\}_{i=1}^{\infty}$ (blue), respectively. A specific copy of each of these edges is highlighted in orange and red respectively. Note $p_1$ is in the support of $\ell$ but $q_1$ is not. \textbf{Top row:} The top row of the figure is the trajectory tree $\cT_{e_p,\mathfrak{p}}(\ell,K)$ generated from $(\ell,K,e_p,\mathfrak{p})$. The backtracks are removed from the final leaf and the loop shown in the parentheses is the final leaf loop before backtrack removal. \textbf{Rest of figure:} The rest of the figure is the trajectory tree $\cT_{e_q,\mathfrak{q}}(\ell,K)$ generated from $(\ell,K,e_q,\mathfrak{q})$. All final leaves (with a green background) are drawn after backtrack removal. To save space, if a splitting is such that multiple string plaquette assignments can be created with the same string but different plaquette assignments, we have drawn the string once and denoted the possible different plaquette assignments below the strings (cf.\ the right string plaquette assignments in the second row). Also, final leaves labeled with the same string plaquette assignment are only drawn once with multiple arrows pointing to them. For example, the string plaquette assignment in the bottom right corner corresponds to three leaves of $\cT_{e_q,\mathfrak{q}}(\ell,K)$.}
\end{center}
\vspace{-3ex}
\end{figure}

\begin{obs}\label{obs:remove_plaq}
    Recall from \cref{fig: traj tree} that, given a path $\mathfrak{p}$ connecting the edge $e$ of a loop $\ell$ to infinity, the first plaquette $p_1$ of $\mathfrak{p}$ may or may not lie in the support of the loop $\ell$. In either case, thanks to~\eqref{eq:noplins}, each loop appearing in a string labeling a (final) leaf of the tree $\cT_{e,\mathfrak{p}}(\ell, K)$ will not contain the plaquette $p_1$ in its support. This is why we sometimes say that the trajectory tree $\cT_{e,\mathfrak{p}}(\ell, K)$ “removes” the plaquette $p_1$ from the loop $\ell$.
\end{obs}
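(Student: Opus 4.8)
The plan is to unwind the definition of a final leaf of $\cT_{e,\mathfrak{p}}(\ell,K)$ and feed it directly into the definition of the distance function $d_{\ell_i}$ from \cref{def: lattice distance}. By the construction in \cref{defn:tr-tree-gen}, a leaf with label $\{(\ell_1,K_1),\dots,(\ell_n,K_n)\}$ is declared final precisely when it satisfies \eqref{eq:noplins}, i.e.\ $\ell_i\cap\cI(\mathfrak{p})=\emptyset$ for all $i\in[n]$. Since by \eqref{eq:bewivfov} the set $\cI(\mathfrak{p})$ consists of \emph{both} orientations of every shared edge $\mathfrak{e}(p_j,p_{j+1})$ along $\mathfrak{p}=\{p_j\}_{j\geq 1}$, this is the same as saying that $n_{\mathfrak{e}(p_j,p_{j+1})}(\ell_i)=0$ for every $j\geq 1$ and every $i\in[n]$. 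I would record this reformulation first, since it is the only place where \eqref{eq:noplins} enters the argument.

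Next I would fix such a final leaf and one of its loops $\ell_i$, and estimate $d_{\ell_i}(p_1)$ from above. The path $\mathfrak{p}=\{p_j\}_{j\geq 1}$ is an infinite path of (positively oriented) plaquettes starting at $p_1$, hence $\mathfrak{p}\in\mathfrak{P}(p_1)$, so it is an admissible competitor in the minimum defining $d_{\ell_i}(p_1)$; by the previous paragraph the associated sum $\sum_{j\geq 1}n_{\mathfrak{e}(p_j,p_{j+1})}(\ell_i)$ vanishes, so $d_{\ell_i}(p_1)\leq 0$, and by nonnegativity of the distance $d_{\ell_i}(p_1)=0$. Since $\supp(\ell_i)$ is by definition the union of the interior plaquette-regions of $\ell_i$, which are exactly the plaquette-regions of strictly positive distance (the complement of the unique distance-zero exterior region), we conclude $p_1\notin\supp(\ell_i)$, which is the assertion. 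If $\ell$ has backtracks one passes to its non-backtrack version as in \cref{defn:collection-plaquette-ass}, and the final backtrack-removal step performed on leaves changes neither any $n_{\mathfrak{e}}(\cdot)$ nor any distance, so it is harmless.

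I do not anticipate a genuine obstacle here: the statement is a bookkeeping consequence of how final leaves are defined, and the only care needed is to translate the ``no edge of $\ell_i$ lies in $\cI(\mathfrak{p})$'' phrasing of \eqref{eq:noplins} into the ``$p_1$ has distance zero'' characterization of lying outside the support. Two points worth flagging explicitly in the write-up: first, the argument does not use \cref{lem:finite-trees} (finiteness of $\cT_{e,\mathfrak{p}}(\ell,K)$) at all — it is a statement about whichever final leaves occur, not about their number or existence; and second, it does not use the hypothesis that $e$ is connected to infinity \emph{in $\ell$} (that hypothesis only serves to make $\cT_{e,\mathfrak{p}}(\ell,K)$ well-defined and eventually finite), only the per-leaf condition \eqref{eq:noplins} together with the fact that $\mathfrak{e}(p_1,p_2)$ corresponds to $e$.
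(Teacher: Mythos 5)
Your argument is correct and is exactly the (essentially definitional) reasoning the paper leaves implicit: condition \eqref{eq:noplins} makes $\mathfrak{p}$ a zero-cost competitor in the minimum defining $d_{\ell_i}(p_1)$, so $p_1$ lies in the distance-zero exterior region and hence outside $\supp(\ell_i)$. One small imprecision: backtrack removal \emph{does} change $n_{\mathfrak{e}}(\cdot)$ in general (it decreases each affected $n_{\mathfrak{e}}$ by $2$); what you actually need, and what is true, is only that it cannot add edges, so the relevant counts along $\mathfrak{p}$ stay zero.
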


\begin{lem}\label{lem:finite-trees}
    Assume $(\ell,K)$ is balanced and $e$ is an edge contained in $\ell$ connected to infinity by a path of plaquettes $\mathfrak{p}$. Then, $\cT_{e,\mathfrak{p}}(\ell,K)$ is finite.
\end{lem}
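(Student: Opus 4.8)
To prove that $\cT_{e,\mathfrak{p}}(\ell,K)$ is finite, the plan is to exhibit a non-negative integer-valued monovariant on string plaquette assignments which strictly decreases at every application of the ML-operator, and then combine this with the (obvious) finiteness of the branching. Concretely, for a balanced string plaquette assignment $(s,\multK)=\{(\ell_1,K_1),\dots,(\ell_n,K_n)\}$ I would set
\[
\Psi(s,\multK) := 5\,\area(\multK) + L(s) - \#^{*}(s),
\]
where $\area(\multK)=\sum_i\sum_{p\in\cP}K_i(p)$, $L(s)=\sum_i|\ell_i|$ is the total number of edges of the loops in $s$ (with backtracks \emph{not} removed, as in the definition of trajectory trees), and $\#^{*}(s)$ is the number of non-null loops among the $\ell_i$. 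Since a non-null closed walk in $\ZZ^2$ has at least two edges, we have $\#^{*}(s)\le L(s)/2$, hence $\Psi(s,\multK)\ge 0$; and $\Psi$ is clearly integer-valued.

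The heart of the argument is to check that each child of a vertex in $\cT_{e,\mathfrak{p}}(\ell,K)$ has strictly smaller $\Psi$ than its parent. By construction a child is obtained by applying the ML-operator at one copy of the outermost edge of a single non-null loop $\ell_i$ of the string; since all the other loops $\ell_j$ ($j\ne i$) and their plaquette assignments are left unchanged, it is enough to track the change coming from $\ell_i$, and there are exactly four cases. In a positive splitting, $\area$ and $L$ are unchanged while $\#^{*}$ increases by $1$ (the loop $\ell_i$ is replaced by two non-null loops), so $\Delta\Psi=-1$. In a negative splitting, $\area$ is unchanged, $L$ decreases by $2$, and $\#^{*}$ changes by at most $+1$, so $\Delta\Psi\le -1$. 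In a positive (resp.\ negative) deformation one unit of $K$ is consumed, so $\area$ decreases by $1$, while the deformed loop gains exactly $4$ (resp.\ $2$) edges and remains non-null, so $\Delta\Psi=-1$ (resp.\ $\Delta\Psi=-3$). In every case $\Psi$ drops by at least $1$, so every path from the root has length at most $\Psi(\ell,K)=5\,\area(K)+|\ell|-1<\infty$.

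To conclude I would observe that the tree has finite branching: at each internal vertex the ML-operator produces only finitely many string plaquette assignments — there are at most $|\ell_i|$ choices for the matching copy of the edge in a splitting, at most $\prod_{p}(K_i(p)+1)<\infty$ decompositions $K_1+K_2=K_i$ (here finiteness of the plaquette assignment is used), and at most four deformations (two plaquettes through $e$, two through $e^{-1}$). A rooted tree with finite branching and no infinite path is finite by K\"onig's lemma; alternatively, since $L(s)-\#^{*}(s)\le\Psi(s,\multK)\le\Psi(\ell,K)$ and $\#^{*}(s)\le L(s)/2$ one gets $L(s)\le 2\Psi(\ell,K)$ throughout the tree, so the branching is even uniformly bounded and the tree can be counted directly.

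I expect the only genuinely delicate point to be setting up the edge-count bookkeeping so that the monovariant is simultaneously non-negative and strictly decreasing — in particular accounting for the null loops that a negative splitting (or a pure backtrack loop) can produce, which is the reason to count only non-null loops and to take the coefficient $1$ rather than $2$ on $\#^{*}$, and carefully computing the exact change of $L$ at each deformation. Once the right $\Psi$ is identified, the remaining verifications are routine.
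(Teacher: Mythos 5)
Your argument is correct, and it takes a genuinely different route from the paper's. The paper counts only the edges lying in $\cI(\mathfrak{p})$: it argues by contradiction that an infinite branch would contain either infinitely many deformations or infinitely many splittings, rules out the former because each deformation strictly decreases $n_{\cI(\mathfrak{p})}(K)$ while splittings never increase it, and then rules out the latter because, once deformations stop, each splitting strictly decreases the (finite) number of $\cI(\mathfrak{p})$-edges of the explored loop. In effect this is a lexicographic monovariant on the pair $\left(n_{\cI(\mathfrak{p})}(K),\, n_{\cI(\mathfrak{p})}(\ell)\right)$, and it leans on the specific structure of $\cT_{e,\mathfrak{p}}(\ell,K)$ (exploration at the outermost edge in $\cI(\mathfrak{p})$, termination when no loop meets $\cI(\mathfrak{p})$). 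Your single scalar potential $\Psi$ ignores $\mathfrak{p}$ entirely and therefore proves something stronger: \emph{every} trajectory tree, however the exploration edges are chosen, has depth at most $5\area(K)+|\ell|-1$, an explicit bound rather than a contradiction. Your case analysis checks out: a positive splitting preserves $L$ and produces exactly two non-null loops (a one-edge closed walk is impossible in $\ZZ^2$, which also gives $\#^{*}\le L/2$ and hence $\Psi\ge 0$); a negative splitting drops $L$ by $2$ while changing $\#^{*}$ by at most $1$; and a deformation consumes one unit of $\area(\multK)$ while adding exactly $4$ (positive) or $2$ (negative) edges to a loop that stays non-null. Together with the finite branching of the ML-operator (finitely many matching copies of $e^{\pm 1}$, finitely many decompositions $K_1+K_2=K_i$ of a finite plaquette assignment, at most four deformations since $|\cP(e)|=|\cP(e^{-1})|=2$ in $\ZZ^2$), this gives finiteness. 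What the paper's bookkeeping buys instead is that its $\cI(\mathfrak{p})$-localized quantities match the way $\cT_{e,\mathfrak{p}}$ is actually steered, whereas your $\Psi$ is purpose-built for finiteness alone; but as a proof of the lemma as stated, yours is complete and arguably cleaner.
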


\begin{proof} 
We start with some preliminary notation and facts. Let $(\Bar{\ell},\Bar{K})$ be any loop plaquette assignment. For an edge $\Bar{e}\in E$, let $n_{\Bar{e}}(\Bar{K})$ denote the number of copies of $\Bar{e}$ in $\Bar{K}$ (i.e., recalling \eqref{defn:ne}, $n_{\Bar{e}}(\Bar{K}) = \sum_{p\in \cP(\Bar{e})}\Bar{K}(p) = n_{\Bar{e}}(\Bar{\ell},\Bar{K})-n_{\Bar{e}}(\Bar{\ell})$). Also, set
\begin{align*}
    n_{\cI(\mathfrak{p})}(\Bar{K}) := \sum_{\Bar{e} \in \cI(\mathfrak{p})} n_{\Bar{e}}(\Bar{K}),
\end{align*}
where $\mathfrak{p}$ is as in the lemma statement, and similarly define $n_{\cI(\mathfrak{p})}(\Bar{\ell},\Bar{K})$ and $n_{\cI(\mathfrak{p})}(\Bar{\ell})$. 

We now assume that $(\Bar{\ell},\Bar{K})$ is such that $\Bar{\ell}$ contains at least one edge of $\cI(\mathfrak{p})$.
For the following facts, even if not explicitly stated, each operation is performed at the outermost edge of ${\Bar{\ell}}$ in $\cI(\mathfrak{p})$.

If $\{(\Bar{\ell}_1,\Bar{K}_1),(\Bar{\ell}_2,\Bar{K}_2)\}$ is a (positive or negative) splitting of ${(\Bar{\ell}, \Bar{K})}$, then we note that for $i=1,2$,
\begin{equation}\label{eq: split edge decrease}
    n_{\cI(\mathfrak{p})}(\Bar{\ell}_i)< n_{\cI(\mathfrak{p})}({\Bar{\ell}})\quad\text{and}\quad n_{\cI(\mathfrak{p})}(\Bar{K}_i)\leq n_{\cI(\mathfrak{p})}({\Bar{K}}).
\end{equation} 
Next, if $(\Bar{\ell}_1,\Bar{K}_1)$ is obtained from ${(\Bar{\ell},\Bar{K})}$ by a positive deformation, then \begin{equation}\label{eq: pos def edge decrease}
    n_{\cI(\mathfrak{p})}(\Bar{\ell}_1)\leq n_{\cI(\mathfrak{p})}({\Bar{\ell}})+2\quad\text{and}\quad n_{\cI(\mathfrak{p})}(\Bar{K}_1)< n_{\cI(\mathfrak{p})}({\Bar{K}}).
\end{equation}
Lastly, if $(\Bar{\ell}_1,\Bar{K}_1)$ is obtained from ${(\Bar{\ell},\Bar{K})}$ by a negative deformation, then \begin{equation}\label{eq: neg def edge decrease}
    n_{\cI(\mathfrak{p})}(\Bar{\ell}_1)\leq n_{\cI(\mathfrak{p})}({\Bar{\ell}})\quad\text{and}\quad n_{\cI(\mathfrak{p})}(\Bar{K}_1)< n_{\cI(\mathfrak{p})}({\Bar{K}}).
\end{equation}

Now, for the sake of contradiction, assume that $\cT_{e,\mathfrak{p}}(\ell,K)$ is infinite. Then, there must be an infinite sequence of loop plaquette assignments $\{(\ell^i,K^i)\}_{i=1}^{\infty}$ such that 
\begin{itemize}
    \item $(\ell^1,K^1) = (\ell,K)$;
    \item for all $i\geq 1$, $n_{\cI(\mathfrak{p})}(\ell^i)\neq 0$ and $(\ell^{i+1},K^{i+1})$ is one of the loop plaquette assignments in $\mle_{\mathbf{e}^i}(\ell^{i},K^{i})$, where $\mathbf{e}^i$ is one copy of the outermost edge of $\ell^i$ in $\cI(\mathfrak{p})$.
\end{itemize}  
But if such a sequence existed, then it must be that at least one of the following two cases holds:
\begin{enumerate}[(i)]
\item For infinitely many $i$, $(\ell^{i+1}, K^{i+1})$ is obtained from $(\ell^i, K^i)$ via a (positive or negative) deformation.
\item For infinitely many $i$, $(\ell^{i+1}, K^{i+1})$ is obtained from $(\ell^i, K^i)$ via a (positive or negative) splitting. 
\end{enumerate}
However, since $n_{\cI(\mathfrak{p})}(K^i)$ must be at least one to perform a (positive or negative) deformation and \eqref{eq: split edge decrease}, \eqref{eq: pos def edge decrease}, and \eqref{eq: neg def edge decrease} tell us that the sequence $(n_{\cI(\mathfrak{p})}(K^i))_i$ can't increase under splittings and strictly decreases under deformations, we deduce that only case $(ii)$ is possible. But this is also not possible. Indeed, if it were then there would be some $k\in \NN$ such that for all $j\geq k$, $\ell^{j+1}$ was obtained from $\ell^j$ via a splitting. Further, \eqref{eq: split edge decrease}, \eqref{eq: pos def edge decrease}, \eqref{eq: neg def edge decrease} and the fact that $n_{\cI(\mathfrak{p})}(\ell)$ is finite give us that $n_{\cI(\mathfrak{p})}(\ell^k)$ is finite. But, since to perform a (positive or negative) splitting, $n_{\cI(\mathfrak{p})}(\ell^i)$ must be at least two, and  \eqref{eq: split edge decrease} tells us that $n_{\cI(\mathfrak{p})}(\cdot)$ strictly decreases under splittings, this clearly gives that only a finite number of splitting terms can occur after $k$, contradicting the definition of $k$. Thus, we must have that $\cT_{e,\mathfrak{p}}(\ell,K)$ is finite as desired.
\end{proof}

\subsubsection{Removing a plaquette from a loop by pushing the edges}\label{sec: removing a plaquette-pushing}

The procedure detailed in the previous section tells us we can ``remove'' a plaquette from a loop by considering trajectory trees generated from $(\ell,K,e,\mathfrak{p})$, but we will need finer information on the leaves of these trees. In particular, the rest of this section will be devoted to proving the following result, which details how the height, plaquette assignments, and edges of the leaves compare to those of the root. To state the lemma, we need some new notation. If $(s,\multK) = \{(\ell_1,K_1),\dots,(\ell_m,K_m)\}$ is a string plaquette assignment and $e\in E$, let
\begin{equation}\label{eq:defn-KK}
    \multK(q):=\sum_{i\in [m]}K_i(q)\quad \text{and}\quad n_e(s):=\sum_{i\in [m]}n_e(\ell_i).
\end{equation}

\begin{prop}\label{prop: remove plaquette from loop} 
       Suppose that $(\ell, K)$ is a balanced loop plaquette assignment, and let $e$ be an edge contained in $\ell$. Assume that $e$ is connected to infinity via a path of plaquettes $\mathfrak{p} = \{p_i\}_{i=1}^{\infty}$, where the first plaquette is given by $p_1 = e\, e_1\, e_2 \, e_3$. Then, each string plaquette assignment $(s,\multK)= \{(\ell_1, K_1), \dots, (\ell_m , K_m)\}$ labeling a leaf of $\cT_{e,\mathfrak{p}}(\ell,K)$  satisfies the following properties:
        \begin{enumerate}
            \item\label{p1 remove plaquette} $\supp(s)\cap\mathfrak{p} = \emptyset$;
            \item\label{p2 remove plaquette} $\multK(q) = K(q)$ for all $q\in \cP\sm \mathfrak{p}$;
            \item\label{p4 remove plaquette} $h_{s}(q)= h_{\ell}(q)$ for all $q\in \cP\sm \{p_{1}^{\pm 1}\}$;
            \item\label{p5 remove plaquette} $n_{\Tilde{e}}(s) \leq n_{\Tilde{e}}(\ell)$ for all $\Tilde{e}\in E\sm \{e_1^{\pm 1}, e_2^{\pm 1}, e_3^{\pm 1}\}$; 
            \item\label{p6 remove plaquette} $n_{\Tilde{e}}(s) + n_{\Tilde{e}^{-1}}(s)\leq \Big(n_{\Tilde{e}}(\ell) + n_{\Tilde{e}^{-1}}(\ell)\Big) + \Big(n_e(\ell) + n_{e^{-1}}(\ell)\Big)$ for all $\Tilde{e}\in \{e_1, e_2, e_3\}$.
        \end{enumerate}
\end{prop}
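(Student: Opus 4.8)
The plan is to prove, by induction on the depth of a node in the tree $\cT_{e,\mathfrak{p}}(\ell,K)$, a statement strong enough that reading it off at the leaves yields properties~\eqref{p1 remove plaquette}--\eqref{p6 remove plaquette}. Write $e^{(j)}:=\mathfrak{e}(p_j,p_{j+1})$, so that $\cI(\mathfrak{p})=\{(e^{(j)})^{\pm1}\}_{j\ge1}$ and $e=e^{(1)}$. By construction, every non-root node is obtained from its parent by one application of $\mle_{\mathbf{e}}$ at a copy $\mathbf{e}$ of the outermost edge of $\cI(\mathfrak{p})$ in one of the current loops, say a copy of $e^{(j)}$. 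The first structural observation is that the only (unoriented) plaquettes containing $e^{(j)}$ are $p_j$ and $p_{j+1}$, so every plaquette that can be used in a deformation at a copy of $e^{(j)}$ is an oriented version of $p_j$ or $p_{j+1}$, hence lies on the path $\mathfrak{p}$. Since a positive or negative deformation with a plaquette $q$ decrements $\multK$ at one oriented copy of $q$ and changes the string height by $\pm h_{\partial q}$ (supported on $q$), while splittings alter neither $\multK$ nor the string height (they preserve the total edge multiset up to removing a backtracking pair, and the height depends only on net signed edge crossings), it follows that along every root-to-leaf path $\multK$ and the string height change only on plaquettes of $\mathfrak{p}$. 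This immediately gives~\eqref{p2 remove plaquette} and the ``off $\mathfrak{p}$'' part of~\eqref{p4 remove plaquette}.

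Next I would obtain~\eqref{p1 remove plaquette} and finish~\eqref{p4 remove plaquette} from the stopping rule~\eqref{eq:noplins} together with the fact that the height of a bounded loop vanishes at infinity. Fix a leaf $(s,\multK)$; by~\eqref{eq:noplins} no loop of $s$ traverses any edge of $\cI(\mathfrak{p})$. For any $k$, the tail $\{p_k,p_{k+1},\dots\}$ is a path of plaquettes from $p_k$ to infinity whose interior edges all lie in $\cI(\mathfrak{p})$ and are therefore absent from every loop of $s$, so $d_{\ell_i}(p_k)=0$ for all $i$; thus each $p_k$ lies in the exterior region of every loop of $s$, which gives $\supp(s)\cap\mathfrak{p}=\emptyset$, i.e.\ \eqref{p1 remove plaquette}. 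For~\eqref{p4 remove plaquette}, since no loop of $s$ crosses $\mathfrak{e}(p_j,p_{j+1})$, the definition of height gives $h_s(p_{j+1})=h_s(p_j)$ for all $j\ge1$, and combined with $h_s(p_k)\to0$ this forces $h_s\equiv0$ on $\mathfrak{p}$; running the same argument for $\ell$, which by hypothesis traverses no edge of $\cI(\mathfrak{p})$ other than $e^{\pm1}=(e^{(1)})^{\pm1}$, gives $h_\ell\equiv0$ on $\{p_2,p_3,\dots\}$. Together with $h_s=h_\ell$ off $\mathfrak{p}$, this yields $h_s(q)=h_\ell(q)$ for all $q\notin\{p_1^{\pm1}\}$.

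The remaining bounds~\eqref{p5 remove plaquette} and~\eqref{p6 remove plaquette} are where I would actually run a strengthened induction. The per-operation bookkeeping is: a positive splitting preserves the total edge multiset of the string; a negative splitting removes one copy each of $e^{(j)}$ and $(e^{(j)})^{-1}$; a positive deformation with $q\in\{p_j^{\pm1},p_{j+1}^{\pm1}\}$ adds one copy of $e^{(j)}$ and one copy of each of the three remaining edges of $q$; and a negative deformation removes one copy of $e^{(j)}$ and adds one copy of each of the three remaining edges of the plaquette used. Hence the only edges whose multiplicity can ever exceed that in $\ell$ are edges of $\cI(\mathfrak{p})$, the ``transverse'' edges of plaquettes of $\mathfrak{p}$ (those not in $\cI(\mathfrak{p})$; for $p_1$ these are exactly $e_1,e_2,e_3$), and---only via a deformation with $p_1^{\pm1}$, which can occur only at a copy of $e^{(1)}=e$---the edges $e_1,e_2,e_3$. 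I would carry the inductive invariant that at every node $(s,\multK)$: (i) any edge $\tilde e$ with neither $\tilde e$ nor $\tilde e^{-1}$ an edge of a plaquette of $\mathfrak{p}$ satisfies $n_{\tilde e}(s)\le n_{\tilde e}(\ell)$ (never touched); (ii) the surplus multiplicity of $\cI(\mathfrak{p})$-edges and of transverse edges of $\mathfrak{p}$-plaquettes currently present is bounded by a monotone nonincreasing potential---roughly the remaining $\multK$-mass on $\mathfrak{p}$ plus the number of $\cI(\mathfrak{p})$-edges still to be eliminated---which vanishes at a leaf (no $\cI(\mathfrak{p})$-edge survives, and after backtrack removal the surplus transverse edges cancel in $\pm$ pairs), giving $n_{\tilde e}(s)\le n_{\tilde e}(\ell)$ for all $\tilde e\notin\{e_1^{\pm1},e_2^{\pm1},e_3^{\pm1}\}$, i.e.\ \eqref{p5 remove plaquette}; and (iii) a surplus copy of $e_k^{\pm1}$ for $k\in\{1,2,3\}$ arises only from a deformation with $p_1^{\pm1}$, and each such deformation can be charged (in a way made precise by the induction, using that the extra copies of $e$ such deformations create are themselves removed before a leaf) to a distinct copy of $e^{\pm1}$ originally present in $\ell$, which yields~\eqref{p6 remove plaquette}.

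The main obstacle is item (ii)--(iii): identifying the precise monotone potential that absorbs the temporary surplus of $\cI(\mathfrak{p})$-edges and transverse edges, checking that it is preserved (or decreases) under each of the four operations, and verifying that it collapses to the stated bounds once backtracks are removed at a leaf. The difficulty is genuine, since a positive deformation enlarges a loop and deposits fresh copies of $e^{(j\pm1)}$ and of a transverse edge; these must be paid back---the copy of $e^{(j\pm1)}$ against the decrease of $\multK$ on $\mathfrak{p}$ (the same monotone quantity that drives \cref{lem:finite-trees}), and the transverse copies against subsequent negative deformations and the final backtrack removal. By contrast,~\eqref{p2 remove plaquette} is immediate,~\eqref{p1 remove plaquette} is a direct consequence of the stopping rule, and the whole height statement~\eqref{p4 remove plaquette} is soft once one invokes the vanishing of the height at infinity, so essentially all the technical weight sits in the edge-count invariant.
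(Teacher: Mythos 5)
Your treatment of Properties~\ref{p1 remove plaquette}--\ref{p4 remove plaquette} is sound and essentially the paper's: one checks (as in \cref{lemma: MLE preserves K and height}) that a single ML-step at a copy of $e^{(j)}$ can only alter $\multK$ and the string height on the plaquettes $p_j^{\pm1},p_{j+1}^{\pm1}\subset\mathfrak{p}$, and then uses the stopping rule together with the vanishing of the height at infinity to kill everything on $\mathfrak{p}\setminus\{p_1^{\pm1}\}$.

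For Properties~\ref{p5 remove plaquette} and~\ref{p6 remove plaquette}, however, there is a genuine gap, and you have correctly located it yourself: items (ii)--(iii) of your invariant are asserted, not proved. More importantly, the mechanism you propose --- a monotone nonincreasing numerical potential (remaining $\multK$-mass on $\mathfrak{p}$ plus surviving $\cI(\mathfrak{p})$-edges) --- cannot by itself deliver the leaf-level bounds. That potential starts at a value depending on $K$, which can be arbitrarily large relative to $\ell$, so ``surplus $\le$ potential'' says nothing useful at a leaf; and the statement to be proved is not that the surplus decreases but that it \emph{cancels exactly} once no $\cI(\mathfrak{p})$-edge survives and backtracks are erased. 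That cancellation is a structural fact, not a monotonicity fact. The paper's resolution is the $\mathfrak{p}$-decomposition machinery (\cref{lem: MLE for edge loops}): writing $\ell=\pi_1\mathbf{e}_1\cdots\pi_n\mathbf{e}_n$ with $n=n_e(\ell)+n_{e^{-1}}(\ell)$, one shows that \emph{every} intermediate loop is an interleaving of some of the original off-$\mathfrak{p}$ arcs $\pi_u$ with connector paths lying entirely in $\cE(\mathfrak{p})$, plus possibly whole loops contained in $\cE(\mathfrak{p})$, and --- crucially --- that the total number of connector slots is conserved and equal to $n$. At a leaf the connectors and the extra loops live on $\cE(\mathfrak{p})\setminus\cI(\mathfrak{p})$, which is a tree, so by \cref{obs: tree equals null} the extra loops vanish and each connector reduces to $\emptyset$, $e_1e_2e_3$, or $e_3^{-1}e_2^{-1}e_1^{-1}$ after backtrack removal. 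This gives Property~\ref{p5 remove plaquette} (no new edges off $\{e_1^{\pm1},e_2^{\pm1},e_3^{\pm1}\}$) and Property~\ref{p6 remove plaquette} (at most $n$ extra copies of each $e_k$) simultaneously; it is exactly the ``made precise by the induction'' charging that your sketch defers. Without an analogue of this conserved combinatorial decomposition, your argument does not close.
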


\begin{remark}
Property~\ref{p6 remove plaquette} at first may seem surprising, since it does not involve the plaquette assignment $K$. Thus one might worry about the case where $K$ is very large, and the string trajectory includes a lot of deformations at $e$, which would add to the number of copies of $\tilde{e}$ for $\tilde{e} \in \{e_1, e_2, e_3\}$ for intermediate strings in the trajectory, i.e.\ for strings labeling interior nodes in the exploration tree. The point is that such deformations would also greatly increase the number of copies of $e$, and these copies would all have to be erased before stopping the exploration process. In the process of erasing all these copies of $e$, the additional copies of $e_i$, $i \in [3]$ will also have to be canceled out (note such canceling may only occur when we erase all backtracks for strings labeling final leaves).
\end{remark}

The first three properties in \cref{prop: remove plaquette from loop} will immediately follow from the next lemma. For an edge $e\in E$, let $\cP^{\pm}(e)$ denote the four plaquettes that contain $e$ in either orientation.

\begin{lem}\label{lemma: MLE preserves K and height}
     Suppose that $(\ell,K)$ is balanced, $\mathbf{e}$ is a specific copy of a lattice edge  $e$ contained in $\ell$, and $\mathfrak{p}$ connects $e$ to infinity. Then, all $(s,\multK)\in \mle_{\mathbf{e}}(\ell,K)$ satisfy the following properties :
     \begin{enumerate}
        \item\label{item:MLE1} $\multK(q) = K(q)$ for all $q\in \cP\sm \cP^{\pm}(e)$;
        \item\label{item:MLE2} $h_s(q) = h_{\ell}(q)$ for all $q\in \cP\sm \cP^{\pm}(e)$.
    \end{enumerate}
\end{lem}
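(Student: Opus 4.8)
The plan is to reduce both assertions to one structural fact about heights together with a short case analysis over the four kinds of outputs of the ML-operator. First I would record the following \emph{height--net-count principle}, immediate from \cref{def: height}: for a string $s$ and two adjacent plaquettes $p,q$, the directed segment from the centre of $p$ to the centre of $q$ meets exactly the lattice edge $\mathfrak e(p,q)$, and a right-to-left (resp.\ left-to-right) crossing of this segment by $s$ is precisely a traversal of $\mathfrak e(p,q)$ in the orientation $e_{p\to q}$ (resp.\ $e_{p\to q}^{-1}$); hence $h_s(q)-h_s(p)=n_{e_{p\to q}}(s)-n_{e_{p\to q}^{-1}}(s)$. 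Fixing a box $B$ large enough to contain $\ell$, all strings produced, and all plaquette loops appearing below, and using $h\equiv 0$ on $\cP^+\cap B^c$, this shows that $s\mapsto h_s$ factors \emph{linearly} through the net edge-count function $e\mapsto n_e(s)-n_{e^{-1}}(s)$: two strings with equal net edge-count functions have equal heights, and if the edge multiset of a string $s''$ is the disjoint union of those of $s$ and $s'$ then $h_{s''}=h_s+h_{s'}$. I would also record the elementary companion fact that the height $h_p$ of a single oriented plaquette loop $p$ is supported on $\{p,p^{-1}\}$ (it is the discrete winding number of a unit square), so $h_p\equiv 0$ on $\cP\sm\cP^{\pm}(e)$ whenever $p\in\cP^{\pm}(e)$.

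With these in hand, inspect the four types of elements of $\mle_{\mathbf e}(\ell,K)$. For a positive splitting $\{\pi_1\mathbf e\pi_3,\pi_2\mathbf e'\}$ of $\ell=\pi_1\mathbf e\pi_2\mathbf e'\pi_3$ the edge multiset is unchanged, and for a negative splitting $\{\pi_1\pi_3,\pi_2\}$ of $\ell=\pi_1\mathbf e\pi_2\mathbf e^{-1}\pi_3$ it loses exactly one copy each of $e$ and $e^{-1}$; in both cases the net edge-count function is unchanged, so $h_s=h_\ell$ \emph{everywhere}, giving item 2, while item 1 is trivial since $\multK=K_1+K_2=K$ on all of $\cP$. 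For a positive deformation with a plaquette $p\in\cP(e)\subseteq\cP^{\pm}(e)$, the loop $\ell\oplus_{\mathbf e}p$ has edge multiset equal to that of $\ell$ together with one copy of the oriented boundary of $p$, so by additivity $h_s=h_\ell+h_p$; since $h_p$ vanishes off $\cP^{\pm}(e)$ this gives item 2, and item 1 holds because $\multK=K\sm\{p\}$ differs from $K$ only at $p\in\cP^{\pm}(e)$. The negative deformation case is identical once one observes that deleting one copy of $e$ and appending $e_4e_5e_6$, where $p=\mathbf e^{-1}e_4e_5e_6\in\cP(e^{-1})\subseteq\cP^{\pm}(e)$, changes the net edge-count function by exactly the net count of the plaquette loop $p$, so again $h_s=h_\ell+h_p$ and $\multK=K\sm\{p\}$ differs from $K$ only at $p$.

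The only genuinely delicate step, and the one I would write out most carefully, is the orientation bookkeeping inside the height--net-count principle: correctly matching ``$s$ crosses the directed $p\to q$ segment from right to left (resp.\ left to right)'' with the orientation in which $s$ traverses $\mathfrak e(p,q)$, and checking that the single common box $B$ used in \cref{def: height} can be chosen to accommodate $\ell$, the output strings, and the plaquette loops $p$ appearing in the deformations simultaneously. Everything else is linear bookkeeping of edge multisets. Note that the hypothesis that $\mathfrak p$ connects $e$ to infinity is not used in this lemma; it is retained only because the statement is invoked inside the proof of \cref{prop: remove plaquette from loop}, and balancedness of $(\ell,K)$ is likewise not needed for the conclusion.
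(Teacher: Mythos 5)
Your proof is correct and rests on the same underlying observation as the paper's own argument: each ML-operation changes $\multK$ only at the deformation plaquette, and changes the net edge counts of the string only on the edges of a plaquette in $\cP^{\pm}(e)$ (or not at all, for splittings), so the height computation away from $\cP^{\pm}(e)$ is unaffected. The only difference is the mechanism for the height claim — the paper computes $h(q)$ by walking in from infinity along a plaquette path avoiding $\cP^{\pm}(e)$ and noting the crossing counts along it are unchanged, whereas you invoke linearity of $h$ in the net edge-count function plus additivity over disjoint unions of edge multisets to obtain the (slightly stronger) identities $h_s=h_\ell$ or $h_s=h_\ell+h_p$ with $h_p$ supported on $\{p,p^{-1}\}$; both mechanisms are valid and essentially interchangeable here.
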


\begin{proof}
    We start by noting that in general to compute the height (\cref{def: height}) of a plaquette $\Bar{p}$ with respect to a loop $\Bar{\ell}$ one can proceed as follows: First, find an infinite path of plaquettes $\Bar{\mathfrak{p}} = \{\Bar{p}_i\}_{i=1}^{\infty}$ starting at $\Bar{p}$. Then, there must be some $j\in \NN$ such that for all $i\geq j$, the plaquette $\Bar{p}_i$ is not contained in $B= [-N,N]^2$, where $N\in \NN$ is large enough that $\Bar{\ell}$ is contained in $B$. Thus, by the definition of the height of a plaquette (cf.\ \cref{def: height}) we know that $h_{\Bar{\ell}}(\Bar{p}_i)=0$ for all $i\geq j$. Finally, to compute $h_{\Bar{\ell}}(\Bar{p})$ simply go from $\Bar{p}_j$ to $\Bar{p}_1 =\Bar{p}$ following $\Bar{\mathfrak{p}}$ and keeping track of the crossings of $\Bar{\ell}$, as detailed in the definition of the height.

    Now, to prove the result, we look at the four outcomes $(s,\multK)$ of $\mle_{\mathbf{e}}(\ell,K)$ separately:

    \medskip
    
    \noindent\underline{Deformations:} Suppose that $s= \{\ell'\} = \{\ell\oplus_{\mathbf{e}}r\}$ for $r\in \cP^{\pm}(e)$ such that $K(r)\geq 1$. This gives us that  \begin{equation}\label{eq: K'h}
        \multK(q) =\begin{cases} K(q) & \text{if $q\in \cP\sm \{r\}$} \\ K(r)-1 & \text{if $q=r$.}
        \end{cases}
    \end{equation}
    With this, \cref{item:MLE1} is obvious. To prove \cref{item:MLE2}, note that $n_{e}(\ell') = n_{e}(\ell)$ for all $e\in E\sm \cE(r)$ (recall we do not remove backtracks). Now, for any plaquette $q\in \cP\sm \cP^{\pm}(e)$ there is clearly an infinite path of plaquettes $\mathfrak{p}$ that starts at $q$ such that $\mathfrak{p}\cap \cP^{\pm}(e)=\emptyset$. Thus, as $n_{e}(\ell') = n_{e}(\ell)$ for all $e\in \cI(\mathfrak{p})$, we get that $h_{\ell}(q)=h_{\ell'}(q)$ as desired. The negative deformations case can be proven similarly. 

    \medskip
    
    \noindent\underline{Splittings:} Suppose that $s= \{\ell_1,\ell_2\} \in \SS_-(\mathbf{e},\ell)$ and $\multK = \{K_1,K_2\}$ such that $K_1+K_2=K$. Then, by the definition of $\multK$, we get that $\multK(q) = K(q)$ for all $q\in \cP$, proving \cref{item:MLE1}. To prove \cref{item:MLE2}, note that
    the edges in $s$ are the same as the edges in $\ell$ with one fewer copy of $e$ and $e^{-1}$ (recall we do not remove backtracks). Then, the reasoning used in the deformation case gives the invariance of the height outside of $\cP^{\pm}(e)$. The positive splitting case can be proven similarly.
    \end{proof}

 Properties~\ref{p1 remove plaquette}, \ref{p2 remove plaquette} and \ref{p4 remove plaquette} of \cref{prop: remove plaquette from loop} are immediate consequences of \cref{lemma: MLE preserves K and height}. 

\begin{proof}[Proof of the first three properties of \cref{prop: remove plaquette from loop}]
    First, note that removing backtracks cannot add edges, change the plaquette assignment, or change the height of a plaquette. The invariance of the height under removing backtracks simply follows from the fact that the height counts the difference between the number of copies of each orientation of an edge, and removing a backtrack removes one copy of each orientation. Thus, if the first three properties of \cref{prop: remove plaquette from loop} hold for a leaf of $\cT_{e,\mathfrak{p}}(\ell, K)$ before removing backtracks, they also hold after removing backtracks. Throughout this proof, even if not explicitly stated, we will work with the leaves of $\cT_{e,\mathfrak{p}}(\ell, K)$ before removing backtracks.

    By the definition of $\cT_{e,\mathfrak{p}}(\ell, K)$, we know that $s$ does not contain any of the edges in $\cI(\mathfrak{p})$, thus establishing Property~\ref{p1 remove plaquette}.
    Property~\ref{p2 remove plaquette} simply follows from \cref{lemma: MLE preserves K and height} and the construction of $\cT_{e,\mathfrak{p}}(\ell,K)$.
    Lastly, to verify Property~\ref{p4 remove plaquette}, observe that \cref{lemma: MLE preserves K and height}, together with the construction of $\cT_{e,\mathfrak{p}}(\ell, K)$, implies that $h_{s}(q) = h_{\ell}(q)$ for all $q \in \cP \setminus \mathfrak{p}$.
    Now, since $\supp(s)\cap\mathfrak{p} = \emptyset$, it follows that $h_s(p) = 0 = h_\ell(p)$ for all $p \in \mathfrak{p}\sm \{p_1^{\pm 1}\}$, which establishes Property~\ref{p4 remove plaquette}.
\end{proof}

The rest of this section is devoted to proving Properties~\ref{p5 remove plaquette}~and~\ref{p6 remove plaquette} of \cref{prop: remove plaquette from loop}.
Our goal is to show that, relative to $\ell$, the string $s$ introduces no new edges except possibly among $\{e_1^{\pm1},e_2^{\pm1},e_3^{\pm1}\}$ (this is Property~\ref{p5 remove plaquette}), and even there at most $n_{e}(\ell) + n_{e^{-1}}(\ell)$ edges are added (this is Property~\ref{p6 remove plaquette}).

The main tool we will use is the following lemma, which details how the structure of a loop is modified by the ML-operator. 
We first introduce some terminology. 
Let $\ell$ be a fixed loop.  Choose a plaquette $p$ and an infinite path of plaquettes $\mathfrak{p} = \{p_i\}_{i=1}^\infty$
starting at $p$, such that $\ell$ shares at least one edge with $\mathcal{I}(\mathfrak{p})$.  Then $\ell$ admits a (non-unique) decomposition of the form
\begin{equation}\label{eq:decomp-loop}
  \ell = \pi_1\,\pi_1^{\mathfrak{p}}\,\pi_2\,\pi_2^{\mathfrak{p}}\,\cdots\,\pi_n\,\pi_n^{\mathfrak{p}},
\end{equation}
where $n\geq 1$, each $\pi_i$ is a (possibly empty) sub-path of $\ell$ consisting entirely of edges in $E\setminus\mathcal{I}(\mathfrak{p})$, and each $\pi_i^{\mathfrak{p}}$ is a (possibly empty\footnote{The reason why we allow both the $\pi_i$ and the $\pi_i^{\mathfrak{p}}$ to be empty will be clearer later; see for instance~\cref{lem: MLE for edge loops}.}) sub-path of $\ell$ contained in $\mathcal{E}(\mathfrak{p})$.  We call any such decomposition a $\mathfrak{p}$-\textbf{decomposition} of $\ell$ (cf.\ \cref{fig: loop decomp}).

\begin{figure}[ht!]
\begin{center}
	\includegraphics[width=.49\textwidth]{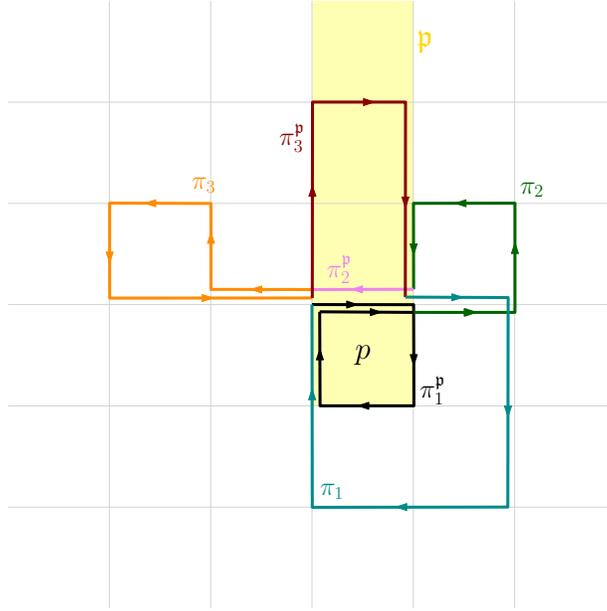}  
	\caption{\label{fig: loop decomp} Let $p$ be the labeled plaquette above and $\mathfrak{p}$ the vertical path of plaquettes above $p$ starting from $p$ (which is highlighted in yellow). A $\mathfrak{p}$-decomposition for the loop is shown. Note that this is not the only possible $\mathfrak{p}$-decomposition of the loop.}
\end{center}
\vspace{-3ex}
\end{figure}

\begin{lem}\label{lem: MLE for edge loops}
   Let $(\ell,K)$ be a balanced loop plaquette assignment and $p$ be a plaquette. Let also $\mathfrak{p} = \{p_i\}_{i=1}^{\infty}$ be an infinite path of plaquettes starting at $p$ such that $\ell$ shares at least one edge with $\mathcal{I}(\mathfrak{p})$. Let $e$ denote the outermost edge of $\ell$ in  $\cI(\mathfrak{p})$ and $\mathbf{e}$ denote a specific copy of $e$ in $\ell$.
   Consider a $\mathfrak{p}$-decomposition of $\ell$ such that $\mathbf{e}$ is in $\pi_s^{\mathfrak{p}}$ for some $s\in [n]$, that is
   \begin{equation*}
        \ell = \pi_1\,\pi_1^{\mathfrak{p}}\,\cdots \pi_s\,\pi_s^{\mathfrak{p}}\,\pi_{s+1}\,\cdots\,\pi_n\,\pi_n^{\mathfrak{p}},\qquad\text{with } \pi^{\mathfrak{p}}_s = \pi^{\mathfrak{p}}_{s,1} \, \mathbf{e} \, \pi^{\mathfrak{p}}_{s,2}.
   \end{equation*}
   Then, the loops obtained from $\ell$ by performing a deformation or a splitting\footnote{Recall that we do not remove backtracks formed by these operations.} at $\mathbf{e}$ satisfy the following properties:

   \begin{itemize}
       \item  If $\ell'$ is obtained from $\ell$ by a (positive or negative) deformation at $\mathbf{e}$, then $\ell'$ has a $\mathfrak{p}$-decomposition of the form
       \begin{equation}\label{eq: ell'}
        \ell'=\pi_1\,\pi^{\mathfrak{p}}_1\,\hdots\, \pi_{s-1}\,\pi^{\mathfrak{p}}_{s-1}\,\pi_s\,\Tilde{\pi}^{\mathfrak{p}}_s\,\pi_{s+1}\,\pi^{\mathfrak{p}}_{s+1}\,\hdots\,\pi_{n}\,\pi^{\mathfrak{p}}_n,
   \end{equation}
   where $\Tilde{\pi}_s^{\mathfrak{p}}$ is a (possibly empty) sequence of edges in $\cE(\mathfrak{p})$.
   
    \item If a (positive or negative) splitting is performed at $\mathbf{e}$ with an edge $\mathbf{e}'$ in $\pi_s^{\mathfrak{p}}$, then the resulting string $s=\{\ell_1, \ell^{\mathfrak{p}}_2\}$ is such that: 
    \begin{itemize}
        \item $\ell_1$ has a $\mathfrak{p}$-decomposition as in \eqref{eq: ell'};
        \item  $\ell^{\mathfrak{p}}_2$ is a loop only consisting of edges in $\cE(\mathfrak{p})$.
    \end{itemize}

    \item If a (positive or negative) splitting is performed at $\mathbf{e}$ with an edge $\mathbf{e}'$ in $\pi_t^{\mathfrak{p}}$ for $t\neq s$ (say $t>s$), then the resulting string $s=\{\ell_1,\ell_2\}$is such that: 
    \begin{itemize}
       \item $\ell_1$ has a $\mathfrak{p}$-decomposition of the form $\ell_1 =  \pi_1\,\pi_{1}^{\mathfrak{p}}\,\hdots\,\pi_s \,\Tilde{\pi}_s^{\mathfrak{p}}\,\pi_{t+1}\,\pi^{\mathfrak{p}}_{t+1}\,\hdots \pi_{n}\,\pi^{\mathfrak{p}}_n$;
       \item $\ell_2$ has a $\mathfrak{p}$-decomposition of the form $ \ell_2 = \pi_{s+1}\,\pi_{s+1}^{\mathfrak{p}}\hdots \pi_t \,\Tilde{\pi}_t^{\mathfrak{p}}$;
    \end{itemize}
    where $\Tilde{\pi}_s^{\mathfrak{p}}$ and $\Tilde{\pi}_t^{\mathfrak{p}}$ are two (possibly empty) sequences of edges in $\cE(\mathfrak{p})$.
   \end{itemize}
   In particular, every string $s\in\mle_{\mathbf{e}}(\ell,K)$ is composed of edges that either already lie in $\ell$ or belong to $\mathcal{E}(\mathfrak{p})$.
\end{lem}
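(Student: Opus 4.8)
The plan is to prove \cref{lem: MLE for edge loops} by a direct case analysis, tracking carefully how each loop operation at the marked copy $\mathbf{e}$ interacts with a fixed $\mathfrak{p}$-decomposition $\ell = \pi_1\,\pi_1^{\mathfrak{p}}\,\cdots\,\pi_n\,\pi_n^{\mathfrak{p}}$, using the fact that $e$ is the \emph{outermost} edge of $\ell$ in $\cI(\mathfrak{p})$. The outermost property is the crucial structural input: it guarantees that whatever new edges a positive deformation introduces on the ``far'' side of $e$ along $\mathfrak{p}$ either already occur in $\ell$ or lie strictly outside $\cI(\mathfrak{p})$ (in fact in $\cE(\mathfrak{p})$), so that the three new edges $e_1,e_2,e_3$ of the plaquette $p_1 = e\,e_1\,e_2\,e_3$ are all in $\cE(\mathfrak{p})$. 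First I would record this: since $\mathbf{e} \in \pi_s^{\mathfrak{p}} \subseteq \cE(\mathfrak{p})$ and $e$ is the outermost edge of $\ell$ in $\cI(\mathfrak{p})$, the plaquette in $\cP(e)$ lying ``above'' $e$ along $\mathfrak{p}$ (and its reverse) has its other three edges in $\cE(\mathfrak{p})$; for deformations with a plaquette on the ``below'' side the other three edges are already among the $\pi_i^{\mathfrak{p}}$ or can be absorbed. This reduces each bullet to a bookkeeping statement about concatenations of paths.

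Next I would handle the deformation case. Write $\pi_s^{\mathfrak{p}} = \pi_{s,1}^{\mathfrak{p}}\,\mathbf{e}\,\pi_{s,2}^{\mathfrak{p}}$, so that $\ell = A\,\mathbf{e}\,B$ with $A = \pi_1\cdots\pi_s\,\pi_{s,1}^{\mathfrak{p}}$ and $B = \pi_{s,2}^{\mathfrak{p}}\,\pi_{s+1}\cdots\pi_n\,\pi_n^{\mathfrak{p}}$. Cyclically we may treat $\ell = \mathbf{e}\,B\,A$. A positive deformation with plaquette $q = \mathbf{e}'\pi$ (here $\pi = e_1'e_2'e_3'$ a path of three edges) gives $\ell \oplus_{\mathbf{e}} q = \mathbf{e}\,\pi\,\mathbf{e}'\,B\,A$; since $q \in \cP(e)$ and all edges of such a $q$ lie in $\cE(\mathfrak{p})$ (this is where outermostness enters — the relevant plaquettes are $p_1^{\pm 1}$ and possibly the plaquette below, all of whose non-$e$ edges are in $\cE(\mathfrak{p})$), the block $\mathbf{e}\,\pi\,\mathbf{e}'\,\pi_{s,2}^{\mathfrak{p}}$ (after reattaching $\pi_{s,2}^{\mathfrak{p}}$) consists entirely of edges in $\cE(\mathfrak{p})$, and we set $\tilde\pi_s^{\mathfrak{p}}$ to be $\pi_{s,1}^{\mathfrak{p}}\,\mathbf{e}\,\pi\,\mathbf{e}'\,\pi_{s,2}^{\mathfrak{p}}$, yielding the decomposition \eqref{eq: ell'}. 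The negative deformation is similar but shorter, deleting $\mathbf{e}$ and inserting $\pi$, again all within $\cE(\mathfrak{p})$.

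Then I would do the two splitting cases. For a splitting at $\mathbf{e}$ with $\mathbf{e}'$ also inside $\pi_s^{\mathfrak{p}}$: write $\pi_s^{\mathfrak{p}} = \pi_{s,1}^{\mathfrak{p}}\,\mathbf{e}\,\rho\,\mathbf{e}'\,\pi_{s,2}^{\mathfrak{p}}$ with $\rho \subseteq \cE(\mathfrak{p})$ (both $\mathbf{e},\mathbf{e}'$ lie in $\pi_s^{\mathfrak{p}}$, so everything between them does too). A positive splitting $\mathsf{S}_{\mathbf{e},\mathbf{e}'}$ produces $\{\pi_{s,1}^{\mathfrak{p}}\,\mathbf{e}\,\pi_{s,2}^{\mathfrak{p}}\cdots,\ \rho\,\mathbf{e}'\}$, where the first loop has a decomposition as in \eqref{eq: ell'} with $\tilde\pi_s^{\mathfrak{p}} = \pi_{s,1}^{\mathfrak{p}}\,\mathbf{e}\,\pi_{s,2}^{\mathfrak{p}}$ and the second loop $\rho\,\mathbf{e}'$ lies entirely in $\cE(\mathfrak{p})$; the negative splitting is analogous (removing the $\mathbf{e},\mathbf{e}'$ pair). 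For a splitting with $\mathbf{e}'$ in a different block $\pi_t^{\mathfrak{p}}$, $t > s$: the portion of $\ell$ between $\mathbf{e}$ and $\mathbf{e}'$ is exactly $\pi_{s,2}^{\mathfrak{p}}\,\pi_{s+1}\,\pi_{s+1}^{\mathfrak{p}}\cdots\pi_t\,\pi_{t,1}^{\mathfrak{p}}$, and splitting peels this off as $\ell_2$ (reattach the $\mathbf{e}'$-tail $\pi_{t,2}^{\mathfrak{p}}$ to get $\tilde\pi_t^{\mathfrak{p}}$), while $\ell_1$ is the complementary concatenation; inspecting which sub-paths land where gives exactly the claimed decompositions. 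The final sentence of the lemma is then immediate: in every case all new edges introduced lie in $\cE(\mathfrak{p})$, and all other edges were already in $\ell$. The main obstacle I anticipate is not any single case but the careful verification — in the deformation case — that the plaquette used in the deformation always has its three non-$e$ edges in $\cE(\mathfrak{p})$; this is precisely where the hypothesis that $e$ is the \emph{outermost} edge of $\ell$ in $\cI(\mathfrak{p})$ must be invoked, ruling out the possibility that the deformation plaquette sticks ``inward'' along $\mathfrak{p}$ past another edge of $\ell$. Once that geometric point is pinned down, the rest is concatenation bookkeeping.
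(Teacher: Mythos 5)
Your proposal follows essentially the same route as the paper's proof: a case-by-case analysis of the four operations at $\mathbf{e}$, tracking how the blocks of the $\mathfrak{p}$-decomposition recombine, and your concatenation bookkeeping for the deformation case and both splitting cases matches the paper's. Two remarks, neither of which breaks the argument. First, you misattribute the role of the ``outermost'' hypothesis: the fact that the three non-$e$ edges of any deformation plaquette lie in $\cE(\mathfrak{p})$ is automatic from $e\in\cI(\mathfrak{p})$ alone, since the only two plaquettes containing the unoriented edge $\mathfrak{e}(p_j,p_{j+1})$ are $p_j$ and $p_{j+1}$ themselves (in suitable orientations), both of which belong to $\mathfrak{p}$; there is no possibility of the deformation plaquette ``sticking inward past another edge of $\ell$'', and indeed the paper's proof of this lemma never invokes outermostness (that hypothesis only matters for how the lemma is later applied in \cref{prop: remove plaquette from loop}). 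Second, in the cross-block splitting case you have a small bookkeeping slip: the tail $\pi_{t,2}^{\mathfrak{p}}$ following $\mathbf{e}'$ attaches to $\ell_1$ (forming $\Tilde{\pi}_s^{\mathfrak{p}}=\pi_{s,1}^{\mathfrak{p}}\,\mathbf{e}\,\pi_{t,2}^{\mathfrak{p}}$), not to $\ell_2$; the piece $\ell_2$ gets instead is $\pi_{s,2}^{\mathfrak{p}}$, rotated cyclically into $\Tilde{\pi}_t^{\mathfrak{p}}=\pi_{t,1}^{\mathfrak{p}}\,\mathbf{e}'\,\pi_{s,2}^{\mathfrak{p}}$. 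Since all these pieces lie in $\cE(\mathfrak{p})$ regardless, the stated conclusions survive, but the explicit decompositions you would write down need this correction.
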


\begin{proof}
    We proceed by considering each operation independently.
    
    If a positive deformation is performed at $\mathbf{e}$ with a plaquette $q=e_1e_2e_3e$, then
    \begin{align*}
        \ell' = \ell\oplus_{\mathbf{e}}q = \pi_1\,\pi^{\mathfrak{p}}_1\,\hdots\,\pi_{s-1}\,\pi^{\mathfrak{p}}_{s-1}
        \pi_s\,\pi^{\mathfrak{p}}_{s,1}\,\mathbf{e} \,e_1e_2e_3e \,\pi^{\mathfrak{p}}_{s,2}\,
        \pi_{s+1}\,\pi^{\mathfrak{p}}_{s+1}\,\hdots \,\pi_{n}\,\pi^{\mathfrak{p}}_n.
    \end{align*}
   Thus, writing $\Tilde{\pi}_s^{\mathfrak{p}} = \pi^{\mathfrak{p}}_{s,1}\,\mathbf{e}\,e_1e_2e_3e\,\pi^{\mathfrak{p}}_{s,2}$, we get that $\ell'$ has the desired form in \eqref{eq: ell'}. The negative deformation follows from the exact same argument.

    Next assume that a splitting is performed. For brevity, we will only give the details for the positive splitting case, as the negative splitting case is similar. If $\mathbf{e}'\in \pi_s^{\mathfrak{p}}$, we can assume without loss of generality that $\mathbf{e}'$ is after $\mathbf{e}$. Then, writing $\pi_{s,2}^{\mathfrak{p}} = \pi_{s,2,1}^{\mathfrak{p}}\,\mathbf{e}'\,\pi^{\mathfrak{p}}_{s,2,2}$, we have that 
    \begin{align*}
        \ell = \pi_1\,\pi^{\mathfrak{p}}_1\hdots \pi_{s-1}\,\pi^{\mathfrak{p}}_{s-1}\,\pi_s\,\pi_{s,1}^{\mathfrak{p}}\,\mathbf{e}\,\pi_{s,2,1}^{\mathfrak{p}}\,\mathbf{e}'\,\pi^{\mathfrak{p}}_{s,2,2}\,\pi_{s+1}\,\pi^{\mathfrak{p}}_{s+1}\hdots \pi_{n}\,\pi^{\mathfrak{p}}_n.
    \end{align*}
    and so, $s$ has the form $\{\ell_1,\ell^{\mathfrak{p}}_2\}$, where
    \begin{align*}
        \ell_1 =  \pi_1\,\pi^{\mathfrak{p}}_1\hdots \pi_{s-1}\,\pi^{\mathfrak{p}}_{s-1}\,\pi_s\,\Tilde{\pi}_s^{\mathfrak{p}}\,\pi_{s+1}\,\pi^{\mathfrak{p}}_{s+1}\hdots \pi_{n}\,\pi^{\mathfrak{p}}_n \text{ with $\Tilde{\pi}_s^{\mathfrak{p}}= \pi_{s,1}^{\mathfrak{p}}\,\mathbf{e}\,\pi^{\mathfrak{p}}_{s,2,2}$,}
        \qquad \text{and}\qquad \ell^{\mathfrak{p}}_2 = \pi_{s,2,1}^{\mathfrak{p}}\,\mathbf{e}',
    \end{align*}
    as we wanted.
    
    Finally, if $\mathbf{e}'\in \pi_t^{\mathfrak{p}}$ for $s<t\leq n$, then, writing $\pi_t^{\mathfrak{p}} = \pi_{t,1}^{\mathfrak{p}}\mathbf{e}'\pi_{t,2}^{\mathfrak{p}}$, we have that
    \begin{align*}
        \ell = \pi_1\,\pi_{1}^{\mathfrak{p}}\,\hdots\,\pi_s\,\pi_{s,1}^{\mathfrak{p}}\,\mathbf{e}\,\pi_{s,2}^{\mathfrak{p}}\,\pi_{s+1}\,\pi^{\mathfrak{p}}_{s+1}\, \,\hdots\pi_{t}\,\pi_{t,1}^{\mathfrak{p}}\,\mathbf{e}'\,\pi^{\mathfrak{p}}_{t,2}\,\pi_{t+1}\,\pi^{\mathfrak{p}}_{t+1}\,\hdots \pi_{n}\,\pi^{\mathfrak{p}}_n,
    \end{align*}
    and so, $s$ has the form  $\{\ell_1,\ell_2\}$, where
    \begin{align*}
        &\ell_1 =  \pi_1\,\pi_{1}^{\mathfrak{p}}\,\hdots\,\pi_s \,\Tilde{\pi}_s^{\mathfrak{p}}\,\pi_{t+1}\,\pi^{\mathfrak{p}}_{t+1}\,\hdots \pi_{n}\,\pi^{\mathfrak{p}}_n,\qquad
        \text{with $\Tilde{\pi}_s^{\mathfrak{p}} = \pi_{s,1}^{\mathfrak{p}}\,\mathbf{e} \,\pi_{t,2}^{\mathfrak{p}}$,}\\
        &\ell_2 = \pi_{s+1}\,\pi_{s+1}^{\mathfrak{p}}\hdots \pi_t \,\Tilde{\pi}_t^{\mathfrak{p}},
        \qquad\qquad\qquad\qquad\,\,\,\,\text{with $\Tilde{\pi}_t^{\mathfrak{p}} =\pi_{t,1}^{\mathfrak{p}}\,\mathbf{e}'\,\pi_{s,2}^{\mathfrak{p}}$},
    \end{align*}
      and so we have the desired decomposition. 
    \end{proof}

The next observation is the last preliminary ingredient we need to complete the proof of Properties~\ref{p5 remove plaquette}~and~\ref{p6 remove plaquette} of \cref{prop: remove plaquette from loop}.

\begin{obs}\label{obs: tree equals null}
    Let $\ell$ be a loop, possibly with backtracks, and suppose the subgraph of $\mathbb{Z}^2$ induced by the set of edges in $\ell$ is a tree.  Then, after removing all backtracks from $\ell$, one obtains the null-loop.
\end{obs}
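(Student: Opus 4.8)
The plan is to prove, by strong induction on the number of edges $n$ of $\ell$, the statement: \emph{if the edge set of a loop $\ell$ induces a forest in $\ZZ^2$, then after erasing all backtracks from $\ell$ one obtains the null-loop.} Since $\ell$ is a connected closed walk, its edge-induced subgraph is automatically connected, so ``forest'' coincides with ``tree'' in this setting; I will phrase things with ``forest'' only because that is the property visibly preserved when edges are deleted. I would first recall the standard fact that the backtrack-erasure rewriting is confluent (the normal-form theorem for free groups), so that ``removing all backtracks from $\ell$'' is well defined and it suffices to exhibit one erasure sequence ending at the null-loop. The base case $n=0$ is immediate, and $n=1$ is impossible for a closed walk.

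For the inductive step ($n\geq 2$), let $T$ be the tree induced by the edges of $\ell$; it has at least one edge, hence a leaf vertex $v$, and we let $e=(u,v)$ be the unique edge of $T$ incident to $v$. Since $e^{\pm 1}$ occurs in $\ell=e_1\cdots e_n$, the walk visits $v$, so (reading cyclically, with $e_{n+1}:=e_1$) there is an index $i$ with $e_i$ ending at $v$; because $e^{\pm 1}$ are the only $T$-edges at $v$, this forces $e_i=e$ and then $e_{i+1}=e^{-1}$, so the cyclically consecutive pair $e_i e_{i+1}$ is a backtrack. If $i\leq n-1$, this is a genuine (non-wrap-around) backtrack $\ell=\pi_1\,e\,e^{-1}\,\pi_2$; erasing it produces a loop $\ell'$ with $n-2$ edges whose edge set is contained in that of $T$, hence still a forest (deleting edges cannot create a cycle, and the remaining walk stays connected), so the inductive hypothesis applied to $\ell'$ finishes this case. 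If instead the only such $i$ is $i=n$, i.e.\ $e_n=e$ and $e_1=e^{-1}$, then $e_2\cdots e_{n-1}$ is itself a loop (a closed walk based at $u$) with $n-2<n$ edges whose edge set is again a sub-forest of $T$; by the inductive hypothesis it erases to the null-loop, and applying that same sequence of erasures inside $\ell$ reduces $\ell$ to $e_1 e_n=e^{-1}e$, which is a backtrack whose erasure yields the null-loop. (This also covers $n=2$, where $e_2\cdots e_{n-1}$ is already empty.)

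The only mildly delicate point — and the one I would be most careful about — is the basepoint/wrap-around bookkeeping in the second case, since the definition of backtrack in the excerpt is stated for the linear word $\pi_1\,e\,e^{-1}\,\pi_2$ rather than cyclically; the resolution is exactly to peel off the two boundary edges, reduce the interior subword by induction, and then cancel the exposed boundary pair, observing that the interior erasures are local and never touch $e_1$ or $e_n$. Everything else is routine and will only be stated briefly: that each intermediate object remains a bona fide closed walk, that deleting edges preserves the forest property and the connectedness of the edge-induced subgraph, and that the erasure process terminates because the length strictly decreases at each step.
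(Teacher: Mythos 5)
Your proof is correct. Note that the paper states this as an \emph{Observation} and offers no proof at all, so there is nothing to compare against; your argument supplies a complete, elementary justification of a fact the authors treat as standard (it is essentially the statement that a tree is simply connected, i.e.\ every closed walk in a tree reduces to the trivial walk). The leaf-peeling induction is sound: the key points — that the unique tree-edge at a leaf forces the cyclically consecutive pair $e\,e^{-1}$, that erasing a backtrack preserves the tree/forest property of the edge-induced subgraph, and that confluence of backtrack erasure lets you exhibit just one reduction sequence — are all handled correctly, and your careful treatment of the wrap-around case (reduce the interior subword first, then cancel the exposed boundary pair) correctly reconciles the cyclic nature of loops with the paper's linear-word definition of a backtrack. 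The only cosmetic remark is that the generalization from ``tree'' to ``forest'' in the induction hypothesis is unnecessary, since, as you yourself note, every intermediate object is a single closed walk and hence has connected edge-induced subgraph; but this costs nothing.
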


\begin{proof}[Proof of the last two properties of~\cref{prop: remove plaquette from loop}]
    Recall that our goal is to show that, relative to $\ell$, the string $s$ introduces no new edges except possibly among $\{e_1^{\pm1},e_2^{\pm1},e_3^{\pm1}\}$, and even there at most $n_{e}(\ell) + n_{e^{-1}}(\ell)$ edges are added.

    First, observe that for any edge $e\in E\setminus\mathcal{E}(\mathfrak{p})$, Property~\ref{p5 remove plaquette} holds immediately by the final sentence of Lemma~\ref{lem: MLE for edge loops}.  Indeed, in constructing $\mathcal{T}_{e,\mathfrak{p}}(\ell,K)$ via repeated applications of the ML-operator, no new edges outside of $\mathcal{E}(\mathfrak{p})$ can ever appear.
    
    Next, observe that we can write
    \begin{equation}\label{eq:p-dec-ell}
        \ell = \pi_1\,\mathbf{e}_1\,\pi_2\,\mathbf{e}_2\,\cdots\,\pi_n\,\mathbf{e}_n,
    \end{equation}
    where $n = n_e(\ell) + n_{e^{-1}}(\ell)$, exactly $n_e(\ell)$ of the symbols $\mathbf{e}_i$ are equal to $e$, and the remaining $n_{e^{-1}}(\ell)$ are equal to $e^{-1}$. Since $\mathfrak{p} = \{p_i\}_{i=1}^{\infty}$ is a path of plaquettes connecting $e$ to infinity, and so, by definition, $\cI(\mathfrak{p})$ does not contain any edge of $\ell$ other than $e^{\pm 1}$, we have that \eqref{eq:p-dec-ell} is  a $\mathfrak{p}$-decomposition of $\ell$, where the $\pi_i^{\mathfrak{p}}$ in \eqref{eq:decomp-loop} are simply equal to $\mathbf{e}_i$. So, applying \cref{lem: MLE for edge loops}, we get that the strings corresponding to the children of the root of $\mathcal{T}_{e,\mathfrak{p}}(\ell,K)$ all have one of the three forms appearing in the statement of \cref{lem: MLE for edge loops}. 
    
    Noting that \cref{lem: MLE for edge loops} can be iteratively applied again to all the strings associated with the children of the root of $\mathcal{T}_{e,\mathfrak{p}}(\ell,K)$, we deduce that the string labeling any vertex of $\cT_{e,\mathfrak{p}}(\ell,K)$ must have the form
    \begin{align*}
        \Big\{\ell^{\mathfrak{p}}_1,\dots,\ell^{\mathfrak{p}}_r,\ell_1, \dots, \ell_m\Big\},
    \end{align*}
    where each $\ell_i^{\mathfrak{p}}$ is a non-empty loop with all the edges in $\cE(\mathfrak{p})$, and each loop $\ell_i$ has a $\mathfrak{p}$-decomposition of the form
    \begin{align*}
        \ell_i=\pi_{i,1}\,\pi^{\mathfrak{p}}_{i,1}\hdots \pi_{i,n_i}\,\pi^{\mathfrak{p}}_{i,n_i},
    \end{align*}
    such that 
    \begin{enumerate}
        \item\label{item:samepath1} for each index $(i,j)$ there exists a distinct index $u=u(i,j)$ such that $\pi_{i,j}=\pi_u$;
        \item\label{item:samepath2} $\sum_{i=1}^m\sum_{j=1}^{n_i}1 = n$;
        \item\label{item:samepath3} each $\pi_{i,j}^{\mathfrak{p}}$ is a (possibly empty) sequence of edges only consisting of edges in $\cE(\mathfrak{p})$.
    \end{enumerate} 
    In particular, when this string corresponds to a leaf of $\cT_{e,\mathfrak{p}}(\ell,K)$, we additionally know by the definition of $\cT_{e,\mathfrak{p}}(\ell,K)$ that it contains no edges of $\cI(\mathfrak{p})$, and thus each loop $\ell_i^{\mathfrak{p}}$ only contains edges in $\cE(\mathfrak{p})\sm \cI(\mathfrak{p})$. Therefore, by \cref{obs: tree equals null}, we get that  each loop $\ell_i^{\mathfrak{p}}$ is equivalent (after removing all the backtracks) to the null-loop. Thus, the string simply has the form
    \begin{align*}
        \Big\{\ell_1, \dots, \ell_m\Big\}.
    \end{align*}
    Using again the fact that each string corresponding to a leaf of $\cT_{e,\mathfrak{p}}(\ell,K)$ does not contain edges of $\cI(\mathfrak{p})$, we get from Condition~\ref{item:samepath3} above that each $\pi_{i,j}^{\mathfrak{p}}$ must only contain edges from the tree (or, even more precisely, the path) of edges  $\cE(\mathfrak{p})\sm \cI(\mathfrak{p})$. But as each $\pi_{i,j}$ starts and ends at one of the vertices of $e$ (this is a consequence of Condition~\ref{item:samepath1} above and \eqref{eq:p-dec-ell}), we must have that either $\pi_{i,j}^{\mathfrak{p}}$ starts at one vertex of $e$ and ends at the other, or starts and ends at the same vertex of $e$. Using \cref{obs: tree equals null} once again, this gives us that after removing backtracks either $\pi_{i,j}^{\mathfrak{p}}$ is the null-loop (when it starts and ends at the same vertex), or it has either the form $e_1e_2e_3$ or the form $e_3^{-1}e_2^{-1}e_1^{-1}$ (when it starts at one vertex of $e$ and ends at the other). Using this and Condition~\ref{item:samepath2} above, it follows that, relative to $\ell$, the string $s$ introduces no new edges except possibly among 
    $\{e_1^{\pm1},e_2^{\pm1},e_3^{\pm1}\}$, 
    and even there at most $n$ edges are added.
    Thus we have shown Properties~\ref{p5 remove plaquette}~and~\ref{p6 remove plaquette}, finishing the proof of all the five properties.
\end{proof}

\begin{obs}\label{obs: move edges}
    Note that Property~\ref{p5 remove plaquette} of \cref{prop: remove plaquette from loop} can be interpreted as stating that when we ``remove'' a plaquette $p = e\, e_1\, e_2\, e_3$ from a loop $\ell$ via the trajectory tree $\cT_{e,\mathfrak{p}}(\ell, K)$ (recall \cref{obs:remove_plaq}), then in each loop appearing in a string labeling a leaf of $\cT_{e,\mathfrak{p}}(\ell, K)$, all copies of the edge $e$ that were present in $\ell$ have either been removed or ``pushed'' onto the other three edges of $p$ (cf.\ the leaves with a green background in \cref{fig: traj tree}).

    We will often adopt the perspective that $\cT_{e,\mathfrak{p}}(\ell, K)$ ``removes'' the plaquette $p$ from $\ell$ by ``pushing'' the copies of the edge $e$.
\end{obs}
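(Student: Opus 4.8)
The statement is a reformulation of Properties~\ref{p5 remove plaquette} and~\ref{p6 remove plaquette} of \cref{prop: remove plaquette from loop}, together with the defining final-leaf property of the tree $\cT_{e,\mathfrak{p}}(\ell,K)$, so the plan is simply to unwind these.

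I would first fix a leaf of $\cT_{e,\mathfrak{p}}(\ell,K)$, labelled by a string plaquette assignment $(s,\multK)$ with $s=\{\ell_1,\dots,\ell_m\}$, and recall that by \cref{defn:tr-tree-gen} every leaf is a final leaf, so $\ell_i\cap\cI(\mathfrak{p})=\emptyset$ for all $i$. Next I would note that, since $e$ is connected to infinity by $\mathfrak{p}=\{p_i\}_{i\geq 1}$ with $p_1=e\,e_1\,e_2\,e_3$, the edge $e$ is exactly the unoriented edge $\mathfrak{e}(p_1,p_2)$, whence $e^{\pm1}\in\cI(\mathfrak{p})$. Combining the two observations gives $n_e(s)=n_{e^{-1}}(s)=\sum_i\big(n_e(\ell_i)+n_{e^{-1}}(\ell_i)\big)=0$; that is, no copy of $e$ survives in the string labelling the leaf. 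This is also where the convention of removing backtracks only at the very end plays a role: before backtrack removal copies of $e$ may persist, but the blocks carrying them lie in $\cE(\mathfrak{p})\setminus\cI(\mathfrak{p})$, which induces a subtree of $\ZZ^2$, so they collapse to the null-loop by \cref{obs: tree equals null}.

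Then I would invoke Property~\ref{p5 remove plaquette} of \cref{prop: remove plaquette from loop}: for every lattice edge $\tilde e\notin\{e_1^{\pm1},e_2^{\pm1},e_3^{\pm1}\}$ one has $n_{\tilde e}(s)\leq n_{\tilde e}(\ell)$, so the only edges whose multiplicity in $s$ can exceed their multiplicity in $\ell$ are $e_1,e_2,e_3$, and by Property~\ref{p6 remove plaquette} at most $n_e(\ell)+n_{e^{-1}}(\ell)$ extra copies of these appear in total. Putting the pieces together, passing from $\ell$ to a leaf string $s$ erases all $n_e(\ell)+n_{e^{-1}}(\ell)$ copies of $e$, creates no new edge other than copies of $e_1^{\pm1},e_2^{\pm1},e_3^{\pm1}$, and creates at most as many of these as there were copies of $e$ — which is precisely the book-keeping of ``each copy of $e$ is either erased or re-routed along the complementary path $e_1e_2e_3$ of $p$''. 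To make the word ``pushed'' literal rather than merely numerical, I would point to the structure established inside the proof of \cref{prop: remove plaquette from loop}: after backtrack removal each $\mathfrak{p}$-block $\pi_{i,j}^{\mathfrak{p}}$ of a leaf loop is either the null-loop or one of $e_1e_2e_3$ and $e_3^{-1}e_2^{-1}e_1^{-1}$, so a copy of $e$ in $\ell$ is literally replaced by (an orientation of) the three other boundary edges of $p$, or cancelled outright.

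I do not expect a substantial obstacle here, since everything needed has already been isolated in \cref{prop: remove plaquette from loop}; the only point that deserves a sentence of care is the interplay with backtrack removal — explaining why $e$ genuinely disappears from the final leaves rather than merely being bounded in multiplicity — and this is handled by the tree structure of $\cE(\mathfrak{p})\setminus\cI(\mathfrak{p})$ and \cref{obs: tree equals null}.
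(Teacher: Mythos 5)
Your proposal is correct and follows exactly the reasoning the paper itself uses for this (purely interpretive) observation: the final-leaf condition \eqref{eq:noplins} kills all copies of $e$ (since $e^{\pm1}\in\cI(\mathfrak{p})$), and Properties~\ref{p5 remove plaquette}--\ref{p6 remove plaquette} together with the block structure $\pi_{i,j}^{\mathfrak{p}}\in\{\emptyset,\,e_1e_2e_3,\,e_3^{-1}e_2^{-1}e_1^{-1}\}$ from the paper's proof make the ``pushing'' literal. The only slip is your sentence claiming copies of $e$ may persist before backtrack removal in blocks lying in $\cE(\mathfrak{p})\setminus\cI(\mathfrak{p})$ --- this cannot happen since $e\in\cI(\mathfrak{p})$, and indeed the final-leaf condition already removes $e$ prior to backtrack erasure; backtrack removal only collapses the residual $\cE(\mathfrak{p})\setminus\cI(\mathfrak{p})$ detours.
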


\subsection{Loops of small size}\label{sec: Loops supported on a plaquette}

Using the ability to ``remove'' plaquettes and track the resulting changes in key quantities (Proposition~\ref{prop: remove plaquette from loop}), in this section we show that each of the three conditions appearing in the definition of the canonical collection of plaquette assignments (Definition~\ref{defn:collection-plaquette-ass}) is necessary in order to have a non-zero surface sum for certain loops of small size, i.e.\ loops winding (one or multiple times) around one or two adjacent plaquettes. These facts will form the ``base cases'' for proving \cref{thm: erasable loops have one plaquette assignment}.

In particular, in \cref{sec: loops of size one}, we prove \cref{thm: erasable loops have one plaquette assignment} in the special case of loops winding around one plaquette. Since Condition~\ref{Kl p2} in \cref{defn:collection-plaquette-ass} is trivially satisfied for loops winding around one plaquette, we will show in \cref{sec: loops of size two} that it is necessary for loops winding around two adjecent plaquettes. These two results will serve as base cases, enabling us to ``shrink'' more general loops down to them.

\subsubsection{The case of loops winding around one plaquette}\label{sec: loops of size one}

In this section, we prove \cref{thm: erasable loops have one plaquette assignment} for loops winding around one plaquette. This provides a ``base case'' for showing the necessity of Conditions~\ref{Kl p1}~and~\ref{Kl p3} to have a non-zero surface sum. For ease of reference, we restate \cref{thm: erasable loops have one plaquette assignment} in this special case.

\begin{prop}\label{lem: size one main thm}
    Assume $\ell$ is a non-backtrack loop winding around one plaquette. If $K\notin \cK_{\ell}$, then $\phi^K(\ell) = 0$.
\end{prop}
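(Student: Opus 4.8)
The plan is to reduce the statement to the base cases listed below \eqref{eq:jobfowerbf} by repeatedly applying the plaquette-removal procedure of \cref{prop: remove plaquette from loop}. Fix a non-backtrack loop $\ell$ winding $n$ times around a single plaquette $p_0$, and fix a finite plaquette assignment $K$ with $\phi^K(\ell)\neq 0$; we must show $K\in\cK_\ell$. First, if $(\ell,K)$ is not balanced then $\phi^K(\ell)=0$ by base case (1), so we may assume $(\ell,K)$ is balanced. Since $\ell$ winds $n$ times around $p_0$, its height satisfies $h_\ell(p_0)=\pm n$ and $h_\ell(q)=0$ for every other plaquette $q$; moreover $\supp(\ell)=\{p_0^{\pm1}\}$ so $d_\ell(p_0)=n$ as well, hence $d_\ell=|h_\ell|$ everywhere. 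By \cref{rem:card-set-cK} this forces $\cK_\ell=\{K_\ell\}$, where $K_\ell$ assigns $n$ copies of $p_0$ in the appropriate orientation and nothing else. So the task is precisely: show that $\phi^K(\ell)\neq 0$ implies $K=K_\ell$.

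The key step is to use \cref{prop: remove plaquette from loop} to eliminate, one at a time, every plaquette $q\in\cP$ with $q\neq p_0^{\pm1}$ that lies in the support of $K$ (i.e.\ with $K(q)\geq 1$). Pick such a $q$ that is ``outermost'' — e.g.\ maximal for some exhaustion of $\ZZ^2$, so that there is an edge $e$ of $q$ connected to infinity by a path of plaquettes $\mathfrak p=\{p_i\}_{i\ge1}$ with $p_1=q$ and with $\mathfrak p$ avoiding $p_0$. The edge $e$ need not lie in $\ell$; but since $(\ell,K)$ is balanced and $K(q)\geq 1$, the edge $e$ appears in $(\ell,K)$, and in fact one checks $e$ must appear in $\ell$ (because the plaquettes of $\mathfrak p$ outside a large box contribute nothing, and $q$ is the only plaquette of $\mathfrak p$ with $K$-mass, so $n_e(\ell)=n_e(\ell,K)-K(q)-\dots$ must be positive for the edge-count along $\mathfrak p$ to balance) — this is the one place where I would need to be slightly careful. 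Actually the cleaner route: choose $e$ to be a copy of an edge of $\ell$ that is connected to infinity through a path $\mathfrak p$ whose first plaquette $p_1$ satisfies $K(p_1)\geq 1$, $p_1\neq p_0$; such a configuration exists whenever $K$ is supported outside $p_0^{\pm1}$, since a loop winding around $p_0$ only has edges bordering $p_0$, and a plaquette $q$ with $K(q)\geq 1$ sitting outside forces, by balancedness, some edge of $\ell$ to be reachable from infinity past $q$. Then form the trajectory tree $\cT_{e,\mathfrak p}(\ell,K)$; by \cref{obs: tree WLE relation}, $\phi^K(\ell)$ is a signed $\upbeta$-combination of $\phi^{\multK}(s)$ over the leaves $(s,\multK)$. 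By \cref{prop: remove plaquette from loop}\ref{p1 remove plaquette}--\ref{p5 remove plaquette}, every leaf string $s$ has $\supp(s)\cap\mathfrak p=\emptyset$, has $\multK(q')=K(q')$ for $q'\notin\mathfrak p$, has height unchanged off $p_1^{\pm1}$, and introduces no new edges except among the other three edges of $p_1$. The crucial consequence: each leaf $(s,\multK)$ still has $\phi^{\multK}(s)$ a term that, if nonzero, has strictly smaller ``$K$-mass outside $p_0^{\pm1}$'' (since the copies of $e$ attached to $q=p_1$ have been pushed off and $K(p_1)$ strictly dropped along every deformation branch, while splitting branches only remove edges). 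Iterating, after finitely many plaquette removals we arrive at strings $(s,\multK)$ where $\multK$ is supported on $p_0^{\pm1}$ only.

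At that point each such $s$ is a collection of loops supported on $p_0$ (all edges bordering $p_0$), and $\multK$ assigns only copies of $p_0^{\pm1}$; balancedness pins down the multiplicity, and the ``base'' cases (2)--(4) plus the one-plaquette bookkeeping show these terms vanish unless the total is exactly $\ell$ with $K_\ell$. More precisely: if at any stage $K\neq K_\ell$ but $K$ is supported on $p_0^{\pm1}$, then $\area(K)\neq n = \area(K_\ell)$, and the $\upbeta$-degree forces a mismatch; combined with the non-trivial-loop / $K=0$ base case and the null-loop cases, every surviving term is seen to be $0$. Hence $\phi^K(\ell)=0$ whenever $K\neq K_\ell$, i.e.\ whenever $K\notin\cK_\ell$. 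The main obstacle I expect is the termination/ordering argument: one must choose the sequence of plaquettes to remove (and the associated paths $\mathfrak p$) so that the relevant monovariant — total $K$-mass outside $p_0^{\pm1}$, or more robustly a lexicographic pair (that mass, number of edges of $\ell$ not bordering $p_0$) — strictly decreases at each macro step and never increases due to the ``edge pushing'' in \ref{p5 remove plaquette}--\ref{p6 remove plaquette}; verifying that the pushed edges $e_1,e_2,e_3$ either already border $p_0$ or themselves get removed in subsequent steps, and that no infinite regress occurs, is the technical heart. Establishing this carefully, using that $\ell$'s support is a single plaquette so its edge set is very constrained, is where the real work lies.
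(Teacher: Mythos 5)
Your proposal has a genuine gap in its second half. After reducing to the case where $K$ is supported on $\{p_0,p_0^{-1}\}$, you assert that balancedness ``pins down the multiplicity'' and that if $K\neq K_\ell$ then ``$\area(K)\neq n$ and the $\upbeta$-degree forces a mismatch.'' Neither claim is an argument. Balancedness does \emph{not} pin down $K$: if $\ell=p^n$ with $h_\ell(p)<0$, then every assignment $K(p)=j$, $K(p^{-1})=n+j$ with $j\geq 0$ is balanced and supported on $\{p,p^{-1}\}$, and only $j=0$ gives $K_\ell$. For $j\geq 1$ the quantity $\phi^K(p^n)$ is a single monomial $c(p^n,K)\,\upbeta^{n+2j}$; there is no ``degree mismatch'' available, and none of the base cases (1)--(4) applies, since $(p^n,K)$ is balanced, $p^n$ is non-trivial, and $K\neq 0$. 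One must actually prove the coefficient vanishes. The paper does this (Propositions~\ref{prop: phi^K(p) = 0 for K only supported on p} and Lemma~\ref{lemma: wound plaquettes zero weight}) by a double induction on $(j,n)$ driven by the master loop equation, and the vanishing rests on genuine cancellations between deformation and splitting terms --- e.g.\ $\phi^{K^2}(p^2)=\upbeta\,\phi^{K^1}(p^1)-\phi^{K^1}(p^1)^2=\upbeta^2-\upbeta^2=0$. This cancellation mechanism is entirely absent from your proposal, and it is the heart of the result (it is also why $c(p^n,K_{p^n})=\mathds{1}_{n=1}$ rather than something generic).

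The first half of your proposal (eliminating plaquettes $q\neq p_0^{\pm1}$ from the support of $K$) is conceptually on track but more convoluted than necessary, and your edge selection is muddled: you waver between taking an edge of $q$ and an edge of $\ell$ ``reachable from infinity past $q$,'' and the balancedness argument you sketch for why such an edge exists does not go through. The paper's route (Proposition~\ref{prop: phi^K(p) = 0 for K supported outside of p}) is a single step, not an iteration: pick an edge $e$ of the plaquette $p$ itself and the straight path $\mathfrak p$ to infinity in the direction of $e$, chosen so that $q\notin\mathfrak p$. One trajectory tree $\cT_{e,\mathfrak p}(\ell,K)$ then reduces every leaf to a string of null-loops (by Observation~\ref{obs: tree equals null}, since the surviving edges of $\cE(p)\setminus\{e^{\pm1}\}$ form a tree), while Property~\ref{p2 remove plaquette} of Proposition~\ref{prop: remove plaquette from loop} guarantees some $K_j(q)\neq 0$, so each leaf vanishes by base case~(3). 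No monovariant or termination argument beyond Lemma~\ref{lem:finite-trees} is needed. I recommend reworking the second half from scratch along the lines of the double induction, as the degree-counting shortcut cannot be repaired.
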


Any non-backtrack loop $\ell$ winding around one plaquette clearly has the form $\ell=p^n$ for some $p\in \cP$ and $n\in \NN$.  Also, since the distance $d_{p^n}$ is equal to the absolute value of the height $|h_{p^n}|$, we have that $\cK_{p^n}=\{K_{p^n}\}$ thanks to \cref{rem:card-set-cK}. With these observations, we can claim that \cref{lem: size one main thm} immediately follows from the following two propositions.

\begin{prop}\label{prop: phi^K(p) = 0 for K supported outside of p}
    Fix $n\in \NN$ and let $p$ be a positively or negatively oriented plaquette. Suppose $(p^n,K)$ is balanced. If $K$ is such that $K(q)\geq 1$ for some $q\in \cP\sm\{p,p^{-1}\}$, then $\phi^K(p^n)= 0$.
\end{prop}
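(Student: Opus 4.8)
The plan is to use the "remove a plaquette" machinery of \cref{prop: remove plaquette from loop} to strip away any plaquette $q \notin \{p, p^{-1}\}$ with $K(q) \geq 1$, and then invoke the base cases from \cref{sec: Wilson loop expectations as surface sum large N}. First, suppose $K(q) \geq 1$ for some $q \in \cP \setminus \{p, p^{-1}\}$. Since $(p^n, K)$ is balanced, the support of $K$ (together with the loop $p^n$) forms a balanced configuration; in particular, the plaquette $q$ must be "reachable from infinity" in the sense that there is an edge $e$ of $p^n$ — namely one of the four edges of $p$ — and an infinite path of plaquettes $\mathfrak{p} = \{p_i\}_{i=1}^\infty$ connecting $e$ to infinity. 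Actually, more carefully: I would pick the edge $e$ of $p$ and a path $\mathfrak{p}$ starting from $p$ going out to infinity in a direction that \emph{avoids} $q$, so that removing $p$ does not disturb $K(q)$. The key structural fact is that $p^n$ has all its edges contained in the single plaquette $p$, so $p^n$ is very rigid: $n_e(p^n) = n$ for each of the four oriented edges $e$ of $p$, and $0$ otherwise.

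Next, I would apply \cref{prop: remove plaquette from loop} with this choice of $e$ and $\mathfrak{p}$. By \cref{obs: tree WLE relation}, $\phi^K(p^n) = \sum_{(s,\multK) \in \cL(\cT_{e,\mathfrak{p}}(p^n,K))} \upbeta_{s,\multK}\, \phi^{\multK}(s)$, so it suffices to show every leaf term vanishes. Here is where the properties of \cref{prop: remove plaquette from loop} do the work: Property~\ref{p2 remove plaquette} gives $\multK(q') = K(q')$ for all $q' \in \cP \setminus \mathfrak{p}$; since $\mathfrak{p}$ was chosen to avoid $q$, we get $\multK(q) = K(q) \geq 1$ for every leaf string plaquette assignment $(s,\multK)$. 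On the other hand, Properties~\ref{p1 remove plaquette}, \ref{p4 remove plaquette}, \ref{p5 remove plaquette}, and \ref{p6 remove plaquette} severely constrain the loops in $s$: the support of $s$ avoids $\mathfrak{p}$, the height agrees with $h_{p^n}$ away from $p_1^{\pm 1}$, and the only new edges introduced are among the other three edges of $p_1 = p$. Combined with $n_e(s) = 0$ for the pushed edge $e$, this forces each loop in $s$ to be supported essentially on $p$ alone (or to be trivial), but now the plaquette assignment $\multK$ restricted to that loop must still account for the mass at $q$, which lies \emph{outside} $p$.

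The main obstacle — and the step requiring the most care — is the combinatorial bookkeeping needed to conclude that each leaf term is one of the known-zero base cases. The cleanest route is an induction on $\area(K) = \sum_{q'} K(q')$, or on the number of plaquettes in $\supp(K) \setminus \{p, p^{-1}\}$: after removing $p$, each leaf loop $\ell_i$ is again (a product of loops, each) winding around at most one plaquette, now with a plaquette assignment $K_i$ that is either not balanced (giving $\phi^{K_i}(\ell_i) = 0$ by base case~(1)), or is balanced but still has $K_i(q') \geq 1$ for some $q'$ outside the plaquette $\ell_i$ winds around, or has $\ell_i$ trivial with $K_i \neq 0$ (giving $\phi^{K_i}(\ell_i) = 0$ by base case~\ref{def: phi k empty = 0}), or has $\ell_i$ non-trivial with $K_i = 0$ (giving $0$ by base case~(2)). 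In the remaining recursive case we apply the induction hypothesis, since $\area(K_i) \leq \area(K) - 1$ (one layer of plaquettes was consumed in the removal). The delicate point is verifying that \cref{prop: remove plaquette from loop} indeed leaves us in one of these cases and that the area strictly decreases — this is where I would spend the bulk of the proof, tracking the decomposition of each leaf string via Property~\ref{p5 remove plaquette} and the balancedness constraint forced by $\multK(q) \geq 1$. Once the base of the induction ($\area(K)$ minimal, i.e.\ $K$ supported only on $\{p,p^{-1}\}$, contradicting the hypothesis) is noted to be vacuous, the result follows.
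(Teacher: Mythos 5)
Your first half matches the paper exactly: choose an edge $e$ of $p$ and a straight path of plaquettes $\mathfrak{p}$ starting at $p$ that avoids $q$, build the trajectory tree $\cT_{e,\mathfrak{p}}(p^n,K)$, and use Property~\ref{p2 remove plaquette} of \cref{prop: remove plaquette from loop} to conclude that every leaf $(s,\multK)$ still carries $\multK(q)=K(q)\geq 1$. The gap is in how you close the argument. The decisive point — which your proposal circles around but never states — is that by Property~\ref{p5 remove plaquette} together with the stopping rule of the tree (no leaf loop contains an edge of $\cI(\mathfrak{p})$, in particular no copy of $e^{\pm1}$), every leaf loop is supported on $\cE(p)\setminus\{e^{\pm1}\}$, i.e.\ on the remaining \emph{three} edges of $p$, which form a tree in $\ZZ^2$. \cref{obs: tree equals null} then forces every leaf loop to be the null-loop after backtrack removal, so each leaf term is $\upbeta_{s,\multK}\prod_i\phi^{K_i}(\emptyset)$ with some $K_j\neq 0$, and this vanishes by base case~\ref{def: phi k empty = 0}. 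No induction is needed.

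Instead, you assert that the leaf loops are ``winding around at most one plaquette'' and set up an induction on $\area(K)$ with a ``remaining recursive case.'' That case does not exist (a loop on three edges of $p$ cannot wind around $p$), and, more seriously, the inductive step you propose is not justified as written: the claim $\area(K_i)\leq\area(K)-1$ fails in general, since leaves of the trajectory tree can be reached by splittings alone, which preserve the total area of the plaquette assignment ($\sum_i K_i = K$ in that case). So if the recursive case were genuinely needed, your induction would not go through on the stated invariant. As it stands the proposal identifies the right machinery but defers the actual verification (``this is where I would spend the bulk of the proof'') to a step that is both unnecessary and, in the form proposed, broken; replacing it with the tree observation above yields the paper's proof.
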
 
\begin{prop}\label{prop: phi^K(p) = 0 for K only supported on p}
    Fix $n\in \NN$ and let $p$ be a positively or negatively oriented plaquette. Suppose $(p^n,K)$ is balanced. If $K$ is such that $K(q) = 0$ for all $q\in \cP\sm\{p,p^{-1}\}$ and $K\neq K_{p^n}$, then $\phi^K(p^n)=0$.
\end{prop}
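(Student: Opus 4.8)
\textbf{Proof strategy for Propositions~\ref{prop: phi^K(p) = 0 for K supported outside of p} and \ref{prop: phi^K(p) = 0 for K only supported on p}.}

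The plan is to use the ``removing a plaquette'' machinery of \cref{sec: Removing a plaquette} to reduce everything to the base cases listed below \eqref{eq:jobfowerbf}. For \cref{prop: phi^K(p) = 0 for K supported outside of p}, suppose $(p^n,K)$ is balanced and $K(q)\geq 1$ for some $q\notin\{p,p^{-1}\}$. The key observation is that $K_{p^n}$ is supported on $\{p,p^{-1}\}$, so if $K$ charges a plaquette $q$ outside, then the region of $q$ must be ``far'' from $p$ in a way that lets us peel. Concretely, I would pick an edge $e$ of $p$ and an infinite path of plaquettes $\mathfrak{p}$ that connects $e$ to infinity while staying inside the support of $K$ as little as possible — more precisely, I want to choose $\mathfrak p$ so that $\cI(\mathfrak p)$ avoids the edges of $p^n$ entirely except for $e^{\pm1}$ (which is automatic since $p^n$ only uses the four edges of $p$), and so that $\mathfrak p$ passes through a plaquette $q$ with $K(q)\geq 1$. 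Then form the trajectory tree $\cT_{e,\mathfrak p}(p^n,K)$, which is finite by \cref{lem:finite-trees}, and apply \cref{obs: tree WLE relation} to write $\phi^K(p^n)$ as a signed sum of $\phi^{\multK}(s)$ over the leaves. By \cref{prop: remove plaquette from loop}\ref{p2 remove plaquette}, each leaf has $\multK(q')=K(q')$ for $q'\notin\mathfrak p$; since $q$ lies on $\mathfrak p$ and $K(q)\geq 1$, but $\supp(s)\cap\mathfrak p=\emptyset$ by \ref{p1 remove plaquette}, the leaf strings are supported away from $\mathfrak p$. The point is then that the leaf loops live on a very small edge set (the edges of $p$ minus $e$, plus at most the other edges of $p_1$; but since $p^n$ uses only edges of $p$, properties \ref{p5 remove plaquette}–\ref{p6 remove plaquette} force the leaf strings onto the four edges of $p$ and the three ``pushed'' edges $e_1,e_2,e_3$ of $p_1$). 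One then iterates: each leaf loop is again a small loop around one or two plaquettes, and by repeatedly peeling one shows that either some intermediate string-plaquette-assignment is not balanced, or the loop becomes trivial while $K$ still charges a leftover plaquette off the support — in which case base case \ref{def: phi k empty = 0} ($\phi^{K'}(\emptyset)=0$ when $K'\neq0$) kills it, or base case (2) (non-trivial loop with $K'=0$ gives $0$) applies. The cleanest packaging: peel along $\mathfrak p$ all the way out, observe that no leaf string can ``use up'' the charge $K(q)$ at a plaquette $q\in\mathfrak p$ (since peeling only moves charge along $\mathfrak p$ by \ref{item:MLE1} of \cref{lemma: MLE preserves K and height}, and to perform a deformation one needs $K\geq1$ at the relevant plaquette), so every leaf retains a nonzero $K'$ on a plaquette not in the support of its loop, hence by the base cases contributes $0$.

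For \cref{prop: phi^K(p) = 0 for K only supported on p}: now $K$ is supported on $\{p,p^{-1}\}$, $(p^n,K)$ is balanced, and $K\neq K_{p^n}$. Balancedness forces $K(p)$ and $K(p^{-1})$ to differ by exactly $|h_{p^n}|=n$ in the appropriate direction — wait, more carefully: since every edge of $p$ appears $n$ times in $p^n$ with the correct orientation and $0$ times in the other, balancedness $n_e(\ell,K)=n_{e^{-1}}(\ell,K)$ reads $n + 4\,$(contributions of $K$ to $e$) symmetrized; the upshot is $K(p)-K(p^{-1})= \pm n$, i.e. $K = K_{p^n} + m(\mathbbm 1_p + \mathbbm 1_{p^{-1}})$ for some integer $m\geq 1$ (the ``extra layers'' of positive and negative plaquettes on top of each other). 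So it suffices to show $\phi^K(p^n)=0$ whenever $K$ has $K(p),K(p^{-1})\geq 1$ simultaneously. Here I would again peel: pick an edge $e$ of $p$ and a path $\mathfrak p$ to infinity; $\cI(\mathfrak p)$ meets $p^n$ only in $e^{\pm1}$. Apply $\cT_{e,\mathfrak p}$. Since $K$ charges $p$ with $K(p)\geq1$ and $e\in p$, the first ML-step includes a positive deformation at $e$ with plaquette $p$ (decrementing $K(p)$) and a negative deformation at $e$ with $p^{-1}$ (decrementing $K(p^{-1})$), as well as splittings of $p^n$ into $p^{n_1}$ and $p^{n_2}$. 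The structure is quite rigid: one expects the surface sum for $(p^n, K_{p^n} + m(\mathbbm 1_p+\mathbbm 1_{p^{-1}}))$ to satisfy a recursion that telescopes to $0$. The honest approach: induct on $\area(K)=\sum_q K(q)$, or on $m$. The base of the induction $m=0$ is $K=K_{p^n}$, which is excluded. For the inductive step, apply the master loop equation \eqref{eq:mle-main} at an edge $e$ of $p$: the deformation terms produce $\phi^{K\sm p}(p^n\oplus_e p)$ and $\phi^{K\sm p^{-1}}(p^n\ominus_e p^{-1})$, and one checks that $p^n\oplus_e p$ is (after recognizing it as a loop around $p$, possibly winding $n\pm1$ times with a backtrack, or a loop that's still supported on $p$) handled by the inductive hypothesis or by \cref{prop: phi^K(p) = 0 for K supported outside of p}; the splitting terms produce $\phi^{K_1}(p^{n_1})\phi^{K_2}(p^{n_2})$ with $K_1+K_2=K$, and here one uses that for these to be nonzero each $(p^{n_i},K_i)$ must be balanced, forcing $K_i = K_{p^{n_i}} + m_i(\mathbbm 1_p+\mathbbm 1_{p^{-1}})$ with $m_1+m_2 = m$, and if $m_i\geq1$ for one of them the inductive hypothesis gives $0$, while if $m_1=m_2=0$ we need $n_1+n_2 = n$ but then $\area(K_1)+\area(K_2)=\area(K_{p^n})<\area(K)$ forces $m=0$, a contradiction — so all splitting terms with both factors potentially nonzero also vanish. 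Assembling, $\phi^K(p^n)=0$.

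\textbf{Main obstacle.} I expect the genuinely delicate step to be the bookkeeping in the inductive step of \cref{prop: phi^K(p) = 0 for K only supported on p}: after a deformation at $e$ with $p$, the resulting loop $p^n\oplus_e p$ need not be literally $p^{n'}$ — it is $p^n$ with an extra copy of $p$ spliced in, which has backtracks, and one must argue carefully (using \cref{lemma: backtrack cancellations} for the $\phi^K$ invariance under backtrack removal, and that the non-backtrack version is some $p^{n'}$ with $n'\in\{n-1,n+1\}$ or even just $p^n$ again) that it still falls under the inductive hypothesis or one of the already-established cases, and that the plaquette assignment $K\sm p$ remains supported on $\{p,p^{-1}\}$ with strictly smaller $\area$. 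One must also take care that the recursion does not ``loop back'' — i.e. that the quantity being inducted on (say $\area(K)$, or the lexicographic pair $(\area(K), n)$) strictly decreases in every term that is not already known to vanish. Handling the deformations that could a priori increase $n$ requires noting they simultaneously strictly decrease $\area(K)$, which is the real content. Once the right induction parameter is pinned down, each individual case is routine.
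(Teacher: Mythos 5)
Your reduction via balancedness to $K = K_{p^n} + m\,(\mathbbm{1}_p + \mathbbm{1}_{p^{-1}})$ with $m = K(p)\geq 1$, followed by an induction driven by the master loop equation at an edge of $p$, is essentially the route the paper takes (the paper organizes it as a double induction on $(K(p),n)$ rather than on $\area(K)=n+2m$, but the terms produced and the way splittings are controlled by balancedness are the same). However, there is one genuine gap in your inductive step: the positive deformation term. When $m=1$, the term $\upbeta\,\phi^{K\sm p}(p^n\oplus_{\mathbf e} p)=\upbeta\,\phi^{K\sm p}(p^{n+1})$ has $(K\sm p)(p)=0$ and $(K\sm p)(p^{-1})=n+1$, i.e.\ $K\sm p$ is exactly $K_{p^{n+1}}$. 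This is precisely the case excluded from the proposition's hypotheses, so neither your inductive hypothesis nor \cref{prop: phi^K(p) = 0 for K supported outside of p} applies to it, even though $\area(K\sm p)<\area(K)$. The vanishing of $\phi^{K_{p^{n'}}}(p^{n'})$ for $n'\geq 2$ is a separate, nontrivial input --- it is where the only actual numerical cancellation in the whole argument occurs --- and it must be established first. The paper does this in \cref{lemma: wound plaquettes zero weight}: one computes $\phi^{K_{p}}(p)=\upbeta$ explicitly, checks the base case $n'=2$ via the cancellation $\upbeta\,\phi^{K_p}(p)-\phi^{K_p}(p)\phi^{K_p}(p)=\upbeta^2-\upbeta^2=0$ (negative deformation against positive splitting), and propagates by induction using that every positive splitting of $p^{n'}$ produces a factor $p^{i}$ with $i\geq 2$. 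With that lemma in hand, the rest of your bookkeeping (splittings forced to satisfy $m_1+m_2=m$ with at least one $m_i\geq 1$; negative deformation decreasing $n$; positive deformation decreasing $m$) closes as you describe; without it, the induction cannot terminate at $m=1$.

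A smaller remark on your treatment of \cref{prop: phi^K(p) = 0 for K supported outside of p}: you propose choosing $\mathfrak{p}$ to pass \emph{through} the charged plaquette $q$ and then assert that the peeling cannot ``use up'' the charge at plaquettes of $\mathfrak{p}$; that assertion is false, since deformations along $\mathfrak{p}$ consume exactly such charge. The correct (and easy) fix, which is what the paper does, is to choose the direction of $\mathfrak{p}$ so that $q\notin\mathfrak{p}$; then Property~\ref{p2 remove plaquette} of \cref{prop: remove plaquette from loop} guarantees every leaf retains $\multK(q)=K(q)\geq 1$ while all leaf loops are null, so each leaf contributes $0$ by the base case $\phi^{K'}(\emptyset)=0$ for $K'\neq 0$.
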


Proposition~\ref{prop: phi^K(p) = 0 for K supported outside of p} follows from what we have shown in \cref{prop: remove plaquette from loop}.

\begin{proof}[Proof of Proposition~\ref{prop: phi^K(p) = 0 for K supported outside of p}]
    
    Fix a plaquette $q\in \cP\sm \{p,p^{-1}\}$ such that $K(q)\geq 1$. Note that we can always find an edge $e$ of $p$ such that $q$ is not contained in the infinite path of plaquettes $\mathfrak{p}$ starting  at $p$ and going straight to infinity in the direction of the edge $e$.  Note also that each leaf $l$ of the trajectory tree $\cT_{e,\mathfrak{p}}(\ell,K)$ has the following form once backtracks are removed:
    \begin{align*}
        (s,\multK)=\left\{(\emptyset,K_1),\dots,(\emptyset,K_m)\right\}.
    \end{align*}
    Indeed, each loop in $(s,\cK)$ is only supported on the edges of the tree $\cE(p)\sm \{e^{\pm1}\}$ by Property~\ref{p5 remove plaquette} of \cref{prop: remove plaquette from loop} and the definition of $\cT_{e,\mathfrak{p}}(\ell,K)$. Thus, \cref{obs: tree equals null} gives us that each loop is the null-loop. Further, there is some $j\in [m]$ such that $K_j(q)\neq 0$ by Property~\ref{p2 remove plaquette} of \cref{prop: remove plaquette from loop} and the fact that $q\notin \mathfrak{p}$. Thus, \cref{obs: tree WLE relation} and \cref{lemma: backtrack cancellations} give us that
    \begin{align*}
        \phi^K(p^n)= \sum_{(s,\multK) = \{(\emptyset,K_1),\dots,(\emptyset,K_m)\}\in \cL(\cT)}\upbeta_{s,\multK}\prod_{i\in [m]}\phi^{K_i}(\emptyset)=0,
    \end{align*}
    where $\upbeta_{s,\multK}$ is as in \cref{obs: tree WLE relation} and the last equality follows from the fact that $\phi^{K'}(\emptyset) = 0$ for $K'\neq 0$ (recall Property~\ref{def: phi k empty = 0} of $\phi^K(\ell)$ on page~\pageref{def: phi k empty = 0}).
\end{proof}

Now we prove Proposition~\ref{prop: phi^K(p) = 0 for K only supported on p}. To do this, we will again apply the master loop equation to write $\phi^K(p^n)$ as a sum of terms that are zero. Unfortunately, we will no longer be in the setting where this can be done by constructing a trajectory tree  where each leaf has a label of the form $\{(\emptyset,K_1),\dots,(\emptyset,K_m)\}$ with at least one $K_j\neq 0$. Fortunately, there are some other cancellations, described in Lemma~\ref{lemma: wound plaquettes zero weight} below, which will cover the cases when all $K_j$ would be zero. 

\begin{lem}\label{lemma: wound plaquettes zero weight}
    Let $p$ be a positively or negatively oriented plaquette. For $n\geq 0$ define
    \[
    K^n(p^{-1}) = n\quad\text{and}\quad K^n(q) = 0, \quad\text{for all }q\in\cP\sm\{p^{-1}\}.
    \]
    Then for all $n\geq 2$, we have that $\phi^{K^n}(p^n) = 0$.
\end{lem}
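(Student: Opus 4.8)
The plan is to prove $\phi^{K^n}(p^n) = 0$ for all $n \geq 2$ by induction on $n$, using the master loop equation of \cref{thm: fixed K 't Hooft master loop equation for surface sum} applied at a carefully chosen edge of the loop $p^n$, combined with the base cases already established, namely that $\phi^{K'}(\emptyset) = 0$ for $K' \neq 0$ (item~\ref{def: phi k empty = 0}) and $\phi^{K'}(\emptyset) = 1$ for $K' = 0$ (item~\ref{def: phi k empty = 1}), and \cref{prop: phi^K(p) = 0 for K supported outside of p}. The key geometric observation is that $(p^n, K^n)$ is balanced precisely because the loop $p^n$ traverses each edge of $p$ exactly $n$ times in the ``positive'' orientation (the orientation of $p$), while $K^n$ assigns weight $n$ to $p^{-1}$, contributing $n$ copies of each edge of $p$ in the reversed orientation; these exactly cancel. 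This means the loop plaquette assignment $(p^n, K^n)$ is, in a sense, ``on the boundary'' of the balanced regime — every edge appears the minimal number of times consistent with balance.

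First I would write $p = \mathbf{e}\,\pi$ where $\mathbf{e}$ is a specific (first) copy of the edge $e$, so that $p^n = \mathbf{e}\,\pi\,\mathbf{e}'\,\pi\,\cdots$ with $n$ copies of $e$ appearing (all in the same orientation). Apply the master loop equation \eqref{eq:mle-main} at $\mathbf{e}$. Since all copies of $e$ in $p^n$ have the same orientation, there are no negative splittings (those require a copy of $e^{-1}$ consecutive-ish in the loop) and no negative deformations of the form $\phi^{K \sm p}(p^n \ominus_{\mathbf{e}} p)$ unless $\cP(e^{-1}, K^n)$ is nonempty — but $K^n$ is supported on $p^{-1}$, and $p^{-1}$ does contain $e^{-1}$, so there is exactly one negative deformation term, with plaquette $p^{-1}$, giving $\upbeta\, \phi^{K^n \sm p^{-1}}(p^n \ominus_{\mathbf{e}} p^{-1})$; here $K^n \sm p^{-1} = K^{n-1}$, and $p^n \ominus_{\mathbf{e}} p^{-1}$ should reduce (after the deformation and recognizing the geometry) to $p^{n-1}$ up to backtracks, so by \cref{lemma: backtrack cancellations} this term equals $\upbeta\, \phi^{K^{n-1}}(p^{n-1})$. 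For the positive splittings: splitting $p^n$ at two copies of $e$ yields $\{p^j, p^{n-j}\}$ for $j = 1, \dots, n-1$, and we must sum over $K_1 + K_2 = K^n$ with both components balanced; balance forces $K_1 = K^j$ (supported on $p^{-1}$ with weight $j$) and $K_2 = K^{n-j}$, giving terms $\phi^{K^j}(p^j)\,\phi^{K^{n-j}}(p^{n-j})$. Finally, the positive deformations $\phi^{K^n \sm q}(p^n \oplus_{\mathbf{e}} q)$ require $q \in \cP(e, K^n)$, i.e.\ $q$ contains $e$ with the correct orientation and $K^n(q) \geq 1$ — but $K^n$ is supported only on $p^{-1}$, which contains $e^{-1}$, not $e$, so there are no positive deformation terms at all.

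Assembling, the master loop equation becomes, schematically,
\[
\phi^{K^n}(p^n) = -\sum_{j=1}^{n-1} \phi^{K^j}(p^j)\,\phi^{K^{n-j}}(p^{n-j}) + \upbeta\,\phi^{K^{n-1}}(p^{n-1}),
\]
modulo checking the exact geometry of the negative deformation (the hard part — see below). Now the induction: for the base case $n = 2$, the right side involves $\phi^{K^1}(p^1)^2$ and $\upbeta\,\phi^{K^1}(p^1)$. Here $K^1(p^{-1}) = 1$, so $(p, K^1)$ is balanced but $K^1 \neq K_p = 0$ (since $h_p \geq 0$ makes $K_p(p) = K_p(p^{-1}) = 0$); hence $\phi^{K^1}(p) = 0$ either by \cref{prop: phi^K(p) = 0 for K only supported on p} with $n=1$... wait, that is the proposition currently being proved — instead $\phi^{K^1}(p) = 0$ follows because $(p, K^1)$ forces a nontrivial surface: more carefully, I would verify $\phi^{K^1}(p) = 0$ directly, since $p \oplus$ with $p^{-1}$ gives a backtracking loop and the master loop equation at the unique edge reduces $\phi^{K^1}(p)$ to $\phi^{K^0}(\emptyset)$-type terms with a nonzero leftover plaquette assignment, or simply $\phi^{0}(\emptyset) = 1$ times $\upbeta$ minus a splitting term — this needs the same bookkeeping. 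Assuming $\phi^{K^1}(p) = 0$, every term on the right vanishes for $n = 2$, giving $\phi^{K^2}(p^2) = 0$. For the inductive step $n \geq 3$: by the inductive hypothesis $\phi^{K^m}(p^m) = 0$ for all $2 \leq m \leq n-1$, and $\phi^{K^1}(p) = 0$ from the base analysis, so every summand $\phi^{K^j}(p^j)\phi^{K^{n-j}}(p^{n-j})$ vanishes (since $1 \leq j \leq n-1$ and $1 \leq n-j \leq n-1$, at least one factor is of the form $\phi^{K^m}(p^m)$ with $1 \leq m \leq n-1$, hence zero), and $\phi^{K^{n-1}}(p^{n-1}) = 0$ by the inductive hypothesis (as $n - 1 \geq 2$). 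Therefore $\phi^{K^n}(p^n) = 0$.

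The main obstacle I expect is the careful verification of exactly which terms survive in the master loop equation at $\mathbf{e}$ — in particular, pinning down the negative deformation term: one must confirm that $p^n \ominus_{\mathbf{e}} p^{-1}$, after the operation (which per the definition $\ell_i \ominus_{\mathbf{e}} p = \pi_2\pi_1$ deletes $\mathbf{e}$ and splices in the reversed complementary path of the plaquette), really does give a loop equivalent to $p^{n-1}$ up to backtracks, so that \cref{lemma: backtrack cancellations} applies and the plaquette assignment correctly becomes $K^{n-1}$. One should also double-check the base case $\phi^{K^1}(p) = 0$ cleanly, perhaps by noting it is subsumed by a direct one-step master-loop-equation computation or by an explicit appeal to the surface-sum definition (the only balanced surface would have to use the plaquette $p^{-1}$ from $K^1$ to cancel the loop, but this produces a sphere-like rather than disk-topology configuration, contributing zero). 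A secondary subtlety is ensuring the balance constraint genuinely forces $K_1 = K^j$, $K_2 = K^{n-j}$ in the splitting sum and that no other balanced decompositions of $K^n$ contribute — this follows because $K^n$ is supported on the single plaquette $p^{-1}$, so $K_1 + K_2 = K^n$ with $K_i \geq 0$ just splits the integer $n$, and balance of $(p^j, K_1)$ pins down $K_1(p^{-1}) = j$.
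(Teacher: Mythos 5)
Your identification of which terms survive in the master loop equation at a copy of $\mathbf{e}$ is exactly right and matches the paper: one negative deformation with $p^{-1}$ contributing $\upbeta\,\phi^{K^{n-1}}(p^{n-1})$, positive splittings contributing $-\sum_{j}\phi^{K^j}(p^j)\phi^{K^{n-j}}(p^{n-j})$ (with balance pinning $K_1=K^j$, $K_2=K^{n-j}$), and nothing else. The overall induction scheme is also the paper's. However, there is a genuine error at the base of your argument: $\phi^{K^1}(p)$ is \emph{not} zero. For the loop $\ell=p$ one has $h_p(p)=\pm 1$ (the loop winds once around its own plaquette), so by \eqref{defn:master-plaq-ass} the height plaquette assignment is $K_p(p^{-1})=1$, i.e.\ $K^1=K_p$ is precisely the canonical assignment — not an excluded one — and a one-step application of the master loop equation (only the negative deformation with $p^{-1}$ is available) gives $\phi^{K^1}(p)=\upbeta\,\phi^{0}(\emptyset)=\upbeta$. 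This is consistent with the classical fact $\phi(p)=\upbeta$; your heuristic that the filled-in single plaquette gives a ``sphere-like'' zero-weight configuration is incorrect — it is a disk of weight $\upbeta$.

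Consequently your base case $n=2$ does not close: the right-hand side is $\upbeta\,\phi^{K^1}(p)-\phi^{K^1}(p)^2=\upbeta^2-\upbeta^2$, and the conclusion $\phi^{K^2}(p^2)=0$ comes from an exact \emph{cancellation} between the deformation term and the splitting term, not from each term vanishing separately as you assert. This cancellation is the essential content of the lemma for $n=2$ and cannot be bypassed. The inductive step for $n\geq 3$ survives once you drop the (false) claim that the $m=1$ factors vanish: in every splitting $\{p^i,p^{n-i}\}$ with $i+(n-i)=n\geq 3$ at least one exponent is $\geq 2$, so the inductive hypothesis (which only covers $m\geq 2$) already kills each product, and $\phi^{K^{n-1}}(p^{n-1})=0$ by induction since $n-1\geq 2$. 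So the fix is local — compute $\phi^{K^1}(p)=\upbeta$ explicitly, redo $n=2$ via the cancellation, and restate the splitting argument for $n\geq 3$ using only exponents $\geq 2$ — but as written the proposal's base case is wrong.
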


\begin{proof}
    To prove the lemma we induct on $n\geq 2$. 
    
    First, we explicitly compute $\phi^{K^1}(p^1)$. To do this, let $e$ denote any edge of $p$. Applying the master loop equation at $e$, we see that a negative deformation with $p^{-1}$ is performed because there are no possible splittings as $p$ does not contain multiple copies of $e$ and doesn't contain any copies of $e^{-1}$, and there are no positive deformations as $K^1$ does not contain any plaquettes with the edge $e$. Thus, the master loop equation gives that\begin{equation}\label{eq: one pocket coefficent}
        \phi^{K^1}(p^1) = \upbeta\phi^{K^0}(\emptyset) =\upbeta,
    \end{equation}
    where for the first equality we used \cref{lemma: backtrack cancellations} and for the last equality we used that $\phi^{K^0}(\emptyset) =1$ (recall Property~\ref{def: phi k empty = 0} of $\phi^K(\ell)$).
    
    With this we can similarly prove the induction basis $n=2$. Applying the master loop equation at one of the copies of $e$ in $p^2$, we see that either a negative deformation with $p^{-1}$ or a positive splitting is performed. This is because there are no possible negative splittings as $p^2$ does not contain $e^{-1}$ and there are no positive deformations as $K^2$ does not contain any plaquettes with the edge $e$. Thus the master loop equation gives that\begin{align*}
        \phi^{K^2}(p^2) = \upbeta\phi^{K^1}(p^1) - \phi^{K^1}(p^1)\phi^{K^1}(p^1)= \upbeta^2-\upbeta^2=0,
    \end{align*}
    where to get the first equality we used \cref{lemma: backtrack cancellations} and to get the second equality we used \eqref{eq: one pocket coefficent}.
    
    Now, for the inductive step, fix $n\geq 3$ and suppose that the claim holds for all $m$ such that $2 \leq m< n$. Then, we apply the master loop equation to some edge corresponding to $e$ in $p^n$. Notice that, by the structure of $K^n$, with an argument similar to the one above, we have that either a positive splitting or a negative deformation is performed at the edge $e$. That is, we get 
    \begin{align*}
        \phi^{K^n}(p^n) = \upbeta\phi^{K^{n-1}}(p^{n-1})- \sum_{\{\ell_1,\ell_2\}\in \SS_+(\mathbf{e},p^n)}
        \sum_{K_1+K_2=K^n}
        \phi^{K_1}(\ell_1)\phi^{K_2}(\ell_2),
    \end{align*}
    where we used again \cref{lemma: backtrack cancellations}.
    Now, notice that by the induction hypothesis, we have that
    \begin{align*}
        \phi^{K^{n-1}}(p^{n-1})=0.
    \end{align*}
    Next, notice that any $\{\ell_1,\ell_2\} \in \SS_+(e,p^n)$ has the form $\{\ell_1,\ell_2\} = \{p^i,p^j\}$ for some $i,j\in \{1,\dots,n-1\}$ such that $i+j=n$. If $\{\ell_1,\ell_2\} = \{p^i,p^j\}$, then $\{K_1,K_2\} = \{K^i,K^j\}$ is the only plaquette assignment such that $(\ell_1,K_1)$ and $(\ell_2,K_2)$ are balanced and $K_1+K_2 = K^n$. By the inductive hypothesis, we get that
    \begin{align*}
        \phi^{K^i}(p^i)\phi^{K^j}(p^j) = 0,
    \end{align*}
    where we used that since $n\geq 3$, we always have that at least one among $i$ and $j$ is greater than or equal to 2. Thus we can conclude that $\phi^{K^n}(p^n) = 0$, finishing the proof of the lemma.
\end{proof}

We can now complete the proof of Proposition~\ref{prop: phi^K(p) = 0 for K only supported on p}.

\begin{proof}[Proof of Proposition~\ref{prop: phi^K(p) = 0 for K only supported on p}]
    
    To show Proposition~\ref{prop: phi^K(p) = 0 for K only supported on p} we prove the following equivalent claim.
    
    \medskip

    \noindent\textbf{Claim.} Let $n, j \geq 1$. Let $K$ be the plaquette assignment such that $K(q) = 0$ for $q \neq p^{\pm}$, $K(p) = j$, and $K(p^{-1}) = n + j$. We have that $\phi^K(p^n) = 0$.
    
    \medskip
    
    To prove this claim, we perform a double induction, first on $n$, then on $j$. To begin, fix $j = 1$. We will show that the claim is true for all $n \geq 1$. We begin with the base case $n = 1$. By the master loop equation, we have that
    \[ \phi^K(p) = \upbeta \phi^{K \sm p^{-1}}(\emptyset) - \upbeta \phi^{K \sm p}(p^2). \]
    Since $K(p) \neq 0$, we have that  $K \setminus p^{-1} \neq 0$, and thus the first term on the right-hand side above is zero. To conclude that the second term is zero, note that, by definition of $K$, the plaquette assignment $K \sm p$ is precisely $K^2$ in Lemma \ref{lemma: wound plaquettes zero weight}. Thus by that lemma, we have that the second term on the right-hand side above is also zero. This proves the base case.
    
    Next, suppose that the claim is true for $j = 1$ and all $1 \leq m \leq n$. Let $K$ be the plaquette assignment corresponding to the case $n + 1$. Again by the master loop equation, we have that
    \[ \phi^K(p^{n+1}) = \upbeta \phi^{K \sm p^{-1}}(p^n) - \upbeta \phi^{K \sm p} (p^{n+2}) - \sum_{\substack{i_1, i_2\geq 1\\
    i_1 + i_2=n+1}}\sum_{K_1+K_2=K}\phi^{K_1}(p^{i_1})\phi^{K_2}(p^{i_2}).\]
    By the inductive assumption, the first term on the right-hand side is zero. By Lemma \ref{lemma: wound plaquettes zero weight}, the second term is zero. For the third and final term, note that for each term in the sum, either $(p^{i_r}, K_r)$ is not balanced for some $r=1,2$, in which case $\phi^{K_1}(p^{i_1}) = 0$, or both $(p^{i_1}, K_1), (p^{i_2}, K_2)$ are balanced, and additionally $K_r(p) = 1$ for some $r = 1, 2$, in which case we may apply the inductive assumption to conclude that $\phi^{i_r}(K_r) = 0$.
    
    We have thus proven that the claim is true for $j = 1$ and all $n \geq 1$. We now use this as the base case to induct on $j$. For the inductive step, assume that the claim is true for all $1 \leq j \leq j_0$ and $n \geq 1$. We proceed to show that the claim is true for $j = j_0 + 1$ and all $n \geq 1$. To do so, we induct on $n$. First, consider the base case $j = j_0 + 1, n = 1$. Let $K$ be the plaquette assignment corresponding to this case. By the master loop equation, we have that
    \[ \phi^K(p) = \upbeta \phi^{K \sm p^{-1}}(\emptyset) - \upbeta \phi^{K \sm p}(p^2).\]
    Since $K(p) = j_0 + 1 \neq 0$, we have that the first term on the right-hand side above is zero. The second term is zero by our inductive assumption, since $(K \sm p)(p) = j_0$. Thus we have proven the claim when $j = j_0 + 1$, $n = 1$.
    
    Next, suppose that the claim is true for  $j = j_0 +1$ and all $1 \leq m \leq n$. Additionally, recall that we have made the ``outer'' inductive assumption that the claim is true for all $1 \leq j \leq j_0$ and $n \geq 1$. Let $K$ be the plaquette assignment corresponding to the case $j_0 + 1, n+1$. Then similar to before, we apply the master loop equation to $\phi^K(p^{n+1})$, and conclude that all terms appearing on the right-hand side are zero by our inductive assumption(s). We omit the details.
\end{proof}

\subsubsection{The case of loops winding around two adjacent plaquettes}\label{sec: loops of size two}

To see the necessity of Condition~\ref{Kl p2} for the plaquette assignments in the canonical collection to have  a non-zero surface sum, we need to consider loops winding around two adjacent plaquettes.

\begin{lem}\label{lem: size two loops}
    Suppose that $\ell$ is a non-backtrack loop winding around the rectangle formed by the union of the two adjacent plaquettes $p,q\in \cP$.
    Assume that $(\ell,K)$ is balanced and $K$ is such that
    \begin{align*}
        K(p)\neq K(q).
    \end{align*}
    Then, $\phi^K(\ell)=0$.
\end{lem}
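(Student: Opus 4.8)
The plan is to follow exactly the strategy that was successful for loops winding around a single plaquette (Propositions~\ref{prop: phi^K(p) = 0 for K supported outside of p}--\ref{prop: phi^K(p) = 0 for K only supported on p} and Lemma~\ref{lemma: wound plaquettes zero weight}), now adapted to the two-plaquette case. First I would reduce to the ``essential'' case: by Proposition~\ref{prop: phi^K(p) = 0 for K supported outside of p}-type reasoning (removing, via a trajectory tree $\cT_{e,\mathfrak{p}}(\ell,K)$ with $\mathfrak{p}$ escaping to infinity and avoiding any offending plaquette), one shows $\phi^K(\ell)=0$ whenever $K$ is supported on a plaquette $r\notin\{p^{\pm1},q^{\pm1}\}$. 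Indeed the leaves of such a tree are labeled by strings of null-loops carrying a nonzero plaquette assignment, so each term vanishes by Property~\ref{def: phi k empty = 0} of $\phi^K(\emptyset)$. Hence we may assume $K$ is supported entirely on $\{p^{\pm1},q^{\pm1}\}$, and since $(\ell,K)$ is balanced and $\ell$ winds around $p\cup q$ (so $h_\ell$ is $\pm$ the winding number $m$ on each of $p,q$), the constraint $K(p)\neq K(q)$ is exactly the statement that $K\neq K_\ell$ does \emph{not} come from a uniform winding: $K$ must put strictly more ``layers'' on one of the two plaquettes than on the other.

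Next I would set up the combinatorial bookkeeping. Writing $\ell=\ell_{p,q}^m$ for the loop winding $m$ times around the rectangle $p\cup q$, the relevant plaquette assignments are parametrized by pairs $(a,b)$ with $a=$ number of positively-and-negatively oriented layers on $p$, $b=$ the same for $q$, together with the forced ``height part'' coming from $K_\ell$; the hypothesis is $a\neq b$. Pick an edge $e$ of $\ell$ lying on the shared edge $\mathfrak{e}(p,q)$ (or, depending on orientation, an outer edge of $p$), and apply the master loop equation (\cref{thm: fixed K 't Hooft master loop equation for surface sum}) at a specific copy $\mathbf e$. The terms on the right-hand side are: negative deformations (which peel off a layer and decrease the relevant count), positive splittings (which split $\ell$ into two loops each winding around $p\cup q$, or around a single plaquette, with fewer total windings), negative splittings and positive deformations (which either do not occur, by the structure of $\ell$ and $K$, or reduce to already-handled cases). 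The key is that after removing backtracks each resulting term is either (i) of single-plaquette type, handled by Propositions~\ref{prop: phi^K(p) = 0 for K supported outside of p}--\ref{prop: phi^K(p) = 0 for K only supported on p} and Lemma~\ref{lemma: wound plaquettes zero weight} (noting that if one factor after a split has $K(p)\neq K(q)$ restricted to a single plaquette, it is forced to be unbalanced or covered by those results), or (ii) of the same two-plaquette type but with strictly smaller complexity measure, e.g.\ smaller $\area(K)$ or smaller winding $m$ or smaller $|a-b|$. This sets up an induction.

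I would therefore run a (possibly double or triple) induction on, say, the total area $\area(K)=\sum_p K(p)$, then on the winding number $m$, then on $\min(a,b)$, exactly paralleling the nested inductions in the proof of Proposition~\ref{prop: phi^K(p) = 0 for K only supported on p}. The base cases are: $m=1$ with a single extra layer on one plaquette, which the master loop equation reduces directly to a $\phi^{K'}(\emptyset)$ term with $K'\neq0$ (hence zero) plus a term covered by Lemma~\ref{lemma: wound plaquettes zero weight} or by Proposition~\ref{prop: phi^K(p) = 0 for K supported outside of p}. The main obstacle I anticipate is \emph{the careful enumeration of which master-loop-equation terms actually appear} for a loop winding around two plaquettes and verifying that every such term is either unbalanced, of strictly smaller complexity, or of single-plaquette type with $K(p)\neq K(q)$ still holding (so the single-plaquette results apply). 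In particular, one must check that no positive splitting can produce two ``good'' two-plaquette loops whose plaquette assignments are both balanced and both have equal values on $p$ and $q$ — this is where the hypothesis $K(p)\neq K(q)$ is genuinely used, since at least one of the two pieces must inherit the imbalance (or be unbalanced). Once this case analysis is pinned down, the induction closes and $\phi^K(\ell)=0$ follows; I would present the enumeration in a small table or figure analogous to \cref{fig:master-loop-operator}, and otherwise omit the routine arithmetic, as was done for the single-plaquette propositions.
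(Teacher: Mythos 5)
Your proposal does not follow the paper's route, and as written it has two concrete gaps. First, the opening reduction is justified incorrectly: you claim that removing a plaquette via a trajectory tree $\cT_{e,\mathfrak{p}}(\ell,K)$ leaves only null-loops at the leaves, but for a loop winding around the rectangle $p\cup q$ the leaves are supported on $\cE(\{p,q\})\sm\{e^{\pm1}\}$, which still contains the cycle $q^{\pm1}$ (this is not a tree, so \cref{obs: tree equals null} does not apply). The leaves are therefore loops winding around the single plaquette $q$, not null-loops; the step is salvageable by invoking \cref{prop: phi^K(p) = 0 for K supported outside of p} on those factors, but the argument you give is wrong. Relatedly, your suggestion to apply the master loop equation at a copy of the shared edge $\mathfrak{e}(p,q)$ is vacuous: by hypothesis $\ell$ winds around the rectangle, so $n_{\mathfrak{e}(p,q)}(\ell)=0$ and there is no such copy to explore at.

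Second, and more seriously, the inductive step you describe does not close with the complexity measure you propose. Your class of loops is parametrized by $(m,a,b)$ (winding number and layer counts on $p$ and $q$), but a negative deformation at an outer edge of $p$ with $p^{-1}$ produces a loop that traverses $\mathfrak{e}(p,q)$ and has unequal heights on $p$ and $q$; such a loop is neither of single-plaquette type nor ``the same two-plaquette type with smaller complexity,'' so it falls outside your induction entirely. Enumerating and controlling these intermediate loops is exactly the hard part, which you explicitly defer (``the main obstacle I anticipate''), so the proposal is a plan rather than a proof. The paper sidesteps this by making one macro step: it removes the plaquette $p$ along a path $\mathfrak{p}$ avoiding $q$ using $\cT_{e,\mathfrak{p}}(\ell,K)$, so that by \cref{prop: remove plaquette from loop} every leaf factor is either a null-loop with nonzero assignment or a loop winding around $q$ alone; the height- and plaquette-preservation properties of that proposition then force some factor $(\ell_j,K_j)$ to satisfy $K_j\neq K_{\ell_j}$ (this is where $K(p)\neq K(q)$, equivalently $K(q)\neq K_\ell(q)$, is used), and \cref{lem: size one main thm} kills that factor. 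If you want to keep a direct master-loop-equation induction, you must enlarge the class of loops to include the mixed-winding configurations created by deformations and exhibit a quantity that decreases for them as well — at which point you are essentially re-deriving \cref{prop: remove plaquette from loop}.
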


\begin{proof}
    Since $n_{\mathfrak{e}(p,q)}(\ell)=0$, we must have that $h_{\ell}(p)=h_{\ell}(q)$.
    We assume that $h_{\ell}(p)=h_{\ell}(q)< 0$; if not, replace $p$ and $q$ below by $p^{-1}$ and $q^{-1}$.  
    
    Since $K(p)\neq K(q)$ by assumption and $K_\ell(p) = K_\ell(q)$ by the definition of $K_{\ell}$ in \eqref{defn:master-plaq-ass}, either $K(p)\neq K_{\ell}(p)$ or $K(q)\neq K_{\ell}(q)$. Assume without loss of generality that
    \begin{equation}\label{eq:forcontra}
        K(q)\neq K_{\ell}(q).
    \end{equation}

    Let $\mathfrak{p}=\{p_i\}_{i=1}^{\infty}$ be a straight path of plaquettes starting at $p$ and such that $p_2\neq q$ (so that $q\notin \mathfrak{p}$). Let also $e=\mathfrak{e}(p_1,p_2)$ and consider the trajectory tree $\cT_{e,\mathfrak{p}}(\ell,K)$.  Now, 
    \cref{obs: tree WLE relation} implies that
    \begin{align*}
        \phi^K(\ell) = \sum_{(s,\multK)\in \cL(\cT_{e,\mathfrak{p}}(\ell,K))}\upbeta_{s,\multK}\cdot\phi^{\multK}(s)
    \end{align*}
    where $\upbeta_{s,\multK}$ is as in \cref{obs: tree WLE relation}.
    Hence, if we show that for each $(s,\multK)=\{(\ell_1, K_1), \dots, (\ell_m , K_m)\}$ in the above sum, there exists some $j\in [m]$ such that $\phi^{K_j}(\ell_j) = 0$, then we are done.

    By Property~\ref{p5 remove plaquette} of \cref{prop: remove plaquette from loop} and the definition of $\cT_{e,\mathfrak{p}}(\ell,K)$, we know that each $\ell_i$ is supported only on the edges in $\cE(\{p,q\})\sm \{e^{\pm 1}\}$. Thus, each $\ell_i$ is either the null-loop or a non-backtrack loop (recall that the leaves of a trajectory tree only contain non-backtrack loops by construction) winding around $q$ in one of the two possible directions. 

    Property~\ref{p2 remove plaquette} of \cref{prop: remove plaquette from loop} gives us that $(s,\multK)$ cannot consist solely of loop plaquette assignments of the form $(\emptyset,0)$. Further, as $\phi^0(\emptyset)=1$, by definition (recall Property~\ref{def: phi k empty = 1} of $\phi^K(\cdot)$ on page~\pageref{def: phi k empty = 0}),  we can assume without loss of generality that $(s,\multK)$ does not contain any loop plaquette assignments of the form $(\emptyset,0)$. Thus, $(s,\multK)$ either consists entirely of loops winding around the plaquette $q$ or there exist some $j\in [m]$ such that $\ell_j=\emptyset$ and  $K_j\neq 0$.
    
    If there exists $j\in[m]$ such that $\ell_j=\emptyset$ and $K_j\neq 0$, then $\phi^{K_j}(\ell_j) = 0$ thanks to Property~\ref{def: phi k empty = 0} of $\phi^K(\cdot)$ on page~\pageref{def: phi k empty = 0}; hence in this case we are done.

    We now look at the case when all the loops $\ell_i$ are non-backtrack loops winding around $q$ in one of the two possible directions.
    Let $e'$ be any edge of $q$ not in $p$. Since $\ell$ is a non-backtrack loop winding around  the rectangle formed by the union of $p$ and $q$, we know that $e'$ only appears in one orientation in $\ell$. Hence, by Property~\ref{p5 remove plaquette} of \cref{prop: remove plaquette from loop}, we must have that $e'$ only appears in one orientation for all $\ell_i$ which must be the same orientation as in $\ell$. Hence, since $h_{\ell}(q)<0$, we have $h_{\ell_i}(q)<0$ for all $i\in[m]$.
    
    We now claim that there is some $j\in [m]$ such that $K_j\neq K_{\ell_j}$. Indeed, if not, we get that (for the first equality we use that $K(q)=\multK(q)$ by Property~\ref{p2 remove plaquette} of \cref{prop: remove plaquette from loop}, for the fourth and fifth equalities we use that $h_{\ell_i}(q)< 0$ for all $i\in [m]$ and $h_{\ell}(q)< 0$, and for the last equality we use that $h_s(q)=h_{\ell}(q)$ by Property~\ref{p4 remove plaquette} of \cref{prop: remove plaquette from loop}),
    \begin{align*}
        K(q)-K_{\ell}(q)=\multK(q)-K_{\ell}(q) =  \sum_{i=1}^mK_{i}(q)-  K_{\ell}(q) =  \sum_{i=1}^mK_{\ell_i}(q)-  K_{\ell}(q) &= \sum_{i=1}^m |h_{\ell_i}(q)|  - |h_{\ell}(q)|\\&= \left|\sum_{i=1}^m h_{\ell_i}(q)\right|- |h_{\ell}(q)|=0,
    \end{align*}
    and this would contradict \eqref{eq:forcontra}.
    Now, since $\ell_j$ is a non-backtrack loop winding around $q$ and $K_j\neq K_{\ell_j}$, and so $K_j\notin \cK_{\ell_j}=\{K_{\ell_j}\}$, \cref{lem: size one main thm} gives us that
    \begin{align*}
        \phi^{K_j}(\ell_j) = 0,
    \end{align*}
     as desired.
\end{proof}

\subsection{Loops of general size}\label{sec: general loops}

In this section we prove \cref{thm: erasable loops have one plaquette assignment}. To do this, for all plaquette assignments $K$ that are not in the canonical collection (recall \cref{defn:collection-plaquette-ass}), we will write $\phi^K(\ell)$ as a sum of terms we know are zero. In particular, we will use \cref{prop: remove plaquette from loop} to ``remove'' a plaquette from $\ell$, obtaining certain new string plaquette assignments $(s,\multK)= \{(\ell_1, K_1), \dots, (\ell_m , K_m)\}$, and then show that at least one of the resulting loop plaquette assignments $(\ell_j, K_j)$ is such that $K_j$ is still not in the canonical collection of plaquette assignments for the ``smaller'' loop $\ell_j$.
Iterating this idea, we will reduce the proof of~\cref{thm: erasable loops have one plaquette assignment} to understanding loops of small size, which we already addressed in \cref{sec: Loops supported on a plaquette}.

As we briefly discussed when sketching the main proof techniques back in Section \ref{sec: proof tech}, the main difficulty in carrying out the above strategy lies in ensuring that the loop is shrunk in a ``controlled manner'', so that the resulting loops are well-behaved and the changes to the plaquette assignments are clearly understood. In \cref{sec: spanning pair}, we introduce a procedure for performing this shrinking and detail many of its properties. Then, in \cref{sec: proof of main}, we prove \cref{thm: erasable loops have one plaquette assignment}.

\subsubsection{Shrinking procedure along a tree of paths of plaquettes}\label{sec: spanning pair}

To shrink a general loop, we will use the following two key facts:

\medskip

$(1)$ \cref{obs: tree equals null} tells us that if a loop, possibly with backtracks, is only supported on edges that form a tree, then after removing all its backtracks, it must be the null-loop. As a consequence, we have the following generalization:

\begin{obs}\label{obs: tree equals null-2}
If the subgraph of $\ZZ^2$ induced by the
set of edges of a loop $\ell$ is only supported on the edges of a plaquette (resp.\ edges on the boundary of the rectangle formed by the union of two adjacent plaquettes) and trees extending outward from these edges (cf.\ \cref{fig: remove backtrack}), then, after removing all backtracks, the loop $\ell$ must be either the null-loop or a loop winding around the plaquette in one of the two possible directions (resp.\ winding around the rectangle formed by the union of the two adjacent plaquettes in one of the two possible directions).
\end{obs}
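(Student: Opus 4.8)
The statement to prove is \cref{obs: tree equals null-2}, which generalizes \cref{obs: tree equals null} from a pure tree to the case where the underlying edge-subgraph consists of a plaquette boundary (or the boundary of a two-plaquette rectangle) together with trees hanging off it.

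\textbf{Plan.} The plan is to reduce the claim to \cref{obs: tree equals null} by a ``pruning'' argument: repeatedly erase backtracks that use the dangling tree edges until nothing of the tree remains, and then analyze what is left, which is a loop supported only on the cycle (the plaquette boundary, or the boundary of the two-plaquette rectangle). First I would set up the structure: write $G$ for the subgraph of $\ZZ^2$ induced by the edges of $\ell$, and let $C$ denote the unique cycle in $G$ (either the boundary of a single plaquette, or the boundary of the rectangle which is the union of two adjacent plaquettes). By hypothesis every edge of $G$ not on $C$ lies on one of the trees extending outward from $C$, so $G$ is exactly $C$ with a forest attached at vertices of $C$ (and possibly further out). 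The key observation is that any edge $\edg$ of $G$ which is a leaf edge of the forest part — i.e.\ has an endpoint of degree one in $G$ — must, every time it is traversed by $\ell$, be immediately followed or preceded by its reverse, since there is nowhere else for the loop to go from the degree-one vertex. Hence every occurrence of such a leaf edge sits inside a backtrack, and we may erase all of them; since Wilson/backtrack-erasure is the operation in question, erasing these backtracks does not change the backtrack-reduced loop.

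Next I would iterate: after erasing all backtracks involving current leaf edges of the forest, the resulting loop $\ell'$ is supported on a strictly smaller subgraph $G'$ (we have removed at least the outermost forest edges), and $G'$ is again of the same form — a cycle $C$ with a (smaller) forest attached, or just $C$ itself once the forest is exhausted. Because the forest has finitely many edges, after finitely many rounds we arrive at a loop $\ell''$, with the same backtrack-reduction as $\ell$, whose underlying subgraph is contained in $C$. Now there are two cases. If $\ell''$ is the null-loop we are done (the loop reduces to the null-loop). Otherwise $\ell''$ is a nonempty backtrack-free closed walk supported on the cycle $C$; since $C$ is a simple cycle, a backtrack-free closed walk on it must traverse $C$ some number of times always in the same direction, i.e.\ it is a power of $C$ traversed clockwise or counterclockwise. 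In the single-plaquette case this is precisely ``$\ell$ winds around the plaquette in one of the two possible directions'' (and, if we also want it to be a single winding, that would need $\ell$ to have the minimal number of edges, but the statement as phrased only asserts ``winding in one of the two directions''). In the two-plaquette case, $C$ is the boundary of the rectangle, and the same argument gives that $\ell''$ winds around that rectangle in one of the two directions.

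\textbf{Main obstacle.} The delicate point is the bookkeeping in the pruning step: one must argue carefully that erasing backtracks at a degree-one vertex is legitimate and that it genuinely decreases the support, without accidentally changing the homotopy/winding data on $C$. Concretely, the subtlety is that after erasing one leaf-edge backtrack, a vertex that previously had degree $\geq 2$ may drop to degree one, so the set of ``leaf edges'' changes dynamically; I would handle this by phrasing the induction on the number of forest edges in the support (or, equivalently, on $|E(G)\setminus E(C)|$), noting that each round of backtrack erasure strictly reduces this quantity while preserving (i) the backtrack-reduced loop and (ii) the property that the support is $C$ plus a forest. A clean way to see that erasing the leaf-edge backtracks never touches $C$: a backtrack $\edg\,\edg^{-1}$ with $\edg$ a forest leaf edge involves only forest edges, so $C$-edges are untouched; hence the cyclic word read off along $C$ (which encodes the winding) is unchanged throughout. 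Once the support is reduced to $C$, the final classification is immediate from the fact that a reduced closed walk on a simple cycle is a signed power of that cycle. I would also remark that this observation is exactly what is needed as a base-case input for the general shrinking procedure in \cref{sec: general loops}, mirroring how \cref{obs: tree equals null} was used in the proof of \cref{prop: remove plaquette from loop}.
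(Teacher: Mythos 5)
Your argument is correct and is exactly the reasoning the paper has in mind: the paper states this observation without proof, presenting it as an immediate generalization of \cref{obs: tree equals null}, and your leaf-pruning induction (erase the forced backtracks at degree-one vertices, reduce to a closed walk on the cycle $C$, then classify reduced closed walks on a simple cycle as signed powers of $C$) is the natural way to make that precise. Note only that ``winding around the plaquette'' in the paper's usage allows multiple windings (i.e.\ $p^n$, cf.\ \cref{lem: size one main thm} and \cref{lem: size two loops}), which matches your conclusion that the reduced walk is $C^{\pm k}$.
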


$(2)$ The second fact, recalling \cref{obs: move edges}, is that Property~\ref{p5 remove plaquette} from \cref{prop: remove plaquette from loop} can be interpreted as ``pushing'' edges when ``removing'' plaquettes. Thus, if we can ``remove'' plaquettes from a loop and ``push'' the edges in such a way that the resulting loop is only supported (in the sense of \cref{obs: tree equals null-2}) on the edges of a plaquette (or edges on the boundary of the rectangle formed by the union of two adjacent plaquettes) and trees extending outside from these edges, we will have shrunk a general loop to a loop that has been already studied in \cref{sec: Loops supported on a plaquette}.

\begin{figure}[ht!]
\begin{center}
	\includegraphics[width=.60\textwidth]{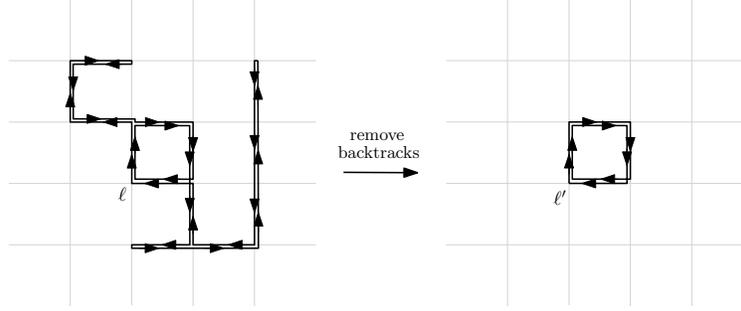}  
	\caption{\label{fig: remove backtrack} The left-hand side shows a loop $\ell$ whose subgraph of $\ZZ^2$ induced by its set of edges is only supported on the edges of a plaquette and trees extending outward from these edges. The right-hand side shows the loop $\ell'$ obtained from $\ell$ after removing all its backtracks.}
\end{center}
\vspace{-3ex}
\end{figure}

\bigskip

In the remainder of this subsection, we introduce some terminology and prove some preliminary facts that will be used later to perform this shrinking procedure.

\medskip

The shrinking procedure we employ will ``follow'' certain trees of paths of plaquettes, which we now introduce (cf.\  \cref{fig: span pair}). Given a plaquette-region $R$ (which we recall by definition contains both positively and negatively oriented plaquettes), we denote by $R^+$ the collection of positively oriented plaquettes in $R$. Let $\cG^*(R^+)$  be the (dual unoriented) graph induced by $R^+$, i.e.\ the graph obtained by adding a vertex at the center of each plaquette in $R^+$ and connecting  vertices corresponding to adjacent plaquettes. A subtree $\tree$ of $\cG^*(R^+)$ is called a \textbf{spanning tree} of $R^+$ if every vertex in $\cG^*(R^+)$ is contained in $\tree$.

We now fix $R^+= \cP^+$, i.e.\ the collection of all positively oriented plaquettes of $\mathbb Z^2$, and simply write $\cG^*$ for $\cG^*(\cP^+)$ . We say that a pair of trees $\spt=\{\tree_1,\tree_2\}$ of $\cG^*$ is a \textbf{spanning pair of trees}, if the trees $\tree_1$ and $\tree_2$ are disjoint, are both infinite, and every vertex in $\cG^*$ is contained in either $\tree_1$ or $\tree_2$. 

Given a spanning pair of trees $\spt=\{\tree_1,\tree_2\}$, we denote by $\sptd$ the oriented graph formed by the subset of the (oriented) edges in $E$ that are \emph{not} crossed by any (dual) edge of $\spt$; see the discussion below \eqref{eq:wefijfb} for more explanations on our choice of notation. Note that $\sptd$ must span all the vertices of $\mathbb{Z}^2$ (otherwise, there would be a loop in $\spt$). Further, as $\tree_1$ and $\tree_2$ are infinite trees we must have that $\sptd$, when interpreted as a graph of unoriented edges, is a (possibly infinite) collection of trees.\footnote{Note the specific spanning pairs of trees we will consider below are such that $\sptd$ only contains one component (i.e.\ $\sptd$ is itself a spanning tree of $\ZZ^2$). This fact is not essential for our proofs so we omit its proof.} To see that each connected component of $\sptd$ must be a tree, note that if a component was not, then the subset of $\cP^+$ it disconnects must completely contain $\tree_1$ or $\tree_2$, which is not possible as $\tree_1$ and $\tree_2$ are both infinite. We refer to $\sptd$ as the  \textbf{Peano forest} of $\spt$. The left-hand side of \cref{fig: span pair} gives an example of a spanning pair of trees and its Peano forest.

Denoting by $E^{\smallsetminus}(\spt)$ the (oriented) edges of $\sptd$, let $E^{\times}(\spt)$ denote the edges in $E\sm E^{\smallsetminus}(\spt)$, i.e.\ the edges in $E$ crossed by the edges of $\spt$. Note that 
\begin{equation}\label{eq:wefijfb}
    E=E^{\smallsetminus}(\spt)\sqcup E^{\times}(\spt),
\end{equation}
and that the $\times$-sign (resp.\ $\smallsetminus$-sign) in the notation denotes the fact that we are referring to edges of $E$ that are crossed (resp.\ not crossed) by dual edges of $\spt$.

\medskip

\begin{figure}[ht!]
\begin{center}
	\includegraphics[width=1\textwidth]{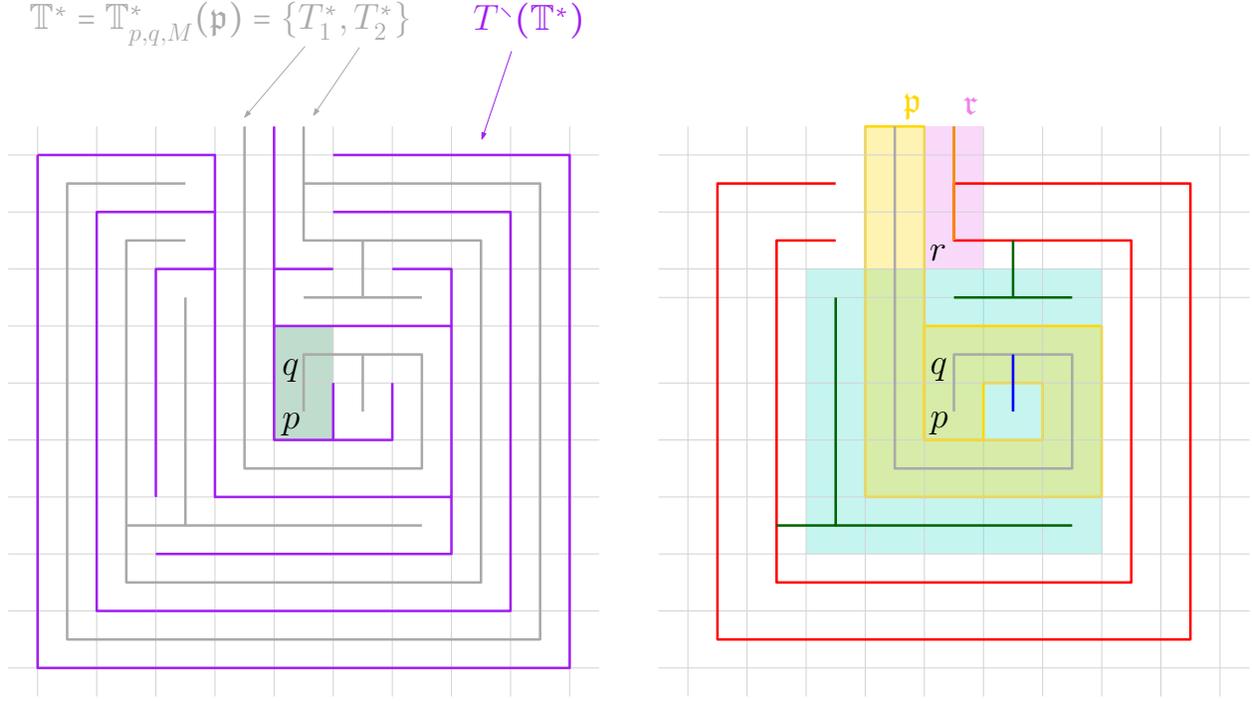}  
	\caption{\label{fig: span pair} A portion of $\mathbb{Z}^2$ is shown in both sides. \textbf{Left:} The two grey trees $\spt=\spt_{p,q,M}(\mathfrak{p})=\{\tree_1,\tree_2\}$ are a spanning pair of trees avoiding $p$ and $q$ generated from the path of plaquettes $\mathfrak{p}$, where $\mathfrak{p}$ is highlighted in gold on the right-hand side of the figure. In purple is shown the corresponding Peano forest $\sptd$ (note that each purple edge is contained twice in $\sptd$, once in each orientation). \textbf{Right:} The box $B_2(p)$ is shown in light blue. To construct the spanning pair of trees $\spt=\spt_{p,q,M}(\mathfrak{p})=\{\tree_1,\tree_2\}$ avoiding $p$ and $q$ generated from $\mathfrak{p}$ shown on the left-hand side of the figure, we first consider the gray path corresponding to $\mathfrak{p}$ and then arbitrarily construct a spanning tree for any plaquette-region cut off by $\mathfrak{p}$ and connect it to $\mathfrak{p}$ (see the blue tree). This gives the tree $T_1^*$. Next, add the orange path $\mathfrak{r}$, i.e.\ the path next of $\mathfrak{p}$ outside of $B_2(p)$, then add the concentric squared red paths starting at the plaquettes of $\mathfrak{r}$, and finally arbitrarily construct a spanning tree for each remaining plaquette-region and connect them to the previous constructed paths. This gives the tree $T_2^*$. This finishes the construction of $T_2^*$ and thus $\spt_{p,q,M}(\mathfrak{p})$.}
\end{center}
\vspace{-3ex}
\end{figure}

Next, we introduce an explicit construction of a specific spanning pair of trees (cf.\  the right-hand side of \cref{fig: span pair}). For any plaquette $p\in \cP^+$ and $k\in \NN$, let $B_k(p)$ denote the square box of positively oriented plaquettes centered at $p$ with width  $2k+1$. For instance, $B_1(p)$ is the square box formed by $p$ and the eight positively oriented plaquettes that share at least one vertex with $p$. Let $\partial B_k(p)$ denote the layer of plaquettes inside $B_k(p)$ touching the boundary of $B_k(p)$.

Now, fix two plaquettes $p,q\in \cP^+$ such that $p$ and $q$ are adjacent and an integer $M\in \NN$. Also, let $\mathfrak{p} = \{p_i\}_{i=1}^{\infty}$ be an infinite path of plaquettes such that $p_1=p$ and $p_2=q$, and once $\mathfrak{p}$ exits $B_M(p)$, it continues as a straight path of plaquettes orthogonal to the boundary of $B_M(p)$ (but we impose no restriction on the shape of the path $\mathfrak{p}$ while it travels inside $B_M(p)$ apart from $p_1=p$ and $p_2=q$). Then construct a spanning pair of trees $\spt_{p,q,M}(\mathfrak{p})=\{\tree_1,\tree_2\}$ as follows. For $\tree_1$:
\begin{enumerate}
    \item Start by including in $\tree_1$ all the edges of $\cG^*$ that cross an interior edge of $\mathfrak{p}$ (cf.\ the grey line on the right-hand side of \cref{fig: span pair}).
    \item For each (if any) finite maximal (positive) plaquette-region of $\cP^+ \sm \mathfrak{p}$, generate in an arbitrary way a spanning tree of that region and connect it (again in an arbitrary way) to $\tree_1\sm\{p,q\}$ (cf.\ the blue tree on the right-hand side of \cref{fig: span pair}).
\end{enumerate}
Then, for $\tree_2$, denoting by $r$ one of the two plaquettes in $\partial B_{M+1}(p)$ adjacent to the first plaquette in $\mathfrak{p}$ outside of $B_M(p)$ and  by $\mathfrak{r}$ the straight path of plaquettes parallel to $\mathfrak{p}$ starting at $r$ (cf.\ the violet path of plaquettes on the right-hand side of \cref{fig: span pair}):
\begin{enumerate}
    \item Start by including in $\tree_2$ all the edges of $\cG^*$ that cross an interior edge of $\mathfrak{r}$ (cf.\ the orange line on the right-hand side of \cref{fig: span pair}).
    \item For all $k\in \NN$, include in $\tree_2$ all the edges of $\cG^*$ that cross an interior edge of the path of plaquettes $\partial B_{M+k}(p) \sm \mathfrak{p}$ (cf.\ the red lines on the right-hand side of \cref{fig: span pair}).
    \item Let $\cS^+$ denote the set of positively oriented plaquettes that have a corresponding dual vertex in $\tree_1\cup\tree_2$. For each (if any) finite maximal (positive) plaquette-region of $\cP^+ \sm \cS^+$, generate in an arbitrary way a spanning tree of that region and connect it (again in an arbitrary way) to $\tree_2$ (cf.\ the green trees on the right-hand side of \cref{fig: span pair}).
\end{enumerate}
We call $\spt_{p,q,M}(\mathfrak{p})=\{\tree_1,\tree_2\}$ \textbf{a spanning pair of trees avoiding\footnote{Here avoiding refers to the fact that we constructed $\tree_1$ so that the dual vertices corresponding to $p$ and $q$ are not branching points of $\tree_1$. This will play a key role later in the proof of \cref{prop: non-zero plaquette assignments three}.} $p$ and $q$ generated from $\mathfrak{p}$} of size $M$.
The left-hand side of \cref{fig: span pair} shows an example of a spanning pair of trees avoiding $p$ and $q$ generated from $\mathfrak{p}$. The right-hand side of \cref{fig: span pair} details its construction step by step.

\medskip

Next, we make a simple observation about the above construction, which we state for future reference.

\begin{obs}\label{obs: edges in dual tree}
    Let $\spt$ be a spanning pair of trees avoiding $p$ and $q$ generated from $\mathfrak{p}$ of size $M$. Then,\begin{enumerate}
        \item $E^{\smallsetminus}(\spt)$ contains all the edges in $\cE(p)$ except the two oriented edges corresponding to $\mathfrak{e}(p,q)$,
        \item $E^{\smallsetminus}(\spt)$ contains all the edges in $\cE(p,q)$ except the two oriented edges corresponding to $\mathfrak{e}(p,q)$ and the two oriented edges corresponding to one of the unoriented edges of $q$ other than $\mathfrak{e}(p,q)$.
    \end{enumerate}
    Thus, the only cycles in $E^{\smallsetminus}(\spt)\cup \cE(p)$ are $p^{\pm 1}$ and the only cycles in $E^{\smallsetminus}(\spt)\cup \cE(p,q)$ are $p^{\pm 1}, q^{\pm 1}$ and the loop winding around the boundary of the rectangle formed by the union of $p$ and $q$ in either orientation.
\end{obs}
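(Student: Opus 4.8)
\textbf{Proof plan for Observation~\ref{obs: edges in dual tree}.}

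The plan is to verify the two bullet points by a direct, local inspection of the construction of $\spt_{p,q,M}(\mathfrak{p})$, and then to deduce the statement about cycles as a formal consequence. For the first item, I would argue as follows. The edges of $\cE(p)$ are the eight oriented edges obtained from the four unoriented sides of the plaquette $p$. An oriented edge $\Tilde{e}$ fails to lie in $E^{\smallsetminus}(\spt)$ precisely when the corresponding unoriented lattice edge is crossed by a dual edge of $\spt = \{\tree_1, \tree_2\}$. A dual edge of $\spt$ crosses the side $\mathfrak{e}(p, q)$ shared by $p$ and $q$ because, by construction, $\tree_1$ contains all dual edges crossing an interior edge of $\mathfrak{p}$, and $\mathfrak{e}(p,q) = \mathfrak{e}(p_1, p_2)$ is such an interior edge. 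For each of the other three sides $\mathfrak{f}$ of $p$: the side $\mathfrak{f}$ is crossed by a dual edge of $\spt$ only if $\spt$ contains the dual edge connecting $p$ to the plaquette $p'$ on the other side of $\mathfrak{f}$; but the dual vertex of $p$ lies only on $\tree_1$ (since $p = p_1 \in \mathfrak{p}$ and $p$ is not in any region handled in steps $(2)$ for $\tree_1$ or in $\tree_2$), and in $\tree_1$ the only dual edge incident to $p$ is the one to $q = p_2$ along $\mathfrak{p}$ — this is exactly the ``avoiding $p$ and $q$'' property, namely that $p$ is not a branching point of $\tree_1$ and $p_1 = p$ is an endpoint of the path portion of $\tree_1$. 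Hence none of the other three sides of $p$ is crossed, so all six corresponding oriented edges lie in $E^{\smallsetminus}(\spt)$, proving item $1$.

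For item $2$, the edges of $\cE(p,q)$ come from the seven unoriented sides of the rectangle-plus-shared-edge figure: the shared side $\mathfrak{e}(p,q)$, the three remaining sides of $p$, and the three remaining sides of $q$. By item $1$ the three remaining sides of $p$ are uncrossed and $\mathfrak{e}(p,q)$ is crossed. For $q = p_2$: the dual vertex of $q$ lies on $\tree_1$ (it is the second vertex of the path portion of $\mathfrak{p}$), and since $\spt$ avoids $q$, the vertex $q$ is not a branching point of $\tree_1$, so exactly two dual edges of $\tree_1$ are incident to $q$ — the one to $p_1 = p$ and the one to $p_3$, both along $\mathfrak{p}$. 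The dual edge to $p_3$ crosses exactly one of the three remaining sides of $q$ (namely $\mathfrak{e}(q, p_3)$), and the other two sides of $q$ are not crossed by any dual edge of $\tree_1$; they are also not crossed by any dual edge of $\tree_2$, since $q$'s dual vertex is not in $\tree_2$ at all. Therefore among the seven sides of $\cE(p,q)$, exactly two unoriented edges are crossed — $\mathfrak{e}(p,q)$ and one side of $q$ — giving exactly four excluded oriented edges, which is item $2$.

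Finally, the cycle statements follow by a standard planar-duality argument. Since $\spt$ is a spanning pair of trees of $\cG^*$ (the dual graph on $\cP^+$), its Peano forest $\sptd$, whose oriented edges are $E^{\smallsetminus}(\spt)$, is acyclic when viewed as an unoriented graph on the vertices of $\ZZ^2$ — this is recalled in the text above: if $E^{\smallsetminus}(\spt)$ contained a cycle, that cycle would enclose a finite set of dual vertices, forcing one of $\tree_1, \tree_2$ to be finite, a contradiction. Now, adding back to $E^{\smallsetminus}(\spt)$ the at most one ``extra'' unoriented edge $\mathfrak{e}(p,q)$ (in the case of $\cE(p)$) creates exactly one new independent cycle, which must be the unique minimal cycle through $\mathfrak{e}(p,q)$ in $E^{\smallsetminus}(\spt) \cup \cE(p)$; by item $1$ that minimal cycle runs along the four sides of $p$, i.e.\ it is $p^{\pm 1}$. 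Similarly, adding to $E^{\smallsetminus}(\spt)$ the two extra unoriented edges identified in item $2$ — $\mathfrak{e}(p,q)$ and the one excluded side of $q$ — creates exactly two new independent cycles; the three cycles generated are $p^{\pm 1}$ (using $\mathfrak{e}(p,q)$), $q^{\pm 1}$ (using the excluded side of $q$), and their symmetric difference, which is the loop around the boundary of the $1 \times 2$ rectangle $p \cup q$. Hence the only cycles in $E^{\smallsetminus}(\spt) \cup \cE(p,q)$ are $p^{\pm 1}$, $q^{\pm 1}$, and that rectangle loop in either orientation, completing the proof. I do not expect any genuine obstacle here; the only care needed is in correctly using the ``avoiding'' property to control the dual edges incident to $p$ and $q$, which is precisely why that property was built into the construction.
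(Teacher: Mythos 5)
Your argument is correct and is exactly the routine verification the paper leaves implicit: the "avoiding" property forces the only dual edges of $\spt$ incident to $p_1$ and $p_2$ to be those along the path $\mathfrak{p}$, which pins down the crossed sides of $p$ and $q$, and the cycle claims then follow from the fact that the Peano forest is acyclic plus the uniqueness of paths in a forest. The only (harmless) imprecision is in attributing which of the three cycles uses which added edge — e.g.\ $q^{\pm 1}$ uses both $\mathfrak{e}(p,q)$ and the excluded side of $q$, so the cycle through the excluded side of $q$ alone may be the rectangle boundary rather than $q^{\pm1}$ — but the resulting set of three cycles is the same either way.
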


\medskip

The idea of our proof will be to shrink a loop in such a way that, for a carefully chosen $\spt_{p,q,M}(\mathfrak{p})$, all the edges in $E^{\times}(\spt_{p,q,M}(\mathfrak{p}))$, except those in $\cE(p)$ (or possibly on the boundary of the rectangle formed by the union of $p$ and $q$), are no longer in the loop. If this is accomplished, we will be in the setting discussed at the beginning of the section, where the loop must simply be the loop around $p$ (or perhaps around the rectangle formed by the union of $p$ and $q$) once backtracks are removed. To perform this shrinking, we need to connect edges to infinity in such a way that the path of plaquettes ``follows'' the carefully chosen $\spt_{p,q,M}(\mathfrak{p})$. We introduce such paths now.

\medskip

Fix any spanning pair of trees $\spt=\spt_{p,q,M}(\mathfrak{p})$ avoiding $p$ and $q$ generated from $\mathfrak{p}$ and of size $M\geq 1$. Let $e$ be any edge in $E^{\times}(\spt)$. Then, by the construction of $\spt$, there are only a finite number of infinite paths $\mathfrak{p}^e=\{p^e_i\}_{i=1}^{\infty}$ of plaquettes such that (cf.\ the left-hand side of~\cref{fig: Tree edges}): 
\begin{equation}\label{eq:tree path}
    e\in \cI(\mathfrak{p}^e), \quad p^e_1 \text{ corresponds to a leaf of }  \spt,  \quad \text{and} \quad \mathfrak{e}(p^e_i,p^e_{i+1})\in E^{\times}(\spt), \forall i\in \NN,
\end{equation}
where here, with $\mathfrak{e}(p^e_i,p^e_{i+1})\in E^{\times}(\spt)$, we actually mean that both oriented edges corresponding to $\mathfrak{e}(p^e_i,p^e_{i+1})$ are in $E^{\times}(\spt)$. We call such a path of plaquettes a \textbf{tree-path of plaquettes containing $e$} and denote by $\mathfrak{P}_{\spt}(e)$ the (finite) set of tree-paths of plaquettes containing $e$.

\medskip

We record a trivial (but important) observation, stated for future reference.

\begin{figure}[ht!]
\begin{center}
    \includegraphics[width=1\textwidth]{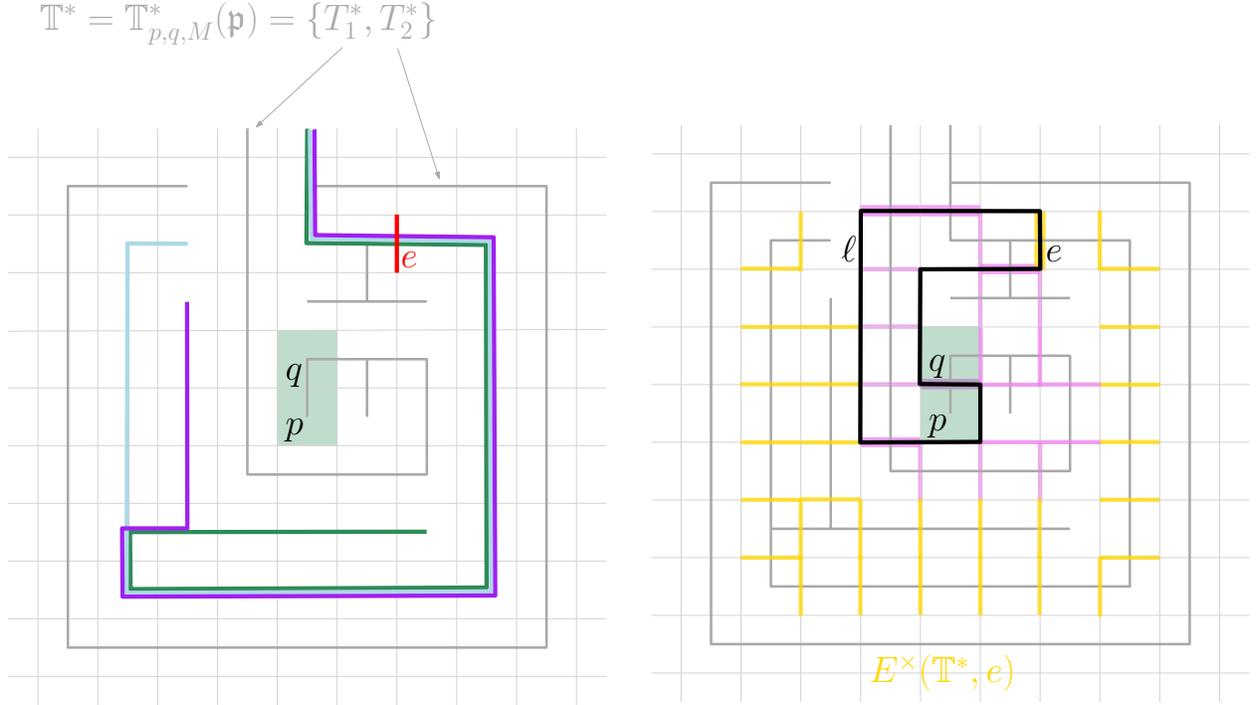}  
	\caption{\label{fig: Tree edges}\textbf{Left:} Given the gray spanning pair of trees $\spt$ avoiding $p$ and $q$ generated from $\mathfrak{p}$ from \cref{fig: span pair}, the three tree-paths of plaquettes in $\mathfrak{P}_{\spt}(e)$ containing the red (oriented) edge $e\in E^{\times}(\spt)$ are highlighted in green, blue, and purple. (The orientation of $e$ is not shown since not important.) \textbf{Right:} Given the black loop $\ell$ and the gray spanning pair of trees avoiding $p$ and $q$ generated from $\mathfrak{p}$ from \cref{fig: span pair}, the edges in $E^{\times}(\spt, e)$ from \eqref{eq:defn-Ee} are colored in yellow ($E^{\times}(\spt, e)$ contains both orientations of each yellow edge). The edges in $E^{\times}(\spt, \ell)$ from \eqref{defn:Ecross} are the yellow and violet edges ($E^{\times}(\spt, \ell)$ contains both orientation of each yellow and violet edge). The edges in $E(\spt, \ell)$ from \eqref{eq:defn-Et} are the yellow and violet edges plus all the oriented edges of $\ZZ^2$ not crossed by any (dual) edge of $\spt$.}
\end{center}
\vspace{-3ex}
\end{figure}

\begin{obs}\label{obs: ppqe}
If $e$ is not in $\mathcal{E}(\{p\})$ (resp. $\mathcal{E}(\{p, q\})$), then $p$ does not contain $e$ (resp. $p, q$ do not contain $e$). In particular, this has the following consequence.
Let $\spt= \{T_1^*,T_2^*\}$ be a spanning pair of trees avoiding $p$ and $q$ generated from $\mathfrak{p}$. Given any edge in $E^{\times}(\spt)$ not in $\cE(\{p\})$ (resp.\ not in $\cE(\{p,q\})$) and any tree-path of plaquettes $\mathfrak{p}^e\in \mathfrak{P}_{\spt}(e)$, the plaquettes in $\mathfrak{p}^e$ that contain $e$ are not $p$ or $p^{-1}$ (resp.\ $p^{\pm 1}$ or $q^{\pm1}$).
\end{obs}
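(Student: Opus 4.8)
The statement to prove, Observation \ref{obs: ppqe}, asserts that for a spanning pair of trees $\spt=\{T_1^*,T_2^*\}$ avoiding $p$ and $q$ generated from $\mathfrak{p}$, given an edge $e\in E^{\times}(\spt)$ not lying in $\cE(\{p\})$ (resp.\ not in $\cE(\{p,q\})$) and any tree-path $\mathfrak{p}^e\in \mathfrak{P}_{\spt}(e)$, the plaquette(s) of $\mathfrak{p}^e$ actually containing the edge $e$ cannot be $p$ or $p^{-1}$ (resp.\ cannot be $p^{\pm1}$ or $q^{\pm1}$).

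\medskip

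\emph{Plan.} This is essentially a bookkeeping consequence of the construction of $\spt_{p,q,M}(\mathfrak{p})$ together with the definition of a tree-path. First I would recall that by \cref{obs: edges in dual tree}, $E^{\smallsetminus}(\spt)$ contains all edges of $\cE(p)$ except the two orientations of $\mathfrak{e}(p,q)$, and all edges of $\cE(p,q)$ except the two orientations of $\mathfrak{e}(p,q)$ and the two orientations of one unoriented edge $\mathfrak{e}'$ of $q$. Equivalently, among the edges of $\cE(p)$, only $\mathfrak{e}(p,q)$ belongs to $E^{\times}(\spt)$, and among the edges of $\cE(p,q)$, only $\mathfrak{e}(p,q)$ and $\mathfrak{e}'$ belong to $E^{\times}(\spt)$. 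Now suppose, toward a contradiction, that for some tree-path $\mathfrak{p}^e=\{p^e_i\}_i$ the edge $e$ is contained in $p=p^e_i$ (or $p^{-1}$), say $e$ is one of the four edges of $p$. Since $e\in E^{\times}(\spt)$ and $e\in\cE(p)$, the first bullet above forces $e$ to be (an orientation of) $\mathfrak{e}(p,q)$. But then $e\in\cE(\{p\})$, contradicting our hypothesis that $e\notin\cE(\{p\})$. This disposes of the first (parenthetical-free) assertion. For the second assertion, the same argument: if $e$ is contained in $p^{\pm1}$ or $q^{\pm1}$, then $e\in\cE(\{p,q\})$ and $e\in E^{\times}(\spt)$, so by the second bullet $e$ is an orientation of $\mathfrak{e}(p,q)$ or of $\mathfrak{e}'$; either way $e\in\cE(\{p,q\})$, contradicting $e\notin\cE(\{p,q\})$.

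\medskip

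There is, however, one subtlety worth addressing carefully rather than glossing over: a tree-path of plaquettes $\mathfrak{p}^e$ may contain $e$ in its interior at more than one location (indeed $\cI(\mathfrak{p}^e)$ is a \emph{sequence} of edges and $e$ can reappear), so ``the plaquettes in $\mathfrak{p}^e$ that contain $e$'' could a priori be several. The point is that the argument above applies uniformly: whatever plaquette $p^e_i$ of $\mathfrak{p}^e$ has $e$ among its four boundary edges, that plaquette cannot be $p^{\pm1}$ (or, in the second case, $p^{\pm1}$ or $q^{\pm1}$), because the only edges of $\cE(\{p\})$ (resp.\ $\cE(\{p,q\})$) lying in $E^{\times}(\spt)$ are $\mathfrak{e}(p,q)$ (resp.\ $\mathfrak{e}(p,q)$ and $\mathfrak{e}'$), and these are excluded by hypothesis on $e$. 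I would also note that the condition $\mathfrak{e}(p^e_j,p^e_{j+1})\in E^{\times}(\spt)$ for all $j$ in \eqref{eq:tree path} is not even needed here — only the hypothesis $e\in E^{\times}(\spt)$ and $e\in\cI(\mathfrak{p}^e)$ matter — but it is harmless to keep the statement as phrased.

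\medskip

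\emph{Main obstacle.} There is essentially no hard step; the only thing to get right is to correctly translate \cref{obs: edges in dual tree} into the ``only $\mathfrak{e}(p,q)$ (resp.\ $\mathfrak{e}(p,q)$ and $\mathfrak{e}'$) from $\cE(\{p\})$ (resp.\ $\cE(\{p,q\})$) lies in $E^{\times}(\spt)$'' form, using \eqref{eq:wefijfb}, and then to invoke the hypothesis that $e$ avoids $\cE(\{p\})$ (resp.\ $\cE(\{p,q\})$). So I expect the write-up to be short — a few lines — and the only care needed is in the case analysis distinguishing the $\cE(\{p\})$ version from the $\cE(\{p,q\})$ version, and in phrasing it so that it covers every plaquette of $\mathfrak{p}^e$ that happens to contain $e$, not just the first one $p^e_1$.
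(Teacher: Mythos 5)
Your argument is correct, and since the paper treats this observation as immediate (it is stated with no proof as ``a trivial (but important) observation''), there is nothing to compare against; your reasoning lands in the same place. That said, the detour through \cref{obs: edges in dual tree} is unnecessary: if a plaquette of $\mathfrak{p}^e$ contains $e$, then by definition $e$ lies in $\cE$ of that plaquette, so if that plaquette were $p^{\pm1}$ (resp.\ $p^{\pm1}$ or $q^{\pm1}$) one would have $e\in\cE(\{p\})$ (resp.\ $e\in\cE(\{p,q\})$) outright, contradicting the hypothesis in one step — your own treatment of the second case already writes down exactly this before invoking the bullet points, and neither $e\in E^{\times}(\spt)$ nor the tree-path structure is actually needed.
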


Now,  to shrink a loop $\ell$ to just edges in $E^{\smallsetminus}(\spt)$, we need to keep track of which edges need to be ``removed'' from $\ell$. We now detail such edges. Let $e$ be any edge in $E^{\times}(\spt)$. For each tree-path of plaquettes $\mathfrak{p}^e = \{p^e_i\}_{i=1}^{\infty}\in \mathfrak{P}_{\spt}(e)$ containing $e$, let $j=j(\mathfrak{p}^e)\in \NN$ be such that $\mathfrak{e}(p^e_{j},p^e_{j+1})$ is the unoriented edge corresponding to $e$. With this, for any edge $e\in E^{\times}(\spt)$, let (cf.\ the yellow edges on the right-hand side of \cref{fig: Tree edges})
\begin{align}\label{eq:defn-Ee}
    E^{\times}(\spt, e):= \bigcup_{\mathfrak{p}^e=  \{p^e_i\}_{i=1}^{\infty}\in \mathfrak{P}_{\spt}(e)}\left\{e'\in E: e'\text{ corresponds to }\mathfrak{e}(p^e_i,p^e_{i+1})\right\}_{i=1}^{j(\mathfrak{p}^e)}.
\end{align}
In words, the set $E^\times(\spt, e)$ consists of (both orientations of) $e$ along with (both orientations of) the interior edges in any tree-path containing $e$ which appear before $e$. If we ``push'' $e$ as in \cref{obs: move edges} along some $\mathfrak{p}^e\in \mathfrak{P}_{\spt}(e)$, we might introduce new edges in $E^{\times}(\spt)$. As we will see later, the set $E^{\times}(\spt, e)$ includes all the edges that need to be ``removed'' from $\ell$ (by ``pushing'' $e$) to ensure that $e$ and the edges introduced when ``pushing'' $e$ are only contained in $E^{\smallsetminus}(\spt)$.

Let $\cE(\ell)$ denote the collection of edges containing any edge of the loop $\ell$ in either orientation, i.e.\   
\[\cE(\ell)=\left\{e\in E: e\in \ell \text{ or }e^{-1}\in\ell \right\}.\]
Then we define (cf.\ the yellow and violet edges on the right-hand side of  \cref{fig: Tree edges})
\begin{equation}\label{defn:Ecross}
     E^{\times}(\spt, \ell):=\bigcup_{e\in \cE(\ell) \cap E^{\times}(\spt)} E^{\times}(\spt, e).
\end{equation}
As we will show later, $E^{\times}(\spt, \ell)$ is the set of all the edges that need to be ``removed'' from $\ell$ (by ``pushing'') so that the new loop consists solely of edges in $E^{\smallsetminus}(\spt)$. 

It will be important to keep track of the number of edges that need to be removed from $\ell$, excluding those in $p$ (or $\{p,q\}$). Let
\begin{equation}\label{eq:defn-np}
    n^{\times}_{p}(\spt, \ell):= |E^{\times}(\spt, \ell)\sm \cE(p)|\quad\text{ and }\quad n^{\times}_{p,q}(\spt, \ell):= |E^{\times}(\spt, \ell)\sm \cE(\{p,q\})|,
\end{equation}
both of which we refer to as \textbf{the number of edges to remove}.
Lastly, we state and prove two simple results (Lemmas~\ref{lem: paths agree}~and~\ref{lem: exists an outer edge}) that will be used later.

\begin{lem}\label{lem: paths agree}
    Let $\spt=\spt_{p,q,M}(\mathfrak{p})= \{T_1^*,T_2^*\}$ be a spanning pair of trees avoiding $p$ and $q$ generated from $\mathfrak{p}$ of size $M\geq 1$. Fix $e\in E^{\times}(\spt)$. Then, for any $\mathfrak{p}^e=\{p^e_i\}_{i=1}^{\infty}\in \mathfrak{P}_{\spt}(e)$ and $\mathfrak{q}^e=\{q^e_i\}_{i=1}^{\infty}\in \mathfrak{P}_{\spt}(e)$,
    \begin{align*}
        p^e_{j(\mathfrak{p}^e)+k} = q^e_{j(\mathfrak{q}^e)+k}
    \end{align*}
    for all $k\geq 0$.
\end{lem}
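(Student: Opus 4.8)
\textbf{Proof proposal for Lemma \ref{lem: paths agree}.}

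The plan is to show that once a tree-path of plaquettes in $\mathfrak{P}_{\spt}(e)$ has crossed the edge $e$, its continuation to infinity is forced, so that any two such paths coincide after their respective crossing indices. First I would recall exactly what the defining conditions \eqref{eq:tree path} impose: for $\mathfrak{p}^e = \{p^e_i\}_{i=1}^\infty \in \mathfrak{P}_{\spt}(e)$ we have $e \in \cI(\mathfrak{p}^e)$, the first plaquette $p^e_1$ corresponds to a leaf of $\spt$, and crucially $\mathfrak{e}(p^e_i, p^e_{i+1}) \in E^\times(\spt)$ for all $i$, i.e.\ every interior edge of the path is crossed by a dual edge of the spanning pair of trees $\spt = \{\tree_1, \tree_2\}$. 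Equivalently, no interior edge of $\mathfrak{p}^e$ lies in $E^{\smallsetminus}(\spt)$, so the path of plaquettes never ``crosses through'' a branch of $\tree_1$ or $\tree_2$ — it stays within a single face of the Peano forest $\sptd$, or said differently, it traces along the two sides of $\spt$. Let $j = j(\mathfrak{p}^e)$ be the index with $\mathfrak{e}(p^e_j, p^e_{j+1})$ equal to the unoriented edge of $e$.

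The key step is to argue that the continuation $\{p^e_{j+k}\}_{k \geq 0}$ is uniquely determined by the pair $(e, \spt)$. Once the path has used $e$ at step $j$, it sits at the plaquette $p^e_{j+1}$ having just crossed a specific dual edge of $\spt$; from there, at each subsequent plaquette, the next move must cross an edge in $E^\times(\spt)$ and cannot revisit a plaquette (a path of plaquettes has no repeats, by the definition above \cref{def: lattice distance}). I would show that at each such plaquette there is exactly one admissible choice: of the three unexplored neighbours of $p^e_{j+k}$, two are separated from it by dual edges lying in $\tree_1$ or $\tree_2$ (hence those separating edges are in $E^\times(\spt)$) — but moving across one of those would force the path to later either recross or get trapped in a finite region, contradicting that it is infinite and non-self-intersecting. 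More cleanly: since $\sptd$ is a collection of trees spanning $\ZZ^2$, the complement structure means that the ``corridor'' traced along one side of $\spt$ is a single bi-infinite strip, and once a direction of travel along it is fixed (which happens at the crossing of $e$), the sequence of plaquettes is determined. I would make this precise by observing that the admissible next-plaquette function is single-valued on the set of (plaquette, incoming-crossed-edge) states arising after the crossing of $e$, using that $\tree_1, \tree_2$ are trees (no cycles) and both infinite.

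Having established this uniqueness, the conclusion is immediate: for any two paths $\mathfrak{p}^e, \mathfrak{q}^e \in \mathfrak{P}_{\spt}(e)$, both $p^e_{j(\mathfrak{p}^e)+1}$ and $q^e_{j(\mathfrak{q}^e)+1}$ are the plaquette on the far side of $e$ from the direction the path came — but wait, one should check both paths cross $e$ in the same direction; they do, because $e$ is a single oriented edge and $\mathfrak{e}(p^e_j, p^e_{j+1})$ being ``the unoriented edge corresponding to $e$'' together with the orientation constraint implicit in $\cI(\mathfrak{p}^e)$ (see \eqref{eq:bewivfov}) pins down which plaquette precedes and which follows. Then inductively $p^e_{j(\mathfrak{p}^e)+k} = q^e_{j(\mathfrak{q}^e)+k}$ for all $k \geq 0$ by applying the single-valued continuation function $k$ times.

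The main obstacle I anticipate is making the ``unique admissible continuation'' claim fully rigorous rather than merely geometrically evident from \cref{fig: Tree edges}: one must carefully rule out the scenario where, at some plaquette $p^e_{j+k}$, there are genuinely two infinite non-self-intersecting continuations both of whose interior edges avoid $E^{\smallsetminus}(\spt)$. This is where the specific structure of $\spt$ as a spanning \emph{pair} of infinite trees (so that $\sptd$ is a forest, with no finite enclosed regions that a path could be diverted into) does the real work, and I would likely phrase it via: the region swept between $\tree_1$ and $\tree_2$ along $e$'s side is connected and its ``boundary'' is a single bi-infinite sequence of plaquettes, of which $\{p^e_{j+k}\}_{k\geq 0}$ is the forward half starting from $e$.
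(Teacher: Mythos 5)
Your overall strategy---reduce the lemma to uniqueness of the infinite continuation from a given plaquette and then conclude by induction---is exactly the paper's; its proof is a single sentence asserting that from any plaquette $z$ there is a unique infinite path of plaquettes with all interior edges in $E^{\times}(\spt)$, ``by the construction of $\spt$.'' The gap is in your justification of that uniqueness. A path of plaquettes whose interior edges all lie in $E^{\times}(\spt)$ is precisely an infinite simple path in the dual graph $\cG^*$ using only edges of $T_1^*\cup T_2^*$; since the two trees are vertex-disjoint, it is an infinite simple path inside a single tree. Uniqueness of such a path from a fixed starting vertex is equivalent to that tree being \emph{one-ended}, and this does not follow from the properties you invoke ($T_1^*,T_2^*$ being infinite trees, $\sptd$ being a forest with no finite enclosed regions). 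For instance, a spanning pair in which one tree is a bi-infinite spine with a finite tooth attached satisfies all of those properties, yet from the tooth's leaf there are two distinct infinite simple tree-paths through the tooth's base edge, and the conclusion of the lemma fails for the primal edge crossed by that base. Relatedly, your local claim that ``at each plaquette there is exactly one admissible choice'' is false at branch points (a dual vertex can have tree-degree up to $4$), and ruling out the extra infinite continuations genuinely requires the explicit construction of $\spt_{p,q,M}(\mathfrak{p})$: each tree is a semi-infinite ray ($\mathfrak{p}$, resp.\ $\mathfrak{r}$ together with the arcs $\partial B_{M+k}(p)\setminus\mathfrak{p}$) carrying only \emph{finite} decorations, hence one-ended.

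A second, smaller problem is your treatment of the $k=0$ case (both paths crossing $e$ in the same direction). The definition gives you no help here: $\cI(\mathfrak{p}^e)$ contains both orientations of each interior edge and $\mathfrak{e}(\cdot,\cdot)$ is unoriented, so nothing ``pins down which plaquette precedes and which follows.'' The fix again rests on one-endedness: deleting from its tree the dual edge crossing $e$ leaves exactly one infinite component, so every leaf from which an infinite simple path can pass through that dual edge lies in the finite component, forcing all paths in $\mathfrak{P}_{\spt}(e)$ to traverse $e$ from the same side. (Equivalently, if two paths crossed $e$ in opposite directions, splicing the tail of one onto the other at a shared plaquette would exhibit a second infinite continuation from that plaquette, contradicting the uniqueness you have just established.) With these two points repaired, your induction goes through and recovers the paper's argument.
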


\begin{proof}
    This simply follows from the fact that given any plaquette $z$, there is a unique infinite path of plaquettes $\{z_i\}_{i=1}^{\infty}$ such that $z_1=z$ and $\mathfrak{e}(z_i,z_{i+1})\in E^{\times}(\spt)$ for all $i\geq 1$, by the construction of $\spt$ (where the notation $\mathfrak{e}(z_i,z_{i+1})\in E^{\times}(\spt)$ should be interpreted as in \eqref{eq:tree path}).
\end{proof}

% The next lemma simply follows from the definition in \eqref{eq:defn-Ee} of $E^{\times}(\spt,e)$  and \cref{lem: paths agree}.

% \begin{lem}\label{lem: crossed edges subset}
%     Let $\spt=\spt_{p,q,M}(\mathfrak{p})= \{T_1^*,T_2^*\}$ be a spanning pair of trees avoiding $p$ and $q$ generated from $\mathfrak{p}$ of size $M\geq 1$. If an edge $e'$ is contained in $E^{\times}(\spt,e)$, then $E^{\times}(\spt,e')\subset E^{\times}(\spt,e)$.
% \end{lem}

Finally, we state a lemma that will be used when determining which edge to ``remove'' (cf.\ the left-hand side of \cref{fig: Tree exploration example} for an illustrative example). First, we introduce the set of edges 
\begin{equation}\label{eq:defn-Et}
    E(\spt, \ell)\coloneqq E^{\smallsetminus}(\spt) \sqcup  E^{\times}(\spt, \ell).
\end{equation}

\begin{figure}[b!]
\begin{center}
    \includegraphics[width=1\textwidth]{figs/Spanning-tree-3.pdf}  
	\caption{\label{fig: Tree exploration example}\textbf{Left:} A typical situation in \cref{lem: exists an outer edge}. Given the gray spanning pair of trees $\spt$ avoiding $p$ and $q$ generated from $\mathfrak{p}$ from \cref{fig: span pair} and $\ell$ (not shown here) as on the right-hand side of \cref{fig: Tree edges}, $\Bar{\ell}$ is a loop using only edges of $E(\spt,\ell)=E^{\smallsetminus}(\spt) \sqcup  E^{\times}(\spt, \ell)$, where the edges in $E^{\times}(\spt, \ell)$ are highlighted in orange (and correspond to the union of the violet and yellow edges on the right-hand side of \cref{fig: Tree edges}). Note $\cE(\Bar{\ell})\cap E^{\times}(\spt,\ell)$ contains eight edges (only two in $\cE(p)$), and thus $n^{\times}_{p}(\spt,\ell)\geq 1$. The two edges $\bar{e}$ and $\bar{e}'$ of $\Bar{\ell}$ are such that they are the outermost edge of $\Bar{\ell}$ for all the tree paths of plaquettes corresponding to that edge. Also, for the edge $\Bar{e}$, the set $E^{\times}(\spt,\Bar{e})$ consists of $\Bar{e}$ and the (orange) edges $\Bar{e}_1$ and $\Bar{e}_3$. Notice that if $\{p_i^{\Bar{e}}\}_{i=1}^{\infty}\in \mathfrak{P}_{\spt}(\Bar{e})$ (for example the path of plaquettes highlighted in light blue), and $k$ is such that $\Bar{e}$ corresponds to $ \mathfrak{e}(p_{k}^{\Bar{e}},p_{k+1}^{\Bar{e}})$, then clearly $\Bar{\mathfrak{p}} = \{p^{\Bar{e}}_{i+k-1}\}_{i=1}^{\infty}$ (the path of plaquettes highlighted by dotted blue line) connects $\Bar{e}$ to infinity. \textbf{Right:} A typical situation in \cref{prop: non-zero plaquette assignments one}/\cref{claim: K1 claim}. Given the loop $\Bar{\ell}$, edge $\Bar{e}$, and path of plaquettes $\Bar{\mathfrak{p}}$ from the left-hand side of the figure, $\ell_j$ shows one of the possible strings (a single loop in this case) corresponding to a leaf on the trajectory tree $\cT_{\Bar{e},\Bar{\mathfrak{p}}}(\Bar{\ell},\Bar{K})$ (where $\Bar{K}$ is any plaquette assignment such that $(\Bar{\ell},\Bar{K})$ is balanced and $\Bar{K}$ contains at least one copy each of the plaquettes $h$ and $h^{-1}$ highlighted in blue). Note that $\ell_j$ is constructed from the edges in $E(\spt,\Bar{\ell})\subset E(\spt,\ell)$, where we note that the edges in $E^{\times}(\spt, \ell)$ are highlighted in orange on the left-hand side and the edges in $E^{\times}(\spt, \Bar{\ell})$ are highlighted in orange on the right-hand side. 
    Note that $n^{\times}_{p}(\spt,\ell_j)<n^{\times}_{p}(\spt,\Bar{\ell})$ as $E^\times(\spt,\ell_j)\subset E^\times(\spt,\Bar{\ell})$ and  $\ell_j$ no longer contains $\Bar{e}$ (so also $ E^{\times}(\spt,\ell_j)$ no longer contains $\Bar{e}$).}
\end{center}
\vspace{-3ex}
\end{figure}

\begin{lem}\label{lem: exists an outer edge}
    Let $\spt=\spt_{p,q,M}(\mathfrak{p})= \{T_1^*,T_2^*\}$ be a spanning pair of trees avoiding $p$ and $q$ generated from $\mathfrak{p} =\{p_i\}_{i=1}^{\infty}$ of size $M\geq 1$ and $\ell$ be a loop. Then, for any loop $\Bar{\ell}$ with all its edges in $E(\spt, \ell)$ such that $n^{\times}_{p}(\spt, \Bar{\ell})\geq 1$ (resp.\ $n_{p,q}^{\times}(\spt, \Bar{\ell})\geq 1$), there exists an edge $\Bar{e}$ of $\Bar{\ell}$ such that: 
    \begin{enumerate}
        \item[1.] $\Bar{e}\in E^{\times}(\spt)$;
        \item[2.] $\Bar{e}$ is the outermost edge of $\Bar{\ell}$ in $\mathfrak{p}^{\Bar{e}}$, for all $\mathfrak{p}^{\Bar{e}}\in \mathfrak{P}_{\spt}(\Bar{e})$; 
        \item[3.] $\Bar{e}\notin \cE(p)$ (resp.\ $\Bar{e}\notin \cE(\{p,q\})$).
    \end{enumerate}
    Furthermore, for such an edge $\Bar{e}$, either $\Bar{e}\in \cI(\mathfrak{p})$; in which case, taking $k\in \NN$ such that $\mathfrak{e}(p_k,p_{k+1})$ corresponds to $\Bar{e}$,
    \begin{enumerate}
    \item[{\crtcrossreflabel{4a}[item:3a]}.]  the path $\Bar{\mathfrak{p}} = \{\Bar{p}_i\}_{i=1}^{\infty}:=\{p_{i+k-1}\}_{i=1}^{\infty}$ connects $\Bar{e}$ to infinity;
    \end{enumerate}
    or $\Bar{e}\notin \cI(\mathfrak{p})$; in which case, fixing any $\{p_i^{\Bar{e}}\}_{i=1}^{\infty} =\mathfrak{p}^{\Bar{e}}\in \mathfrak{P}_{\spt}(\Bar{e})$ and taking $k\in \NN$ such that $\mathfrak{e}(p_k^{\Bar{e}},p_{k+1}^{\Bar{e}})$ corresponds to $\Bar{e}$,
    \begin{enumerate}
         \item[{\crtcrossreflabel{4b}[item:3b]}.] 
         the path $\Bar{\mathfrak{p}} = \{\Bar{p}_i\}_{i=1}^{\infty}:=\{p^{\Bar{e}}_{i+k-1}\}_{i=1}^{\infty}$ connects $\Bar{e}$ to infinity. Moreover, in this case $\Bar{p}_1\notin\mathfrak{p}$.
    \end{enumerate}
\end{lem}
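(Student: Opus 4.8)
\textbf{Proof proposal for \cref{lem: exists an outer edge}.}

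The plan is to first produce the edge $\Bar{e}$ satisfying conditions 1 and 2, and then to verify that one of the two path constructions in \ref{item:3a}/\ref{item:3b} connects $\Bar{e}$ to infinity. For the existence of $\Bar{e}$, the key idea is an outermost-edge selection argument guided by the structure of the Peano forest $\sptd$ and the tree-paths $\mathfrak{P}_{\spt}(\cdot)$. Since $n^{\times}_p(\spt,\Bar{\ell})\geq 1$ (resp.\ $n^{\times}_{p,q}(\spt,\Bar{\ell})\geq 1$), the set $\cE(\Bar{\ell})\cap\bigl(E^{\times}(\spt,\ell)\sm\cE(p)\bigr)$ (resp.\ $\sm\,\cE(\{p,q\})$) is nonempty. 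Among all edges $e$ in this set, I would pick the one which is ``most exterior'' with respect to $\spt$: concretely, recall from \eqref{eq:defn-Ee} and \cref{lem: crossed edges subset} that the relation ``$e'\in E^{\times}(\spt,e)$'' is transitive (i.e.\ $E^{\times}(\spt,e')\subseteq E^{\times}(\spt,e)$), so it defines a partial order on the finitely many relevant crossed edges; choose $\Bar{e}$ to be a maximal element of this order intersected with $\cE(\Bar{\ell})$ and with the complement of $\cE(p)$ (resp.\ $\cE(\{p,q\})$). I claim such a maximal $\Bar{e}$ is automatically the outermost edge of $\Bar{\ell}$ in every $\mathfrak{p}^{\Bar{e}}\in\mathfrak{P}_{\spt}(\Bar{e})$: any edge of $\Bar{\ell}$ lying strictly beyond $\Bar{e}$ along such a tree-path would, by the definition \eqref{eq:tree path} of tree-paths and \eqref{eq:defn-Ee}, have $\Bar{e}$ in its $E^{\times}(\spt,\cdot)$-set, hence lie below $\Bar{e}$ in the order, contradicting maximality (here I also use that $\Bar{\ell}\subseteq E(\spt,\ell)$, so any edge of $\Bar{\ell}$ not in $E^{\smallsetminus}(\spt)$ does belong to $E^{\times}(\spt,\ell)$, keeping us inside the partially ordered set). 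Condition 2 holds by construction of the set we maximized over.

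Next, for the ``furthermore'' part, split on whether $\Bar{e}\in\cI(\mathfrak{p})$. If $\Bar{e}\in\cI(\mathfrak{p})$, pick $k$ with $\mathfrak{e}(p_k,p_{k+1})$ corresponding to $\Bar{e}$ and set $\Bar{\mathfrak{p}}=\{p_{i+k-1}\}_{i=1}^\infty$. This is an infinite path of plaquettes with $\mathfrak{e}(\Bar{p}_1,\Bar{p}_2)=\Bar{e}$; I must check $\cI(\Bar{\mathfrak{p}})$ contains no edge of $\Bar{\ell}$ other than $\Bar{e}^{\pm1}$. The tail of $\mathfrak{p}$ from $p_k$ onward is a sub-tree-path (once $\mathfrak{p}$ has left $B_M(p)$ it goes straight to infinity orthogonally, and before that it is a legitimate initial segment), so each $\mathfrak{e}(\Bar{p}_i,\Bar{p}_{i+1})$ for $i\geq 1$ lies in $E^{\times}(\spt)$ and, for $i\geq 2$, in $E^{\times}(\spt,\Bar{e})$ by \eqref{eq:defn-Ee}; were any such edge in $\cE(\Bar{\ell})$, it would sit strictly above $\Bar{e}$ along a tree-path through $\Bar{e}$, contradicting that $\Bar{e}$ is the outermost edge of $\Bar{\ell}$ in every $\mathfrak{p}^{\Bar{e}}$ (extend $\Bar{\mathfrak{p}}$ backwards to the appropriate leaf of $\spt$ to realize it as an element of $\mathfrak{P}_{\spt}(\Bar{e})$). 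Hence $\Bar{\mathfrak{p}}$ connects $\Bar{e}$ to infinity, giving \ref{item:3a}. If instead $\Bar{e}\notin\cI(\mathfrak{p})$, fix any $\mathfrak{p}^{\Bar{e}}=\{p_i^{\Bar{e}}\}_{i=1}^\infty\in\mathfrak{P}_{\spt}(\Bar{e})$, pick $k$ with $\mathfrak{e}(p_k^{\Bar{e}},p_{k+1}^{\Bar{e}})$ corresponding to $\Bar{e}$, and set $\Bar{\mathfrak{p}}=\{p^{\Bar{e}}_{i+k-1}\}_{i=1}^\infty$; the same outermost-edge argument (now using \cref{lem: paths agree} to see the tail is independent of the choice of $\mathfrak{p}^{\Bar{e}}$, together with \cref{lem: crossed edges subset}) shows $\cI(\Bar{\mathfrak{p}})$ meets $\Bar{\ell}$ only in $\Bar{e}^{\pm1}$, so $\Bar{\mathfrak{p}}$ connects $\Bar{e}$ to infinity. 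Finally, $\Bar{p}_1=p^{\Bar{e}}_k\notin\mathfrak{p}$: since $\Bar{e}\notin\cI(\mathfrak{p})$ but $\Bar{e}=\mathfrak{e}(\Bar{p}_1,\Bar{p}_2)$, at least one of $\Bar{p}_1,\Bar{p}_2$ is off $\mathfrak{p}$; if $\Bar{p}_1\in\mathfrak{p}$ then, tracing the unique $E^{\times}(\spt)$-path from $\Bar{p}_1$ (which is the tail of $\mathfrak{p}$ by uniqueness as in \cref{lem: paths agree}), the edge $\Bar{e}$ would be an interior edge of $\mathfrak{p}$, contradiction.

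The main obstacle I anticipate is the selection step: making precise the partial order on crossed edges and confirming that a maximal element lying on $\Bar{\ell}$ and outside $\cE(p)$ (resp.\ $\cE(\{p,q\})$) actually exists under the hypothesis $n^{\times}_p(\spt,\Bar{\ell})\geq 1$, and that maximality in this order is exactly equivalent to being ``the outermost edge of $\Bar{\ell}$ in $\mathfrak{p}^{\Bar{e}}$ for \emph{all} $\mathfrak{p}^{\Bar{e}}\in\mathfrak{P}_{\spt}(\Bar{e})$'' — the ``for all'' quantifier is the delicate point, and it is precisely where \cref{lem: paths agree} (all tree-paths through $\Bar{e}$ share a common tail beyond $\Bar{e}$) and \cref{lem: crossed edges subset} (transitivity of $E^{\times}(\spt,\cdot)$) do the work. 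Everything else is a matter of unwinding definitions \eqref{eq:tree path}, \eqref{eq:defn-Ee}, and the explicit construction of $\spt_{p,q,M}(\mathfrak{p})$, together with \cref{obs: ppqe} to handle condition 2 and the ``$\Bar{p}_1\notin\mathfrak{p}$'' clause.
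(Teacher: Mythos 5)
Your argument follows the same strategy as the paper's proof: select an ``outermost'' crossed edge of $\Bar{\ell}$ lying outside $\cE(p)$ (resp.\ $\cE(\{p,q\})$), use the common-tail property of \cref{lem: paths agree} to upgrade outermost-ness along one tree-path to outermost-ness along all tree-paths through $\Bar{e}$, and then note that the tail of any such tree-path beyond $\Bar{e}$ connects $\Bar{e}$ to infinity precisely because $\Bar{e}$ is outermost; your treatment of Items~\ref{item:3a}, \ref{item:3b} and of the claim $\Bar{p}_1\notin\mathfrak{p}$ matches the paper's. The only structural difference is the selection step: the paper takes $\Bar{e}$ to be the outermost edge of $\Bar{\ell}$ along a single tree-path through a witness edge $e'\in\cE(\Bar{\ell})\cap\bigl(E^{\times}(\spt,\ell)\sm\cE(p)\bigr)$ (so outermost-ness is automatic and condition 2 has to be checked), whereas you take a maximal element of the order induced by $E^{\times}(\spt,\cdot)$ restricted to edges outside $\cE(p)$ (so condition 2 is automatic and outermost-ness has to be checked). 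These are equivalent routes.

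There is, however, one step you need to patch. Because you maximize only over edges \emph{outside} $\cE(p)$ (resp.\ $\cE(\{p,q\})$), maximality does not by itself exclude an edge of $\Bar{\ell}$ that lies strictly beyond $\Bar{e}$ on a tree-path but happens to belong to $\cE(p)$: such an edge is simply not a member of your poset, so it produces no contradiction with maximality, yet it would destroy condition 1. The missing ingredient is the structural fact coming from \cref{obs: edges in dual tree} and the ``avoiding $p$ and $q$'' construction: the only crossed edges in $\cE(p)$ (resp.\ $\cE(\{p,q\})$) are those corresponding to $\mathfrak{e}(p_1,p_2)$ (resp.\ $\mathfrak{e}(p_1,p_2)$ and $\mathfrak{e}(p_2,p_3)$), and since $p_1$ is a leaf of $T_1^*$ these can only occur as the very first interior edges of any tree-path containing them, hence never strictly beyond an edge that is not in $\cE(p)$ (resp.\ $\cE(\{p,q\})$). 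The paper needs the same fact, but deploys it to verify condition 2 rather than condition 1; \cref{obs: ppqe}, which you cite for this purpose, does not say quite the right thing. Finally, a direction slip: an edge $\tilde{e}$ of $\Bar{\ell}$ beyond $\Bar{e}$ satisfies $\Bar{e}\in E^{\times}(\spt,\tilde{e})$, so $\tilde{e}$ lies \emph{above} $\Bar{e}$ in your order, not below; the contradiction with maximality is the right one once the direction is corrected.
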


\begin{proof}
    We only prove the case when $n^{\times}_{p}(\spt, \Bar{\ell})\geq 1$ as the $n_{p,q}^{\times}(\spt, \Bar{\ell})\geq 1$ case can be proven using the exact same arguments.
    
    As $n^{\times}_{p}(\spt, \Bar{\ell})\geq 1$, there must be some edge $e'$ in $E^{\times}(\spt,\ell)\sm \cE(p)$ in $\Bar{\ell}$. Let $\mathfrak{p}^{e'}\in \mathfrak{P}_{\spt}(e')$. Also, let $\Bar{e}$ denote the outermost edge of $\Bar{\ell}$ in $\mathfrak{p}^{e'}$. As $\Bar{e}$ is the outermost edge of $\Bar{\ell}$ in $\mathfrak{p}^{e'}$, we know that $\Bar{e}\in E^{\times}(\spt)$ by the definition of tree-paths of plaquettes in \eqref{eq:tree path}. Also, clearly $\mathfrak{p}^{e'}\in \mathfrak{P}_{\spt}(\Bar{e})$. Thus $\Bar{e}$ is the outermost edge of $\Bar{\ell}$ for at least one of the paths in $\mathfrak{P}_{\spt}(\Bar{e})$. Then \cref{lem: paths agree} implies that $\Bar{e}$ is the outermost edge for all the paths in $\mathfrak{P}_{\spt}(\Bar{e})$, as desired. Lastly, $\Bar{e}\notin \cE(p)$ because $e'\notin \cE(p)$, $\Bar{e}$ is the outermost edge of $\Bar{\ell}$ in $\mathfrak{p}^{e'}$, and any path in $\mathfrak{P}_{\spt}(e')$ either does not contain any edges in $\cE(p)$ or it contains an edge corresponding to $\mathfrak{e}(p_1,p_2)$ (the only edge in $\cE(p)$ that can be in a path of $\mathfrak{P}_{\spt}(e')$, see e.g.~\cref{fig: Tree edges}) as the first edge of the path, but this edge cannot be the outermost edge of the path. To see this last claim, note that $\mathfrak{e}(p_1, p_2) \in \cE(p)$ and $e' \notin \cE(p)$.

    Now let $\Bar{e}$ be an edge such that it is the outermost edge in $\Bar{\ell}$ for all tree-paths in $\mathfrak{P}_{\spt}(\Bar{e})$. Then, for any $\{p_i^{\Bar{e}}\}_{i=1}^{\infty}= \mathfrak{p}^{\Bar{e}}\in \mathfrak{P}_{\spt}(\Bar{e})$, taking $k$ such that $\mathfrak{e}(p_k^{\Bar{e}},p_{k+1}^{\Bar{e}}) = \Bar{e}$, we clearly have that $\{p^{\Bar{e}}_{j}\}_{j=k}^{\infty}$ connects $\Bar{e}$ to infinity (since $\Bar{e}$ is the outermost edge of $\Bar{\ell}$ in $\mathfrak{p}^{\Bar{e}}$). Thus, if $\Bar{e}\in \cI(\mathfrak{p})$, the fact that $\mathfrak{p}\in \mathfrak{P}_{\spt}(\Bar{e})$ and the above reasoning give us Item~\ref{item:3a}.
    
    On the other hand, if $\Bar{e}\notin \cI(\mathfrak{p})$, the above reasoning gives the first claim in  Item~\ref{item:3b}. The second claim simply follows from the fact that $\{p_i^{\Bar{e}}\}_{i=1}^k\cap\mathfrak{p}=\emptyset$ if $\Bar{e}\notin \cI(\mathfrak{p})$ by construction of the paths of plaquettes in $\mathfrak{P}_{\spt}(\Bar{e})$.
\end{proof}

\subsubsection{Wilson loop expectations as a finite explicit sum over canonical
plaquette assignments}\label{sec: proof of main}

Finally, we prove \cref{thm: erasable loops have one plaquette assignment}. To do this, we will prove Propositions~\ref{prop: non-zero plaquette assignments one},~\ref{prop: non-zero plaquette assignments two}~and~\ref{prop: non-zero plaquette assignments three}, each showing that one of the three conditions appearing in the definition of the canonical collection of plaquette assignments (\cref{defn:collection-plaquette-ass}) is necessary to have a non-zero surface sum.

Each proposition will be proven by shrinking a loop along the tree-paths of plaquettes of a well-chosen spanning pair of trees avoiding $p$ and $q$ generated from $\mathfrak{p}$, for a specific choice of $p,q$, and $\mathfrak{p}$, until the loops are small. 

\begin{prop}[\textsc{necessity of the first condition}]\label{prop: non-zero plaquette assignments one}
    Let $\ell$ be any non-trivial loop. Let $\cK_{\ell}^1$ denote the collection of finite plaquette assignments of the form 
    \begin{equation*}
        K = K_{\ell}+K',\qquad\text{where $K'$ is such that $K'(p)=K'(p^{-1})$ for all $p\in \cP$}.
    \end{equation*}
    If $K\notin \cK_{\ell}^1$, then $\phi^K(\ell)=0$.
\end{prop}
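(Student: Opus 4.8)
The plan is to show that if $K \notin \cK_\ell^1$, then $(\ell, K)$ fails to be balanced in a way that propagates: more precisely, I claim that $K \notin \cK_\ell^1$ means there is some plaquette $q_0$ with $K(q_0) \neq K(q_0^{-1}) + (\text{the contribution from }K_\ell)$, i.e.\ writing $K = K_\ell + K'$ with $K'$ not balanced. The key observation is that $(\ell, K_\ell)$ is balanced, and $(\ell, K)$ balanced forces $K'$ to be ``balanced as far as the loop sees it'', but the condition in \cref{defn:collection-plaquette-ass} requires genuine balancedness $K'(p) = K'(p^{-1})$ everywhere. The gap between these is exactly what I want to exploit: if $K'$ is not balanced but $(\ell, K)$ is balanced, then there must be an unoriented edge $\mathfrak e$ with $n_e(K') \neq n_{e^{-1}}(K')$ for some orientation, yet this imbalance is compensated by edges of $\ell$ itself. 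Since $\ell$ is a fixed finite loop, such compensation can only happen near $\supp(\ell)$, and I can find a plaquette $q$ where $K'$ is imbalanced that is ``accessible'' via a path of plaquettes $\mathfrak p$ and a spanning pair of trees $\spt = \spt_{p,q,M}(\mathfrak p)$.

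First I would set up the spanning pair of trees construction from \cref{sec: spanning pair}: choose a plaquette $p$ in the support of $\ell$ (or adjacent to it) and an appropriate $\mathfrak p$, form $\spt_{p,q,M}(\mathfrak p)$ for $M$ large enough that all of $\ell$ and the relevant part of $K$ lies inside $B_M(p)$. Then I would run the shrinking procedure: repeatedly apply \cref{prop: remove plaquette from loop} to ``remove'' plaquettes from $\ell$ by pushing edges along tree-paths, using \cref{lem: exists an outer edge} to always find an outermost edge to push, and \cref{lem: crossed edges subset} to see that the number-of-edges-to-remove quantity $n_p^\times(\spt, \cdot)$ strictly decreases (as in the right-hand side of \cref{fig: Tree exploration example}). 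This terminates, by the finiteness statements in \cref{lem:finite-trees} together with the strict-decrease argument, leaving a trajectory tree all of whose leaves are string plaquette assignments $(s, \multK)$ where every loop in $s$ is supported (in the sense of \cref{obs: tree equals null-2}) only on the edges of $p$ and trees hanging off them — so after backtrack removal each such loop is either the null-loop or a power of $p$. Throughout, Properties~\ref{p2 remove plaquette} and~\ref{p4 remove plaquette} of \cref{prop: remove plaquette from loop} track how $\multK$ and $h_s$ evolve: the plaquette assignment away from the pushed plaquettes is preserved.

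The crux is then: in each leaf $(s, \multK) = \{(\ell_1, K_1), \dots, (\ell_m, K_m)\}$, I must exhibit some $j$ with $\phi^{K_j}(\ell_j) = 0$. Because $K \notin \cK_\ell^1$, the imbalance of $K'$ is preserved (up to bookkeeping) by Property~\ref{p2 remove plaquette}, so either some $K_j \neq 0$ with $\ell_j = \emptyset$ (giving $\phi^{K_j}(\emptyset) = 0$ by base case \ref{def: phi k empty = 0}), or some $(\ell_j, K_j)$ is a power of $p$ with $K_j$ not balanced, in which case $(\ell_j, K_j)$ is not balanced and $\phi^{K_j}(\ell_j) = 0$ by base case (1), or — the delicate case — every leaf loop is a genuine power $p^{n_j}$ with $(\ell_j, K_j)$ balanced but $K_j$ imbalanced-in-the-$K'$-sense, which I must rule out by a height/distance accounting argument analogous to the one at the end of the proof of \cref{lem: size two loops} (using Property~\ref{p4 remove plaquette} to show the heights add up correctly and hence the $K'$-imbalance cannot vanish). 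I expect this last step — showing the imbalance genuinely survives the shrinking into the leaves rather than being quietly absorbed — to be the main obstacle, and it is where the careful choice of which plaquette $q$ to ``protect'' (via the ``avoiding $p$ and $q$'' construction) and the invariance Properties~\ref{p2 remove plaquette}--\ref{p4 remove plaquette} must be combined precisely; the remaining cases then close by appealing to \cref{lem: size one main thm} or the base cases of $\phi^K(\cdot)$.
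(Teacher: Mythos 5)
Your plan follows the paper's proof of this proposition essentially step for step: the same spanning-pair-of-trees machinery, the same iterative shrinking with $n^{\times}_{\hat p}(\spthat,\cdot)$ strictly decreasing, and the same terminal cases (null-loops with a nonzero plaquette assignment, and windings of a single plaquette handled by \cref{lem: size one main thm}). You have also correctly located the crux --- that the defect of $K$ must survive into at least one component of every leaf --- and correctly guessed that the mechanism is the summation accounting used at the end of \cref{lem: size two loops}.

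Two points of precision, the first being where your sketch stops short of a proof. The invariant to propagate is not an edge-level imbalance of $K'$ but the plaquette-level statement: since $K_\ell(p)-K_\ell(p^{-1})=h_\ell(p)$ for every $p$, the hypothesis $K\notin\cK_\ell^1$ produces a single plaquette $\hat p$ with $K(\hat p)-K(\hat p^{-1})\neq h_\ell(\hat p)$, and it is this inequality at this fixed $\hat p$ that must be preserved through the shrinking. Consequently the spanning pair of trees must be anchored at $\hat p$ itself (the paper takes $\hat{\mathfrak{p}}$ to be the straight path starting at $\hat p$ and $\hat M=1$), not at a plaquette of $\supp(\ell)$: the payoff is that \cref{obs: ppqe} then guarantees that every removal path used in the shrinking avoids $\hat p^{\pm1}$, so Properties~\ref{p2 remove plaquette} and~\ref{p4 remove plaquette} of \cref{prop: remove plaquette from loop} give, for each leaf $\{(\ell_1,K_1),\dots,(\ell_m,K_m)\}$, the exact identities $\sum_i\bigl(K_i(\hat p)-K_i(\hat p^{-1})\bigr)=K(\hat p)-K(\hat p^{-1})$ and $\sum_i h_{\ell_i}(\hat p)=h_{\ell}(\hat p)$; if every component satisfied $K_i(\hat p)-K_i(\hat p^{-1})=h_{\ell_i}(\hat p)$, these would force $K(\hat p)-K(\hat p^{-1})=h_\ell(\hat p)$, a contradiction, so some component inherits the defect. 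Second, your ``delicate case'' is mislabeled: a leaf component that is a nontrivial winding of $\hat p$ with $K_j(\hat p)-K_j(\hat p^{-1})\neq h_{\ell_j}(\hat p)$ is not something to rule out --- it is the favorable case, killed directly by \cref{lem: size one main thm} since then $K_j\notin\cK_{\ell_j}^1\supseteq\cK_{\ell_j}$; what must be ruled out is that \emph{all} components lose the defect, and that is exactly what the displayed identities accomplish.
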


\begin{proof}
    First, \cref{lemma: backtrack cancellations} allows us to assume that $\ell$ has no backtracks (recall from \cref{defn:collection-plaquette-ass} that if $\ell'$ is the loop obtained from a backtrack loop $\ell$ by removing all its backtracks, then $\cK_{\ell'}= \cK_{\ell}$).
    Further, if $\ell$ has no backtracks and  winds around a single plaquette, we have the desired result by \cref{lem: size one main thm}. Also, if $(\ell,K)$ is not balanced, then the result is trivial. Thus, we consider $(\ell,K)$ such that
    \begin{equation}\label{eq:ass-on-pl}
        \text{$\ell$ has no backtracks},\qquad \area(\ell)\geq 2, \qquad K\notin \cK_{\ell}^1,\qquad \text{and} \qquad(\ell,K)  \text{ is balanced}.
    \end{equation}
    Since $K\notin \cK_{\ell}^1$ by assumption and $K_\ell(p)-K_\ell(p^{-1})=h_{\ell}(p)$ for all plaquettes $p\in\cP$ by definition of $K_{\ell}$ in \eqref{defn:master-plaq-ass}, there exists some plaquette $\hat{p}\in \cP$ such that 
    \begin{equation}\label{eq:K-not-h}
        K(\hat{p})-K\big(\hat{p}^{-1}\big) \neq h_{\ell}(\hat{p}).
    \end{equation}
    Let $\hat{\mathfrak{p}}$ be the vertical path of plaquettes starting at $\hat{p}$. Let $\hat{q}$ denote the second plaquette in $\hat{\mathfrak{p}}$. As $\hat{\mathfrak{p}}$ is just a straight line, we can construct a spanning pair of trees $\spthat=\spt_{\hat{p},\hat{q},\hat{M}}(\hat{\mathfrak{p}})$ avoiding $\hat{p}$ and $\hat{q}$ from $\hat{\mathfrak{p}}$ of size (say) $\hat{M}= 1$. Also, note that since $\area(\ell)\geq 2$, we must have that (recall the notation in \eqref{eq:defn-np})
    \begin{equation}\label{eq: more than one edge to remove K1}
        n^{\times}_{\hat{p}}(\spthat,\ell)\geq 1.
    \end{equation}
    The following claim will be our key tool to shrink the loop $\ell$ to smaller loops we know how to deal with (cf.\ the right-hand side of \cref{fig: Tree exploration example} for an illustrative example). It tells us that we can iteratively shrink $\ell$, preserving the condition in \eqref{eq:K-not-h}, until the shrunken loop consists only of edges in $\sptdhat \cup \cE(\hat{p})$. This will imply (thanks to \cref{obs: tree equals null-2} and \cref{obs: edges in dual tree}) that the shrunken loop, after removing backtracks, will only be supported on $\hat{p}$. Recall the set $E(\spt, \ell)$ introduced in  \eqref{eq:defn-Et}.

    \begin{claim}\label{claim: K1 claim}
        Let $\ell$, $\hat{p}$, $\hat{\mathfrak{p}}$ and $\spthat$ be as above. Suppose that $(\Bar{\ell},\Bar{K})$ is a  loop plaquette assignment such that:
        \begin{enumerate}[(i)]
            \item $(\Bar{\ell},\Bar{K})$ is balanced and does not contain backtracks;
            \item all the edges of $\Bar{\ell}$ are in $E(\spthat, \ell)$; 
            \item $n^{\times}_{\hat{p}}(\spthat, \Bar{\ell})\geq 1$;
            \item $\Bar{K}(\hat{p})-\Bar{K}\big(\hat{p}^{-1}\big) \neq h_{\Bar{\ell}}(\hat{p})$.
        \end{enumerate} 
        There exists an edge $\Bar{e}$ of $\Bar{\ell}$ and a path of plaquettes $\Bar{\mathfrak{p}}$ connecting $\Bar{e}$ to infinity such that the trajectory tree $\cT_{\Bar{e},\Bar{\mathfrak{p}}}(\Bar{\ell},\Bar{K})$ generated from $(\Bar{\ell},\Bar{K},\Bar{e},\Bar{\mathfrak{p}})$ from \cref{defn:tr-tree-gen} has the following property: for all  leaves $(s,\multK)=\{(\ell_1,K_2),\dots,(\ell_m,K_m)\}$ of $\cT_{\Bar{e},\Bar{\mathfrak{p}}}(\Bar{\ell},\Bar{K})$, there exists  $j\in [m]$ such that:
        \begin{enumerate}[(i)]
            \item $(\ell_j,K_j)$ is balanced and does not contain backtracks; 
            \item all the edges of $\ell_j$ are in $E(\spthat, \ell)$;
            \item $n^{\times}_{\hat{p}}(\spthat, \ell_j)<n^{\times}_{\hat{p}}(\spthat, \Bar{\ell})$;
            \item $K_j(\hat{p})-K_j\big(\hat{p}^{-1}\big) \neq h_{\ell_j}(\hat{p})$.
        \end{enumerate}
    \end{claim}

    Assuming the claim, we first complete the proof of the proposition.
    Thanks to \eqref{eq:ass-on-pl}, \eqref{eq:K-not-h}, and \eqref{eq: more than one edge to remove K1}, we know that the pair $(\ell, K)$ satisfies the assumptions of \cref{claim: K1 claim}. Let $\Bar{e}$ and $\Bar{\mathfrak{p}}$ be the edge and the path of plaquettes given by the claim (used with $(\Bar{\ell},\Bar{K})=(\ell,K)$). Then, by \cref{obs: tree WLE relation}, we have that
    \begin{equation}\label{eq: K1 reduction}
    \phi^K(\ell) = \sum_{(s,\multK)\in \cL(\cT_{\Bar{e},\Bar{\mathfrak{p}}}(\ell,K))}\upbeta_{s,\multK}\cdot\phi^{\multK}(s) = \sum_{(s,\multK)\in \cL(\cT_{\Bar{e},\Bar{\mathfrak{p}}}(\ell,K))} \upbeta_{s,\multK}\prod_{(\ell_i, K_i) \in (s,\multK)} \phi^{K_i}(\ell_i),
    \end{equation}
    where $\upbeta_{s,\multK}$ is as in \cref{obs: tree WLE relation}. Now, by \cref{claim: K1 claim}, for each $(s,\multK) \in \cL(\cT_{\Bar{e},\Bar{\mathfrak{p}}}(\ell,K))$, there exists $(\ell_j, K_j) \in (s,\multK)$ such that: $(i)$ $(\ell_j, K_j)$ is balanced and does not contain backtracks; $(ii)$ all the edges of $\ell_j$ are in $E(\spthat, \ell)$; $(iii)$~$n^{\times}_{\hat{p}}(\spthat, \ell_j) < n^{\times}_{\hat{p}}(\spthat, \ell)$; and $(iv)$ $K_j(\hat{p})-K_j\big(\hat{p}^{-1}\big) \neq h_{\ell_j}(\hat{p})$.

    Thus, if we can show that $\phi^{K_j}(\ell_j) = 0$, it follows that the entire summand corresponding to $(s,\multK)$ in \eqref{eq: K1 reduction} is zero. Since this reasoning applies to each leaf in $\cT_{\Bar{e},\Bar{\mathfrak{p}}}(\ell,K)$, we reduced the task of proving that $\phi^K(\ell) = 0$ to showing that $\phi^{\tilde{K}}(\tilde{\ell}) = 0$ for all $(\tilde{\ell}, \tilde{K})$ such that $(i)$, $(ii)$, $(iii)$ and $(iv)$ hold. 

    Iterating this reduction, by successive applications of \cref{claim: K1 claim}, we conclude that it suffices to verify that 
    \[\phi^{{\tilde{K}}}({\tilde{\ell}}) = 0,\] 
    for all loop plaquette assignments $({\tilde{\ell}}, {\tilde{K}})$ such that:
    \begin{enumerate}[(i)]
    \item $({\tilde{\ell}}, {\tilde{K}})$ is balanced and does not contain backtracks;
    \item all the edges of $\Tilde{\ell}$ are in $E(\spthat, \ell)$;
    \item $n^{\times}_{\hat{p}}(\spthat, {\tilde{\ell}}) = 0$;
    \item ${\tilde{K}}(\hat{p})-{\tilde{K}}\big(\hat{p}^{-1}\big) \neq h_{{\tilde{\ell}}}(\hat{p})$.
    \end{enumerate}

    Now, assuming the latter four assumptions $(i)$--$(iv)$, observe that since all the edges of $\tilde{\ell}$ are in $E(\spthat, \ell)$ and $n^{\times}_{\hat{p}}(\spthat, {\tilde{\ell}}) = 0$, ${\tilde{\ell}}$ is constructed solely from edges in $\sptdhat \cup \cE(\hat{p})$. Therefore, due to \cref{obs: edges in dual tree} and the fact that $\Tilde{\ell}$ is a non-backtrack loop, \cref{obs: tree equals null-2} guaranties that the loop ${\tilde{\ell}}$ must be either the null-loop or a loop winding around the plaquette $\hat{p}$ (in one of the two possible directions).
    
    If $\tilde{\ell}$ is a loop winding around the plaquette $\hat{p}$, since ${\tilde{K}}(\hat{p})-{\tilde{K}}\big(\hat{p}^{-1}\big) \neq h_{{\tilde{\ell}}}(\hat{p})$, we have that ${\tilde{K}} \notin \cK_{{\tilde{\ell}}}^1$. This allows us to apply \cref{lem: size one main thm} because $\cK_{{\tilde{\ell}}}\subset \cK_{{\tilde{\ell}}}^1$, yielding $\phi^{{\tilde{K}}}({\tilde{\ell}}) = 0$.
    If $\tilde{\ell}$ is the null-loop, we note that the condition ${\tilde{K}} \notin \cK_{{\tilde{\ell}}}^1$ implies that ${\tilde{K}} \neq 0$ on at least one plaquette (otherwise ${\tilde{K}}$ would belong to $\cK_{{\tilde{\ell}}}^1$). Hence, using \cref{obs: tree equals null} and the fact that $\phi^{\dot{K}}(\emptyset) = 0$ for all $\dot{K} \neq 0$ from Property~\ref{def: phi k empty = 0} on page~\pageref{def: phi k empty = 0}, we again get $\phi^{{\tilde{K}}}({\tilde{\ell}}) = 0$.

    Therefore, assuming the validity of the claim, we conclude that $\phi^K(\ell) = 0$, as desired.

    \medskip

    We now prove \cref{claim: K1 claim}. Suppose that $(\Bar{\ell},\Bar{K})$ satisfies all the assumptions of the claim. Then, the first three items of \cref{lem: exists an outer edge} give us the existence of an edge $\Bar{e}$ of $\Bar{\ell}$ such that 
    \begin{equation}\label{eq:fvwievfow}
        \text{$\Bar{e}$ is in $E^{\times}(\spthat)$ but not in $\cE(\hat{p})$, and $\Bar{e}$ is the outermost edge of every tree-path $\mathfrak{q}\in\mathfrak{P}_{\spthat}(\Bar{e})$.}
    \end{equation}
    Further, Items~\ref{item:3a}~and~\ref{item:3b} of \cref{lem: exists an outer edge} give us a path of plaquettes $\Bar{\mathfrak{p}} = \{\Bar{p}_i\}_{i=1}^{\infty}$ connecting $\Bar{e}$ to infinity.
    
    Now, we show that the conclusion of the claim holds for this choice of $\Bar{e}$ and $\Bar{\mathfrak{p}}$. Let 
    $$(s,\multK)=\left\{(\ell_1,K_1),\dots,(\ell_m,K_m)\right\}$$ 
    be a leaf on the trajectory tree generated from $(\Bar{\ell},\Bar{K},\Bar{e},\Bar{\mathfrak{p}})$. 
    
    \medskip
    
    \noindent{\emph{\underline{Proof of (i):}}} Thanks to the definition of the trajectory tree generated from $(\Bar{\ell},\Bar{K},\Bar{e},\Bar{\mathfrak{p}})$ and \cref{rem:balanced}, we have that for all $i\in[m]$,
    \begin{equation}\label{eq:cond1-ok}
        (\ell_i,K_i) \text{ is balanced and does not contain backtracks.} 
    \end{equation}
    
    \medskip
    
    \noindent{\emph{\underline{Proof of (ii):}}} Property~\ref{p5 remove plaquette} of \cref{prop: remove plaquette from loop} gives us that\begin{equation}\label{eq: edge dec}
        n_{e}(s) \leq n_{e}(\Bar{\ell}),\quad\text{ for all }e\in E\sm \{(\Bar{e}_1)^{\pm 1}, (\Bar{e}_2)^{\pm 1}, (\Bar{e}_3)^{\pm 1}\},
    \end{equation}
    where $\Bar{p}_1=\Bar{e}\,\Bar{e}_1\,\Bar{e}_2\,\Bar{e}_3$. Hence, for all $i\in[m]$, the only edges that can be in $\ell_i$ without necessarily being in $\Bar{\ell}$ are $(\Bar{e}_1)^{\pm 1}, (\Bar{e}_2)^{\pm 1}, (\Bar{e}_3)^{\pm 1}$. But $\cE(\Bar{p}_1) \subset   E(\spthat, \Bar{\ell})$ by the definition of $ E(\spthat, \Bar{\ell})$ in \eqref{eq:defn-Et}. Thus, every edge of $\ell_i$ is contained in $E(\spthat, \Bar{\ell}) = E^{\smallsetminus}(\spthat) \sqcup E^{\times}(\spthat,\Bar{\ell})$ for all $i\in [m]$. Thus, to show (ii), it is sufficient to show that (recall from \eqref{defn:Ecross} that 
     $E^{\times}(\spthat, \Bar{\ell}):=\bigcup_{e\in \cE(\Bar{\ell}) \cap E^{\times}(\spthat)} E^{\times}(\spthat, e)$)
    \begin{equation}\label{eq:ewivfvqweuoi}
        E^{\times}(\spthat,\Bar{\ell})\subset E^{\times}(\spthat,\ell).
    \end{equation}
    To see this, suppose that $\Tilde{e}$ is an edge in $\cE(\Bar{\ell})\cap E^{\times}(\spthat)$. Now, since $E(\spthat,\ell)$ contains all the edges of $\Bar{\ell}$ (by assumption (ii)) and contains both orientations of its own edges (by definition), we get that $\cE(\Bar{\ell})\subset E(\spthat,\ell)$. Thus, as $\Tilde{e}\in \cE(\Bar{\ell})\cap E^{\times}(\spthat)$, we get that $\Tilde{e}\in E^{\times}(\spthat,\ell)$. Therefore, by the definition of $E^{\times}(\spthat,\ell)$ in~\eqref{defn:Ecross}, there is some $\Tilde{e}'\in \cE(\ell)\cap E^{\times}(\spthat)$ such that $\Tilde{e}\in E^{\times}(\spthat,\Tilde{e}')$. Now, the definition of $E^{\times}(\spthat,\Tilde{e})$ in~\eqref{eq:defn-Ee} and \cref{lem: paths agree} give us that $E^{\times}(\spthat,\Tilde{e})\subset E^{\times}(\spthat,\Tilde{e}')$. Thus, as this holds for any edge in $\cE(\Bar{\ell})\cap E^{\times}(\spthat)$, we get the desired inclusion in \eqref{eq:ewivfvqweuoi} (recalling once again the definition in~\eqref{defn:Ecross}). So, we conclude that 
    \begin{equation}\label{eq:cond2-ok}
    \text{all the edges of $\ell_i$ are in $E(\spthat, \ell)$ for all $i\in [m]$}.
    \end{equation}

    \medskip
    
    \noindent{\emph{\underline{Proof of (iii):}}}  
    The same proof as above, used to show that $E^{\times}(\spthat,\Bar{\ell})\subset E^{\times}(\spthat,\ell)$, gives us that $E^{\times}(\spthat,\ell_i)\subseteq E^{\times}(\spthat,\Bar{\ell})$ for all $i\in [m]$.
    We claim that for all $i \in [m]$, $E^{\times}(\spthat,\ell_i)$ is strictly contained in $E^{\times}(\spthat,\Bar{\ell})$ because $\Bar{e}\in E^{\times}(\spthat,\Bar{\ell})$ but $\Bar{e}\notin E^{\times}(\spthat,\ell_i)$. 
    The fact that $\Bar{e}\in E^{\times}(\spthat,\Bar{\ell})$ is trivial because $\Bar{e}$ is in $\Bar{\ell}$ and $\bar{e} \in E^\times(\spthat)$ (recall \eqref{eq:fvwievfow}). 
    The fact that 
    \[\Bar{e}\notin E^{\times}(\spthat,\ell_i)=\bigcup_{\Tilde{e}\in \cE(\ell_i) \cap E^{\times}(\spthat)} E^{\times}(\spthat, \Tilde{e})\]
    immediately follows if we show that for all $\Tilde{e}\in\cE(\ell_i)\cap E^\times(\spthat)$,
    \begin{equation}\label{eq:wfeiwefbw}
        \Bar{e}\notin E^{\times}(\spthat, \Tilde{e})=\bigcup_{\mathfrak{p}^{\Tilde{e}}=  \{p^{\Tilde{e}}_i\}_{i=1}^{\infty}\in \mathfrak{P}_{\spthat}(\Tilde{e})}\left\{e'\in E: e'\text{ corresponds to }\mathfrak{e}(p^{\Tilde{e}}_i,p^{\Tilde{e}}_{i+1})\right\}_{i=1}^{j(\mathfrak{p}^{\Tilde{e}})}.
    \end{equation}
    Clearly, \eqref{eq:wfeiwefbw} is true when $\Tilde{e}\in \cE(\ell_i)
    \cap \{(\Bar{e}_1)^{\pm 1}, (\Bar{e}_2)^{\pm 1}, (\Bar{e}_3)^{\pm 1}\}$ as any tree-path of plaquettes $\mathfrak{p}\in \mathfrak{P}_{\spthat}(\Bar{e}_i^{\pm 1})$ must go through $(\Bar{e}_i)^{\pm 1}$ before $\Bar{e}$. To show that the claim is true for the other edges in $\cE(\ell_i)$, suppose that $\Tilde{e}\in \cE(\ell_i)$ but  $ \Tilde{e}\notin\{(\Bar{e}_1)^{\pm 1}, (\Bar{e}_2)^{\pm 1}, (\Bar{e}_3)^{\pm 1}\}$ and, to get a contradiction, assume also that $\Tilde{e}$ is such that $\Bar{e}\in E^{\times}(\spthat, \Tilde{e})$. Note that $\Bar{e}\neq \Tilde{e}$, since $\Bar{e}\notin \cE(\ell_i)$ by the definition of $\cT_{\Bar{e},\Bar{\mathfrak{p}}}(\Bar{\ell},\Bar{K})$, and note also that  $\Tilde{e}\in \Bar{\ell}$, since $n_{\Tilde{e}}(\Bar{\ell})\geq 1$ by \eqref{eq: edge dec} (which we can use with $\Tilde{e}$ because $ \Tilde{e}\notin\{(\Bar{e}_1)^{\pm 1}, (\Bar{e}_2)^{\pm 1}, (\Bar{e}_3)^{\pm 1}\}$). Further, as $\Bar{e}\in E^{\times}(\spthat, \Tilde{e})$ there is some tree path of plaquettes $\mathfrak{p}^{\Tilde{e}}= \{p_i^{\Tilde{e}}\}_{i=1}^{\infty}$ such that $\bar{e}$ corresponds to an edge in $\{\mathfrak{e}(p_i^{\Tilde{e}}, p_{i+1}^{\Tilde{e}})\}_{1\leq i< j(\mathfrak{p}^{\Tilde{e}})}$. Thus, we have that there is a tree-path of plaquettes containing $\Bar{e}$ such that $\Bar{e}$ is not the outermost edge of $\Bar{\ell}$ in the path (because we just argued that $\Bar{e}\neq \Tilde{e}$ and $\Tilde{e}\in\Bar{\ell}$). This contradicts the fact (from \eqref{eq:fvwievfow}) that $\Bar{e}$ is the outermost edge of $\Bar{\ell}$ in every tree-path $\mathfrak{q}\in\mathfrak{P}_{\spthat}(\Bar{e})$, concluding the proof of \eqref{eq:wfeiwefbw}.
    
    Therefore, we get that $\Bar{e}\notin E^{\times}(\spthat,\ell_i)$ and thus the strict containment of $E^{\times}(\spthat,\ell_i)$ in $E^{\times}(\spthat,\Bar{\ell})$.
    This latter fact and the fact that $\Bar{e}$ is not in $\cE(\hat{p})$ (recall \eqref{eq:fvwievfow}) give us the desired property:
    \begin{equation}\label{eq:cond3-ok}
        \text{$n^{\times}_{\hat{p}}(\spthat,\ell_i)<n^{\times}_{\hat{p}}(\spthat,\Bar{\ell})$ for all $i\in [m]$}.
    \end{equation}
    
    \medskip
    
    \noindent{\emph{\underline{Proof of (iv):}}} We claim that there must be some $j\in [m]$ such that $K_j(\hat{p})-K_j\big(\hat{p}^{-1}\big) \neq h_{\ell_j}(\hat{p})$. Indeed, if $K_i(\hat{p})-K_i\big(\hat{p}^{-1}\big) = h_{\ell_i}(\hat{p})$ for all $i\in[m]$, then 
    \begin{multline}\label{eq: k equal h}
        K(\hat{p}) - K(\hat{p}^{-1})= \multK(\hat{p}) - \multK(\hat{p}^{-1}) = \sum_{i\in [m]}\left(K_i(\hat{p})-K_i(\hat{p}^{-1}) \right)= \sum_{i\in [m]}h_{\ell_i}(\hat{p}) =  h_{\Bar{\ell}}(\hat{p}),
    \end{multline}
    where, to obtain the first equality we used that $\hat{p}\notin \Bar{\mathfrak{p}}$ by \cref{obs: ppqe} and Property~\ref{p2 remove plaquette} of \cref{prop: remove plaquette from loop}; for the second equality we used the definition of $\multK(\cdot)$ from \eqref{eq:defn-KK}; for the third equality we used that we are assuming that $K_i(\hat{p})-K_i\big(\hat{p}^{-1}\big) = h_{\ell_i}(\hat{p})$ for all $i\in[m]$; and finally to get the last equality we used Property~\ref{p4 remove plaquette} of \cref{prop: remove plaquette from loop} together with the fact that $\Bar{p}_1\neq (\hat{p})^{\pm 1}$ by \cref{obs: ppqe}. As \eqref{eq: k equal h} contradicts \eqref{eq:K-not-h}, there must be some $j\in [m]$ such that $K_j(\hat{p})-K_j\big(\hat{p}^{-1}\big) \neq h_{\ell_j}(\hat{p})$. 

    \medskip
    
    The four proofs above finish the entire proof of \cref{claim: K1 claim}, and so of \cref{prop: non-zero plaquette assignments one}.
\end{proof}

Next we show that the second condition of the canonical collection of plaquette assignments is necessary to have a non-zero surface sum. 

\begin{prop}[\textsc{necessity of the second condition}]\label{prop: non-zero plaquette assignments two}
    Let $\ell$ be any non-trivial loop and $\cK_{\ell}^2$ denote the collection of finite plaquette assignments such that
    \[\text{if $p,q\in \cP^+$ are in the same plaquette-region of $\ell$, then $K(p)=K(q)$}.\] 
    If $K\notin \cK_{\ell}^2$, then $\phi^K(\ell)=0$.
\end{prop}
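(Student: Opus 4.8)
The strategy mirrors the proof of \cref{prop: non-zero plaquette assignments one}: we shrink the loop $\ell$ along tree-paths of plaquettes of a carefully chosen spanning pair of trees, at each macro step preserving the ``bad'' condition (here: the plaquette assignment is not constant on some plaquette-region of the evolving loop), until the loop has been reduced to a loop winding around one or two adjacent plaquettes, which are the base cases already handled. First, by \cref{lemma: backtrack cancellations} we may assume $\ell$ is a non-backtrack loop, and if $(\ell,K)$ is not balanced the statement is trivial, so we assume $(\ell,K)$ is balanced, $\ell$ has no backtracks, and $K\notin\cK_\ell^2$. The last condition means there exist two adjacent plaquettes $\hat p,\hat q\in\cP^+$ lying in the same plaquette-region of $\ell$ with $K(\hat p)\neq K(\hat q)$ (adjacency can be arranged since a plaquette-region is connected through adjacent plaquettes, and if $K$ is non-constant on a region it must be non-constant across some adjacent pair inside it). Note $\hat p,\hat q$ in the same region forces $n_{\mathfrak e(\hat p,\hat q)}(\ell)=0$, hence $h_\ell(\hat p)=h_\ell(\hat q)$.

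Next I would pick $\hat{\mathfrak p}=\{p_i\}_{i=1}^\infty$ to be an infinite path of plaquettes with $p_1=\hat p$, $p_2=\hat q$, and straight outside some box $B_M(\hat p)$, and form $\spthat=\spt_{\hat p,\hat q,\hat M}(\hat{\mathfrak p})$, a spanning pair of trees avoiding $\hat p$ and $\hat q$ generated from $\hat{\mathfrak p}$ (of size $\hat M$ large enough to accommodate any wiggles of $\hat{\mathfrak p}$ inside the box). The key tool will be a claim analogous to \cref{claim: K1 claim}: if $(\bar\ell,\bar K)$ is balanced, non-backtracking, has all edges in $E(\spthat,\ell)$, has $n^\times_{\hat p,\hat q}(\spthat,\bar\ell)\geq 1$, and satisfies $\bar K(\hat p)\neq\bar K(\hat q)$ with $h_{\bar\ell}(\hat p)=h_{\bar\ell}(\hat q)$ (equivalently $n_{\mathfrak e(\hat p,\hat q)}(\bar\ell)=0$, which is preserved since $\mathfrak e(\hat p,\hat q)\in E^\times(\spthat)$ is never among the edges $e_1,e_2,e_3$ pushed onto when removing an edge not in $\cE(\{\hat p,\hat q\})$), then via \cref{lem: exists an outer edge} there is an outermost edge $\bar e\notin\cE(\{\hat p,\hat q\})$ of $\bar\ell$ and a path $\bar{\mathfrak p}$ connecting $\bar e$ to infinity such that each leaf $(s,\multK)=\{(\ell_1,K_1),\dots,(\ell_m,K_m)\}$ of $\cT_{\bar e,\bar{\mathfrak p}}(\bar\ell,\bar K)$ has some $j$ with $(\ell_j,K_j)$ balanced, non-backtracking, all edges in $E(\spthat,\ell)$, $n^\times_{\hat p,\hat q}(\spthat,\ell_j)<n^\times_{\hat p,\hat q}(\spthat,\bar\ell)$, and $K_j(\hat p)\neq K_j(\hat q)$. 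The first three of these follow exactly as in \cref{prop: non-zero plaquette assignments one} (using Properties~\ref{p2 remove plaquette} and \ref{p5 remove plaquette} of \cref{prop: remove plaquette from loop}, \cref{lem: crossed edges subset}, \cref{obs: edges in dual tree}, and \cref{obs: ppqe} to check $\hat p,\hat q\notin\bar{\mathfrak p}$); for the fourth, Property~\ref{p2 remove plaquette} of \cref{prop: remove plaquette from loop} gives $K(\hat p)=\multK(\hat p)=\sum_i K_i(\hat p)$ and similarly for $\hat q$ (since $\hat p,\hat q\notin\bar{\mathfrak p}$), so if $K_i(\hat p)=K_i(\hat q)$ for all $i$ then $K(\hat p)=K(\hat q)$, a contradiction, forcing some $j$ with $K_j(\hat p)\neq K_j(\hat q)$.

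Iterating this claim reduces the problem to showing $\phi^{\tilde K}(\tilde\ell)=0$ for $(\tilde\ell,\tilde K)$ balanced, non-backtracking, all edges in $E(\spthat,\ell)$, $n^\times_{\hat p,\hat q}(\spthat,\tilde\ell)=0$, and $\tilde K(\hat p)\neq\tilde K(\hat q)$. As in \cref{prop: non-zero plaquette assignments one}, $n^\times_{\hat p,\hat q}(\spthat,\tilde\ell)=0$ together with all edges in $E(\spthat,\ell)$ forces $\tilde\ell$ to be built solely from edges in $\sptdhat\cup\cE(\{\hat p,\hat q\})$, so by \cref{obs: edges in dual tree} and \cref{obs: tree equals null-2}, the non-backtrack loop $\tilde\ell$ is either the null-loop, a loop winding around $\hat p$, a loop winding around $\hat q$, or a loop winding around the rectangle formed by $\hat p\cup\hat q$. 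If $\tilde\ell$ is the null-loop then $\tilde K\neq 0$ somewhere (else $\tilde K$ would be constant, in particular $\tilde K(\hat p)=\tilde K(\hat q)$), so $\phi^{\tilde K}(\tilde\ell)=0$ by Property~\ref{def: phi k empty = 0}. If $\tilde\ell$ winds around $\hat p$ alone (or $\hat q$ alone), then $\hat q$ (resp.\ $\hat p$) lies outside the support of $\tilde\ell$, so its region-value under any $K'\in\cK_{\tilde\ell}$ would have to match the exterior region-value; but more directly: since $\tilde K(\hat q)\neq\tilde K(\hat p)$ and at least one of $\hat p,\hat q$ is not in $\{p^{\pm1}\}$ for $p$ the plaquette $\tilde\ell$ winds around, $\tilde K\notin\cK_{\tilde\ell}$ and we conclude by \cref{lem: size one main thm} (\cref{prop: phi^K(p) = 0 for K supported outside of p} handles the case where $\tilde K$ is nonzero off $\{p,p^{-1}\}$). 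Finally, if $\tilde\ell$ winds around the rectangle $\hat p\cup\hat q$, then $\tilde K(\hat p)\neq\tilde K(\hat q)$ is exactly the hypothesis of \cref{lem: size two loops}, giving $\phi^{\tilde K}(\tilde\ell)=0$. This completes the reduction and hence the proof.

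\textbf{Main obstacle.} The technical heart, as in \cref{prop: non-zero plaquette assignments one}, is verifying that the ``number of edges to remove'' $n^\times_{\hat p,\hat q}(\spthat,\cdot)$ strictly decreases at each macro step — i.e.\ that $\bar e\notin E^\times(\spthat,\ell_j)$ for all leaves — which requires the delicate argument (via \cref{lem: exists an outer edge}, \cref{lem: crossed edges subset}, and the outermost-edge property) that pushing $\bar e$ onto $\bar e_1,\bar e_2,\bar e_3$ cannot reintroduce $\bar e$ through any tree-path. The genuinely new wrinkle for this proposition is ensuring $\hat p$ and $\hat q$ remain adjacent and in the same region throughout, i.e.\ that $n_{\mathfrak e(\hat p,\hat q)}(\ell_j)$ stays zero; this is where the choice $p_1=\hat p$, $p_2=\hat q$ in $\hat{\mathfrak p}$ and the ``avoiding $\hat p$ and $\hat q$'' structure of $\spthat$ (so $\mathfrak e(\hat p,\hat q)$ is never a pushed-onto edge $e_i$ by \cref{obs: ppqe} and \cref{obs: edges in dual tree}) is essential, and must be checked carefully against Property~\ref{p5 remove plaquette} of \cref{prop: remove plaquette from loop}.
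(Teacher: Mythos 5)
Your proposal follows the paper's proof of this proposition essentially step for step: reduce to adjacent $\hat p,\hat q$ in the same region with $K(\hat p)\neq K(\hat q)$ and $n_{\mathfrak{e}(\hat p,\hat q)}(\ell)=0$, build $\spt_{\hat p,\hat q,\hat M}(\hat{\mathfrak{p}})$ with $p_1=\hat p$, $p_2=\hat q$, and run the same shrinking claim with the two extra invariants ($n_{\mathfrak{e}(\hat p,\hat q)}=0$ and $K(\hat p)\neq K(\hat q)$), terminating in the null-loop or the rectangle loop handled by \cref{lem: size two loops}. One small correction: in your final reduction you drop the invariant $n_{\mathfrak{e}(\hat p,\hat q)}(\tilde{\ell})=0$ (which your own claim preserves) and then treat the cases where $\tilde{\ell}$ winds around $\hat p$ or $\hat q$ alone; those cases cannot occur precisely because that invariant forbids $\tilde{\ell}$ from using the shared edge, and the ad hoc argument you give for them is not actually valid (e.g.\ if $\tilde{\ell}=\hat p^{\,n}$ with negative height then $K_{\tilde{\ell}}(\hat p)=n\neq 0=K_{\tilde{\ell}}(\hat q)$, so $\tilde K(\hat p)\neq\tilde K(\hat q)$ alone does not force $\tilde K\notin\cK_{\tilde{\ell}}$) — so keep the invariant through to the end, as the paper does.
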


\begin{proof}
    First, \cref{lemma: backtrack cancellations} allows us to assume that $\ell$ has no backtracks. Now, if $\ell$ does not have any plaquette-region with at least two plaquettes, then every plaquette assignment is trivially in $\cK_{\ell}^2$. Thus, we assume that $\ell$ has at least one plaquette-region with at least two plaquettes. Further, if $(\ell,K)$ is not balanced, the result is trivial. Thus, we consider $K$ such that
    \begin{equation}\label{eq: K notin K2}
        K\notin K_{\ell}^2\quad \text{and}\quad (\ell,K)\text{ is balanced.}
    \end{equation}
    Since $K\notin  \cK_{\ell}^2$, there exists $\hat{p},\hat{q}\in \cP^+$ in the same plaquette-region of $\ell$ such that \begin{equation}\label{eq: Kp neq Kq}
        K(\hat{p})\neq K(\hat{q}).
    \end{equation}
    Further, by the definition of plaquette-regions, we can assume that $\hat{p}$ and $\hat{q}$ are adjacent and the edge shared by them is not in $\ell$. That is, that
    \begin{equation}\label{eq: ehp hq =0}
        n_{\mathfrak{e}(\hat{p},\hat{q})}(\ell)=0.
    \end{equation}

    Now, let $\hat{\mathfrak{p}}$ be the straight path of plaquettes starting at $\hat{p}$ such that $\hat{q}$ is the second plaquette in $\hat{\mathfrak{p}}$. As $\hat{\mathfrak{p}}$ is just a straight line, we can construct a spanning pair of trees $\spthat=\spt_{\hat{p},\hat{q},\hat{M}}(\hat{\mathfrak{p}})$ avoiding $\hat{p}$ and $\hat{q}$ from $\hat{\mathfrak{p}}$ of size (say) $\hat{M}= 1$. Note that if $n^{\times}_{\hat{p},\hat{q}}(\spthat,\ell)=0$, we have the desired result. This is because if $n^{\times}_{\hat{p},\hat{q}}(\spthat,\ell)=0$, then all the edges of $\ell$ are in $E^{\smallsetminus}(\spthat)\cup\mcl E(\{p,q\})\setminus\{\mathfrak{e}(\hat{p},\hat{q})\}$, thus the fact that  $\ell$ is a non-backtrack loop, \cref{obs: edges in dual tree}, and \cref{obs: tree equals null-2} guarantees that the loop $\ell$ must be either the null-loop or a loop winding around the rectangle formed by the union of $\hat{p}$ and $\hat{q}$ (in one of the two possible directions). Since we assumed that $\ell$ was non-trivial we know it is not the null-loop and thus we must get that we are in the latter case above. But in this case, \cref{lem: size two loops} gives the desired result. Thus, we can assume that
    \begin{equation}\label{eq: more than one edge to remove K1-2}
        n^{\times}_{\hat{p},\hat{q}}(\spthat,\ell)\geq 1.
    \end{equation}

     The following claim will be our key tool to shrink the loop to a smaller loop we know how to deal with. It allows us to iteratively shrink $\ell$, preserving the condition in \eqref{eq: Kp neq Kq} and the fact that $\mathfrak{e}(\hat{p},\hat{q})$ is not in the loop, until the shrunken loop just consists of edges in $E^{\smallsetminus}(\spthat) \cup \cE(\{\hat{p},\hat{q}\})$. This will imply that the shrunken loop, after removing backtracks, will only be supported on the boundary of the rectangle formed by $\hat{p}$ and $\hat{q}$.

    \begin{claim}\label{claim: K2 claim}
         Let $\ell$, $\hat{p}$, $\hat{q}$, $\hat{\mathfrak{p}}$, and $\spthat$ be as above. Suppose that $(\Bar{\ell},\Bar{K})$ is a loop plaquette assignment such that:
        \begin{enumerate}[(i)]
            \item $(\Bar{\ell},\Bar{K})$ is balanced and does not contain backtracks;
            \item all the edges of $\Bar{\ell}$ are contained in $E(\spthat, \ell)$; 
            \item $n^{\times}_{\hat{p},\hat{q}}(\spthat, \Bar{\ell})\geq 1$;
            \item $n_{\mathfrak{e}(\hat{p},\hat{q})}(\Bar{\ell})=0$;
            \item $\Bar{K}(\hat{p})\neq \Bar{K}(\hat{q})$.
        \end{enumerate} 
        There exists an edge $\Bar{e}$ of $\Bar{\ell}$ and a path of plaquettes $\Bar{\mathfrak{p}}$ connecting $\Bar{e}$ to infinity such that the trajectory tree $\cT_{\Bar{e},\Bar{\mathfrak{p}}}(\Bar{\ell},\Bar{K})$ generated from $(\Bar{\ell},\Bar{K},\Bar{e},\Bar{\mathfrak{p}})$ from \cref{defn:tr-tree-gen} has the following property: for all leaves $(s,\multK)=\{(\ell_1,K_2),\dots,(\ell_m,K_m)\}$ of $\cT_{\Bar{e},\Bar{\mathfrak{p}}}(\Bar{\ell},\Bar{K})$, there exists a $j\in [m]$ such that:
        \begin{enumerate}[(i)]
            \item $(\ell_j,K_j)$ is balanced and does not contain backtracks; 
            \item all the edges of $\ell_j$ are contained in $E(\spthat, \ell)$;  
            \item $n^{\times}_{\hat{p},\hat{q}}(\spthat, \ell_j)<n^{\times}_{\hat{p},\hat{q}}(\spthat, \Bar{\ell})$;
            \item $n_{\mathfrak{e}(\hat{p},\hat{q})}(\ell_j)=0$;
            \item $K_j(\hat{p})\neq K_j(\hat{q})$.
        \end{enumerate}
    \end{claim}

    First, assuming the validity of \cref{claim: K2 claim}, we complete the proof of the proposition. As before, by successive applications of \cref{claim: K2 claim}, we obtain that to prove that $\phi^K(\ell)=0$, it suffices to verify that $\phi^{\Tilde{K}}(\Tilde{\ell}) = 0$ for all loop plaquette assignments $(\Tilde{\ell}, \Tilde{K})$ such that 
    \begin{enumerate}[(i)]
         \item $(\Tilde{\ell}, \Tilde{K})$ is balanced and does not contain backtracks;
         \item all the edges of $\Tilde{\ell}$ are in $E(\spthat, \ell)$;
         \item $n^{\times}_{\hat{p},\hat{q}}(\spthat, \Tilde{\ell}) = 0$; 
         \item $n_{\mathfrak{e}(\hat{p},\hat{q})}(\Tilde{\ell})=0$;
         \item $\Tilde{K}(\hat{p})\neq \Tilde{K}(\hat{q})$.
    \end{enumerate}

    Now, assuming the latter five assumptions $(i)$--$(v)$, the facts that all edges of $\Tilde{\ell}$ are in $E(\spthat, \ell)$, that $n_{\mathfrak{e}(\hat{p},\hat{q})}(\Tilde{\ell})=0$, and that  $n^{\times}_{\hat{p},\hat{q}}(\spthat, \Tilde{\ell}) = 0$ imply that $\Tilde{\ell}$ consists only of edges in $E^{\smallsetminus}(\spthat) \cup \cE(\{\hat{p},\hat{q}\})$ excluding the two oriented edges corresponding to $\mathfrak{e}(\hat{p}, \hat{q})$. Thus, since  $\Tilde{\ell}$ is a non-backtrack loop, \cref{obs: edges in dual tree} and \cref{obs: tree equals null-2} guaranty that the loop ${\tilde{\ell}}$ must be either the null-loop or the loop winding around the rectangle formed by the union of $\hat{p}$ and $\hat{q}$ (in one of the two possible directions).
    
    Now, if $\Tilde{\ell}$ is the loop winding around  the rectangle formed by the union of $\hat{p}$ and $\hat{q}$, then \cref{lem: size two loops} gives us that $\phi^{\Tilde{K}}(\Tilde{\ell}) = 0$. If $\Tilde{\ell}$ is the null-loop, then the condition $\Tilde{K}\notin \cK_{\Tilde{\ell}}^2$ implies that $\Tilde{K}\neq 0$ on at least one plaquette (otherwise $\Tilde{K}$ would belong to $\cK_{\Tilde{\ell}}^2$) and so $\phi^{\Tilde{K}}(\Tilde{\ell})=0$ again.

    Therefore, under the assumption of the claim, we conclude that $\phi^K(\ell) = 0$, as desired.

    \medskip

    Now, we prove \cref{claim: K2 claim}. Suppose that $(\Bar{\ell},\Bar{K})$ satisfies all the assumptions of the claim. Then, the first three items of \cref{lem: exists an outer edge} give us that there is an edge $\Bar{e}$ of $\Bar{\ell}$ such that $\Bar{e}$ is in $E^{\times}(\spthat)$ but not in $\cE(\{\hat{p},\hat{q}\})$, and $\Bar{e}$ is the outermost edge of every tree-path $\mathfrak{q}\in\mathfrak{P}_{\spthat}(\Bar{e})$. Further, Items~\ref{item:3a}~and~\ref{item:3b} of \cref{lem: exists an outer edge} give us a path of plaquettes $\Bar{\mathfrak{p}} = \{\Bar{p}_i\}_{i=1}^{\infty}$ connecting $e$ to infinity.
    
    Now, we show that the conclusion of the claim holds for this choice of $\Bar{e}$ and $\Bar{\mathfrak{p}}$. Let 
    $$(s,\multK)=\left\{(\ell_1,K_1),\dots,(\ell_m,K_m)\right\}$$ 
    be a leaf on the trajectory tree generated from $(\Bar{\ell},\Bar{K},\Bar{e},\Bar{\mathfrak{p}})$. The first three properties in the claim follow exactly with the same proof used for \cref{claim: K1 claim}; hence we omit the details.

    Next, since $\Bar{p}_1\notin\{\hat{p}^{\pm1},\hat{q}^{\pm 1}\}$ by \cref{obs: ppqe}, we get from Property~\ref{p5 remove plaquette} of \cref{prop: remove plaquette from loop} that
    \begin{equation}\label{eq:K2cond4-ok}
        \text{$n_{\mathfrak{e}(\hat{p},\hat{q})}(\ell_i)=0$ for all $i\in [m]$}.
    \end{equation}
    Laslty, we claim that there must be some $j\in [m]$ such that $K_j(\hat{p})\neq K_j(\hat{q})$. Indeed, if all $K_i$ were such that $K_i(\hat{p}) = K_i(\hat{q})$, we get that
    \begin{equation}
        \begin{split}\label{eq: kp-kq=0}
        \Bar{K}(\hat{p})-\Bar{K}(\hat{q}) = \multK(\hat{p})-\multK(\hat{q}) = \sum_{i=1}^m \left( K_i(\hat{p})-K_i(\hat{q})\right) = 0,
        \end{split}
    \end{equation}
    where, to get the first equality we used that  $\hat{p}^{\pm 1}, \hat{q}^{\pm 1}\notin \Bar{\mathfrak{p}}$ by \cref{obs: ppqe} and Property~\ref{p2 remove plaquette} of \cref{prop: remove plaquette from loop}; and to get the last equality we used our assumption that $K_i(\hat{p}) = K_i(\hat{q})$ for all $i\in [m]$. As \eqref{eq: kp-kq=0} contradicts the assumption that $\Bar{K}(\hat{p})\neq \Bar{K}(\hat{q})$, there must be some $j\in [m]$ such that $K_j(\hat{p})\neq K_j(\hat{q})$. This finishes the proof of the claim. 
    \end{proof}

Finally, we show that the third condition of the canonical collection of plaquette assignments is necessary to have a non-zero surface sum. Recall the definition of distance $d_{\ell}(\cdot)$ for a loop $\ell$ from \cref{def: lattice distance}.

\begin{prop}[\textsc{necessity of the third condition}]\label{prop: non-zero plaquette assignments three}
    Let $\ell$ be any non-trivial loop and $\cK_{\ell}^3$ the set of plaquette assignments such that for all $p\in \cP^+$,
    \begin{align*}
        K(p) +K(p^{-1})\leq d_{\ell}(p).
    \end{align*}
    If $K\notin \cK_{\ell}^3$, then $\phi^K(\ell)=0$.
\end{prop}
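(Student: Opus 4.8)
The plan is to mirror the structure of the proofs of Propositions \ref{prop: non-zero plaquette assignments one} and \ref{prop: non-zero plaquette assignments two}, using the shrinking procedure along a spanning pair of trees to reduce to the ``base cases'' of loops winding around one plaquette (Proposition \ref{lem: size one main thm}) or the null-loop. Suppose $K \notin \cK_{\ell}^3$, and by Lemma \ref{lemma: backtrack cancellations} assume $\ell$ has no backtracks; if $(\ell,K)$ is not balanced the claim is trivial, so assume it is balanced. Since $K \notin \cK_{\ell}^3$, there is a plaquette $\hat p \in \cP^+$ with
\[
    K(\hat p) + K(\hat p^{-1}) > d_{\ell}(\hat p).
\]
The key idea is to exploit that the distance $d_{\ell}(\hat p)$ is \emph{realized} by a distance-achieving path of plaquettes (Definition \ref{def: lattice distance}): let $\hat{\mathfrak p} = \{p_i\}_{i=1}^\infty$ be such a path starting at $\hat p$, so $\sum_{i=1}^\infty n_{\mathfrak e(p_i,p_{i+1})}(\ell) = d_{\ell}(\hat p)$. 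Unlike in the previous two propositions, we cannot take $\hat{\mathfrak p}$ to be a straight line; we must work with this particular path (which, outside a large box $B_M(\hat p)$, we may arrange to be straight, by choosing $M$ large enough that $\supp(\ell) \subseteq B_M(\hat p)$, since for plaquettes far from $\ell$ the distance is eventually achieved by a straight ray). Let $\hat q = p_2$ and build a spanning pair of trees $\spthat = \spt_{\hat p, \hat q, \hat M}(\hat{\mathfrak p})$ avoiding $\hat p$ and $\hat q$, generated from $\hat{\mathfrak p}$.

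Next I would formulate and prove a shrinking claim, analogous to Claim \ref{claim: K1 claim}/Claim \ref{claim: K2 claim}: there is an edge $\bar e$ of $\bar\ell$ and a path $\bar{\mathfrak p}$ connecting $\bar e$ to infinity such that every leaf $(s,\multK) = \{(\ell_1,K_1),\dots,(\ell_m,K_m)\}$ of $\cT_{\bar e,\bar{\mathfrak p}}(\bar\ell,\bar K)$ has some component $(\ell_j,K_j)$ with: $(\ell_j,K_j)$ balanced and backtrack-free; all edges of $\ell_j$ in $E(\spthat,\ell)$; $n^{\times}_{\hat p}(\spthat,\ell_j) < n^{\times}_{\hat p}(\spthat,\bar\ell)$; and, crucially, $K_j(\hat p) + K_j(\hat p^{-1}) > d_{\ell_j}(\hat p)$. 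The first three conditions follow verbatim from the arguments in Claim \ref{claim: K1 claim} using Proposition \ref{prop: remove plaquette from loop}. For the fourth, one uses Property \ref{p2 remove plaquette} of Proposition \ref{prop: remove plaquette from loop} ($\multK(\hat p) = K(\hat p)$, $\multK(\hat p^{-1}) = K(\hat p^{-1})$, since $\hat p^{\pm 1} \notin \bar{\mathfrak p}$ by Observation \ref{obs: ppqe}), so that if every $K_i$ satisfied $K_i(\hat p) + K_i(\hat p^{-1}) \le d_{\ell_i}(\hat p)$, then summing would give
\[
    K(\hat p) + K(\hat p^{-1}) = \sum_{i=1}^m \big(K_i(\hat p) + K_i(\hat p^{-1})\big) \le \sum_{i=1}^m d_{\ell_i}(\hat p).
\]
So the heart of the matter is to show $\sum_{i=1}^m d_{\ell_i}(\hat p) \le d_{\bar\ell}(\hat p)$, i.e.\ that the distance of $\hat p$ is \emph{superadditive-in-reverse} under the shrinking operation — shrinking does not increase the total distance. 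This is where the choice of $\hat{\mathfrak p}$ as a distance-achieving path, and the fact that the spanning trees are built around it, should be used: the edges to remove all lie along tree-paths, and Property \ref{p5 remove plaquette} (edges are only ever ``pushed'' onto the other edges of $p_1$, not created elsewhere) guarantees $n_{\mathfrak e}(s) \le n_{\mathfrak e}(\bar\ell)$ for unoriented edges $\mathfrak e$ away from $p_1$, hence $\sum_i \sum_k n_{\mathfrak e(p_k, p_{k+1})}(\ell_i) \le \sum_k n_{\mathfrak e(p_k,p_{k+1})}(\bar\ell)$ along the tail of $\hat{\mathfrak p}$, and the distance-achieving property converts this edge count back into a distance bound. (Here the ``avoiding $\hat p$ and $\hat q$'' property of the spanning trees matters, exactly as flagged in the footnote before Observation \ref{obs: edges in dual tree}, to control what happens near $p_1$.)

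Granting the claim, iterate it: repeated application reduces proving $\phi^K(\ell) = 0$ to proving $\phi^{\tilde K}(\tilde\ell) = 0$ for all $(\tilde\ell,\tilde K)$ with $(\tilde\ell,\tilde K)$ balanced, backtrack-free, all edges of $\tilde\ell$ in $E(\spthat,\ell)$, $n^{\times}_{\hat p}(\spthat,\tilde\ell) = 0$, and $\tilde K(\hat p) + \tilde K(\hat p^{-1}) > d_{\tilde\ell}(\hat p)$. As in the proof of Proposition \ref{prop: non-zero plaquette assignments one}, $n^{\times}_{\hat p}(\spthat,\tilde\ell) = 0$ together with all edges lying in $E(\spthat,\ell)$ forces $\tilde\ell$ to be built solely from edges of $\sptdhat \cup \cE(\hat p)$, so by Observations \ref{obs: edges in dual tree} and \ref{obs: tree equals null-2}, $\tilde\ell$ is either the null-loop or a loop winding around $\hat p$. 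If $\tilde\ell$ is the null-loop, then $d_{\tilde\ell}(\hat p) = 0$, so $\tilde K(\hat p) + \tilde K(\hat p^{-1}) > 0$ means $\tilde K \ne 0$, hence $\phi^{\tilde K}(\emptyset) = 0$ by Property \ref{def: phi k empty = 0}. If $\tilde\ell = \hat p^{\,n}$ winds around $\hat p$, then $d_{\tilde\ell}(\hat p) = |h_{\tilde\ell}(\hat p)| = n$, and $\tilde K(\hat p) + \tilde K(\hat p^{-1}) > n$ forces $\tilde K \ne K_{\tilde\ell}$ (since $K_{\tilde\ell}(\hat p) + K_{\tilde\ell}(\hat p^{-1}) = n$), hence $\tilde K \notin \cK_{\tilde\ell} = \{K_{\tilde\ell}\}$, and Proposition \ref{lem: size one main thm} gives $\phi^{\tilde K}(\tilde\ell) = 0$. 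This closes the proof. I expect the main obstacle to be exactly the distance-monotonicity step: verifying rigorously that $\sum_i d_{\ell_i}(\hat p) \le d_{\bar\ell}(\hat p)$ after a macro step, which requires carefully relating the distance-achieving path $\hat{\mathfrak p}$ to the tree-paths along which edges are pushed, and handling the (bounded) perturbation of edge counts near $p_1$ coming from Property \ref{p6 remove plaquette} — one has to check these extra copies of $e_1, e_2, e_3$ are all eventually erased (as backtracks) before the exploration terminates, so they do not spuriously inflate the distance of the final loops.
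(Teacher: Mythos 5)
Your overall architecture coincides with the paper's: the same choice of $\hat p$, the same distance-achieving path $\hat{\mathfrak p}$ made straight outside a large box $B_{\hat M}(\hat p)$, the same spanning pair of trees $\spt_{\hat p,\hat q,\hat M}(\hat{\mathfrak p})$, and the same reduction to the one-plaquette and null-loop base cases. The gap is in the invariant you propose to carry through the induction. You maintain $K_j(\hat p)+K_j(\hat p^{-1})>d_{\ell_j}(\hat p)$ and correctly identify that this requires $\sum_{i} d_{\ell_i}(\hat p)\le d_{\bar\ell}(\hat p)$ at every macro step --- but your sketch of that inequality only works at the \emph{first} step. You bound $\sum_i d_{\ell_i}(\hat p)$ by the total edge count of the $\ell_i$ along the fixed path $\hat{\mathfrak p}$ and compare it with the edge count of $\bar\ell$ along $\hat{\mathfrak p}$; to convert the latter back into $d_{\bar\ell}(\hat p)$ you need $\hat{\mathfrak p}$ to be distance-achieving \emph{for $\bar\ell$}, which is guaranteed only when $\bar\ell=\ell$. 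After one macro step the minimum in \cref{def: lattice distance} may be attained along an entirely different path, and along such a path the counts can genuinely increase (Property~\ref{p6 remove plaquette} permits up to $n_e(\bar\ell)+n_{e^{-1}}(\bar\ell)$ new copies of $e_1,e_2,e_3$, which are offset only by the disappearance of $e$ itself --- an edge that the competing path need not cross). So the distance-monotonicity step you flag as ``the main obstacle'' is not merely unproven; as a vehicle for propagating your invariant it is the wrong statement to aim for.

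The paper's resolution is precisely to stop tracking the distance: the fourth item of \cref{claim: K3 claim} is $K_j(\hat p)+K_j(\hat p^{-1})>\sum_{i=1}^{\infty}n_{\mathfrak e(\hat p_i,\hat p_{i+1})}(\ell_j)$, the edge count of $\ell_j$ along the \emph{original fixed path} $\hat{\mathfrak p}$. This quantity always dominates $d_{\ell_j}(\hat p)$ (the distance is a minimum over paths), it equals $d_{\ell}(\hat p)$ at the start exactly because $\hat{\mathfrak p}$ is distance-achieving for $\ell$, and its propagation is the edge-count computation you already wrote down: Properties~\ref{p5 remove plaquette}~and~\ref{p6 remove plaquette} of \cref{prop: remove plaquette from loop}, split into two cases according to whether $\bar e\in\cI(\hat{\mathfrak p})$ (then $\hat{\mathfrak p}$ crosses $e$ and one of $e_1,e_2,e_3$ at consecutive steps, so the gain at $e_i$ is cancelled by the loss of all copies of $e$) or not (then $\bar p_1\notin\hat{\mathfrak p}$ and Property~\ref{p5 remove plaquette} alone suffices). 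Your argument is therefore salvageable essentially verbatim: replace the distance by the fixed-path edge count in the induction hypothesis, and only convert back to an inequality against $d_{\tilde\ell}(\hat p)$ at the very end, in the base case, where your treatment is already correct.
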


\begin{proof}
    First, \cref{lemma: backtrack cancellations} allows us to assume that $\ell$ has no backtracks.
    Hence, if $\ell$ has no backtracks and is supported on a single unoriented plaquette, we have the desired result by \cref{lem: size one main thm}. 
    
    Further, if $(\ell,K)$ is not balanced, then the result is trivial. Thus, we consider $(\ell,K)$ such that
    \begin{equation}\label{eq: K notin K3}
        \text{$\ell$ has no backtracks}, \qquad \area(\ell)\geq 2,\qquad K\notin \cK_{\ell}^3,\qquad \text{and} \qquad(\ell,K)  \text{ is balanced}.
    \end{equation}
    Since $K\notin K_{\ell}^3$, there exists some plaquette $\hat{p}\in \cP^+$ such that
    \begin{equation}\label{eq: K bigger d}
    K(\hat{p})+K(\hat{p}^{-1}) >d_{\ell}(\hat{p}).
    \end{equation}
    Let $\hat{M}\in \NN$ be any integer such that all the edges of $\ell$ are in $\cE(B_{\hat{M}}(\hat{p}))$. Also, let $\hat{\mathfrak{p}}=\{\hat{p}_i\}_{i=1}^{\infty}$ denote a distance achieving path for $d_\ell(\hat{p})$ such that $\hat{\mathfrak{p}}$ is a straight line outside of $B_{\hat{M}}(\hat{p})$ (note that such a distance achieving path exists). Let $\hat{q}$ denote the second plaquette in $\hat{\mathfrak{p}}$. With this, let $\spthat=\spt_{\hat{p},\hat{q},\hat{M}}(\hat{\mathfrak{p}})$ be the spanning pair of trees avoiding $\hat{p}$ and $\hat{q}$ from $\hat{\mathfrak{p}}$ of size $\hat{M}$. Since $\hat{\mathfrak{p}}$ is a distance achieving path for $d_\ell(\hat{p})$, we have by defnition that
    \begin{equation}\label{eq: K big sum n}
        K(\hat{p})+K(\hat{p}^{-1})>\sum_{i=1}^{\infty}n_{\mathfrak{e}(\hat{p}_i,\hat{p}_{i+1})}(\ell).
    \end{equation}
    Note that, for any loop plaquette assignment $(\Tilde{\ell}, \Tilde{K})$, knowing that $\Tilde{K}(\hat{p})+\Tilde{K}(\hat{p}^{-1})>\sum_{i=1}^{\infty}n_{\mathfrak{e}(\hat{p}_i,\hat{p}_{i+1})}(\Tilde{\ell})$ implies that $\Tilde{K}(\hat{p})+\Tilde{K}(\hat{p}^{-1}) >d_{\Tilde{\ell}}(\hat{p})$. Also, note that since $\area(\ell)\geq 2$, we must have that\begin{equation}\label{eq: more than one edge to remove K3}
        n^{\times}_{\hat{p}}(\spthat,\ell)\geq 1.
    \end{equation}
    
    The following claim will be our key tool to shrink the loop $\ell$ to a smaller loop we know how to deal with. It tells us that we can iteratively shrink the loop $\ell$ until it just consists of edges in $E^{\smallsetminus}(\spthat)\cup \cE(\hat{p})$. Thus, after removing backtracks, the shrunken loop can only be supported on $\hat{p}$. Moreover, while doing this shrinking, we will preserve the condition in \eqref{eq: K big sum n} (and so also in  \eqref{eq: K bigger d} thanks to the comment in the previous paragraph).

    \begin{claim}\label{claim: K3 claim}
        Let $\ell$, $\hat{p}$, $\hat{\mathfrak{p}}$, and $\spthat$ be as above. Suppose that $(\Bar{\ell},\Bar{K})$ is a loop plaquette assignment such that:
        \begin{enumerate}[(i)]
            \item $(\Bar{\ell},\Bar{K})$ is balanced and does not contain backtracks;
            \item all the edges of $\Bar{\ell}$ are contained in $E(\spthat, \ell)$;
            \item $n^{\times}_{\hat{p}}(\spthat, \ell)\geq 1$;
            \item $\Bar{K}(\hat{p})+\Bar{K}(\hat{p}^{-1})>\sum_{i=1}^{\infty}n_{\mathfrak{e}(\hat{p}_i,\hat{p}_{i+1})}(\Bar{\ell})$.
        \end{enumerate} 
        There exists an edge $\Bar{e}$ of $\Bar{\ell}$ and a path of plaquettes $\Bar{\mathfrak{p}}$ connecting $\Bar{e}$ to infinity such that the trajectory tree $\cT_{\Bar{e},\Bar{\mathfrak{p}}}(\Bar{\ell},\Bar{K})$ generated from $(\Bar{\ell},\Bar{K},\Bar{e},\Bar{\mathfrak{p}})$ from \cref{defn:tr-tree-gen} has the following property: For all leaves $(s,\multK)=\{(\ell_1,K_2),\dots,(\ell_m,K_m)\}$ of $\cT_{\Bar{e},\Bar{\mathfrak{p}}}(\Bar{\ell},\Bar{K})$, there exists $j\in [m]$ such that:
        \begin{enumerate}[(i)]
            \item $(\ell_j,K_j)$ is balanced and does not contain backtracks; 
            \item all the edges of $\ell_j$ are contained in $E(\spthat, \ell)$;
            \item $n^{\times}_{\hat{p}}(\spthat, \ell_j)<n^{\times}_{\hat{p}}(\spthat, \Bar{\ell})$;
            \item $K_j(\hat{p})+K_j(\hat{p}^{-1})>\sum_{i=1}^{\infty}n_{\mathfrak{e}(\hat{p}_i,\hat{p}_{i+1})}(\ell_j)$.
        \end{enumerate}
    \end{claim}

    First, assuming the validity of \cref{claim: K3 claim} we finish the proof of the proposition. 
    As before, by successive applications of \cref{claim: K3 claim}, we obtain that to prove that $\phi^K(\ell)=0$, it suffices to verify that $\phi^{\Tilde{K}}(\Tilde{\ell}) = 0$ for all loop plaquette assignments $(\Tilde{\ell},\Tilde{K})$ such that 
    \begin{enumerate}[(i)]
    \item $(\Tilde{\ell},\Tilde{K})$ is balanced and does not contain backtracks;
    \item all the edges of $\Tilde{\ell}$ are contained in $E(\spthat, \ell)$;
    \item$n^{\times}_{\hat{p}}(\spthat, \Tilde{\ell}) = 0$;
    \item $\Tilde{K}(\hat{p})+\Tilde{K}(\hat{p}^{-1})>\sum_{i=1}^{\infty}n_{\mathfrak{e}(\hat{p}_i,\hat{p}_{i+1})}(\Tilde{\ell})$.
    \end{enumerate}

    Now, assuming the latter four assumptions $(i)$--$(iv)$, $\Tilde{\ell}$ having all its edges in $E(\spthat, \ell)$ and $n^{\times}_{\hat{p}}(\spthat, {\tilde{\ell}}) = 0$ imply that the loop ${\tilde{\ell}}$ must be either the null-loop or a loop winding around the plaquette $\hat{p}$ (in one of the two possible directions).
    If $\tilde{\ell}$ is a loop winding around the plaquette $\hat{p}$, note that $\Tilde{K}\notin\cK_{\Tilde{\ell}}^3$ as $\Tilde{K}(\hat{p})+\Tilde{K}(\hat{p}^{-1})>\sum_{i=1}^{\infty}n_{\mathfrak{e}(\hat{p}_i,\hat{p}_{i+1})}(\Tilde{\ell})\geq d_{\Tilde{\ell}}(\hat{p})$, and thus $\phi^{\Tilde{K}}(\Tilde{\ell})=0$ by \cref{lem: size one main thm}, since $\cK_{\Tilde{\ell}}\subset\cK_{\Tilde{\ell}}^3$.
    If $\tilde{\ell}$ is the null-loop, the condition $\Tilde{K}\notin \cK_{\Tilde{\ell}}^3$ implies that $\Tilde{K}\neq 0$ on at least one plaquette. Hence, $\phi^{\Tilde{K}}(\Tilde{\ell})=0$ as usual.
    
    Therefore, assuming the validity of the claim, we conclude that $\phi^K(\ell) = 0$, as desired.

    \medskip

    Now, we prove \cref{claim: K3 claim}. Assume that $(\Bar{\ell},\Bar{K})$ satisfies all the assumptions of the claim. Then, the first three items of \cref{lem: exists an outer edge} give us that there is an edge $\Bar{e}$ of $\Bar{\ell}$ such that $\Bar{e}$ is in $E^{\times}(\spthat)$ but not in $\cE(\hat{p})$, and $\Bar{e}$ is the outermost edge of every tree-path $\mathfrak{q}\in\mathfrak{P}_{\spthat}(\Bar{e})$. Further, Items~\ref{item:3a}~and~\ref{item:3b} of \cref{lem: exists an outer edge} give us a path of plaquettes $\Bar{\mathfrak{p}} = \{\Bar{p}_i\}_{i=1}^{\infty}$ connecting $e$ to infinity.
    
    Now, we show that the conclusion of the claim holds for this choice of $\Bar{e}$ and $\Bar{\mathfrak{p}}$. Let 
    $$(s,\multK)=\left\{(\ell_1,K_1),\dots,(\ell_m,K_m)\right\}$$ 
    be a leaf on the trajectory tree generated from $(\Bar{\ell},\Bar{K},\Bar{e},\Bar{\mathfrak{p}})$.
    The first three properties in the claim follow exactly with the same proof used for \cref{claim: K1 claim}; hence we omit the details.

    Next, we show that (this will allow us to quickly conclude the proof of the claim later)
    \begin{equation}\label{eq: lj leq d l}
         \sum_{j=1}^m\sum_{i=1}^{\infty}n_{\mathfrak{e}(\hat{p}_i,\hat{p}_{i+1})}(\ell_j)\leq \sum_{i=1}^{\infty}n_{\mathfrak{e}(\hat{p}_i,\hat{p}_{i+1})}(\Bar{\ell}).
    \end{equation}
    %First, as $\ell_j$ has a finite number of edges, $\sum_{i=1}^{\infty}n_{\mathfrak{e}(\hat{p}_i,\hat{p}_{i+1})}(\ell_j)$ is a finite sum for all $j\in [m]$. Thus 
    Since all the terms are non-negative, we can interchange the summations, giving that
    \begin{equation}\label{eq: sum switch}
        \sum_{j=1}^m\sum_{i=1}^{\infty}n_{\mathfrak{e}(\hat{p}_i,\hat{p}_{i+1})}(\ell_j) = \sum_{i=1}^{\infty}\sum_{j=1}^mn_{\mathfrak{e}(\hat{p}_i,\hat{p}_{i+1})}(\ell_j)= \sum_{i=1}^{\infty}n_{\mathfrak{e}(\hat{p}_i,\hat{p}_{i+1})}(s).
    \end{equation}
    Now we split the proof into two cases. Recall the notation $\cI(\cdot)$ from the beginning of \cref{sec: removing a plaquette}.

    \medskip

    \noindent\underline{\emph{Case 1:}} We assume that $\Bar{e}\in \cI(\hat{\mathfrak{p}}\sm\{\hat{p}\})$. This is the situation of Item~\ref{item:3a} in \cref{lem: exists an outer edge}, and so let $\Bar{\mathfrak{p}} = \{\Bar{p}_i\}_{i=1}^{\infty}:=\{\hat{p}_{i+k-1}\}_{i=1}^{\infty}$. Now, writing $\Bar{p}_1=\hat{p}_{\Bar{k}} = \Bar{e}\Bar{e}_1\Bar{e}_2\Bar{e}_3$, we find that 
    \[
    \mathfrak{e}(\hat{p}_{\Bar{k}-1},\hat{p}_{\Bar{k}}) \in \{(\Bar{e}_1)^{\pm 1}, (\Bar{e}_2)^{\pm 1}, (\Bar{e}_3)^{\pm 1}\},
    \quad
    \mathfrak{e}(\hat{p}_{\Bar{k}},\hat{p}_{\Bar{k}+1})=\Bar{e}
    \quad\text{and}\quad
    \mathfrak{e}(\hat{p}_{\Bar{i}},\hat{p}_{\Bar{i}+1})\notin \cE(\Bar{p}_1), \quad\forall i\notin\{\Bar{k}-1,\Bar{k}\},
    \]
    where, for the last claim, we used the fact that our paths of plaquettes are, by assumption, simple; i.e.\ each plaquette is visited at most once.
    So, we get that
\begin{align*}
       \sum_{j=1}^m\sum_{i=1}^{\infty}n_{\mathfrak{e}(\hat{p}_i,\hat{p}_{i+1})}(\ell_j)
       &\stackrel{\eqref{eq: sum switch}}{=} \bigg(\sum_{i=1}^{\Bar{k}-2}n_{\mathfrak{e}(\hat{p}_i,\hat{p}_{i+1})}(s) \bigg) + n_{\mathfrak{e}(\hat{p}_{\Bar{k}-1},\hat{p}_{\Bar{k}})}(s) + n_{\mathfrak{e}(\hat{p}_{\Bar{k}},\hat{p}_{\Bar{k}+1})}(s) + \sum_{i=\Bar{k}+1}^{\infty}n_{\mathfrak{e}(\hat{p}_i,\hat{p}_{i+1})}(s)\\
       &\,\,\,\leq\,\,\, \bigg(\sum_{i=1}^{\Bar{k}-2}n_{\mathfrak{e}(\hat{p}_i,\hat{p}_{i+1})}(\Bar{\ell}) \bigg) + \left(n_{\mathfrak{e}(\hat{p}_{\Bar{k}-1},\hat{p}_{\Bar{k}})}(\Bar{\ell}) +  n_{\mathfrak{e}(\hat{p}_{\Bar{k}},\hat{p}_{\Bar{k}+1})}(\Bar{\ell})\right) + 0 + \sum_{i=\Bar{k}+1}^{\infty}n_{\mathfrak{e}(\hat{p}_i,\hat{p}_{i+1})}(\Bar{\ell}) \\&\,\,\,\,\,=\,\,\,\,\,
       \sum_{i=1}^{\infty}n_{\mathfrak{e}(\hat{p}_i,\hat{p}_{i+1})}(\Bar{\ell}),
    \end{align*}
    where, for the inequality, we applied Property~\ref{p5 remove plaquette} from \cref{prop: remove plaquette from loop} to the first and last summands (since $\mathfrak{e}(\hat{p}_{\Bar{i}},\hat{p}_{\Bar{i}+1})\notin \cE(\Bar{p}_1)$ for all $i\notin\{\Bar{k}-1,\Bar{k}\}$), Property~\ref{p6 remove plaquette} from \cref{prop: remove plaquette from loop} to the second summand (also using also that $\mathfrak{e}(\hat{p}_{\Bar{k}},\hat{p}_{\Bar{k}+1})=\Bar{e}$), and noted that $n_{\mathfrak{e}(\hat{p}_{\Bar{k}},\hat{p}_{\Bar{k}+1})}(s)=n_{\Bar{e}}(s)=0$ by construction of $\cT_{\Bar{e}, \Bar{\mathfrak{p}}}(\Bar{\ell},\Bar{K})$.
    Thus, \eqref{eq: lj leq d l} holds in this case.

    \medskip

    \noindent\underline{\emph{Case 2:}} We assume that $\Bar{e}\notin \cI(\hat{\mathfrak{p}}\sm\{\hat{p}\})$. This is the situation of Item~\ref{item:3b} in \cref{lem: exists an outer edge}, and so we know that $\Bar{p}_1\notin\hat{\mathfrak{p}}$. Therefore, by Property~\ref{p5 remove plaquette} of \cref{prop: remove plaquette from loop}, we get that 
    \begin{align*}
        \sum_{j=1}^m\sum_{i=1}^{\infty}n_{\mathfrak{e}(\hat{p}_i,\hat{p}_{i+1})}(\ell_j)
        \stackrel{\eqref{eq: sum switch}}{=}
        \sum_{i=1}^{\infty} n_{\mathfrak{e}(\hat{p}_i,\hat{p}_{i+1})}(s) \leq \sum_{i=1}^{\infty} n_{\mathfrak{e}(\hat{p}_i,\hat{p}_{i+1})}(\Bar{\ell}),
    \end{align*} 
    Thus, \eqref{eq: lj leq d l} also holds in this second case.

    \medskip

    Finally, we deduce that there is some $j\in [m]$ such that $K_j(\hat{p})+K_j(\hat{p}^{-1})>\sum_{i=1}^{\infty}n_{\mathfrak{e}(\hat{p}_i,\hat{p}_{i+1})}(\ell_j)$. Suppose not, i.e.\  $K_j(\hat{p})+K_j(\hat{p}^{-1})\leq\sum_{i=1}^{\infty}n_{\mathfrak{e}(\hat{p}_i,\hat{p}_{i+1})}(\ell_j)$ for all $j\in [m]$. Then
    \begin{equation}
    \begin{aligned}\label{eq: k less d}
        \Bar{K}(\hat{p})+\Bar{K}(\hat{p}^{-1}) = \multK(\hat{p}) + \multK(\hat{p}^{-1}) &= \sum_{j=1}^m  K_{j}(\hat{p})+K_{j}(\hat{p}^{-1})\\
        &\leq \sum_{j=1}^m\sum_{i=1}^{\infty}n_{\mathfrak{e}(\hat{p}_i,\hat{p}_{i+1})}(\ell_j)\leq \sum_{i=1}^{\infty} n_{\mathfrak{e}(\hat{p}_i,\hat{p}_{i+1})}(\Bar{\ell}),
    \end{aligned}
    \end{equation}
    where, for the first equality, we used that $(\hat{p})^{\pm 1}\notin \Bar{\mathfrak{p}}$ by \cref{obs: ppqe} and Property~\ref{p2 remove plaquette} of \cref{prop: remove plaquette from loop}; for the first inequality we used our assumption $K_j(\hat{p})+K_j(\hat{p}^{-1})\leq\sum_{i=1}^{\infty}n_{\mathfrak{e}(\hat{p}_i,\hat{p}_{i+1})}(\ell_j)$ for all $j\in [m]$; and  for the last inequality we used \eqref{eq: lj leq d l}. 
    
    As \eqref{eq: k less d} contradicts $\Bar{K}(\hat{p})+\Bar{K}(\hat{p}^{-1})>\sum_{i=1}^{\infty}n_{\mathfrak{e}(\hat{p}_i,\hat{p}_{i+1})}(\Bar{\ell})$, there must be some $j\in [m]$ such that $K_j(\hat{p})+K_j(\hat{p}^{-1})>\sum_{i=1}^{\infty}n_{\mathfrak{e}(\hat{p}_i,\hat{p}_{i+1})}(\ell_j)$. This finishes the proof of the claim.
\end{proof}

    \section{Explicit computations for Wilson loop expectation coefficients and  spectral convergence for simple loops}\label{sec: height plaquette assignment coefficent}

    In this section, we explicitly compute (for several cases of loops) the coefficients $c(\ell, K)$ appearing in the formula for the Wilson loop expectation given by \cref{thm: erasable loops have one plaquette assignment}. In particular, in \cref{sec: Wilson loop expectation constant for wound simple loops}, we prove \cref{thm: wound simple loops WLE}, then, in \cref{sec: spec conv}, we deduce that the empirical spectral distribution of $Q_{\ell}$, for $\ell$ a simple loop, converges weakly to an explicit deterministic measure on the unit circle, proving \cref{cor: limiting spectral density},
    and finally, in \cref{sec: Wilson loop expectation constant for erasable loops with three or fewer self-crossings}, we explain how to compute the Wilson loop expectations appearing in \cref{table}.

    Throughout this section, we always assume loops are non-backtrack loops. In particular, when we apply any loop operation, we assume that the resulting new loops have all backtracks removed (recall~\cref{fig-operations}).

    \subsection{Wilson loop expectation coefficients for simple self-winding loops}\label{sec: Wilson loop expectation constant for wound simple loops}
    
    Let $\ell$ be a simple loop with area $a_{\ell}$. The goal of this section is to prove \cref{thm: wound simple loops WLE} by computing $c(\ell^n,K_{\ell^n})$ for any $n\in \NN$.

    First, a fact that says the property of being a simple loop is closed under certain negative deformations. For any edge $e$ of $\ell$, only one of the two plaquettes containing $e$ in the right orientation (i.e.\ in $\cP(e)$) is contained in $\supp(\ell)$. We denote this plaquette by $p(e)$. We say that an edge $e$ of $\ell$ is \textbf{simple-preserving} for $\ell$ if $\ell\ominus_e p(e)^{-1}$ is a simple loop. The following result guarantees that any simple loop has simple-preserving edges.
    
    \begin{lem}\label{lemma: simple loops are closed under shrinking}
        Let $\ell$ be a simple loop with $\area(\ell)\geq 2$. Then, it contains at least two simple-preserving edges.
    \end{lem}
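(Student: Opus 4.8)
\textbf{Proof plan for Lemma~\ref{lemma: simple loops are closed under shrinking}.}

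The plan is to reason geometrically about the simple loop $\ell$, viewed as an embedded Jordan curve in the plane. Since $\ell$ is simple, its support $\supp(\ell)$ is a topological disk made up of $2a_\ell$ plaquettes (counting both orientations), and every interior plaquette-region has height $\pm 1$ — in particular $d_\ell = |h_\ell|$ on all of $\cP$, so $\cK_\ell = \{K_\ell\}$ by \cref{rem:card-set-cK}. An edge $e$ of $\ell$ bounds $\supp(\ell)$ on one side, namely the plaquette $p(e) \subseteq \supp(\ell)$. Performing the negative deformation $\ell \ominus_e p(e)^{-1}$ replaces the single edge $e$ of $\partial(\supp(\ell))$ by the other three edges of $p(e)$, i.e.\ it ``pushes $e$ across the plaquette $p(e)$'': the new loop $\ell'$ still traces the boundary of the region $\supp(\ell) \setminus p(e)$ (with backtracks removed, per the convention of this section). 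The key point is that $\ell'$ is simple precisely when $\supp(\ell)\setminus p(e)$ is still a topological disk whose boundary is traced without repeated vertices — which fails only if removing $p(e)$ either disconnects the region, pinches it at a vertex, or creates a region whose boundary revisits a vertex. So I would reduce the lemma to the purely combinatorial statement: \emph{any simply-connected polyomino $P$ with at least two cells contains at least two cells $c$ such that $P\setminus c$ is again a simply-connected polyomino with no pinch points on its boundary.}

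For that combinatorial statement, first I would dispose of the small cases by hand: if $a_\ell = 2$, the region is a domino and either of its two cells works (removing one leaves a single plaquette, whose boundary loop is simple), so in fact both edges ``at the ends'' are simple-preserving — actually for a domino one checks directly that there are (at least) two simple-preserving edges. For $a_\ell \geq 3$ I would use the standard fact that a finite simply-connected polyomino $P$, regarded through its adjacency graph (vertices $=$ cells, edges $=$ shared unoriented edges), has a dual structure in which ``removable'' cells correspond to cells $c$ such that $P \setminus c$ remains simply connected without pinch points; call these \emph{good} cells. The claim is that there are always at least two good cells. One clean way: take the cell $c_1$ achieving the lexicographically smallest lower-left corner among all cells of $P$ — removing it cannot disconnect $P$ and cannot create a pinch (one checks the local pictures, using that no cell lies below-or-left of $c_1$), so $c_1$ is good; then take the cell $c_2$ with the lexicographically largest upper-right corner, and by the symmetric argument $c_2$ is good; finally, since $a_\ell\geq 2$ we have $c_1\neq c_2$. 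Translating back: the boundary edge of $\ell$ bordering $c_1$ on the ``outer'' side (say its bottom edge, which is on $\partial\supp(\ell)$ because nothing lies below $c_1$) is simple-preserving, and likewise the appropriate boundary edge bordering $c_2$; these are two distinct edges of $\ell$.

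The main obstacle I expect is making rigorous the translation between ``the polyomino $P\setminus c$ is a simply-connected, pinch-free region'' and ``the loop $\ell\ominus_e p(e)^{-1}$ (after backtrack removal) is a simple loop.'' This requires carefully analyzing the local effect of the negative deformation at $e$ together with the backtrack-removal step — in particular verifying that pushing $e$ across $p(e)$ produces backtracks exactly along the edges of $p(e)$ that are also edges of $\ell$, and that after cancelling them one genuinely gets the boundary walk of $\supp(\ell)\setminus p(e)$, with repeated vertices appearing if and only if that region has a pinch. I would handle this by a small finite case check on the possible local configurations of how $\ell$ meets the three non-$e$ edges of $p(e)$ (there are only a handful, indexed by which of those three edges already belong to $\ell$ and with which orientation), combined with the observation that since $\ell$ is simple, the corner cells $c_1, c_2$ chosen above are automatically in the ``easy'' local configuration (exactly one or two of the neighboring cells of $c_i$ lie in $P$, arranged so that no pinch is created). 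An alternative, if the corner argument gets delicate, is to induct on $a_\ell$: remove a good cell to get a smaller simple loop, apply the inductive hypothesis there, and track how simple-preserving edges lift back — but the direct corner argument seems cleaner and avoids having to re-establish simplicity at each step.
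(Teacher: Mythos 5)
Your reduction of the lemma to a statement about simply-connected polyominoes is a reasonable reframing, and your notion of a ``good'' cell matches the paper's notion of a good plaquette. However, the central claim of your argument --- that the cell with lexicographically extremal corner is always good --- is false, and this is precisely the hard case of the lemma. Consider the S-tetromino with cells at $(0,1),(0,0),(1,0),(1,-1)$, or the U-pentomino with cells at $(0,1),(0,0),(1,0),(2,0),(2,1)$: both are supports of simple loops (every lattice vertex lies on the boundary at most once), yet in each the bottom-most/left-most cell $c_1$ has both its right and top neighbors in $P$ while the diagonal cell is absent, so removing $c_1$ both disconnects $P$ and creates a pinch at the opposite corner of $c_1$. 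The local configuration you invoke (``no cell lies below-or-left of $c_1$'') does not rule this out: it only controls two of the four corners of $c_1$. The same failure occurs for either lexicographic convention, so the corner argument cannot be repaired by choosing the ordering more carefully.

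The paper's proof confronts exactly this bad configuration head-on. It first observes that every simple loop has at least two ``corner'' plaquettes (two consecutive edges on $\ell$), and that such a plaquette $p$ fails to be good only when $\ell$ passes through the vertex of $p$ opposite its corner. In that case one rewires $\ell$ at $p$ (replacing its two edges of $\ell$ by the two missing edges of $p$) to split it into two strictly smaller simple loops $\ell_1,\ell_2$; by induction each contains a good plaquette, at most one of which can be adjacent to $p$, so each sub-loop contributes a plaquette that is still good for $\ell$, giving the required two. Your fallback suggestion (``remove a good cell and induct'') is also not viable as stated: it presupposes the existence of a good cell, which is what is being proved, and a good cell of the smaller region need not be good for the original one. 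To fix your write-up you would need to replace the extremal-cell argument with an induction that handles the pinched-corner case, essentially as the paper does.
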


    \begin{proof}
        First, notice that if a plaquette $p$ in the interior of $\ell$ is such that three of its edges are in $\ell$ then clearly $\ell \ominus_e p(e)$, with $e$ any of the three edges, is a simple loop. Similarly, if exactly two consecutive edges of $p$ are in $\ell$ and $\ell$ does not touch the other vertex of $p$, then $\ell \ominus_e p(e)$, with $e$ either of the two edges, is a simple loop. We will call an interior plaquette good if it is one of the two types above.

        \begin{claim*}
            If $\ell$ is a simple loop with $\area(\ell)\geq 2$, then it contains two good plaquettes.
        \end{claim*}

        Notice the claim clearly implies the lemma. To prove the claim we will induct on the area of $\ell$.

        \medskip

        \noindent\underline{\emph{Base case}:} Suppose that $\area(\ell)=2$. Then we have that $\ell$ is a rectangle made up of two plaquettes and clearly both interior plaquettes are good plaquettes.

        \medskip

        \noindent\underline{\emph{Induction step}:} Suppose that $n\coloneqq\area(\ell)\geq 3$. First, notice that $\ell$ must contain at least two plaquettes whose two consecutive edges are contained in $\ell$. Indeed, if this were not the case, it would be impossible for the loop to close back on itself. If these two plaquettes are good, then we are done. So assume that at least one of them, say $p$, is not good. That is, $\ell$ contains the other vertex of $p$. This means that $\ell$ must have the form detailed on the left-hand side of \cref{fig: SimpleLoopShrinkingProof}. Further, as detailed on the right-hand side of \cref{fig: SimpleLoopShrinkingProof}, notice that removing the two edges on $p$ in $\ell$ and replacing them by the two missing edges on $p$ produces two simple loops $\ell_1$ and $\ell_2$. In particular, we must have that $1\leq \area(\ell_1)<n$ and $1\leq \area(\ell_2)<n$. 
        
        Now, if $\area(\ell_1)=1$ (resp.\ $\area(\ell_2)=1$) then the plaquette corresponding to $\ell_1$ (resp.\ $\ell_2$) must have been a good plaquette for $\ell$. Further, if $\area(\ell_1)\geq 2$ (resp.\ $\area(\ell_2)\geq 2$) then the induction hypothesis gives us that $\ell_1$ (resp.\ $\ell_2$) contains two good plaquettes. Now only one of these good plaquettes can be adjacent to $p$. Thus, at least one of the good plaquettes for $\ell_1$ (resp.\ $\ell_2$) has to be a good plaquette for $\ell$. Thus $\ell_1$ and $\ell_2$ each contain at least one plaquette that is a good plaquette for $\ell$, thus giving the claim.
    \end{proof}

    \begin{figure}[ht!]
		\begin{center}
			\includegraphics[width=.65\textwidth]{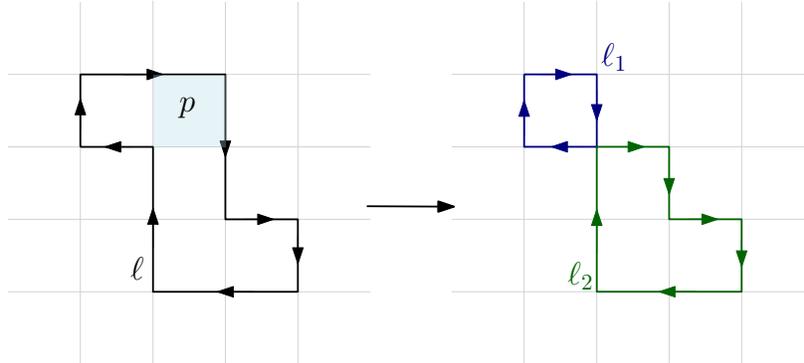}  
			\caption{\label{fig: SimpleLoopShrinkingProof} Any loop $\ell$ must contain at least two plaquettes whose two consecutive edges are contained in $\ell$. The left-hand side of the figure shows a loop $\ell$ where such a plaquette $p$ is not a good plaquette. The right-hand side illustrates how to construct two simple loops $\ell_1$ and $\ell_2$ from $\ell$ by replacing the edges of $p$ in $\ell$ by those that are not in $\ell$.}
		\end{center}
		\vspace{-3ex}
	\end{figure}

    Next, we introduce a special case of the coefficients master loop equation \eqref{eq:mle-2}. To do this, we need some notation. Let $e$ be a simple-preserving edge of $\ell$ and set $p=p(e)^{-1}$. For $0\leq m\leq n$, let \begin{align*}
        \ell^n\ominus_e^m p := \ell^n\ominus_{\mathbf{e}_1}p \ominus_{\mathbf{e}_2}p\hdots \ominus_{\mathbf{e}_m}p,  
    \end{align*}
    where the $\mathbf{e}_i$s are $m$ distinct copies of the edge $e$ in $\ell^n$, and $\ell^n\ominus_e^0 p :=\ell^n$. Further, for a general loop $\ell'$, edge $e'$ contained in $\ell'$, $p' := p(e')^{-1}$, and $0\leq m\leq n$, let $K^{e',\ell'}_{n,m}$ be the plaquette assignment such that
    \begin{equation}\label{eq: K e l n m}
        K^{e',\ell'}_{n,m}(p') = m\quad\text{and}\quad  K^{e',\ell'}_{n,m}(q) = K_{(\ell')^n}(q) \text{ for all $q\in \cP\sm\{p'\}$.}
    \end{equation}
    With this, we give a recursive relation for these coefficients (coming from the master loop equation).

    \begin{lem}\label{lem:coefficent-MLE}
         Suppose $\ell$ is a simple loop, $e$ is a simple-preserving edge of $\ell$, and $p=p(e)^{-1}$. Then, for $1\leq m\leq n$,\begin{equation}\label{eq: coeffienct MLE full}
             c(\ell^n\ominus_e^{n-m} p, K^{e,\ell}_{n,m}) =  c(\ell^n\ominus_e^{n-m+1} p, K^{e,\ell}_{n,m-1})- \sum_{i=1}^{m-1} c(\ell^i, K_{\ell^i}) c(\ell^{n-i}\ominus_e^{n-m} p, K^{e,\ell}_{n-i,m-i}).
         \end{equation}
    \end{lem}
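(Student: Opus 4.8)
The idea is to apply the coefficient master loop equation \eqref{eq:mle-2} to the loop $\ell^n \ominus_e^{n-m} p$ at a carefully chosen copy of the edge $e$, and to argue that most terms on the right-hand side vanish or collapse, leaving precisely the three types of terms appearing in \eqref{eq: coeffienct MLE full}. First I would set up notation: write $\ell' := \ell^n \ominus_e^{n-m} p$ and let $\mathbf{e}$ be one of the $m$ remaining copies of the edge $e$ in $\ell'$ (there are $m$ copies, since $\ell^n$ has $n$ copies of $e$ and we have performed $n-m$ negative deformations, each removing one copy). The plaquette assignment is $K = K^{e,\ell}_{n,m}$, which puts weight $m$ on $p = p(e)^{-1}$ and agrees with $K_{\ell^n}$ elsewhere. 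I would then examine each of the four sums in \eqref{eq:mle-2} applied at $\mathbf{e}$: the negative-splitting sum, the positive-splitting sum, the negative-deformation sum $\sum_{p'\in\cP(e^{-1},K)}$, and the positive-deformation sum $\sum_{q\in\cP(e,K)}$.

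The positive-deformation sum should vanish: since the only plaquette in $\cP(e,K)$ with positive $K$-value would have to contain $e$ with the correct orientation, and by the structure of a simple loop wound $n$ times with the canonical plaquette assignment, there is no such plaquette (the plaquette $p$ contains $e^{-1}$, not $e$). For the negative-deformation sum $\sum_{p'\in\cP(e^{-1},K)}$, the only plaquette with $K$-value $\geq 1$ containing $e^{-1}$ is $p$ itself, so this sum contributes a single term $c(\ell' \ominus_{\mathbf{e}} p, K\sm p)$; one then identifies $\ell' \ominus_{\mathbf{e}} p = \ell^n \ominus_e^{n-m+1} p$ and $K \sm p = K^{e,\ell}_{n,m-1}$, which is the first term on the right-hand side of \eqref{eq: coeffienct MLE full}. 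The negative-splitting sum should be empty, or contribute only backtrack/trivial terms: because $e$ is simple-preserving, the geometry of $\ell^n$ around $e$ (the loop traverses $e$ in a fixed orientation, winding $n$ times) means there is no copy of $e^{-1}$ available to split against in a non-trivial way — the copies of $e$ all appear in the same orientation. The heart of the matter is the positive-splitting sum: splitting $\ell'$ at two copies $\mathbf{e}, \mathbf{e}'$ of $e$ decomposes the $n$-fold (partially deformed) loop into an $i$-fold piece and an $(n-i)$-fold piece; one has to check that for each $1 \le i \le m-1$, there is exactly one balanced pairing $K_1 + K_2 = K$, namely $K_1 = K_{\ell^i}$ and $K_2 = K^{e,\ell}_{n-i,m-i}$, giving the term $c(\ell^i, K_{\ell^i}) c(\ell^{n-i}\ominus_e^{n-m} p, K^{e,\ell}_{n-i,m-i})$, and that the range of $i$ is exactly $1$ to $m-1$ (the deformed copies of $e$ have already been removed, so only $m$ copies of $e$ remain, and a positive splitting at $\mathbf{e}$ pairs it with one of the other $m-1$ copies).

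The main obstacle I anticipate is the careful bookkeeping in the positive-splitting step: one must verify that after performing $n-m$ negative deformations at $e$, the resulting loop $\ell'$ still has the clean ``$n$-times-around with $m$ surviving copies of $e$'' structure, so that splitting at $\mathbf{e}$ and another copy $\mathbf{e}'$ genuinely produces $\ell^i$ (for the ``inner'' piece, with $i$ determined by how many copies of $e$ fall between $\mathbf{e}$ and $\mathbf{e}'$) and $\ell^{n-i}\ominus_e^{n-m} p$ (for the ``outer'' piece), and that the plaquette assignment splits uniquely. This requires understanding how the negative deformations $\ominus_e p$ act on the cyclic word $\ell^n = (e \sigma)^n$ where $\sigma$ is the complementary path, and confirming that $p(e)$ is consistent across all copies (which uses that $\ell$ is simple, so $p(e)$ is well-defined, and simple-preservation ensures the deformed loop stays simple enough to iterate). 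I would handle this by writing $\ell^n$ explicitly as a cyclic concatenation and tracking the effect of each operation on the word, using the invariance under backtrack removal (\cref{lemma: backtrack cancellations}) to clean up after negative deformations, and using the base-case results of \cref{sec: loops of size one} (e.g.\ \cref{lemma: wound plaquettes zero weight}) only insofar as they are implicitly needed to rule out spurious terms. The remaining verifications — that the split plaquette assignments are the unique balanced ones, and that $\cK_{\ell^i} = \{K_{\ell^i}\}$ so the coefficient notation is unambiguous — follow from \cref{rem:card-set-cK} and balancedness.
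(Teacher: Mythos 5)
Your proposal is correct and follows essentially the same route as the paper: apply the coefficient master loop equation \eqref{eq:mle-2} at one of the $m$ surviving copies of $e$, rule out negative splittings (no copy of $e^{-1}$, since $\ell$ is simple so $e$ appears in only one orientation) and positive deformations (no plaquette containing $e$ with positive $K$-value), and identify the single negative-deformation term and the $m-1$ positive-splitting terms with their unique balanced plaquette-assignment decompositions. The paper's proof is a terser version of exactly this, compressing the "main obstacle" you flag (that the splittings genuinely produce $\ell^i$ and $\ell^{n-i}\ominus_e^{n-m}p$) into the phrase "together with the structure of $\ell^n\ominus_e^{n-m}p$ and $K^{e}_{n,m}$".
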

    \begin{proof}
        By definition, $\ell^n\ominus_e^{n-m} p$ contains $m$ copies of $e$. Let $\mathbf{e}$ denote one of these copies. Then, applying the master loop equation in \eqref{eq:mle-2} at $\mathbf{e}$, we get that 
        \begin{align*}
            c(\ell^n\ominus_e^{n-m} p, K^{e,\ell}_{n,m}) =  c(\ell^n\ominus_e^{n-m+1} p, K^{e,\ell}_{n,m-1})- \sum_{i=1}^{m-1} c(\ell^i, K_{\ell^i}) c(\ell^{n-i}\ominus_e^{n-m} p, K^{e,\ell}_{n-i,m-i}),
        \end{align*}
        where we used that only negative deformations and positive splittings are allowed, due to the fact that $e$ only appears in one orientation as $\ell$ is a simple loop, together with the structure of  $\ell^n\ominus_e^{n-m} p$ and $K^e_{n,m}$.
    \end{proof}
    
    Next, we introduce some simplified notation. Let $a_{\ell}$ denote the area of the loop $\ell$. Next, let\begin{align*}
        \Tilde{c}_n(a_{\ell}):= c(\ell^n,K_{\ell^n}).
    \end{align*}Next, fix a simple-preserving edge $e$ of $\ell$ (which exists by \cref{lemma: simple loops are closed under shrinking}) and let $p=p(e)^{-1}$. Then let\begin{align*}
        \Tilde{c}_n(a_{\ell}-1):= c(\ell^n \ominus^n_e p ,K^{e,\ell}_{n,0}) = c(\ell^n_{a-1}, K_{\ell^n_{a_{\ell}-1}}),
    \end{align*}where $\ell_{a_{\ell}-1} = \ell\ominus_e p$. Now as $\ell_{a_{\ell}-1}$ is a simple loop, we can use the same method to define $\ell^n_{a_{\ell}-2}$ and $\Tilde{c}_n(a_{\ell}-2)$. More generally, for $1\leq a\leq a_{\ell}$, we can iterate the above process to define $\ell_{a}^n$ and $\Tilde{c}_n(a)$. Further, for $1< a\leq a_{\ell}$, let $e_a$ denote the simple-preserving edge of $\ell_a$ that is used to define $\ell_{a-1}$ and let $p_a = p(e_a)^{-1}$ (i.e.\ $e_{a_{\ell}} = e$ and $p_{a_{\ell}} = p$). Lastly, for $1\leq a\leq a_{\ell}$ and $0\leq m\leq n$, let\begin{equation}\label{eq: two height WLE coeffiencte}
        \Tilde{c}_{n,m}(a):= c(\ell_a^n\ominus_{e_a}^{n-m} p_a, K^{e_a,\ell_a}_{n,m}).
    \end{equation}Now, we introduce a few facts for future reference. For all of these facts, we assume that $1\leq a\leq a_{\ell}$. First, note that  
    \begin{equation}\label{eq:nn=n}
         \Tilde{c}_{n,n}(a) =  \Tilde{c}_n(a).
    \end{equation}
    Also notice, \begin{equation}\label{eq: two height coef with zero}
         \Tilde{c}_{n,0}(a) =  \Tilde{c}_{n}(a-1),
    \end{equation}
    by definition. Next, note that $\ell_1^n$ must be a plaquette wrapped around $n$ times and thus we know that \begin{equation}\label{eq: wound plaq coef}
         \Tilde{c}_n(1) = \mathbbm{1}_{\{n=1\}},
    \end{equation}
    by \cref{lem: size one main thm}, \cref{lemma: wound plaquettes zero weight}, and \eqref{eq: one pocket coefficent}. Lastly, we rewrite \eqref{eq: coeffienct MLE full} in terms of our new simplified coefficients, for $0\leq m\leq n$,
    \begin{equation}\label{eq: coeffienct MLE}
            \Tilde{c}_{n,m}(a) =  \Tilde{c}_{n,m-1}(a)- \sum_{i=1}^{m-1} \Tilde{c}_{i,i}(a) \Tilde{c}_{n-i,m-i}(a).
    \end{equation}

    Next, we define for all $1\leq a\leq a_{\ell}$ the following two formal power series in the variable $t$:
    \begin{align*}
         \Tilde{C}(a,t)&:= \sum_{n\geq 0} \Tilde{c}_n(a)t^n,\quad \text{where $ \Tilde{c}_0(a):=1$ and $ \Tilde{c}_n(a)$ is as above,} \\C(a,t)&:= \sum_{n\geq 0}c_n(a)t^n,\quad \text{where $c_0(a):=1$ and $c_n(a):= \frac{(-1)^{n+1}}{n}\binom{na-2}{n-1}$ for $n\in\NN$.}
    \end{align*}

    Note that if we can show that for all $1\leq a\leq a_{\ell}$, the formal power series  $C(a,t) $ and $\Tilde{C}(a,t)$ are equal, then we clearly get that 
    \begin{equation}\label{eq:coef-eq}
        \Tilde{c}_n(a)=c_n(a)  \qquad\text{for all } n\geq 0,
    \end{equation}
    proving \cref{thm: wound simple loops WLE} (equality of coefficients is the definition of equality for formal power series). In particular, \eqref{eq:coef-eq} guarantees that the coefficients $\Tilde{c}_n(a)$ do not depend on the shape of the loop $\ell_a$ but only on its area. Thus, \cref{thm: wound simple loops WLE} follows from the following three propositions.

    \begin{prop}\label{prop: unique sol to moment equation}
        For all $a\in \NN$, there exists a unique formal power series $F(a,t)$ that solves the equation \begin{equation}\label{eq: moment eq}
            F(a,t)^a-F(a,t)^{a-1}=t,
        \end{equation}such that $F(a,0)=1$.
    \end{prop}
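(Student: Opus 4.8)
The plan is to prove existence and uniqueness of the formal power series $F(a,t)$ solving $F(a,t)^a - F(a,t)^{a-1} = t$ with $F(a,0) = 1$ by a direct order-by-order argument, treating $a \in \NN$ as fixed throughout. Write $F(a,t) = 1 + \sum_{n \geq 1} f_n t^n$; substituting into \eqref{eq: moment eq} and factoring, we have $F^{a-1}(F-1) = t$, so $F^{a-1} \cdot \sum_{n \geq 1} f_n t^n = t$. The key observation is that the coefficient of $t^1$ on the left-hand side is $f_1$ (since $F^{a-1} = 1 + O(t)$), forcing $f_1 = 1$; and more generally, for each $n \geq 2$, the coefficient of $t^n$ on the left-hand side equals $f_n$ plus a polynomial expression in $f_1, \dots, f_{n-1}$ only (the $f_n$ term arises uniquely from multiplying the constant term $1$ of $F^{a-1}$ by $f_n t^n$; every other contribution involves a factor $f_j$ with $j < n$ coming from the $F^{a-1}$ factor, which has no constant-order-$n$ dependence on $f_n$). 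Setting this coefficient equal to $0$ for $n \geq 2$ thus determines $f_n$ uniquely in terms of $f_1, \dots, f_{n-1}$. This recursion both proves existence (define the $f_n$ inductively by these formulas) and uniqueness (any solution must satisfy them).

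More carefully, first I would establish the clean identity $F^{a-1}(t) = 1 + (a-1) f_1 t + O(t^2)$ and, in general, observe that $[t^0] F^{a-1} = 1$, which is the only fact needed. Then I would set up the induction: suppose $f_1, \dots, f_{n-1}$ have been uniquely determined (base case $n=1$ giving $f_1 = 1$ as above). Writing $F^{a-1} = \sum_{k \geq 0} g_k t^k$ where each $g_k$ is a polynomial in $f_1, \dots, f_k$ (with $g_0 = 1$), the coefficient of $t^n$ in $F^{a-1} \cdot \sum_{m \geq 1} f_m t^m$ is $\sum_{k=0}^{n-1} g_k f_{n-k} = g_0 f_n + \sum_{k=1}^{n-1} g_k f_{n-k} = f_n + (\text{expression in } f_1, \dots, f_{n-1})$. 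Equating to $[t^n] t = 0$ for $n \geq 2$ yields $f_n = -\sum_{k=1}^{n-1} g_k f_{n-k}$, a well-defined expression by the inductive hypothesis. This completes the induction.

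I do not expect a serious obstacle here; the main care needed is the bookkeeping that the coefficient of $t^n$ in $F^{a-1}$ depends only on $f_1, \dots, f_n$ (so that the term $g_0 f_n = f_n$ is genuinely the only place $f_n$ appears at order $t^n$ on the left-hand side), which follows because raising $1 + (\text{higher order})$ to a power cannot lower the order of a term. One should also remark that the argument is purely formal — it takes place entirely in the ring $\RR[[t]]$ — so no convergence issue arises, which is exactly what is needed since \cref{prop: unique sol to moment equation} is phrased for formal power series. A brief alternative worth mentioning is that since $\partial_F(F^a - F^{a-1})\big|_{F=1} = aF^{a-1} - (a-1)F^{a-2}\big|_{F=1} = a - (a-1) = 1 \neq 0$, the formal implicit function theorem applies directly to the analytic function $\Phi(F, t) = F^a - F^{a-1} - t$ at the point $(F,t) = (1,0)$, giving existence and uniqueness of a formal (indeed analytic) solution $F(a,t)$ with $F(a,0) = 1$ immediately; I would likely present the elementary order-by-order version as the main proof and note the implicit function theorem perspective as a remark.
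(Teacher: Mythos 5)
Your argument is correct. The paper proves this proposition by a one-line appeal to the formal power series implicit function theorem: it sets $\mathcal{F}_a(z,t) = (z+1)^a - (z+1)^{a-1} - t$, checks $\mathcal{F}_a(0,0)=0$ and $\partial_z\mathcal{F}_a(0,0) = a-(a-1) = 1 \neq 0$, and cites the formal IFT to obtain a unique $G(a,t)$ with $G(a,0)=0$, then takes $F = G+1$. So the approach you relegate to a closing remark is exactly the paper's proof, while your main argument unwinds that black box into an explicit order-by-order recursion. Both are valid. Your factorization $F^{a-1}(F-1)=t$ is clean, and the key bookkeeping point — that $[t^k]F^{a-1}$ depends only on $f_1,\dots,f_k$, so $f_n$ enters the coefficient of $t^n$ only through the term $g_0 f_n = f_n$ — is stated and justified correctly; the recursion $f_1=1$, $f_n = -\sum_{k=1}^{n-1} g_k f_{n-k}$ simultaneously gives existence and uniqueness, and the degenerate case $a=1$ (where $F^{a-1}=1$ and $F=1+t$) is handled automatically. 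What the paper's route buys is brevity and a citation that absorbs all the bookkeeping; what yours buys is a self-contained, elementary proof that makes the uniqueness mechanism transparent and requires no external reference. Either is acceptable here.
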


    \begin{prop}\label{prop: C solves moment equation}
        For all $1\leq a\leq a_{\ell}$, $ \Tilde{C}(a,0) =1$ and $ \Tilde{C}(a,t)$ solves \eqref{eq: moment eq}.
    \end{prop}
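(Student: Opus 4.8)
\textbf{Proof proposal for Proposition~\ref{prop: C solves moment equation}.}

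The plan is to show directly that the formal power series $\Tilde{C}(a,t) = \sum_{n\geq 0}\Tilde{c}_n(a)t^n$ satisfies the functional equation $\Tilde{C}(a,t)^a - \Tilde{C}(a,t)^{a-1} = t$, using the recursion \eqref{eq: coeffienct MLE} together with the base cases \eqref{eq:nn=n}, \eqref{eq: two height coef with zero}, and \eqref{eq: wound plaq coef}. The fact that $\Tilde{C}(a,0)=1$ is immediate from the convention $\Tilde{c}_0(a)=1$, so the content is the functional equation.

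First I would introduce, for each fixed $a$ and each $0\leq m\leq a$, the auxiliary bivariate formal power series $G_m(t) := \sum_{n\geq m}\Tilde{c}_{n,m}(a)\,t^n$ (shifting so that the lowest-order term matches the number of copies of $e_a$ present), and try to recognize $G_m$ in terms of $\Tilde{C}(a,t)$. The key structural observation is that $\ell_a^n \ominus_{e_a}^{n-m} p_a$ is the loop $\ell_a$ wound $n$ times with $n-m$ of the copies of $e_a$ "pushed over" the interior plaquette $p(e_a)$, i.e.\ it is essentially a loop around the area-$a$ region with $m$ "full winds" and $n-m$ winds around the smaller area-$(a-1)$ region; this should let me identify $\Tilde{c}_{n,m}(a)$ with a convolution-type expression mixing $\Tilde{c}_\bullet(a)$ and $\Tilde{c}_\bullet(a-1)$. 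Concretely, I expect to prove by a generating-function manipulation of \eqref{eq: coeffienct MLE} that
\[
G_m(t) = t^{m}\,\Tilde{C}(a,t)^{m}\cdot\Tilde{C}(a-1,t)^{?},
\]
or something in this spirit — the exponents are exactly what the calculation must pin down. Summing \eqref{eq: coeffienct MLE} against $t^n$ and using \eqref{eq:nn=n}, \eqref{eq: two height coef with zero} to translate the $m=n$ and $m=0$ boundary terms, the recursion becomes a relation of the form $G_m = G_{m-1} - (\text{shift})\cdot \Tilde{C}(a,t)\cdot G_{m-1}$ or similar, which telescopes.

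The cleanest route, which I would pursue in parallel, is to induct on $a$. The base case $a=1$ is \eqref{eq: wound plaq coef}: $\Tilde{C}(1,t) = 1 + t$ (only $\Tilde{c}_0(1)=\Tilde{c}_1(1)=1$ are nonzero), and indeed $\Tilde{C}(1,t)^1 - \Tilde{C}(1,t)^0 = (1+t) - 1 = t$. For the inductive step, the recursion \eqref{eq: coeffienct MLE} relates the coefficients at area $a$ to products of coefficients at area $a$ (through $\Tilde{c}_{i,i}(a) = \Tilde{c}_i(a)$) and — crucially, via $\Tilde{c}_{n,0}(a) = \Tilde{c}_n(a-1)$ — to the area-$(a-1)$ series, for which the functional equation $\Tilde{C}(a-1,t)^{a-1} - \Tilde{C}(a-1,t)^{a-2} = t$ is known by induction. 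I would assemble the $G_m(t)$ into a single identity: writing $H(t) := \Tilde{C}(a,t)$ and $h(t):=\Tilde{C}(a-1,t)$, the recursion should yield $G_a = t\cdot H^{a}\cdot(\text{something involving }h)$ on one hand and $G_a = \sum_n \Tilde{c}_n(a)t^n = H$ on the other (this is \eqref{eq:nn=n}), while $G_0 = \sum_n \Tilde{c}_n(a-1)t^n = h$. Telescoping the recursion $G_m = G_{m-1}(1 - tH)$-type relation from $m=0$ to $m=a$ then gives $H = h\cdot(1-tH)^{a}$ or a rearrangement thereof, and combining with the inductive hypothesis $h^{a-1}-h^{a-2}=t$ should collapse to $H^a - H^{a-1}=t$ after elementary algebra.

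The main obstacle I anticipate is bookkeeping: correctly tracking the exponents and the index shifts when converting the three-term recursion \eqref{eq: coeffienct MLE} (which has the nonlinear convolution $\sum_{i=1}^{m-1}\Tilde{c}_{i,i}(a)\Tilde{c}_{n-i,m-i}(a)$) into a clean closed identity among $\Tilde{C}(a,t)$ and $\Tilde{C}(a-1,t)$. The convolution term is over \emph{both} indices simultaneously ($n-i$ and $m-i$ move together), which is exactly the signature of a generating function in a single variable $t$ once one writes $\Tilde{c}_{n,m}(a)$ with the "correct" power $t^{n}$ (not $t^{m}$); getting this normalization right is the delicate point. A secondary check I must not skip: the recursion \eqref{eq: coeffienct MLE} as stated holds for $0\leq m \leq n$, but I also need that $\Tilde{c}_{n,m}(a)$ vanishes for $m>n$ and that the $m=a$ slice genuinely reproduces $\Tilde{C}(a,t)$ — this is where the geometric fact that "pushing $e_a$ over $p(e_a)$ all the way around recovers a wind of $\ell_{a-1}$, and doing it zero times leaves $\ell_a$ wound $n$ times" must be invoked, citing \cref{lemma: simple loops are closed under shrinking} for the existence of the simple-preserving edges $e_a$ used to define the whole tower. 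Once the single clean polynomial identity between $H$, $h$, and $t$ is extracted, the rest is a two-line algebraic verification.
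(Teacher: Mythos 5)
Your high-level plan coincides with the paper's: induct on $a$ with base case \eqref{eq: wound plaq coef}, introduce generating functions for the doubly-indexed array $\Tilde{c}_{n,m}(a)$, convert \eqref{eq: coeffienct MLE} into an identity among them, and close the induction using the area-$(a-1)$ equation. However, the concrete identities you propose to establish are not correct, and the step you defer to ``the calculation'' is exactly where the proof lives, so as written there is a genuine gap.

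The key structural point you miss is that in the convolution $\sum_{i=1}^{m-1}\Tilde{c}_{i,i}(a)\Tilde{c}_{n-i,m-i}(a)$ both indices shift by $i$, so the quantity preserved is the \emph{difference} $n-m$ (the number of deformations performed), not the second index $m$. Consequently the correct slicing is the paper's $\Tilde{C}_m(a,t)=\sum_{n\geq 0}\Tilde{c}_{n+m,n}(a)t^n$ (fixed number of deformations, power of $t$ equal to the number of remaining copies of $e_a$), for which the convolution becomes the clean product $\Tilde{C}(a,t)\cdot\Tilde{C}_m(a,t)$ within a single slice, and the $m$-recursion enters only through the term $\Tilde{c}_{n+m,n-1}$. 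With your slicing $G_m(t)=\sum_{n\geq m}\Tilde{c}_{n,m}(a)t^n$, the recursion reads $G_m=G_{m-1}-\sum_{i=1}^{m-1}\Tilde{c}_i(a)t^iG_{m-i}$, which does not telescope as a product $G_m=G_{m-1}(1-tH)$; moreover your boundary identifications are swapped (it is the $m=0$ slice in the correct normalization that equals $\Tilde{C}(a,t)$ via \eqref{eq:nn=n}, while $\Tilde{C}(a-1,\cdot)$ enters through the constant terms $\Tilde{c}_{m,0}(a)=\Tilde{c}_m(a-1)$ of each slice, per \eqref{eq: two height coef with zero}; your claim that $G_a=\Tilde{C}(a,t)$ misreads \eqref{eq:nn=n}, which is a statement about the diagonal, not the $m=a$ slice). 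Finally, the closed identity you guess, $H=h\cdot(1-tH)^a$, is false already for $a=2$ (where $h=1+t$ and $H=1+t-t^2+\cdots$), and it would not yield $H^a-H^{a-1}=t$ from $h^{a-1}-h^{a-2}=t$. The correct relation, obtained in the paper by assembling the slices into $\Tilde{C}(a,t,z)=\sum_m\Tilde{C}_m(a,t)z^m$ and substituting $z=t/\Tilde{C}(a,t)$, is the \emph{compositional} identity $\Tilde{C}(a,t)=\Tilde{C}\bigl(a-1,t/\Tilde{C}(a,t)\bigr)$ of \eqref{eq: C area eq}, from which the functional equation follows in one line. You would need to find this substitution trick (or an equivalent) to complete the argument.
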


    \begin{prop}\label{prop: TilC solves moment equation}
        For all $1\leq a\leq a_{\ell}$, $C(a,0) =1$ and $C(a,t)$ solves \eqref{eq: moment eq}.
    \end{prop}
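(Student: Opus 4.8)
\textbf{Proof proposal for Proposition \ref{prop: TilC solves moment equation}.}

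The statement to prove is that $C(a,0)=1$ and that the formal power series $C(a,t) = \sum_{n\geq 0} c_n(a) t^n$, with $c_0(a)=1$ and $c_n(a) = \frac{(-1)^{n+1}}{n}\binom{na-2}{n-1}$ for $n\geq 1$, satisfies the functional equation $C(a,t)^a - C(a,t)^{a-1} = t$. The value $C(a,0) = c_0(a) = 1$ is immediate from the definition, so the content is entirely in verifying the functional equation. The plan is to recognize the coefficients $c_n(a)$ as (up to sign) a Fuss--Catalan / generalized-binomial sequence and to invoke the Lagrange inversion formula, which is the natural tool whenever a power series is defined by a binomial-coefficient formula and one suspects it solves an algebraic equation.

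Concretely, I would first reduce \eqref{eq: moment eq} to a cleaner form. Write $F = C(a,t)$ and set $G = G(a,t) := F(a,t) - 1$, so that $G(a,0) = 0$ and $G$ is a power series with no constant term. The equation $F^a - F^{a-1} = t$ becomes $F^{a-1}(F-1) = t$, i.e. $(1+G)^{a-1} G = t$. This exhibits $t$ as an explicit power series in $G$ with a simple zero at $G=0$ (the linear coefficient is $1$), so by formal Lagrange inversion there is a unique power series $G = G(a,t)$ with $G(a,0)=0$ solving it — this is essentially a re-derivation of Proposition \ref{prop: unique sol to moment equation} — and moreover Lagrange inversion gives an explicit formula for its coefficients:
\begin{equation*}
[t^n] G(a,t) = \frac{1}{n}[w^{n-1}] \left( (1+w)^{-(a-1)} \right)^n = \frac{1}{n}[w^{n-1}] (1+w)^{-n(a-1)}.
\end{equation*}
Using $[w^{n-1}](1+w)^{-n(a-1)} = \binom{-n(a-1)}{n-1} = (-1)^{n-1}\binom{n(a-1)+n-2}{n-1} = (-1)^{n-1}\binom{na-2}{n-1}$, this yields $[t^n] G(a,t) = \frac{(-1)^{n-1}}{n}\binom{na-2}{n-1} = \frac{(-1)^{n+1}}{n}\binom{na-2}{n-1} = c_n(a)$ for all $n\geq 1$. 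Since also $[t^0] G(a,t) = 0$ and $[t^0] C(a,t) = 1$, we get $C(a,t) = 1 + G(a,t)$, and by construction $G$ (hence $C$) solves the functional equation. This handles all $a \geq 1$ uniformly; the degenerate case $a=1$ should be checked directly ($c_n(1) = \mathbbm{1}_{n=1}$, so $C(1,t) = 1+t$, and indeed $C(1,t)^1 - C(1,t)^0 = (1+t) - 1 = t$), and one should note that for $a=1$ the exponent $-(a-1) = 0$ makes the Lagrange formula give $[t^n]G = \frac{1}{n}[w^{n-1}](1) = \mathbbm{1}_{n=1}$, consistent.

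The main obstacle I anticipate is not conceptual but bookkeeping: pinning down the sign conventions and the generalized-binomial identity $[w^{k}](1+w)^{-m} = (-1)^k\binom{m+k-1}{k}$ and then matching $(-1)^{n-1}\binom{n(a-1)+n-2}{n-1}$ with $(-1)^{n+1}\binom{na-2}{n-1}$ — these are the same since $(-1)^{n-1}=(-1)^{n+1}$ and $n(a-1)+n-2 = na-2$, but it is exactly the kind of place where an off-by-one in the Lagrange formula (coefficient of $w^{n-1}$ versus $w^n$, or the $\frac1n$ prefactor) can creep in. A secondary point worth a sentence is to justify working with formal power series throughout: all the manipulations (composition $G \mapsto (1+G)^{a-1}G$, Lagrange inversion) are legitimate in $\mathbb{Q}[[t]]$ because the relevant series have the appropriate vanishing of constant terms, so no convergence is needed. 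Once these are nailed down, the proof is short; it also gives Proposition \ref{prop: unique sol to moment equation} as a byproduct, though the paper may prefer to prove that separately via the implicit function theorem for formal power series.
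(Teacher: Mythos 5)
Your proposal is correct and follows essentially the same route as the paper: both verify the functional equation by Lagrange inversion, with the paper applying the derivative form at $F=1$ while you use the equivalent coefficient-extraction form after the substitution $G=F-1$; the binomial bookkeeping $(-1)^{n-1}\binom{n(a-1)+n-2}{n-1}=(-1)^{n+1}\binom{na-2}{n-1}$ checks out.
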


The rest of this section is devoted to the proof of these three results.

\begin{proof}[Proof of \cref{prop: unique sol to moment equation}]
        Fix any positive integer $a\in\NN$. Define the functional
        \begin{equation*}
            \mathcal{F}_a(z,t)\coloneqq (z+1)^{a}-(z+1)^{a-1}-t.
        \end{equation*}
        We want to apply the formal power series implicit function theorem (see for instance \cite[Proposition 3.1 (a)]{sokal2009ridiculously}). First, note that\begin{align*}
            \cF_a(0,0) = 1^a - 1^{a-1} = 0.
        \end{align*}Next, note that
        \begin{equation*}
            \frac{\partial\mathcal{F}_a}{\partial z}(z,t) = a(z+1)^{a-1} - (a-1)(z+1)^{a-2}.
        \end{equation*}Thus at $(0,0)$ we have that 
        \begin{equation*}
            \frac{\partial\mathcal{F}_a}{\partial z}(0,0)= a\cdot 1^{a-1} - (a-1) \cdot 1^{a-2} = 1 \neq 0.
        \end{equation*}
        Thus, the formal power series implicit function theorem gives us that there is a unique formal power series $G(a,t)$ in the variable $t$ such that $G(a,0)=0$ and
        \begin{align*}
            \cF_a(G(a,t),t) = 0.
        \end{align*}Thus we have that there is a unique formal power series $F(a,t) := G(a,t) + 1$ with constant term equal to one solving \eqref{eq: moment eq} as desired.
    \end{proof}

    \begin{proof}[Proof of \cref{prop: C solves moment equation}]
        We induct on $1\leq a\leq a_{\ell}$. For $a=1$, we get that 
        \begin{align*}
             \Tilde{C}(1,t) = \sum_{n\geq 0} \Tilde{c}_n(1)t^n = 1+t,
        \end{align*}
        where to get the second inequality we used that $ \Tilde{c}_0(a)=1$ by definition, and \eqref{eq: wound plaq coef}. Thus,
        \begin{align*}
             \Tilde{C}(1,t) -  \Tilde{C}(1,t)^0 = 1+t-1=t,
        \end{align*}as desired.

        Fix $1\leq a\leq a_{\ell}$. To prove the inductive step, we introduce another auxiliary generating function. For $m\geq 0$, define
        \begin{align*}
             \Tilde{C}_{m}(a,t) :=\sum_{n\geq 0} \Tilde{c}_{n+m,n}(a)t^n.
        \end{align*}
        Note that $\Tilde{C}_{0}(a,t)= \Tilde{C}(a,t)$ since $\Tilde{c}_{n,n}(a) =  \Tilde{c}_n(a)$ by \eqref{eq:nn=n}.
        \begin{claim}\label{claim: C proof}
            Fix $1\leq a\leq a_{\ell}$. For all $m\geq 0$,
            \begin{equation}\label{eq: Cm eq}
                t \cdot \Tilde{C}_{m+1}(a,t) =  \Tilde{C}(a,t)\Big[ \Tilde{C}_m(a,t)-  \Tilde{c}_{m}(a-1)\Big].
            \end{equation}
        \end{claim}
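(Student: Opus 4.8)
The plan is to prove Claim~\ref{claim: C proof} by expanding both sides as formal power series in $t$ and matching coefficients of $t^n$ for each $n \geq 0$, using the recursion~\eqref{eq: coeffienct MLE} as the key input. Concretely, I would compare the coefficient of $t^n$ on the left-hand side of~\eqref{eq: Cm eq}, which is $\Tilde{c}_{n+m,n-1}(a)$ (with the convention that this is $0$ when $n=0$), against the coefficient of $t^n$ on the right-hand side. For the right-hand side, I would first expand $\Tilde{C}(a,t)\Tilde{C}_m(a,t) = \sum_{n \geq 0}\left(\sum_{i=0}^n \Tilde{c}_i(a)\,\Tilde{c}_{n-i+m,n-i}(a)\right)t^n$ and $\Tilde{C}(a,t)\cdot \Tilde{c}_m(a-1) = \sum_{n \geq 0}\Tilde{c}_n(a)\,\Tilde{c}_m(a-1)\,t^n$, so the coefficient of $t^n$ on the right is $\sum_{i=0}^n \Tilde{c}_i(a)\,\Tilde{c}_{n-i+m,n-i}(a) - \Tilde{c}_n(a)\,\Tilde{c}_m(a-1)$.

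Thus the claim reduces to the identity, for all $m \geq 0$ and $n \geq 0$,
\begin{equation}\label{eq:claim-C-reduction}
    \Tilde{c}_{n+m,n-1}(a) = \sum_{i=0}^{n}\Tilde{c}_i(a)\,\Tilde{c}_{n-i+m,n-i}(a) - \Tilde{c}_n(a)\,\Tilde{c}_m(a-1),
\end{equation}
where the left side is interpreted as $0$ for $n=0$. The $i=0$ term on the right is $\Tilde{c}_0(a)\,\Tilde{c}_{n+m,n}(a) = \Tilde{c}_{n+m,n}(a)$ (recall $\Tilde{c}_0(a)=1$), and the $i=n$ term is $\Tilde{c}_n(a)\,\Tilde{c}_{m,0}(a) = \Tilde{c}_n(a)\,\Tilde{c}_m(a-1)$ by~\eqref{eq: two height coef with zero}, which cancels the last term. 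So~\eqref{eq:claim-C-reduction} becomes $\Tilde{c}_{n+m,n-1}(a) = \Tilde{c}_{n+m,n}(a) + \sum_{i=1}^{n-1}\Tilde{c}_i(a)\,\Tilde{c}_{n-i+m,n-i}(a)$, i.e. $\Tilde{c}_{n+m,n}(a) = \Tilde{c}_{n+m,n-1}(a) - \sum_{i=1}^{n-1}\Tilde{c}_{i,i}(a)\,\Tilde{c}_{n+m-i,n-i}(a)$, which is exactly~\eqref{eq: coeffienct MLE} applied with the pair $(n+m, n)$ in place of $(n,m)$ (using $\Tilde{c}_i(a) = \Tilde{c}_{i,i}(a)$ from~\eqref{eq:nn=n}). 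I would need to double-check the indexing edge cases: the $n=0$ case of~\eqref{eq:claim-C-reduction} should read $0 = \Tilde{c}_{m,0}(a) - \Tilde{c}_0(a)\Tilde{c}_m(a-1) = 0$ by~\eqref{eq: two height coef with zero}, which holds, and for $n \geq 1$ the sum $\sum_{i=1}^{n-1}$ is empty when $n=1$, giving $\Tilde{c}_{m+1,1}(a) = \Tilde{c}_{m+1,0}(a)$, consistent with~\eqref{eq: coeffienct MLE} at $m=1$.

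Once Claim~\ref{claim: C proof} is established, I would finish the proof of Proposition~\ref{prop: C solves moment equation} by summing~\eqref{eq: Cm eq} over $m \geq 0$, or more precisely by setting up a telescoping/algebraic manipulation. Multiplying~\eqref{eq: Cm eq} by $t^m$ and summing over $m \geq 0$ should relate $\sum_m t^{m+1}\Tilde{C}_{m+1}(a,t)$ to $\Tilde{C}(a,t)\big[\sum_m t^m \Tilde{C}_m(a,t) - \sum_m t^m \Tilde{c}_m(a-1)\big]$; recognizing that $\sum_{m \geq 0} t^m \Tilde{C}_m(a,t) = \sum_{m,n \geq 0}\Tilde{c}_{n+m,n}(a)t^{m+n}$ can be reorganized by the substitution $N = m+n$ into $\sum_{N}\big(\sum_{n=0}^N \Tilde{c}_{N,n}(a)\big)t^N$, and that $\sum_{m\geq 0}t^m\Tilde{c}_m(a-1) = \Tilde{C}(a-1,t)$, together with the fact that the $m=0$ term of $\sum_m t^{m+1}\Tilde{C}_{m+1}$ is just $\sum_m t^m \Tilde{C}_m - \Tilde{C}_0 = \sum_m t^m\Tilde{C}_m - \Tilde{C}(a,t)$, should collapse everything to $\Tilde{C}(a,t) - \Tilde{C}(a,t)^2 \cdot(\text{something})$ or directly to $\Tilde{C}(a,t)^a - \Tilde{C}(a,t)^{a-1} = t$ after invoking the inductive hypothesis that $\Tilde{C}(a-1,t)$ solves~\eqref{eq: moment eq}, hence $\Tilde{C}(a-1,t)^{a-1} - \Tilde{C}(a-1,t)^{a-2} = t$. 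The main obstacle I anticipate is this last bookkeeping step: correctly handling the double-indexed family $\Tilde{c}_{n,m}$, keeping track of which generating-function identity is being summed and over what range, and cleanly extracting the power $a-1$ versus $a$ from the telescoping so that the inductive hypothesis plugs in. The proof of Claim~\ref{claim: C proof} itself is essentially a mechanical translation of~\eqref{eq: coeffienct MLE}, so the risk there is purely in off-by-one errors in the indices $n+m$ versus $n$ versus $m$.
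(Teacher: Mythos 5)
Your proof of the claim is correct and is essentially the paper's own argument run in reverse: the paper starts from \eqref{eq: coeffienct MLE} applied to the pair $(n+m,n)$, rearranges, multiplies by $t^n$ and sums, whereas you match coefficients of $t^n$ and reduce to that same instance of \eqref{eq: coeffienct MLE}, with the same cancellation of the $i=n$ term against $\Tilde{c}_n(a)\Tilde{c}_m(a-1)$ via \eqref{eq: two height coef with zero}. The edge cases you check ($n=0$ and $n=1$) are handled correctly, so this is the same proof in all essentials.
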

        Assuming the claim, we first finish the proof of the proposition. We introduce an additional auxiliary generating function. 
        For $z\in\RR$, define
        \begin{align*}
             \Tilde{C}(a,t,z) := \sum_{m\geq 0} \Tilde{C}_m(a,t)z^m.
        \end{align*}
        Notice
        \begin{align*}
            \frac{t}{z}\left( \Tilde{C}(a,t,z)- \Tilde{C}(a,t)\right) &= t\sum_{m\geq 0} \Tilde{C}_{m+1}(a,t)z^m\\&=  \Tilde{C}(a,t)\left[\sum_{m\geq 0} \Tilde{C}_{m}(a,t)z^m-\sum_{m\geq 0} \Tilde{c}_{m}(a-1)z^m\right]\\&=  \Tilde{C}(a,t)\Big[ \Tilde{C}(a,t,z)-  \Tilde{C}(a-1,z)\Big],
        \end{align*}
        where for the second equality we used \eqref{eq: Cm eq}. Now, setting $z=t/ \Tilde{C}(a,t)$ in the above equality, we obtain
    \begin{align*}
             \Tilde{C}(a,t)\cdot  \Tilde{C}\left(a,t,\frac{t}{ \Tilde{C}(a,t)}\right)- \Tilde{C}(a,t)^2 =  \Tilde{C}(a,t)\cdot  \Tilde{C}\left(a,t,\frac{t}{ \Tilde{C}(a,t)}\right)-  \Tilde{C}(a,t)\cdot  \Tilde{C}\left(a-1,\frac{t}{ \Tilde{C}(a,t)}\right)
        \end{align*}
        and thus
        \begin{equation}\label{eq: C area eq}
             \Tilde{C}(a,t) =  \Tilde{C}\left(a-1,\frac{t}{ \Tilde{C}(a,t)}\right).
        \end{equation}
        Therefore, by the inductive hypothesis, we can write
        \begin{align*}
             \Tilde{C}(a,t)^{a-1}-  \Tilde{C}(a,t)^{a-2} =  \Tilde{C}\left(a-1,\frac{t}{ \Tilde{C}(a,t)}\right)^{a-1}-  \Tilde{C}\left(a-1,\frac{t}{ \Tilde{C}(a,t)}\right)^{a-2} = \frac{t}{ \Tilde{C}(a,t)},
        \end{align*}
        where the first equality follows from \eqref{eq: C area eq} and the second equality uses the inductive hypothesis. Rearranging the terms, we get that $\Tilde{C}(a,t)$ solves \eqref{eq: moment eq}, as desired.

        \medskip

        \noindent Finally, we prove \cref{claim: C proof}. \eqref{eq: coeffienct MLE} gives us that, for $n\geq 1$,
        \begin{align*}
             \Tilde{c}_{n+m,n}(a) &=  \Tilde{c}_{n+m,n-1}(a) - \sum_{i=1}^{n-1} \Tilde{c}_{i,i}(a) \Tilde{c}_{n+m-i,n-i}(a)\\&=  \Tilde{c}_{n+m,n-1}(a)- \sum_{i=0}^n \Tilde{c}_{i,i}(a) \Tilde{c}_{n+m-i,n-i}(a) +  \Tilde{c}_{n,n}(a) \Tilde{c}_{m,0}(a) +  \Tilde{c}_{0,0}(a) \Tilde{c}_{n+m,n}(x) \\&=  \Tilde{c}_{n+m,n-1}(a)- \sum_{i=0}^n \Tilde{c}_{i,i}(a) \Tilde{c}_{n+m-i,n-i}(a) +  \Tilde{c}_{n,n}(a) \Tilde{c}_{m,0}(a) +  \Tilde{c}_{n+m,n}(a),
        \end{align*}
        where to get the last equality we used that $ \Tilde{c}_{0,0}(a)= \Tilde{c}_0(a)=1$ by definition. Rearranging terms, we get that
        \begin{align*}
             \Tilde{c}_{n+m,n-1}(a) = \sum_{i=0}^n \Tilde{c}_{i,i}(a) \Tilde{c}_{n+m-i,n-i}(a) -  \Tilde{c}_{n,n}(a) \Tilde{c}_{m,0}(a).
        \end{align*}
        Multiplying by $t^n$ and summing over $n\geq 1$, we get that 
        \begin{align*}
            t\sum_{n\geq 1} \Tilde{c}_{(n-1)+(m+1),n-1}(a)t^{n-1} &= \sum_{n\geq 1}\left(\sum_{i=0}^n \Tilde{c}_{i,i}(a) \Tilde{c}_{n+m-i,n-i}(a)\right)t^n -  \Tilde{c}_{m,0}(a)\sum_{n\geq 1} \Tilde{c}_{n,n}(a)t^n \\
            &= \left(\sum_{n\geq 0} \Tilde{c}_{n,n}(a)t^n\right)\left(\sum_{n\geq 0} \Tilde{c}_{n+m,n}(a)t^n\right) -  \Tilde{c}_{0,0}(a) \Tilde{c}_{m,0}(a)\\
            &\qquad-  \Tilde{c}_{m,0}(a)\sum_{n\geq 1} \Tilde{c}_{n,n}(a)t^n \\
            &=  \Tilde{C}(a,t) \Tilde{C}_m(a,t) -  \Tilde{c}_{m}(a-1) -  \Tilde{c}_{m}(a-1)\sum_{n\geq 1} \Tilde{c}_{n,n}(a)t^n \\
            &=  \Tilde{C}(a,t) \Tilde{C}_m(a,t) -  \Tilde{c}_{m}(a-1) \Tilde{C}(a,t),
        \end{align*}
        where to get the third equality we used \eqref{eq: two height coef with zero} and that $\Tilde{c}_{0,0}(a)= \Tilde{c}_0(a)=1$ by definition. 
        Noting that the left-hand side of the last display is exactly $t \Tilde{C}_{m+1}(a,t)$, we get the claim.
    \end{proof}

    \begin{proof}[Proof of \cref{prop: TilC solves moment equation}]
        We will use the Lagrange inversion theorem to solve \eqref{eq: moment eq}. Let $\cF_{a}(F) \coloneqq F^{a}-F^{a-1}$. Then \eqref{eq: moment eq} is just $\cF_{a}(F) =t$. Notice that\begin{align*}
            \frac{\partial \cF_{a}}{\partial F}(1) = a \cdot 1^{a-1}-(a-1) \cdot 1^{a-2} = 1\neq 0.
        \end{align*}
        Thus applying the Lagrange inversion theorem at $F=1$ gives us that\begin{align*}
            F(a,t) = 1 +\sum_{n=1}^{\infty}g_n(a)\frac{t^n}{n!}
        \end{align*}where\begin{align*}
            g_n(a) &= \lim_{F\to 1}\frac{d^{n-1}}{dF^{n-1}}\left[\left(\frac{F-1}{F^{a}-F^{a-1}}\right)^n\right] \\&= \lim_{F\to 1}\frac{d^{n-1}}{dF^{n-1}}\left(F^{n(1-a)}\right)\\&= (n-na)(n-na-1)\hdots(n-na-n+2)\\&= (-1)^{n+1}(na-2)\hdots(na-n).
        \end{align*}
        Hence,
        \begin{align*}
            F(a,t) = 1 + \sum_{n\geq 1}\left(\frac{(-1)^{n+1}}{n!}\frac{(na-2)!}{(n(a-1)-1)!}\right)t^n = 1 + \sum_{n\geq 1}\left(\frac{(-1)^{n+1}}{n}\binom{na-2}{n-1}\right)t^n = C(a,t).
        \end{align*}Thus $C(a,t)$ solves \eqref{eq: moment eq}, as desired.
    \end{proof}

    \subsection{Convergence of the empirical spectral distribution for simple loops}\label{sec: spec conv}

    In this section, we show that the empirical spectral distribution of $Q_{\ell}$, for $\ell$ a simple loop, converges weakly to a deterministic measure on the unit circle that only depends on the area of $\ell$. In particular, we prove \cref{cor: limiting spectral density}.
    
    First, we give the missing proof of \cref{lem:cov-series}.

    \begin{proof}[Proof of \cref{lem:cov-series}]
        The sum in \eqref{eq: mua} is a finite sum when $a=1$.
        Hence we assume that $a \geq 2$ and $|\upbeta| \leq 1/2$. For the first claim, it suffices to show that the sum
        $\sum_{n=1}^{\infty} \left| c_n(a) \right| \cdot |\upbeta|^{na}$
        converges.
        Using standard asymptotic estimates for binomial coefficients, we have that
        \[
        \binom{na - 2}{n - 1} \sim \frac{1}{\sqrt{2 \pi}}\frac{(a-1)^{1/2 + n - na} a^{-(3/2) + na}}{\sqrt{n}}, \quad \text{as } n\to\infty.
        \]
        Hence, for $n$ large enough, we get that
        \[
        |c_n(a)| \cdot |\upbeta|^{na} \leq \frac{1}{n} \cdot \binom{na - 2}{n - 1} \cdot |\upbeta|^{na} \leq \frac{1}{n} \cdot \frac{(a-1)^{ n } (\frac{a}{a-1})^{na} }{\sqrt{n}} \cdot |\upbeta|^{na} \leq n^{-\frac{3}{2}}\left( (a-1)\left(\frac{a}{a-1}\right)^{a}|\upbeta|^a \right)^{ n }.
        \]
        Since $a \geq 2$ and $|\upbeta| \leq 1/2$, we have that $(a-1) \left(\frac{a}{a-1}\right)^{a}|\upbeta|^a \leq 1$. Hence, the sum converges absolutely.
    \end{proof}

    Recall that $\mu_{\ell_a,N}$ denotes the empirical spectral distribution of $Q_{\ell_a}$, i.e.\ 
    \[\mu_{\ell_a,N}(x)=\frac{1}{N}\sum_{i=1}^N \delta_{\lambda_{\ell_a,N}^i}(x),\] 
    where $\lambda_{\ell_a,N}^i$ are the complex-valued unitary eigenvalues of the unitary matrix $Q_{\ell_a}$. Also, let  $\nu_{\ell_a,N}$ denote the corresponding \emph{expected} empirical spectral measure, that is,
    \begin{align*}
        \nu_{\ell_a,N}(\cdot) = \E{\mu_{\ell_a,N}(\cdot)}.
    \end{align*}
    Equivalently, for any continuous function $f:\SS^1\to\RR$,
    \begin{equation}\label{eq: nu}
        \int f \ d \nu_{\ell_a,N} = \E{\int f\ d\mu_{\ell_a,N}} = \E{\frac{1}{N}\sum_{i=1}^Nf(\lambda_{\ell_a,N}^i)}.
    \end{equation}

\begin{proof}[Proof of \cref{cor: limiting spectral density}]
    We begin with some general considerations. By elementary Fourier theory, we may define the following metric on the space of finite real-valued measures on $\TT$:
    \[ \rho(\mu, \nu) := \sum_{n \in \NN} 2^{-n} \bigg|\int z^n d\mu - \int z^n d\nu\bigg|. \]
    Note that the above sum is absolutely convergent since $\mu, \nu$ are finite measures and $z \mapsto z^n$ is a bounded function on $\TT$. When restricted to probability measures, $\rho$ metrizes the weak convergence. Moreover, the set of probability measures forms a closed subset with respect to this topology. Thus, if $\{\mu_n\}_{n\geq 1}$ is a sequence of probability measures and $\mu$ is a finite real-valued measure such that $\rho(\mu_n, \mu) \rightarrow 0$, then $\mu$ is also a probability measure. Also, we see that in order for $\mu_n$ to converge weakly to $\mu$, it is enough that $\int z^k d\mu_n \rightarrow \int z^k d\mu$ for all $k \in \NN$, as $n \rightarrow \infty$.
    
    Next, as $\nu_{\ell_a,N}$ is a probability measure on the $\TT$, we know that there is some real-valued random variable $\theta_{\ell_a,N}$ such that $\mathrm{e}^{i \theta_{\ell_a,N}}$ has distribution $\nu_{\ell_a,N}$. By \cref{thm: sum over surfaces representation in 't hooft limit} (here is where we use the assumption that $|\upbeta|\leq \upbeta_0$) and \cref{thm: wound simple loops WLE}, we know the asymptotic behavior of the moments of $\nu_{\ell_a,N}$, or equivalently, the asymptotic behavior of the characteristic function of $\theta_{\ell_a,N}$ evaluated at the integer values. That is, for $n\in \NN$, we have that 
    \begin{equation}\label{eq: spec con pos}
        \E{\mathrm{e}^{in\theta_{\ell_a,N}}} = \int z^n\ d\nu_{\ell_a,N} = \E{\frac{1}{N}\sum_{i=1}^N(\lambda_{\ell_a,N}^i)^n} = \E{\tr(Q_{\ell_a}^n)} \xrightarrow[N\to\infty]{}c_n(a)\upbeta^{na},
    \end{equation}
    where we recall that $c_n(a) :=  \frac{(-1)^{n+1}}{n}\binom{na-2}{n-1}$. Further, as the conjugate expected empirical spectral measure is equal to the empirical spectral measure of $\mu_{\ell_a^{-1},N}$ and $\ell_a^{-1}$ is still a simple loop of area $a$, the above reasoning gives that for all $n\in \NN$, \begin{equation}\label{eq: spec con neg}
        \E{\mathrm{e}^{-in\theta_{\ell_a,N}}}= \E{\tr(Q_{\ell_a^{-1}}^n)}\xrightarrow[N\to\infty]{}c_n(a)\upbeta^{na}.
    \end{equation}
    Also,  if $n=0$ we clearly have that \begin{equation}\label{eq: spec con zero}
        \E{\mathrm{e}^{in\theta_{\ell_a,N}}} = 1 \xrightarrow[N\to\infty]{} 1 =c_0(a).
    \end{equation}
    Note, \eqref{eq: spec con pos}, \eqref{eq: spec con neg}, and \eqref{eq: spec con zero} give us the asymptotic behavior of the Fourier coefficients of $\nu_{\ell_\alpha, N}$. Thus, as the measure $\mu_a$  with density (the sum pointwise converges by \cref{lem:cov-series} and the discussion above it)
    \begin{align*}
         f_a(x)
         = 
         \frac{1}{2\pi}\left(1 + \sum_{n\geq 1}2c_n(a)\upbeta^{na}\cos(nx)\right)
         =
         \frac{1}{2\pi} + \frac{1}{2\pi}\sum_{n\geq 1}c_n(a)\upbeta^{na}\left(\mathrm{e}^{inx} + \mathrm{e}^{-inx}\right) 
    \end{align*}
    has the matching Fourier coefficients, we conclude that $\nu_{\ell_a,N}$ converges weakly to $\mu_a$. Further, by the discussion at the beginning of the proof, this gives that $\mu_a$ is a probability measure. Thus, we obtain that $f_{\alpha,\upbeta}$ is non-negative.
    
    To get the claimed weak convergence in probability for $\mu_{\ell_a,N}$, simply note that the factorization as $N\to\infty$ given by \cref{thm: sum over surfaces representation in 't hooft limit} implies that for all $n\in\NN$,
    \begin{align*}
        \Var_{\mu_{\Lambda}, N, \upbeta}(\tr(Q_{\ell_a}^n)) = \phi_{\Lambda, N,\upbeta}(\{\ell_a,\ell_a\}) -\phi_{\Lambda, N,\upbeta}(\ell_a)^2  \xrightarrow{\: N \to \infty \: }   \phi(\ell_a)^2- \phi(\ell_a)^2 = 0.
    \end{align*}
    By Chebyshev's inequality, we have that $\rho(\mu_{\ell_a, N}, \mu_a) \stackrel{p}{\rightarrow} 0$, thus giving the desired weak convergence in probability.
\end{proof}

\subsection{Wilson loop expectations for loops with three or fewer self-crossings}\label{sec: Wilson loop expectation constant for erasable loops with three or fewer self-crossings}

In this section, we explicitly compute the coefficients $c(\ell, K)$, defined in \eqref{eq:coefficents}, which appear in the Wilson loop expectation for lattice loops isotopic to some of the continuum loops listed in \cref{table}. (All the loops not explicitly treated in this section can be analyzed using similar arguments, and so we omit the details.) 
    
We recall that throughout this section, we are assuming that lattice loops are non-backtrack loops. In particular, when we apply any loop operation, we assume that the resulting new loops have all backtracks removed.

Moreover, in this section, given any lattice loop $\ell$ with its corresponding height plaquette assignment $K_{\ell}$ from \eqref{defn:master-plaq-ass}, we set
\[c(\ell) \coloneqq c(\ell, K_{\ell}).\]
If $\ell$ is a lattice loop isotopic to a continuum loop $\cL$, and $R$ is a region of $\cL$, we also denote by $R$ the corresponding plaquette-region of $\ell$.

    \medskip

    The three main tools that we use to compute the Wilson loop expectations appearing in \cref{table} are:
    \begin{itemize}
        \item \cref{thm: erasable loops have one plaquette assignment}, describing exactly what plaquette assignments may yield a non-zero surface sum (and actually we will see that for all lattice loops treated in this section, the plaquette assignments in the canonical collection will all give a non-zero surface sum).
        \item The master loop equation in \cref{thm: fixed K 't Hooft master loop equation for surface sum}; or more precisesly its version stated in \eqref{eq:mle-2}.
        \item \cref{thm: wound simple loops WLE}, computing exactly the coefficents $c(\ell^n)=c(\ell^n, K_{\ell^n})$ for all simple loops $\ell$.
    \end{itemize}

    \subsubsection{Example 1}\label{sect:loop1}

    We start with the first continuum loop $\cL$ in \cref{table}, i.e.\ the one isotopic to a circle. Let $\ell$ be any lattice loop isotopic to $\cL_0$. Then $\ell$ must be a simple loop. Let $s$ be the area of $\ell$. Then, \cref{thm: wound simple loops WLE} gives us that 
    \[\phi(\ell)=\upbeta^s,\]
    as claimed in \cref{table}.

    \subsubsection{Example 2}\label{sect:loop2}

    We now move to the continuum loop $\cL$ in the top left corner of~\cref{fig: Ex1_loop}, labeling each region as shown in the figure. Let $\ell$ be any lattice loop isotopic to $\cL$ (an example is shown on the top right corner of \cref{fig: Ex1_loop}). Because of the topology of $\cL$, note that $\ell$ must have its height equal to its distance. Thus, recalling \cref{rem:card-set-cK}, we must have that the canonical collection of plaquette assignments associated with $\ell$ is just $\cK_{\ell} = \{K_{\ell}\}$. Therefore, thanks to \cref{thm: erasable loops have one plaquette assignment}, to compute $\phi(\ell)$, there is only one coefficient to compute, that is, $c(\ell)$. 

    \begin{figure}[ht!]
		\begin{center}
			\includegraphics[width=.6\textwidth]{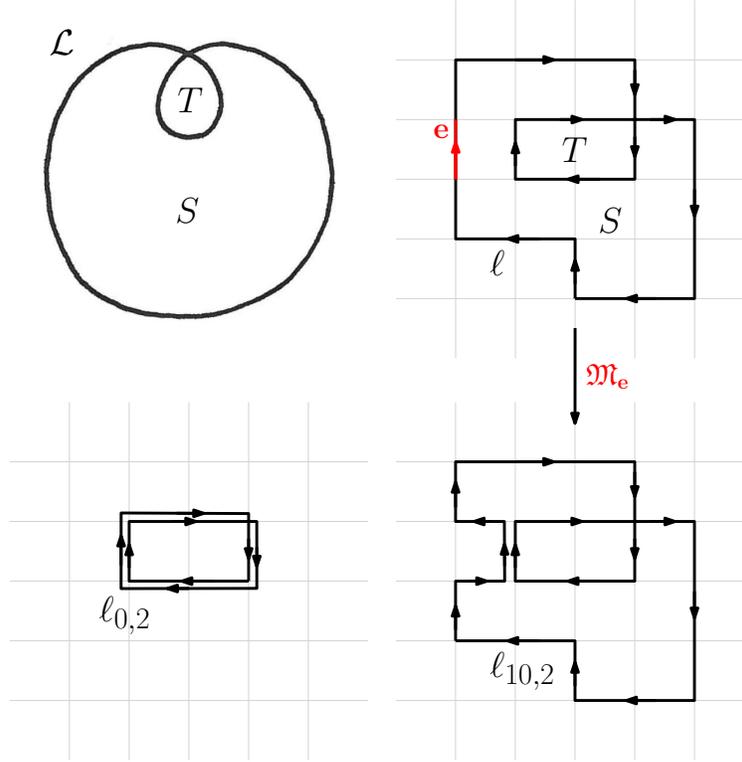}  
			\caption{\label{fig: Ex1_loop} \textbf{Top left:} A continuum loop $\cL$ with two interior regions labeled $S$ and $T$. \textbf{Top right:} A lattice loop $\ell$ isotopic to $\cL$. The plaquette region $S$ contains $s=11$ plaquettes and the plaquette region $T$ contains $t=2$ plaquettes. An edge $\mathbf{e}$ on the boundary of $S$ but not $T$ is highlighted. \textbf{Bottom right:} The loop, $\ell_{10,2}$, obtained from $(\ell,K_{\ell})$ by applying the master loop equation at $\mathbf{e}$.  \textbf{Bottom left:} The loop, $\ell_{0,2}$, obtained by repeatedly applying the master loop equation to remove all of the $S$ region from $\ell$.}
		\end{center}
		\vspace{-3ex}
	\end{figure}
    
    Assume that the plaquette-region $S$ of $\ell$ has area $s$  and the plaquette-region $T$ has area $t$ (cf.\ the top right corner of \cref{fig: Ex1_loop}). 
    Let $\mathbf{e}$ be any edge on the boundary of $S$ but not on the boundary of $T$ (such an edge exists because $\ell$ is isotopic to $\cL$).

    Then, applying the master loop equation at $\mathbf{e}$ must result in a negative
    deformation since this is the only admissible operation. Indeed, it must be the case as the edge $e$ is only contained once in $\ell$ (again, because $\ell$ is isotopic to $\cL$) and $e^{-1}$ is only contained once in the plaquette assignment $K_{\ell}$ (by definition of $K_{\ell}$ in \eqref{defn:master-plaq-ass}; in particular, $e^{-1}$ is contained in the plaquette $p^{-1}$ where $p$ is the unique plaquette in $S$ containing $e$). The bottom right corner of \cref{fig: Ex1_loop} shows an example of this application of the master loop equation. 
    
    Thus, denoting by $\ell_{s-1,t}$ the loop obtained after performing this operation (cf.\ the bottom right corner of \cref{fig: Ex1_loop}), we get, thanks to the master loop equation in \eqref{eq:mle-2}, that
    \[c(\ell) = c(\ell_{s-1,t}).\]
    The above reasoning can be iterated until we remove all the plaquettes in the  plaquette-region $S$. Thus, thanks to \eqref{eq:mle-2}, we get that
    \[c(\ell) = c(\ell_{0,t}),\]
    where $\ell_{0,t}$ denotes the loop that winds twice around the plaquette-region $T$ (cf.\ the bottom left corner of \cref{fig: Ex1_loop}). Since the boundary of the region $T$ in $\cL$ is simple and  $\ell$ is isotopic to $\cL$, it must be that the boundary of the plaquette-region $T$ of $\ell$ is also simple. Hence, $\ell_{0,t}$ is a loop with area $t$ winding around $T$ twice and so, \cref{thm: wound simple loops WLE} gives us that
    \begin{equation*}
        c(\ell)=c(\ell_{0,t}) = 1-t.
    \end{equation*}
    Since $\area(K_\ell)=s+2t$ by definition of $K_\ell$, \cref{thm: erasable loops have one plaquette assignment} allows us to conclude that
    \begin{equation*}
        \phi(\ell)= (1-t)\upbeta^{s+2t},
    \end{equation*}
    as claimed in \cref{table}.

    \subsubsection{Example 3}

    We now move to the continuum loop $\cL$ in the top left of \cref{fig: Ex2_loop}, labeling each region as shown in the figure. Let $\ell$ be any lattice loop isotopic to $\cL$ (an example is shown in the top middle of \cref{fig: Ex2_loop}). For the same reason as for the previously analyzed loop, thanks to \cref{thm: erasable loops have one plaquette assignment}, we only need to compute the coefficient $c(\ell)$ to determine $\phi(\ell)$,

    \begin{figure}[ht!]
		\begin{center}
			\includegraphics[width=.75\textwidth]{figs/Ex2_loop.pdf}  
			\caption{\label{fig: Ex2_loop} \textbf{Top left:} A continuum loop $\cL$ with three interior regions labeled $S$, $T$, and $U$. \textbf{Top middle:} A lattice loop $\ell$ isotopic to $\cL$. The plaquette region $S$ contains $s=20$ plaquettes, the plaquette region $T$ contains $t=11$ plaquettes, and the plaquette region $U$ contains $u=3$ plaquettes. \textbf{Top right:} the loop, $\ell_{0,11,3}$, obtained from $\ell$ by removing the $S$ region as in \cref{sect:loop2}. An edge $\mathbf{e}$ on the boundary of $T$ but not $U$ is highlighted. \textbf{Middle right:} The string $\{\ell',\ell''\}$ obtained from $(\ell_{0,11,3},K_{\ell_{0,11,3}})$ when a positive splitting is preformed at $\mathbf{e}$. \textbf{Middle middle:} The loop obtained from $(\ell_{0,11,3},K_{\ell_{0,11,3}})$ when a negative deformation is preformed at $\mathbf{e}$. \textbf{Middle left:} The loop obtained from $(\ell_{0,11,3},K_{\ell_{0,11,3}})$ when two consecutive negative deformations are preformed at the two copies of $e$ in $\ell_{0,11,3}$. \textbf{Bottom middle:} The loop $\ell_{0,0,3}$ obtained by repeatedly applying the negative deformations to remove all of the $S$ and $T$ regions from $\ell$.}
		\end{center}
		\vspace{-3ex}
	\end{figure}

    Using the same techniques as for the previously analyzed loop, we get that
    \[c(\ell) = c(\ell_{0,t,u}),\]
    where $\ell_{0,t,u}$ denotes the loop that one obtains after removing all the plaquette-region $S$ from $\ell$ only performing negative deformations; as detailed before.

    Now, our goal is to remove the plaquette-region $T$ from $\ell_{0,t,u}$. We immediately note that, by construction, the loop $\ell_{0,t,u}$ has two copies,  oriented in the same direction, of each lattice edge on the boundary of $T$ that is not part of the boundary of $U$.
    Let $\mathbf{e}$ be any edge of $\ell_{0,t,u}$ on the boundary of  $T$ but not on the boundary of $U$ (such an edge exists because our original loop $\ell$ is isotopic to $\cL$). 
    
    Applying the master loop equation at $\mathbf{e}$ will result in either a negative deformation with the plaquette  $p^{-1}$ of $K_{\ell_{0,t,u}}$ (where $p$ is the plaquette in the support of $\ell(0,t,u)$ that contains $e$; cf.\ the middle middle of \cref{fig: Ex2_loop}) or a positive splitting with the other copy of the edge $e$ in $\ell_{0,t,u}$ (cf.\ the middle right of \cref{fig: Ex2_loop}).

    Let $\ell'$ and $\ell''$ be the two loops obtained through a positive splitting (cf.\ the middle left of \cref{fig: Ex2_loop}). Note if a negative deformation was performed then if we apply the master loop equation again at the unique occurrence of $e$ in the new loop, we can only perform a negative deformation with $p^{-1}$. Let $\ell_{0,t-1,u}$ denote the loop obtained through two consecutive negative deformations. An example of such a loop obtained through two consecutive negative deformations is shown in the middle left of \cref{fig: Ex2_loop}. 
    
    Translating this into the coefficients we are aiming to compute, we get, thanks to the master loop equation in \eqref{eq:mle-2}, that
    \begin{align*}
        c(\ell_{0,t,u})=c(\ell_{0,t-1,u}) - c(\ell')\cdot c(\ell'') = c(\ell_{0,t-1,u}) - (1-u),
    \end{align*}
    where to get the last equality we used that $c(\ell')\cdot c(\ell'')=1\cdot (1-u)$ by our results in  Sections~\ref{sect:loop1}~and~\ref{sect:loop2}. 

    Now, repeating the above procedure to $\ell_{0,t-1,u}$ until all the plaquette-region $T$ is removed, by the same argument we get  that
    \begin{align*}
        c(\ell) = c(\ell_{0,t,u}) = c(\ell_{0,0,u}) - t(1-u),
    \end{align*}
    where $\ell_{0,0,u}$ denotes the loop that winds three times around the plaquette-region $U$ (cf.\ the bottom middle of \cref{fig: Ex2_loop}). Since the boundary of the region $U$ in $\cL$ is simple and we started with a loop $\ell$ isotopic to $\cL$, it must be that the boundary of the plaquette-region $U$ is also simple. Hence, $\ell_{0,0,u}$ is a loop with area $u$ winding around $U$ three times and so, \cref{thm: wound simple loops WLE} gives us that $c(\ell_{0,0,u})=\frac{1}{6}(3u-3)(3u-2)$ and so
    \begin{equation*}\label{eq: cstu}
        c(\ell) = \frac{1}{6}(3u-3)(3u-2) - t(1-u).
    \end{equation*}
    Since $\area(K_\ell)=s+2t+3u$ by definition of $K_\ell$, \cref{thm: erasable loops have one plaquette assignment} allows us to conclude that
    \begin{equation*}
        \phi(\ell)= \left[\frac{1}{6}(3u-3)(3u-2) - t(1-u)\right]\upbeta^{s+2t+3u},
    \end{equation*}
    as claimed in \cref{table}.

    \subsubsection{Example 4}

    Finally, we compute the Wilson loop expectations for lattice loops isotopic to the continuum loop $\cL$ in the top left of \cref{fig: Ex3_loop_2}, an example of which is given in the top middle of \cref{fig: Ex3_loop_2}.

    \begin{figure}[ht!]
		\begin{center}
			\includegraphics[width=.75\textwidth]{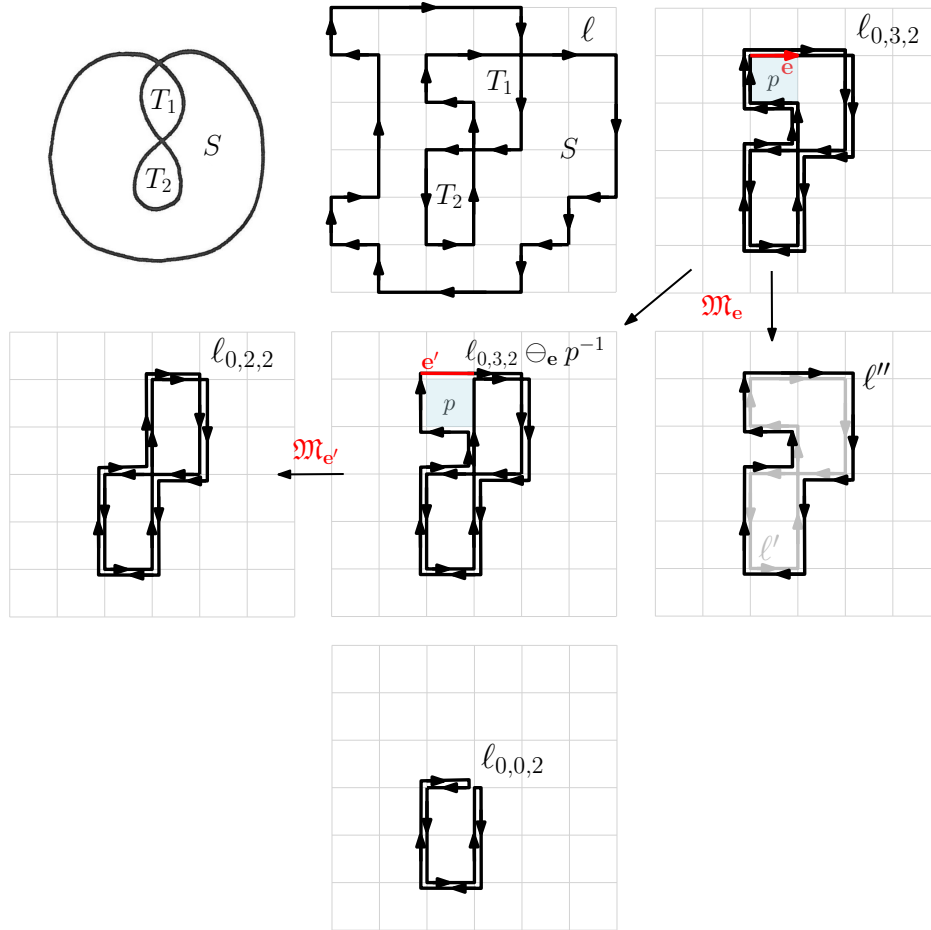}  
			\caption{\label{fig: Ex3_loop_2} \textbf{Top left:} A continuum loop $\cL$ with three interior regions labeled $S$, $T_1$, and $T_2$. \textbf{Top middle:} A lattice loop $\ell$ isotopic to $\cL$. The plaquette region $S$ contains $s=22$ plaquettes, the plaquette region $T_1$ contains $t_1=3$ plaquettes, and the plaquette region $T_2$ contains $t_2=2$ plaquettes. \textbf{Top right:} The loop $\ell_{0,3,2}$ obtained from $\ell$ by removing the $S$ region as in \cref{sect:loop2}. An edge $\mathbf{e}$ on the boundary of $T_1$ but not $T_2$ is highlighted. \textbf{Middle right:} The string, $\{\ell',\ell''\}$, obtained from $(\ell_{0,3,2},K_{\ell_{0,3,2}})$ when a positive splitting is preformed at $\mathbf{e}$. \textbf{Middle middle:} The loop obtained from $(\ell_{0,3,2},K_{\ell_{0,3,2}})$ when a negative deformation is preformed at $\mathbf{e}$. \textbf{Middle left:} The loop obtained from $(\ell_{0,3,2},K_{\ell_{0,3,2}})$ when two consecutive negative deformations are preformed at the two copies of $e$ in $\ell_{0,3,2}$. \textbf{Bottom middle:} The loop $\ell_{0,0,2}$ obtained by repeatedly applying the negative deformations to remove all of the $S$ and $T_1$ regions from $\ell$. Note $\ell_{0,0,2}$ is equivalent to the null-loop once backtracks are removed.}
		\end{center}
		\vspace{-3ex}
	\end{figure}

    First, notice that any lattice loop $\ell'$ isotopic to $\cL'$ and $\ell''$ isotopic to $\cL''$, where $\cL'$ and $\cL''$ are shown on the left-hand side of  \cref{fig: Ex3_loop_1} (the right-hand side of \cref{fig: Ex3_loop_1} shows two examples of such lattice loops), we have that\begin{equation}\label{eq: fig eight}
        c(\ell') = 1\quad\text{and}\quad c(\ell'')=1.
    \end{equation}
    We leave the (simple) proof of this claim to the reader.

    \begin{figure}[ht!]
		\begin{center}
			\includegraphics[width=.5\textwidth]{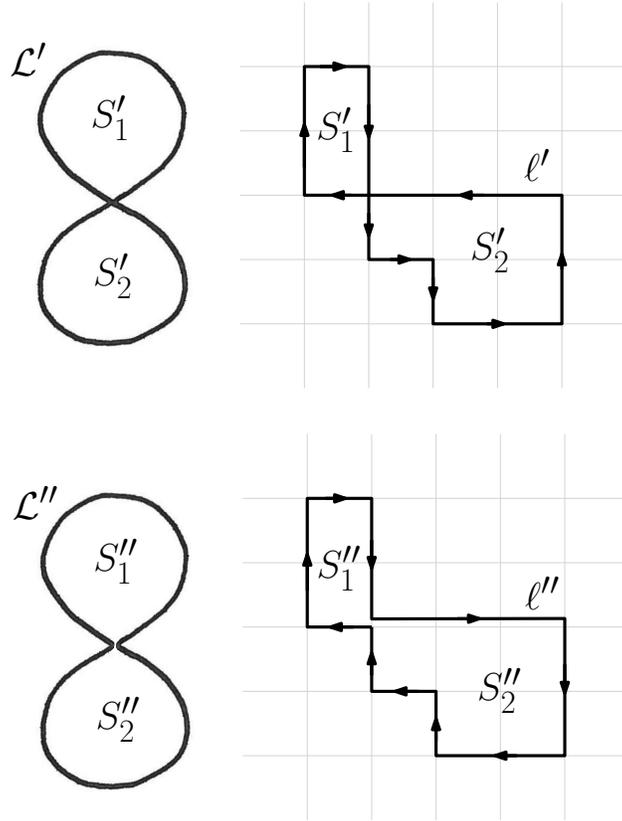}  
			\caption{\label{fig: Ex3_loop_1} \textbf{Top left:} A continuum loop $\cL'$ with one self-crossing and two interior regions labeled $S_1'$ and $S_2'$. \textbf{Top right:} A lattice loop $\ell'$ isotopic to $\cL'$. \textbf{Bottom left:} A continuum loop $\cL''$ with no self-crossings but one double point  and two interior regions labeled $S_1''$ and $S_2''$. \textbf{Bottom right:} A lattice loop $\ell''$ isotopic to $\cL''$. Note the edges of the loop are drawn slightly off the lattice so there is no ambiguity in which order the edges are visited, but, if the edges where drawn on the lattice, there would be one double point corresponding to the unique vertex contained in both plaquette regions, but no self-crossings.}
		\end{center}
		\vspace{-3ex}
	\end{figure}

     Let $\ell$ be any lattice loop isotopic to $\cL$ (cf.\ the top middle of \cref{fig: Ex3_loop_2}). Then, for any plaquette assignment $\Bar{K}$, let\begin{align*}
        \Bar{K}^{T_2}(p) = \begin{cases} \Bar{K}(p) + 1 & \text{if $p$ is in $T_2$,}\\ \Bar{K}(p) & \text{otherwise,}
        \end{cases}
    \end{align*}
    and set $c_{T_2}(\Bar{\ell}) := c(\Bar{\ell}, K_{\Bar{\ell}}^{T_2})$ for any loop $\Bar{\ell}$.
    
    Note that thanks to \cref{rem:card-set-cK}, the canonical collection $\cK_{\ell}$ of plaquette assignments has cardinality two because the $T_2$ region has height $0$ but distance $2$. In particular, 
    \[\cK_{\ell} = \{K_{\ell}, K_{\ell}^{T_2}\}.\] 
    Thus, to compute $\phi(\ell)$ we will need to compute $c(\ell)$ and $c_{T_2}(\ell)$. We will compute $c(\ell)$ and $c_{T_2}(\ell)$ in parallel.

    First, using the same techniques as in the previous examples, we remove the $S$ region. That is, we get that\begin{align*}
        c(\ell) = c(\ell_{0,t_1,t_2})\quad\text{and}\quad c_{T_2}(\ell) = c_{T_2}(\ell_{0,t_1,t_2}),
    \end{align*}where $\ell_{0,t_1,t_2}$ denotes the loop obtained after removing all of the plaquette-region $S$ from $\ell$ while only performing negative deformations; as detailed before. 

    Now, we remove the $T_1$ region. We note that, by construction, the loop $\ell_{0,t_1,t_2}$ has two copies, oriented in the same direction, of each lattice edge on the boundary of $T_1$. Let $\mathbf{e}$ be any boundary edge of $T_2$ not on the boundary of $T_2$ (cf.\ the top right of \cref{fig: Ex3_loop_2}).

    Applying the master loop equation at $\mathbf{e}$ will result in either a negative deformation with the plaquette $p^{-1}$ of $K_{\ell_{0,t_1,t_2}}$ (where $p$ is the plaquette in the support of $\ell_{0,t_1,t_2}$ that contains $e$; cf.\ the middle middle of \cref{fig: Ex3_loop_2}) or in a positive splitting  (cf.\ the middle right of \cref{fig: Ex3_loop_2}). Let $\ell'$ and $\ell''$ be the two loops obtained through a positive splitting.

   Note that if a negative deformation was performed, then, if we apply the master loop equation again at the unique occurrence of $e$ in the new loop, we can only perform a negative deformation with $p^{-1}$. Let $\ell_{0,t_1-1,t_2}$ denote the loop obtained through two consecutive negative deformations. An example of such a loop obtained through two consecutive negative deformations is shown in the middle left of \cref{fig: Ex3_loop_2}. 
   
   Translating this into the coefficients we are aiming to compute, we get that
   \begin{align*}
       c(\ell_{0,t_1,t_2}) = c(\ell_{0,t_1-1,t_2}) - c(\ell', K')\cdot c(\ell'',K'') = c(\ell_{0,t_1-1,t_2}),
   \end{align*}where to get the final equality we used that there is no $K'+K''=K_{\ell_{0,t_1,t_2}}$ such that both $(\ell',  K')$ and $(\ell'',K'')$ are balanced, and
   \begin{align*}
       c_{T_2}(\ell_{0,t_1,t_2}) = c_{T_2}(\ell_{0,t_1-1,t_2}) - c(\ell')c(\ell'') = c_{T_2}(\ell_{0,t_1-1,t_2})-1,
   \end{align*}where to get the final equality we used that $\ell'$ and $\ell''$ are isotopic to $\cL'$ and $\cL''$ respectively and thus we know their coefficients by \eqref{eq: fig eight}. 

   Now, repeating the above procedure to $\ell_{0,t_1-1,t_2}$ until all of the plaquette-region $T_1$ is removed, by the same argument we get that\begin{align*}
       c(\ell_{0,t_1,t_2}) = c(\emptyset) = 1\quad\text{and}\quad c_{T_2}(\ell_{0,t_1,t_2)} = c_{T_2}(\emptyset) - t_1 = -t_1
   \end{align*}where we used that the loop $\ell_{0,0,t_2}$ is equivalent to the null-loop once backtracks are removed (cf.\ the bottom middle of \cref{fig: Ex3_loop_2}) and $0^{T_2}\neq 0$, where $0$ denotes the zero plaquette assignment. Thus, we get that\begin{align*}
       c(\ell) = 1\quad\text{and}\quad c_{T_2}(\ell) = -t_1.
   \end{align*}Finally, since $\area(K_{\ell}) = s+2t_1$ and $\area(K^{T_2}_{\ell}) = s+2t_1+2t_2$, then \cref{thm: erasable loops have one plaquette assignment} allows us to conclude that\begin{align*}
       \phi(\ell) = \upbeta^{s+2t_1} - t_1\upbeta^{s+2t_1+2t_2} = \upbeta^{s+2t_1}(1-t_1\upbeta^{2t_2}),
   \end{align*}as claimed in \cref{table}.

\addcontentsline{toc}{section}{References}
\bibliography{mybib}
\bibliographystyle{hmralphaabbrv}
 
\end{document}